\tikzstyle{result} = [rectangle, 
\tikzstyle{input} = [rectangle, 
\tikzstyle{arrow} = [thick, ->,>=stealth]
\newtheorem{proposition}{Proposition}
\newtheorem{theorem}[proposition]{Theorem}
\newtheorem{lemma}[proposition]{Lemma}
\newtheorem{corollary}[proposition]{Corollary}
\theoremstyle{remark}
\newtheorem{remark}[proposition]{Remark}
\theoremstyle{definition}
\newtheorem{condition}[proposition]{Condition}
\newtheorem{hypothesis}[proposition]{Hypothesis}
\numberwithin{equation}{section}
\numberwithin{proposition}{section}
\renewcommand{\le}{\leqslant}
\renewcommand{\ge}{\geqslant}
\renewcommand{\leq}{\leqslant}
\renewcommand{\geq}{\geqslant}
\renewcommand{\subset}{\subseteq}
\newcommand{\mcl}{\mathcal}
\newcommand{\A}{{\mathsf{A}}}
\newcommand{\F}{\mathcal{F}}
\newcommand{\G}{\mathcal{G}}
\renewcommand{\S}{\mathsf{S}}
\newcommand{\DD}{\mathcal{D}}
\newcommand{\D}{{\overbracket[1pt][-1pt]{D}}}
\renewcommand{\c}{\mathbf{c}}
\newcommand{\cc}{\bar{\c}}
\newcommand{\ca}{\hat{\c}}
\newcommand{\Da}{\widehat{D}}
\newcommand{\E}{\mathbb{E}}
\newcommand{\Er}{\mathbb{E}_{\rho}}
\renewcommand{\Pr}{\mathbb{P}_{\rho}}
\renewcommand{\aa}{\mathbf{a}}
\renewcommand{\L}{\mathcal{L}}
\newcommand{\M}{\mathcal{M}}
\newcommand{\N}{\mathbb{N}}
\newcommand{\tna}{\tilde \nabla}
\newcommand{\Ll}{\left}
\newcommand{\Rr}{\right}
\newcommand{\lhs}{left-hand side}
\newcommand{\rhs}{right-hand side}
\newcommand{\1}{\mathbf{1}}
\newcommand{\R}{\mathbb{R}}
\newcommand{\Z}{\mathcal{Z}}
\newcommand{\Zd}{{\mathbb{Z}^d}}
\newcommand{\Td}{\mathbb{T}^d}
\renewcommand{\P}{\mathbb{P}}
\newcommand{\ov}{\overline}
\renewcommand{\bar}{\overline}
\renewcommand{\tilde}{\widetilde}
\newcommand{\de}{\delta}
\renewcommand{\d}{{\mathrm{d}}}
\newcommand{\var}{\mathbb{V}\!\mathrm{ar}}
\renewcommand{\epsilon}{\varepsilon}
\newcommand{\T}{\mathbb{T}}
\newcommand{\X}{\mathcal{X}} 
\newcommand{\tx}{t_{\operatorname{mix}}}
\newcommand{\cu}{{\scaleobj{1.2}{\square}}}
\renewcommand{\fint}{\strokedint}
\newcommand{\Rd}{{\mathbb{R}^d}}
\renewcommand{\r}{\mathbf{r}}
\newcommand{\nub}{\bar{\nu}}
\newcommand{\fil}{\mathscr{F}}
\newcommand{\gil}{\mathscr{G}}
\DeclareMathOperator{\dist}{dist}
\DeclareMathOperator{\supp}{supp}
\DeclareMathOperator{\diam}{diam}
\DeclareMathOperator{\size}{size}
\newcommand{\bbracket}[1]{\left\llangle{#1}\right\rrangle} 
\newcommand{\XX}{\widetilde{\mathcal{X}}} 
\newcommand{\PP}{\widetilde{\mathbb{P}}} 
\newcommand{\EE}{\widetilde{\mathbb{E}}} 
\newcommand{\teta}{\tilde{\eta}} 
\newcommand{\poi}{\text{Poi}}
\newcommand{\Ind}[1]{\mathbf{1}_{\left\{#1\right\}}}
\newcommand{\id}{\mathsf{Id}}
\newcommand{\norm}[1]{\left\Vert{#1}\right\Vert}
\newcommand{\bracket}[1]{\left\langle{#1}\right\rangle}
\newcommand{\ovs}[1]{\overline{{#1}^*}}
\newcommand{\mres}{\mathbin{\vrule height 1.4ex depth 0pt width
		0.13ex\vrule height 0.13ex depth 0pt width 1ex}}
\numberwithin{equation}{section}
\renewcommand{\a}{\alpha}
\renewcommand{\b}{\beta}
\newcommand{\e}{\varepsilon}
\newcommand{\ga}{\gamma}
\renewcommand{\k}{\kappa}
\newcommand{\la}{\lambda}
\renewcommand{\th}{\theta}
\newcommand{\si}{\sigma}
\renewcommand{\t}{\tau}
\newcommand{\om}{\omega}
\newcommand{\De}{\Delta}
\newcommand{\Ga}{\Gamma}
\newcommand{\La}{\Lambda}
\newcommand{\Om}{\Omega}
\newcommand{\lan}{\langle}
\newcommand{\ran}{\rangle}
\def\Tr{\operatorname{Tr}}
\def\lbr{\left(}
\def\rbr{\right)}
\newcommand{\vertiii}[1]{{\left\vert\kern-0.25ex\left\vert\kern-0.25ex\left\vert #1 
	\right\vert\kern-0.25ex\right\vert\kern-0.25ex\right\vert}}
\title[Homogenization of non-gradient exclusion process]{Quantitative homogenization and hydrodynamic limit of non-gradient exclusion process}
\author{Tadahisa Funaki, Chenlin Gu, Han Wang}
\address[Tadahisa Funaki]{Beijing Institute of Mathematical Sciences and Applications, Beijing, China}
\email{funaki@bimsa.cn}
\address[Chenlin Gu]{Yau Mathematical Sciences Center, Tsinghua University, Beijing, China}
\email{gclmath@tsinghua.edu.cn}  
\address[Han Wang]{Qiuzhen College, Tsinghua University, Beijing, China}
\email{wanghan21@mails.tsinghua.edu.cn}
\begin{document}

\begin{abstract}

	For the non-gradient exclusion process, we prove the quantitative homogenization of the diffusion matrix and the conductivity by local functions. The proof relies on the renormalization approach developed by Armstrong, Kuusi, Mourrat, and Smart, while the new challenge here is the hard core constraint of particle number on every site. Therefore, a coarse-grained method is proposed to lift the configuration to a larger space without exclusion, and a gradient coupling between two systems is applied to capture the spatial cancellation. We then strengthen the convergence rate to be uniform concerning the density, and integrate it into the work by Funaki, Uchiyama, and Yau [\textit{IMA Vol. Math. Appl.}, 77 (1996), pp. 1-40.] to yield a quantitative hydrodynamic limit.  Our new approach avoids showing the characterization of closed forms and provides stronger results. The extension is discussed for the model in the presence of disorder on the bonds.
	
	\bigskip
	
	\noindent \textsc{MSC 2010:} 82C22, 35B27, 60K35.
	
	\medskip
	
	\noindent \textsc{Keywords:} interacting particle systems, non-gradient exclusion process, Kawasaki dynamics, diffusion matrix, quantitative homogenization, hydrodynamic limit.
	
\end{abstract}

\maketitle

\tableofcontents

\newpage

%
%
%
%
%
%
%
%

\section{Introduction}\label{sec:1}
\emph{The diffusion matrix} plays an important role in the study of the large-scale behaviors of
interacting particle systems. Among these systems, some are classified as \emph{the gradient models} if the current of the conserved quantity can be written as a sum of the difference between a local function and its spatial shift, and the others are called \emph{the non-gradient models}. Unlike the gradient models, the non-gradient models usually require more techniques to derive the hydrodynamic limit, because the scaling yields a diverging factor and, moreover, the diffusion matrix does not have an explicit expression and one needs to add a correction. This idea goes back to the seminal work \cite{varadhanII} of Varadhan, who studied the Ginzburg--Landau model. Later, the hydrodynamic limit was proved in several classical particle systems of non-gradient type: \emph{the generalized symmetric exclusion process (GSEP)}  by Kipnis, Landim and Olla \cite{KLO94};  \emph{the lattice gas}, also known as \emph{the non-gradient exclusion process} or \emph{speed-change Kawasaki dynamics}, by Funaki, Uchiyama and Yau \cite{fuy}, the general lattice gas with mixing condition by Varadhan and Yau \cite{varadhan1997mixing}; \emph{the multi-type simple symmetric exclusion process (multi-type SSEP)} by Quastel \cite{quastel}, etc. The equilibrium fluctuation in non-gradient models was studied later in \cite{fun96, lu94, cha96}, and the regularity of the diffusion matrix was discussed in a series of work \cite{lov-reg, naga1, naga2, naga3, bernardin}. One can also refer to \cite{spohn2012large,kipnis1998scaling} for the basic background and \cite{sasada2018green} for the relation between the gradient condition and the Green--Kubo formula.

It is natural to ask the convergence rate, and the quantitative hydrodynamic limit has received attentions recently. The sharp estimate of the relative entropy was obtained in \cite{JM} by Jara and Menezes for a weakly asymmetric exclusion, which is an important step to derive the non-equilibrium fluctuation. Based on \cite{natalie2009twoscale}, the quantitative results were developed  by Dizdar, Menz, Otto, and Wu for the Ginzburg--Landau model in \cite{dizdar2018quantitativeA, dizdar2018quantitativeB}. Very recently, Menegaki and Mouhot proposed a \emph{consistence-stability approach} in \cite{menegaki2021} to obtain the quantitative hydrodynamic limit in 1-Wasserstein distance for several models including the zero-range process, the Ginzburg--Landau model, and the simple exclusion process (see \cite[Chapter~5.5]{menegaki2021PhD}). However, all these results are for the gradient model, and there are no quantitative results for the non-gradient model in the literature to the best of our knowledge. This is because, as mentioned in the last paragraph,  a diverging factor appears and the diffusion matrix is more complicated in the non-gradient model. Such obstacle was already observed in the proof in \cite{fuy,fun96}, where several key error terms are finally reduced to the approximation of the diffusion matrix or \emph{the conductivity}; see \cite[(2.5) and Section~5]{fuy}. These two fundamental quantities are defined using variational formulas and are related by the Einstein relation. Therefore, both of them can be approximated qualitatively by a sequence of local functions, but the convergence rate is unknown.

In this paper, we answer the question above using a concrete construction of the desired local functions. As Varadhan observed the link between the interacting systems and homogenization, in the sense of averaging and gradient replacement that kills the diverging factor,  in the earlier work \cite{varadhanII}, the new improvement comes from the recent progress in quantitative homogenization theory; see \cite{armstrong2016quantitative, armstrong2016lipschitz, armstrong2016mesoscopic, armstrong2017additive, ferg1, ferg2, AKMbook, armstrong2022elliptic} based on the renormalization approach, and \cite{NS,vardecay, gloria2011optimal,gloria2012optimal,gloria2015quantification, GO3,gloria2020regularity} based on another approach using spectral inequalities. As an example, for the $\nabla \phi$ interface model studied in \cite{funaki1997interface}, a quantitative hydrodynamic limit is obtained in \cite{ArmstrongDario2024} using the renormalization approach. Also inspired by the renormalization approach, \cite{bulk} studies a similar diffusion matrix problem in continuous configuration space, and a quantitative equilibrium fluctuation is obtained recently in \cite{gu2024quantitative} under the same setting. These works pave the way for the quantitative homogenization theory in interacting particle systems, but the continuous configuration model there relaxes the hard core constraint by allowing arbitrarily large number of particles in the unit volume. As a consequence, to apply the existing results to a lattice particle model of exclusion rule still meets technical challenges in math. The present paper aims to resolve these difficulties and is the first example to establish quantitative homogenization theory on the non-gradient exclusion process. Our main result not only generalizes \cite{bulk} to the non-gradient exclusion process, but also improves in the sense that we construct \emph{one} local corrector to realize uniform convergence of conductivity for \emph{every} particle density. Moreover, this density-uniform homogenization can be integrated into the relative entropy method in the classical work \cite{fuy}, and then establishes a quantitative hydrodynamic limit; namely, our result provides the convergence rate in the hydrodynamic limit for non-gradient exclusion process.  We emphasize that our method is new and, in particular, avoids to prove the characterization of closed forms. The latter is usually required and useful to show the hydrodynamic limit for non-gradient models in qualitative level, but the convergence rate was not accessible. 

We believe that our proof is robust, as it extends to the cases where the external disorder is posed on the bonds. We hope to cover other models in future work, including GSEP, the multi-type SSEP, and the exclusion process reversible under the general Gibbs measure.

\subsection{Main results}
In this part, we state our main results. 

We recall quickly the necessary notations of the exclusion process and the results in the previous work. Let $\Zd$ be the Euclidean lattice and we use $\X := \{0,1\}^{\Zd}$ to stand for the configuration of particles under exclusion rule. The element of $\X$ will be denoted by $\eta = \{\eta_x: x \in \Zd \}$. Here $\eta_x = 0$ means the site $x$ is vacant and $\eta_x = 1$ means the site is occupied by one particle. We denote by $y \sim x$ for $x,y \in \Zd$ if $\vert x - y\vert = 1$. Then $\{x,y\}$ is called an (undirected) bond. For every $\Lambda \subset \Zd$, we denote by $\Lambda^*$ the set of bonds in $\Lambda$ that 
\begin{align}\label{eq.defBond}
	\Lambda^* := \{ \{x,y\}: x,y \in \Lambda, x \sim y\}.
\end{align} 

For $x,y \in \Zd$, the exchange operator $\eta^{x,y}$ is defined as 
\begin{align}\label{eq.exchange}
	(\eta^{x,y})_z := \Ll\{\begin{array}{ll}
		\eta_z, & \qquad z \neq x,y; \\
		\eta_y, & \qquad z = x; \\
		\eta_x, & \qquad z = y.
	\end{array}\Rr.
\end{align}
Especially, when $b = \{x,y\}$ is a bond, we also write $\eta^b$ instead of $\eta^{x,y}$, and define the Kawasaki operator $\pi_b \equiv \pi_{x,y}$ 
\begin{align}\label{eq.Kawasaki}
	\pi_b f(\eta) := f(\eta^b) - f(\eta).
\end{align} 
For every $x \in \Zd$, the translation operator $\tau_x$ is defined as 
\begin{align}\label{eq.translation1}
	(\tau_x \eta)_{y} := \eta_{x+y},
\end{align}
and for function $f$ on $\X$, we also define $\tau_x f$ as 
\begin{align}\label{eq.translation2}
	(\tau_x f)(\eta) = f(\tau_x \eta).
\end{align}

We also consider a family of local functions 
\begin{align}
	\{c_b(\eta) \equiv c_{x,y}(\eta) = c_{y,x}(\eta); b=\{x,y\} \in (\Zd)^*\},
\end{align}
which determine the jump rate of particles on the nearest bonds. We suppose the following conditions for the jump rate throughout the paper without specific explanation.
\begin{hypothesis}\label{hyp} The following conditions are supposed for  $\{c_b\}_{b \in (\Zd)^*}$.
	\begin{enumerate}
		\item Non-degenerate and local: $c_{x,y}(\eta)$ depends only on $\{\eta_z: \vert z - x\vert \leq \r\}$ for some integer $\r > 0$, and is bounded on two sides $1 \leq c_{x,y}(\eta) \leq \lambda$.
		\item Spatially homogeneous: for all $\{x,y\} \in (\Zd)^*$, $c_{x,y} = \tau_x c_{0,y-x}$.
		\item Detailed balance under Bernoulli measures: $c_{x,y}(\eta)$ is independent of $\{\eta_x, \eta_y\}$.
	\end{enumerate}
\end{hypothesis}

The \emph{non-gradient exclusion process} on $\Zd$ is defined by the generator below
\begin{align}\label{eq.Generator}
	\L := \sum_{b \in (\Zd)^*} c_b(\eta) \pi_b = \frac{1}{2}\sum_{x,y \in \Zd: \vert x - y\vert = 1} c_{x,y}(\eta) \pi_{x,y}.
\end{align}
This model is also called \emph{the speed-change Kawasaki dynamics} or \emph{the lattice gas} in the literature, and we will also use these names alternatively from time to time in the paper.

This model is known of non-gradient type, i.e. we cannot find any function $\{h_{i,j}\}_{1\leq i,j\leq d}$ such that $c_{0,e_i} (\eta)(\eta_{e_i} - \eta_0 ) = \sum_{j=1}^d \Ll((\tau_{e_j} h_{i,j})(\eta) - h_{i,j}(\eta)\Rr)$ for general $\{c_{b}\}_{b \in (\Zd)^*}$, with $\{e_i\}_{1 \leq i \leq d}$ the canonical basis of $\Zd$. 

\smallskip

The hydrodynamic limit of the speed-change Kawasaki dynamics on torus is proved in \cite{fuy}. Let $\Td_N := (\mathbb{Z} / N \mathbb{Z})^d$ be the lattice torus of scale $N$, where we can define all the notations by replacing $\Zd$ with $\Td_N$. We denote by $\X_N := \{0,1\}^{\Td_N}$ the configuration space on $\Td_N$, and define $\eta^N(t) = \{\eta^N_x(t), x \in \Td_N\}$ as the $\X_N$-valued Markov jump process on torus governed by the generator $\L_N := N^2 \L$, the counterpart of \eqref{eq.Generator} on $\Td_N$. The macroscopic empirical measure of $\eta^N(t)$ is defined as the following measure on $\Td$
\begin{align}\label{eq.empricalMeasure}
	\rho^N(t, \d v) := N^{-d} \sum_{x \in \Td_N} \eta^N_x(t) \delta_{x/N}(\d v),
\end{align} 
and the limit is the unique (classical) solution of nonlinear diffusion equation 
\begin{align}\label{eq.nonlinear}
	\partial_t \rho(t, v) = \nabla \cdot(D(\rho(t, v)) \nabla \rho(t, v)), \qquad (t, v) \in \R_+ \times \Td,
\end{align}
provided that the initial profile $\rho^N(0,\d v)$ has a smooth limit $\rho(0,v)$. Here $\Td = \Rd / \Zd$ is the continuous torus and $D : (0,1) \to \R^{d \times d}$ is \emph{the diffusion matrix} defined by \emph{the Einstein relation} 
\begin{align}\label{eq.Einstein}
	D(\rho) := \frac{\c(\rho)}{2 \chi(\rho)},
\end{align}
where $\chi(\rho)$ is \emph{the compressibility}
\begin{align}\label{eq.defCompress}
	\chi(\rho) := \rho (1-\rho),
\end{align}  
and  $\c(\rho)$ is \emph{the effective conductivity} defined as follows. Let $\F_0$ stand for the local function space on $\X$ and $\F_0^d := (\F_0)^d$, and $\bracket{\cdot}_{\rho}$ stand for the expectation under Bernoulli product measure of density $\rho \in [0,1]$. We construct at first a quadratic form with respect to the $\Rd$-valued function $F \in \F_0^d$ 
\begin{align}\label{eq.defQuadra}
	\forall \xi \in \Rd, \quad \xi \cdot \c(\rho; F) \xi = \frac{1}{2} \sum_{\vert x\vert = 1} \bracket{c_{0,x}\Ll(\xi \cdot \Ll\{ x(\eta_x - \eta_0) - \pi_{0,x}(\sum_{y \in \Zd} \tau_y F)\Rr\}\Rr)^2}_{\rho}.
\end{align}
Then $\c(\rho)$ is the minimization of $\c(\rho; F)$
\begin{align}\label{eq.defC}
	\xi \cdot \c(\rho) \xi := \inf_{F \in \F_0^d}\xi \cdot \c(\rho; F) \xi.
\end{align}
See \cite[Part II, Proposition~2.2]{spohn2012large}  for the relation between 
the diffusion matrix $D(\rho)$ and the Green--Kubo formula.
It is proved in \cite{bernardin} that $D(\rho)$ is continuously extendable to $\rho=0, 1$ and
\begin{equation} \label{1.D}
	D \in C^\infty([0,1]).
\end{equation}
Furthermore, the diffusion matrix $D(\rho)$ is uniformly positive and bounded:
\begin{align}  \label{eq:1.9}
	\forall \xi \in \R^d \text{ and } \rho\in [0,1], \qquad c_* |\xi|^2 \le \xi \cdot D(\rho)\xi \le c^* |\xi|^2,
\end{align}
where $0<c_*< c^*<\infty$; see \cite[Lemma 12.1]{Fu24}.

The hydrodynamic limit in \cite[Theorem~1.1]{fuy} states the following convergence: let $P$ stand for the probability space of the process $(\eta^N(t))_{t \in \R_+}$ and $\rho(t, \d v) = \rho(t, v) \d v$. For every $\phi \in C^\infty(\Td)$ and $\epsilon > 0$, we have
\begin{align}  \label{eq.P-decay-Qualitative}
	P\Ll[ \Ll|\int_{\Td} \phi(v) \rho^N(t, \d v) -
	\int_{\Td} \phi(v) \rho(t, \d v) \Rr| > \e\Rr] \xrightarrow{N \to \infty} 0,
\end{align}
assuming that the distribution of $\eta^N(0)$ is close to the local equilibrium
state with a density profile $\rho_0(v)$, which satisfies following Hypothesis \ref{hyp:1.2}.

\begin{hypothesis}\label{hyp:1.2} 
	The initial value $\rho_0 = \rho_0(v)$ of the nonlinear diffusion equation 
	\eqref{eq.nonlinear} is smooth (i.e., $\rho_0\in C^\infty(\T^d)$) and satisfies
	$0<\rho_0(v)<1$ for every $v\in \T^d$.
\end{hypothesis}
Hypothesis \ref{hyp:1.2}, together with the properties 
\eqref{1.D} and \eqref{eq:1.9} of $D(\rho)$, ensures the unique smooth classical solution of
the  equation \eqref{eq.nonlinear} in the class ${\rho(t,v)\in C^{1,3}(\R_+\times\T^d)}$; see \cite[Theorem 6.1, p.452]{Ladyzen}, and the discussion in  \cite[Section~10]{Fu24}. Moreover, by the maximum principle (see \cite[p.389]{Evans}), we have  $0<\rho(t,v)<1$. 

The proof of the hydrodynamic limit in \cite{fuy} relies on \emph{the relative entropy method}. Roughly, the relative entropy $h_N(t)$ measures how close the empirical measure $\eta^N(t)$ is to the solution $\rho(t, \cdot)$; see Section~\ref{subsec.hydro_setting} for the definition of $h_N(t)$. The assumption in \eqref{eq.P-decay-Qualitative} about the initial data is $h_N(0) = o(1)$ as $N \to \infty$, and \eqref{eq.P-decay-Qualitative} was proved via $h_N(t) \xrightarrow{N \to \infty} 0$. More precisely, one key step (\cite[Theorem~2.1, Corollary~2.1]{fuy}) proved that, for every $\beta > 0$ and small $\delta > 0$, the following estimate for $h_N(t)$ is valid
\begin{align}\label{eq.EntropyFlow}
	h_N(t) \leq h_N(0) + \frac{1}{\delta}\int_0^t h_N(s) \, \d s + C(\beta +1) \sup_{\rho \in [0,1]} \vert R(\rho; F_N) \vert + \frac{C}{\beta} + Q_N.
\end{align}
One can find the details of the error term $Q_N \, (= Q^{\Om_1}_N+Q^{LD}_N+Q^{En}_N+Q^{\Om_2}_N)$ in Proposition~\ref{prop.after_LD}. The function $F_N$ in \eqref{eq.EntropyFlow} aims to better approximate the reference measure in $h_N(t)$ (see \eqref{eq:2psit} for details), and the quantity $R(\rho; F)$ in \eqref{eq.EntropyFlow} comes from the conductivity
\eqref{eq.defQuadra} and \eqref{eq.defC}, which is defined as
\begin{align}\label{eq.defR}
	R(\rho; F) :=  \c(\rho; F) - \c(\rho).
\end{align}
To conclude the decay of relative entropy $h_N(t)$, we expect naturally that the error term $Q_N$ vanishes as $N \to \infty$. We need to take a large $\beta$ such that $C/\b$ on the {\rhs} of \eqref{eq.EntropyFlow} is small, and then apply Gronwall's inequality and the entropy inequality. Therefore,  the term $R(\rho; F_N)$ in \eqref{eq.EntropyFlow} also contributes the decay, and it was proved in \cite[Lemma~2.1]{fuy} that 
\begin{align}\label{eq.RQualitative}
	\inf_{F \in \F^d_0} \sup_{\rho \in [0,1]} \vert R(\rho; F) \vert = 0,
\end{align}
where $|R|:= \big(\sum_{i,j=1}^d R_{ij}^2\big)^{1/2}$ stands for the norm of
the matrix $R=(R_{ij})_{1\le i,j\le d}$.

\bigskip

The object of this paper is to give a convergence rate for $\rho^N(t, \cdot)\to\rho(t,\cdot)$ as $N\to\infty$, improving the qualitative convergence
in probability stated in \eqref{eq.P-decay-Qualitative}. As explained briefly above, 
one essential step is the decay of $R(\rho; F_N)$, and we need to study \eqref{eq.RQualitative} more precisely. Actually, the refinement of \eqref{eq.RQualitative} can be considered as a quantitative homogenization of the fundamental quantity $\c(\rho)$, which is our main result stated as follows. In the statement, $\Lambda_L := (-\frac{L}{2}, \frac{L}{2})^d \cap \Zd$ stands for a hypercube of side length around $L \in \R_+$, and $\F_0^d(\Lambda_L)$ is the subset of $\F_0^d$ which contains $\sigma(\{\eta_x\}_{x \in \Lambda_L})$-measurable local functions.   

\begin{theorem}\label{thm.mainUniform}
	Under Hypothesis~\ref{hyp}, there exists an exponent $\gamma(d,\lambda, \r) > 0$ and a positive constant $C(d, \lambda, \r) < \infty$, such that  
	\begin{align}\label{eq.mainUniform}
		\inf_{F_L \in \F^d_0(\Lambda_L)} \sup_{\rho \in [0,1]} 	\vert R(\rho; F_L) \vert \leq C L^{-\gamma}.
	\end{align}
\end{theorem}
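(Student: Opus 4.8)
The plan is to realize the infimum in \eqref{eq.defC} by an explicit sequence of \emph{local correctors} obtained from a finite-volume variational problem, and then to control both the error coming from restricting to $\Lambda_L$ and the error coming from the density-uniformity. The starting point is the observation that the quadratic form $\c(\rho;F)$ in \eqref{eq.defQuadra} is, after the gradient replacement $x(\eta_x-\eta_0)\mapsto x(\eta_x-\eta_0)-\pi_{0,x}(\sum_y\tau_yF)$, exactly the Dirichlet energy of a discrete divergence-form operator on the configuration space with a frozen reference density $\rho$. Accordingly I would introduce, for each box $\Lambda_L$, the finite-volume corrector $F_L=F_L(\rho)$ minimizing the restricted form over $\F_0^d(\Lambda_L)$; by general principles the map $L\mapsto \c_L(\rho):=\c(\rho;F_L)$ is monotone decreasing to $\c(\rho)$, so the quantitative content is a \emph{rate} for $\c_L(\rho)-\c(\rho)$. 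The natural route is subadditivity/superadditivity à la Armstrong--Kuusi--Mourrat--Smart: pair the energy quantity $\c_L$ with its convex-dual flux quantity $\c_L^{-1}$ (or the associated $J$-functional), show that their difference is a nonnegative, (super/sub)additive quantity over a dyadic partition of $\Lambda_L$ into subcubes, and iterate the one-step contraction to get an algebraic rate $L^{-\gamma}$.

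The key steps, in order, are as follows. First I would set up the coarse-grained lift described in the abstract: since the hard-core constraint $\eta_x\in\{0,1\}$ destroys the product structure needed for the renormalization iteration (one cannot freely glue configurations on adjacent subcubes without violating exclusion), I would map the configuration on a mesoscopic block to an unconstrained variable recording the particle count (and finer statistics) in that block, so that on the coarse-grained space the measure factorizes over blocks up to a controlled error. Second, I would transport the Dirichlet form \eqref{eq.defQuadra} to this lifted space and verify that the effective conductivity is unchanged in the limit, while on finite blocks one picks up a boundary-layer error that is summable. Third, on the lifted, now essentially product, space I would run the ACF-type subadditive argument: establish the one-scale estimate $\c_{2L}(\rho)\le \c_L(\rho)$ and the matching lower bound via the dual flux functional, deduce that $E_L:=\c_L(\rho)-\un{\c}_L(\rho)\ge0$ decays, and combine with a concentration/variance estimate (here the measure is a product Bernoulli on each block, so independence is available) to upgrade the expectation decay to a deterministic-in-$\rho$ algebraic rate. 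Fourth, to get the \emph{same} $F_L$ working for \emph{every} $\rho\in[0,1]$, I would use the regularity of the local corrector as a function of the reference density — Lipschitz (or Hölder) dependence of $F_L(\rho)$ on $\rho$, uniform in $L$ — to replace $F_L(\rho)$ by $F_L(\rho_k)$ on a fine net $\{\rho_k\}$ of $[0,1]$ of mesh $\sim L^{-\gamma'}$ and then interpolate; the endpoints $\rho\in\{0,1\}$ where $\chi(\rho)=0$ are handled by continuity since $R(\rho;F)$ stays bounded there. Finally, unwinding $R(\rho;F_L)=\c(\rho;F_L)-\c(\rho)$ and taking the infimum over $\F_0^d(\Lambda_L)$ yields \eqref{eq.mainUniform}.

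The main obstacle I expect is the second and third steps together: making the coarse-graining quantitatively compatible with the renormalization iteration. Concretely, the gradient term $\pi_{0,x}(\sum_y\tau_y F_L)$ is a telescoping sum whose ``tails'' near $\partial\Lambda_L$ do not vanish under exclusion the way they would for an unconstrained field, so the gluing step that is routine in ACF must be replaced by the \emph{gradient coupling} between the exclusion system and its lifted unconstrained partner, and one must show this coupling contributes only a lower-order boundary error at every scale. Quantifying this coupling — in particular showing the discrepancy between the two systems is controlled by the surface-to-volume ratio uniformly in the density, so that the per-scale errors form a convergent geometric series rather than accumulating — is the heart of the matter, and is precisely the new input beyond \cite{bulk} where the absence of a hard-core constraint makes the lift trivial. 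Everything else (monotonicity of $\c_L$, convex duality, the Caccioppoli/energy estimates feeding the iteration, and the net argument in $\rho$) is structurally parallel to the existing quantitative homogenization machinery.
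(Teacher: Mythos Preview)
Your overall architecture---subadditive quantity paired with its convex dual, iterated one-step improvement, Caccioppoli-type inner regularity---matches the paper's Sections~\ref{sec.Sub}--\ref{sec.Rate}, and you correctly identify that the hard-core constraint is what makes the renormalization nontrivial. Two points, however, need correction.

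\textbf{The coarse-grained lifting is not a block coarse-graining.} You describe mapping a mesoscopic block to ``the particle count (and finer statistics)'' so that the coarse-grained measure factorizes. The paper does something different and sharper: it lifts \emph{site by site} via $[\teta]_x:=\Ind{\teta_x\ge 1}$, coupling $\operatorname{Ber}(\rho)$ on $\{0,1\}$ with $\operatorname{Poi}(\alpha(\rho))$ on $\N$ where $\alpha(\rho)=-\log(1-\rho)$ (Proposition~\ref{prop.coarsen}). The point is not to recover a product structure---$\P_\rho$ is already product---but to gain access to the multiscale Poincar\'e inequality, which requires an $H^2$ estimate that fails for the Kawasaki generator (Remark~\ref{rmk.H2Challenge}) but holds for independent random walks (Lemma~\ref{lem.H2Torus}). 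The gradient coupling (Propositions~\ref{prop.IdentityGradient}, \ref{prop.IdentityGradient2}) then transfers spatial-average information back from the free-particle side to the exclusion side. Your description of the obstacle as a boundary-layer gluing issue is not quite the right diagnosis.

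\textbf{The net argument in $\rho$ does not close.} This is the substantive gap. Your fourth step proposes to take a net $\{\rho_k\}$ of mesh $L^{-\gamma'}$, use the corrector $F_L(\rho_k)$ near $\rho_k$, and interpolate by Lipschitz dependence of $\rho\mapsto F_L(\rho)$. But the theorem asks for a \emph{single} $F_L\in\F_0^d(\Lambda_L)$ that works simultaneously for all $\rho$; the net argument only delivers
\[
\sup_{\rho\in[0,1]}\ \inf_{F_L\in\F_0^d(\Lambda_L)}\ R(\rho;F_L)\ \le\ CL^{-\gamma},
\]
which is the weak direction of the minimax and is already a consequence of Proposition~\ref{prop.GrandCanonicalEnsemble}. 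The paper explicitly flags this distinction (see the discussion preceding \eqref{eq.CorrectorFreeRough}) and notes there is no known duality swapping the quantifiers. Their solution is a \emph{direct construction}: the density-free corrector \eqref{eq.defFreeCorrector1}
\[
\hat\phi^{(\epsilon)}_{m,n,\xi}\ =\ \sum_{z}\ \phi_{\,\bar\eta_0\vee\epsilon\wedge(1-\epsilon),\ z+\cu_n,\ \xi},
\]
where $\bar\eta_0$ is the \emph{empirical density of the configuration itself}. This single function adapts to whatever density is present, and its analysis (Proposition~\ref{prop.rhoFreeCorrectorHomo}) requires the regularity of $\rho\mapsto\phi_{\rho,\Lambda,\xi}$ together with local equivalence of ensembles---ingredients you mention, but deployed differently. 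The idea of letting the corrector read off the density from $\eta$ is the missing step in your outline.
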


We will also give the concrete construction of the local function achieving the estimate above in our proof; see Section~\ref{subsec.Stationary} for details. 
Then as expected, we can insert the estimate \eqref{eq.mainUniform} in \eqref{eq.EntropyFlow},
and obtain a quantitative hydrodynamic limit after careful investigation. In the statement, $H^{-\a}(\T^d)$ is the standard Sobolev space on $\T^d$ with negative index $-\a$. We refer to Section~\ref{subsec.hydro_setting} for the details of relative entropy and other related settings.

\begin{theorem} \label{thm.HLQuant}
	Under Hypothesis~\ref{hyp}, and	we assume Hypothesis~\ref{hyp:1.2} for the initial value $\rho_0$ with the initial
	entropy condition $h_N(0) \le C_0 N^{-\kappa_0}$ for some constants
	${C_0, \kappa_0>0}$.  Then, for every $\a>d/2$ and $T>0$, there exist $\kappa(\k_0, d,\lambda, \r)>0$ and ${C(C_0, \a, T, \rho_0, \k_0, d,\lambda, \r)>0}$ such that the following estimate holds
	\begin{align}  \label{eq.P-decay}
		\sup_{t\in [0,T]} \E\Big[ \big\|\rho^N(t, \cdot) - \rho(t, \cdot)\big\|_{H^{-\a}(\T^d)}^2\Big]
		\le C N^{-\kappa}.
	\end{align}
\end{theorem}

The quantitative hydrodynamic limit was shown for several kinds of
gradient models; see \cite{dizdar2018quantitativeA, menegaki2021, JM}.
Our Theorem \ref{thm.HLQuant}, which is shown for non-gradient model,  
is formulated similar to 
\cite{dizdar2018quantitativeA}, in which one-dimensional Ginzburg--Landau 
lattice model of gradient type was studied and, in their case, 
$d=1$, $\a=1$ and $\k=2/3$.  In our case, the range of $\a$ is $(d/2,\infty)$,
which is taken as widely as possible for the measure $\rho^N(t)$ on $\T^d$.
Our $\k>0$ is determined from several factors and small.  The optimal exponent suggested by the central limit theorem should be $\k \simeq d$.

\subsection{Strategy of the proof} 
The outline of this paper is presented in Figure~\ref{fig.outline}. It can be divided into 4 blocks. 
The main body is the orange block in the center, which consists of Sections~\ref{sec.Sub}, ~\ref{sec.Rate}, ~\ref{sec.disorder},  and contains a proof of quantitative homogenization. The argument roughly follows \cite[Chapter~2]{AKMbook}, which was developed by Armstrong, Kuusi, Mourrat, and Smart, and has been applied to many other models. However, several analytic inputs are required to apply this method to the non-gradient exclusion setting. They are resumed in the red block on the left, consisting of Section~\ref{sec.coarse} and Proposition~\ref{prop.Caccioppoli}. They are also the main technical progresses in this paper. Especially, we need to handle the constraint of particle numbers. The blue block on the right corresponds to Section~\ref{sec.Free}, where we improve the homogenization result, so it becomes independent of density as stated in Theorem~\ref{thm.mainUniform}. Finally, the green block at the bottom refers to the quantitative hydrodynamic limit in Section~\ref{sec.hy}. We further explain the 4 blocks one by one in the following paragraphs.

\begin{figure}
	\centering
	\begin{tikzpicture}[node distance=2cm]\label{fig.diagram}
		\node[text=red] (Lift) [input] {Section~\ref{sec.coarse} \textit{Coarse-grained lifting}};
		\node[text=red] (Poincare) [input, below of = Lift, yshift=0.5cm] {Proposition~\ref{prop.WMPoincare} \textit{Weighted multiscale Poincar\'e inequality}};
		\node[text=red] (Caccioppoli) [input, below of = Poincare, yshift=0.5cm] {Proposition~\ref{prop.Caccioppoli} \textit{Modified Caccioppoli inequality}};
		
		\node[text={rgb,255:red, 204; green, 102; blue,0}] (Renormalization) [result, right of=Poincare, xshift=3cm, yshift=-0.5cm] {Section~\ref{sec.Rate} \\ \textit{Quantitative homogenization via renormalization} \\   (\textit{Extension in} Section~\ref{sec.disorder})};
		\node[text={rgb,255:red, 204; green, 102; blue,0}] (Subadditive) [result, above of = Renormalization] {Section~\ref{sec.Sub} \\ \textit{Subadditive quantities}};
		
		\node[text=cyan] (DensityFree) [result, below of=Renormalization] {Section~\ref{subsec.FreeCorrector} \\ \textit{Construction of density-free corrector}};
		\node[text={rgb,255:red, ; green, 153; blue,0}] (Hydrodynamic) [result, below of=DensityFree] {Section~\ref{sec.hy} \\ \textit{Quantitative hydrodynamic limit}};

		\node[text={rgb,255:red, ; green, 153; blue,0}] (RelativeEntropy) [input, below of=Caccioppoli, yshift=-1cm] {Relative entropy method \cite{fuy}};
		
		\node[text=cyan] (Regularity) [input, right of = Subadditive, xshift=3cm] {Section~\ref{subsec.Regularity} \textit{Regularity of local corrector}};
		\node[text=cyan] (CanonicalEnsemble) [input, below of=Regularity] {Section~\ref{subsec.RateCano} \textit{Convergence under canonical ensemble}};
		\node[text=cyan] (LocalEnsemble) [input, below of=CanonicalEnsemble] {Lemma~\ref{lem.localEquiv} \\ \textit{Local equivalence of ensembles}};
		
		\draw [arrow] (Subadditive) -- (Regularity);	
		\draw [arrow] (Subadditive) -- (Renormalization);
		\draw [arrow] (Renormalization) -- (CanonicalEnsemble);	
		\draw [arrow] (DensityFree) -- (Hydrodynamic);	
		\draw [arrow] (Renormalization) -- (DensityFree) ;
		\draw [arrow] (CanonicalEnsemble.east) |- ($(CanonicalEnsemble.east) + (0.2cm,0cm)$) |- ($(CanonicalEnsemble.east) + (0.2cm,-3cm)$) |-  (Hydrodynamic.east);
		\draw [arrow] (Regularity.north) |- ($(Regularity)  + (0cm,0.8cm)$) |- ($(Lift)  + (-1.9cm,0.8cm)$) |- (DensityFree.west);	
		
		\draw [arrow] (Lift) -- (Poincare);	
		\draw [arrow] (Poincare) -- (Renormalization);
		\draw [arrow] (Caccioppoli) -- (Renormalization);
		\draw [arrow] (LocalEnsemble) -- (CanonicalEnsemble);
		\draw [arrow] (LocalEnsemble) -- (DensityFree);
		\draw [arrow] (RelativeEntropy) -- (Hydrodynamic);
	\end{tikzpicture}
	\caption{The outline of proof.}\label{fig.outline}
\end{figure}

\subsubsection{Homogenization via the renormalization approach} This part concerns the orange block in Figure~\ref{fig.outline}.
The quantitative homogenization via the renormalization approach was developed in \cite{armstrong2016quantitative, armstrong2016lipschitz, armstrong2016mesoscopic, armstrong2017additive,ferg1, ferg2}; see monographs \cite{AKMbook, armstrong2022elliptic} and \cite{informal} for a gentle introduction. This renormalization approach has shown its robustness in a number of other settings including the parabolic equations \cite{armstrong2018parabolic},  the finite-difference equations on percolation clusters \cite{armstrong2018elliptic,dario2018optimal,dario2019quantitative}, the differential forms \cite{dario2018differential}, the ``$\nabla \phi$'' interface model \cite{dario2019phi,gradphi2,armstrong2023scaling, ArmstrongDario2024}, and the Villain model \cite{dario2020massless}. Recently, \cite{bulk, giunti2021smoothness, gu2024quantitative} also generalize the theory to an interacting particle system in continuous space \emph{without} exclusion.

The heart of the renormalization approach requires a finite-volume approximation with sub-additivity. Therefore, let us first reformulate  \eqref{eq.defQuadra} and \eqref{eq.defC}. Consider a formal sum $\ell_{\xi} = \sum_{x \in \Zd} (\xi \cdot x) \eta_x$, then we notice that the term $\xi \cdot x(\eta_x - \eta_0)$  in \eqref{eq.defQuadra} is 
\begin{align*}
	\xi \cdot x(\eta_x - \eta_0) = - \pi_{0,x} \ell_{\xi}.
\end{align*} 
Moreover,  we denote by the formal sum $\Gamma_F :=\sum_{y \in \Zd} \tau_y F $ given $F \in \F_0^d$. Thus, \eqref{eq.defQuadra} is the Dirichlet energy of a linear statistic plus some correction term $\Gamma_F$
\begin{equation}\label{eq.defQuadra2}
	\begin{split}
		\xi \cdot \c(\rho; F) \xi &=  \sum_{i=1}^d \bracket{c_{0,e_i}\Ll(\pi_{0,e_i} (\ell_\xi + 	\xi \cdot \Gamma_F\Rr)^2}_{\rho} \\
		&= \frac{1}{\vert \Lambda \vert} \sum_{i=1}^d \sum_{x \in \Lambda} \bracket{c_{x,x+e_i}\Ll(\pi_{x,x+e_i} (\ell_\xi + 	\xi \cdot \Gamma_F\Rr)^2}_{\rho}.
	\end{split}
\end{equation}
Here $\Lambda \subset \Zd$ and the passage to the second line is based on stationarity.

We then propose the following expression as a natural finite-volume approximation of \eqref{eq.defC} 
\begin{align}\label{eq.defC2}
	\frac{1}{2} \xi \cdot \cc(\rho, \Lambda) \xi := \inf_{v \in \ell_{\xi} +  \F_0(\Lambda^-)} \frac{1}{ \vert \Lambda \vert} \bracket{ v (- \L_\Lambda v)}_{\rho}. 
\end{align}
Here $\Lambda^-$ is the interior of $\Lambda$ and $\overline{\Lambda^*}$ is the set of bonds issued from $\Lambda$ (see \eqref{eq.defMinus}, \eqref{eq.defBondLarge} for details). $\F_0(\Lambda^-)$ is the set of $\sigma(\{\eta_x\}_{x \in \Lambda^-})$-measurable local functions, and $\L_{\Lambda}$ is the generator on $\Lambda$
\begin{align}\label{eq.GeneratorDomain}
	\L_\Lambda v := \sum_{b \in \ovs{\Lambda}} c_b \pi_b v.
\end{align}
A well-known integration by part formula under $\bracket{\cdot}_\rho$ also tells us 
\begin{align}\label{eq.IPP}
	\bracket{ v (- \L_\Lambda v)}_{\rho} =\frac{1}{2} \sum_{b \in \ovs{\Lambda}} \bracket{c_b (\pi_b v)^2}_{\rho},
\end{align}
so \eqref{eq.defC2} can be written as 
\begin{align*}
	\xi \cdot \cc(\rho, \Lambda) \xi &= \inf_{v \in \ell_{\xi} +  \F_0(\Lambda^-)} \frac{1}{ \vert \Lambda \vert}\bracket{c_b (\pi_b v)^2}_{\rho}\\
	& \approx \inf_{v \in \ell_{\xi} +  \F_0(\Lambda^-)} \frac{1}{ \vert \Lambda \vert} \sum_{i=1}^d \sum_{x \in \Lambda} \bracket{c_{x, x+e_i} (\pi_{x, x+e_i} v)^2}_{\rho}.
\end{align*}
We throw away the boundary layer in the second line, then this formula is very close to the version in \eqref{eq.defQuadra2}. We expect that  $\cc(\rho, \Lambda)$ converges to $\c(\rho)$ when $\Lambda \nearrow \Zd$.

A similar definition like \eqref{eq.defC2} can also be posed under the canonical ensemble
\begin{align}\label{eq.defC3}
	\frac{1}{2} \xi \cdot \ca(\Lambda, N) \xi := \inf_{v \in \ell_{\xi} +  \F_0(\Lambda^-)}  \frac{1}{\vert \Lambda \vert} \bracket{ v (- \L_\Lambda v)}_{\Lambda, N}, 
\end{align}
where $\bracket{\cdot}_{\Lambda, N}$ is the expectation under the uniform measure of $N$ particles in $\Lambda$. Notice that the quantity $\ca(\Lambda, N)$ still depends on the configuration outside $\Lambda$. On the other hand, because the jump rate $c$ is of finite range $\r$ by Hypothesis~\ref{hyp}, the influence from the boundary layer vanishes  when $\Lambda \nearrow \Zd$. Thus $\ca(\Lambda, N)$ should be close to $\c(N/\vert \Lambda \vert)$ in large scale.

The convergence of these two quantities are intermediate steps to prove Theorem~\ref{thm.mainUniform}, and are also related to the CLT variance estimate in \cite[Section~5]{fuy}.
\begin{theorem}\label{thm.main1}
	Under Hypothesis~\ref{hyp}, there exists a constant $C(d,  \lambda, \r) < \infty$ and two exponents ${\gamma_1(d, \lambda, \r), \gamma_2(d, \lambda, \r) > 0}$ such that for every $L,M \in \N_+$,
	\begin{align}\label{eq.main1A}
		\Ll\vert \cc(\rho, \Lambda_L) - \c(\rho) \Rr\vert  \leq C L^{-\gamma_1},
	\end{align}
	and 
	\begin{align}\label{eq.main1B}
		\Ll\vert \ca( \Lambda_L, M) - \c(M/\vert \Lambda_L \vert) \Rr\vert  \leq C L^{-\gamma_2}.
	\end{align}
\end{theorem}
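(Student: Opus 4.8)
The plan is to follow the renormalization strategy of Armstrong--Kuusi--Mourrat--Smart, adapted to the exclusion setting. First I would introduce the convex dual of $\cc(\rho, \Lambda)$, a flux-type (Neumann) finite-volume conductivity $\c^*(\rho, \Lambda)$, defined by Legendre transforming the quadratic energy in \eqref{eq.defC2}: instead of fixing the ``slope'' $\ell_\xi$ of the test function one prescribes the spatially averaged current and maximizes. Using only the two-sided bound $1 \le c_b \le \lambda$ of Hypothesis~\ref{hyp}, one checks that $\cc(\rho, \Lambda)$ and $\c^*(\rho, \Lambda)$ are uniformly comparable to the identity and sandwich the infinite-volume object, $\c^*(\rho, \Lambda) \le \c(\rho) \le \cc(\rho, \Lambda)$ in the sense of quadratic forms. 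Moreover, since the reference measure $\bracket{\cdot}_\rho$ is Bernoulli product and the operators $\pi_b$ are local, gluing sub-cube minimizers (with supports in $\Lambda_L^-$) produces an admissible test function on the larger cube whose energy decouples up to the $O(L^{d-1})$ bonds straddling the partition and the range-$\r$ interactions across them; this gives approximate subadditivity of $\cc$ (and an analogous superadditivity of $\c^*$) along a geometric sequence of scales, hence the qualitative convergence $\cc(\rho, \Lambda_L) \to \c(\rho)$ and $\c^*(\rho, \Lambda_L) \to \c(\rho)$.

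The core of the argument is a quantitative bound on the additivity defect $\tau(\Lambda_L) := \xi \cdot (\cc(\rho, \Lambda_L) - \c^*(\rho, \Lambda_L))\,\xi \ge 0$. I would prove a one-step improvement of the form $\tau(\Lambda_{3L}) \le (1-\theta)\,\tau(\Lambda_L) + \bigl(\text{error at scale } L\bigr)$ for a fixed $\theta = \theta(d) \in (0,1)$, and iterate over the scales $L_n = 3^n$ to conclude $\tau(\Lambda_L) \le C L^{-\gamma}$. The contraction is the standard duality computation: the energy-norm discrepancy between the Dirichlet minimizer and the Neumann maximizer equals $\tau(\Lambda_L)$, and at scale $3L$ both are, on each sub-cube and on average, close to the common homogenized affine profile, so their discrepancy is a sum of sub-cube fluctuations. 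The error term is thus the boundary contribution ($O(1/L)$ from the straddling bonds) plus a fluctuation term measuring the concentration of the sub-cube energies around their means.

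Controlling this fluctuation term is the main obstacle, and this is exactly where the hard-core constraint bites: unlike the continuous-configuration model of \cite{bulk}, the occupation of a block of size $\ell$ is bounded by $\ell^d$, so the coarse-grained (block-occupation) configuration is not a product field and a direct concentration estimate is unavailable. To handle this I would use the coarse-graining announced in the introduction --- lift the $\{0,1\}$-valued configuration on each block to an auxiliary unconstrained occupation variable, so that the lifted field has a genuine product structure on the coarse lattice --- together with a gradient coupling between the exclusion dynamics and the lifted dynamics, designed so that the relevant Dirichlet energies and currents are transported with only a controlled error and so that the spatial cancellation needed to close the estimate can be read off on the lifted (product) side, where Efron--Stein / bounded-difference inequalities apply. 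Combined with the $O(\ell^2)$ spectral gap (and log-Sobolev) bound for the Kawasaki dynamics restricted to a block, uniform in the density, this yields the algebraic decay of the fluctuation term and hence of $\tau(\Lambda_L)$.

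Finally, \eqref{eq.main1A} is immediate from $0 \le \xi \cdot (\cc(\rho, \Lambda_L) - \c(\rho))\,\xi \le \tau(\Lambda_L) \le C L^{-\gamma_1}$ together with the uniform comparability of all the matrices. For \eqref{eq.main1B} I would compare $\ca(\Lambda_L, M)$ with $\cc(M/\vert \Lambda_L \vert, \Lambda_L)$ using equivalence of ensembles with a quantitative rate: the local central limit theorem for sums of Bernoulli variables gives that the canonical expectation $\bracket{\cdot}_{\Lambda_L, M}$ and the grand-canonical expectation $\bracket{\cdot}_\rho$ with $\rho = M/\vert \Lambda_L \vert$ of the bounded local functionals entering \eqref{eq.defC2}--\eqref{eq.defC3} differ by $O(\vert \Lambda_L \vert^{-1})$, while the dependence of $\ca(\Lambda_L, M)$ on the configuration outside $\Lambda_L$ enters only through the range-$\r$ boundary bonds and contributes $O(L^{-1})$. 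Chaining these with \eqref{eq.main1A} at density $\rho = M/\vert \Lambda_L \vert$ and choosing $\gamma_2$ slightly below $\gamma_1$ gives \eqref{eq.main1B}.
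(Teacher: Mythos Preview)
Your outline for \eqref{eq.main1A} captures the renormalization strategy correctly and matches the paper's approach: the dual (Neumann) quantity, the master gap $\tau$, and the coarse-grained lifting with gradient coupling to independent particles are the right ingredients. One point of imprecision: the spatial-cancellation control on the lifted side is not achieved by Efron--Stein concentration. The paper relies on two specific analytic tools --- a \emph{modified Caccioppoli inequality} for Kawasaki-harmonic functions (converting the energy defect into an $L^2$-flatness statement for the optimizer), and a \emph{weighted multiscale Poincar\'e inequality}, proved first on the independent-particle system via a dimension-free $H^2$ identity and then pulled back through the gradient coupling. These are substantially more than bounded-difference estimates: the multiscale Poincar\'e does not hold directly on the exclusion side because the operators $\pi_b, \pi_{b'}$ fail to commute when $b, b'$ share a vertex, and it is precisely to circumvent this that the lift is needed.

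For \eqref{eq.main1B} there is a genuine gap. Equivalence of ensembles makes $|\bracket{g}_{\Lambda_L,M} - \bracket{g}_{\hat\rho}|$ small only when $g$ is \emph{local}, with error growing with the support of $g$. But the integrand in \eqref{eq.defC2}--\eqref{eq.defC3} involves the optimizer $v \in \ell_\xi + \F_0(\Lambda_L^-)$, which depends on the configuration in all of $\Lambda_L$; moreover the canonical and grand-canonical optimizers are different functions, so you are not comparing two expectations of one fixed local functional. The paper resolves this by a two-scale sandwich: for $\ell \ll L$, build a sub-minimizer for the canonical problem at scale $L$ by pasting grand-canonical correctors from sub-cubes of size $\ell$; each piece is then genuinely local, so equivalence of ensembles applies termwise with error $O((\ell^2/L)^d)$, giving $\ca(\Lambda_L,M) \le \cc(\hat\rho,\Lambda_\ell) + \text{error}$. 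A matching lower bound $\ca_*(\Lambda_L,M) \ge \cc_*(\hat\rho,\Lambda_\ell) - \text{error}$ follows because the $\nub_*$-maximizer solves its first-variation equation conditionally on each canonical shell, hence is simultaneously the maximizer under every $\bracket{\cdot}_{\Lambda_L,M}$. Optimizing $\ell = L^{d/(2d+\gamma_1)}$ balances the equivalence-of-ensembles error against the grand-canonical rate $\ell^{-\gamma_1}$ and produces $\gamma_2 = d\gamma_1/(2d+\gamma_1)$.
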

The convergence of quantities in Theorem~\ref{thm.main1} is largely studied using the renormalization approach, because $\cc(\rho, \Lambda)$ defined in \eqref{eq.defC2} satisfies the sub-additivity. That is,  for disjoint union $\Lambda = \bigsqcup_{i=1}^N \Lambda^{(i)}$, we have
\begin{align}\label{eq.SubC}
	\cc(\rho, \Lambda) \leq \sum_{i=1}^N \frac{\vert \Lambda^{(i)}\vert}{\vert \Lambda \vert} \cc(\rho, \Lambda^{(i)}).
\end{align}
To prove it, it suffices to ``glue" the minimizers in $\{\Lambda^{(i)}\}_{1 \leq i \leq N}$ together, which yields a sub-minimizer for the functional of $\cc(\rho, \Lambda)$. Then, $\cc(\rho, \Lambda)$ admits a limit, saying $\cc(\rho)$, when $\Lambda \nearrow \Zd$. The coincidence $\cc(\rho) = \c(\rho)$ is not contained directly from their definitions, but can be justified later in Proposition~\ref{prop.ProofMain}.

The sub-additivity \eqref{eq.SubC} only implies a qualitative convergence. To quantify the rate, an important observation from \cite[Chapter~2]{AKMbook} is another dual quantity $\cc_*(\rho,  \Lambda)$. It satisfies $\cc_*(\rho,  \Lambda) \leq \cc(\rho, \Lambda)$, so we can use its gap $\Ll(\cc(\rho, \Lambda) - \cc_*(\rho,  \Lambda)\Rr)$ to measure the convergence rate. Surprisingly, the gap itself $\Ll(\cc(\rho, \Lambda) - \cc_*(\rho, \Lambda)\Rr)$ roughly satisfies the sub-additivity like \eqref{eq.SubC} 
\begin{align}\label{eq.SubC_gap}
	\Ll(\cc(\rho, \Lambda) - \cc_*(\rho, \Lambda)\Rr) \leq \sum_{i=1}^N \frac{\vert \Lambda^{(i)}\vert}{\vert \Lambda \vert}  \Ll(\cc(\rho, \Lambda^{(i)}) - \cc_*(\rho, \Lambda^{(i)})\Rr).
\end{align}
See \eqref{eq.Jsub} for its rigorous version. This non-trivial property hints the quantitative homogenization. The complete proof is presented in Section~\ref{sec.Sub} and~\ref{sec.Rate}.

\begin{remark}\label{rmk.Domain}
	We have three remarks about Theorem~\ref{thm.main1}.
	\begin{enumerate}
		\item Recalling the Einstein relation \eqref{eq.Einstein}, the results in Theorem~\ref{thm.main1} also imply the convergence rate of the diffusion matrix $D(\rho)$, for all $\rho \in (0,1)$, by local functions or the finite-volume approximation.
		
		\item The choice of the notation $\ovs{\Lambda}$ here is just for technical convenience. Lemma~\ref{lem.WhitneySub} ensures the stability of Theorem~\ref{thm.main1} in the general domain, so the statement still holds if one replaces $\ovs{\Lambda}$ in \eqref{eq.GeneratorDomain} by the canonical notation $\Lambda^*$ defined in \eqref{eq.defBond}.
		
		\item We also obtain an estimate similar to \eqref{eq.main1A} when the disorder is posed on bonds. To simplify the notation, we leave the related discussion in Section~\ref{sec.disorder}.
	\end{enumerate}
\end{remark}

\bigskip
\subsubsection{Two analytic inequalities and coarse-grained lifting} This part explains the red block in Figure~\ref{fig.outline}.
Besides the observation of sub-additivity \eqref{eq.SubC_gap}, the proof of Theorem~\ref{thm.main1} also needs two analytic inequalities:
\begin{enumerate}
	\item the Caccioppoli inequality;
	\item the multiscale Poincar\'e inequality.
\end{enumerate}
The two key inequalities are well-developed for the elliptic equation on $\Rd$, but can become challenging in other settings. We should also highlight that, the two inequalities are more than the technical estimates, but the essentials of quantitative homogenization, because they characterize the elliptic conditions in the large scale; see the recent work for the homogenization in high contrast \cite{armstrong2024superdiffusive}, \cite{armstrong2024renormalization}. In the exclusion process, these inequalities are not accessible directly. The main contribution in this paper is to relax them respectively as \emph{the modified Caccioppoli inequality} and \emph{the weighted multiscale Poincar\'e inequality}, which are explained in the following paragraphs in detail.

The classical Caccioppoli inequality describes the inner regularity of the elliptic equations, but seems missing in the particle systems due to the influence of particles near the boundary. Therefore, the modified Caccioppoli inequality is developed in the previous work \cite[Proposition 3.9]{bulk}, which differs from the classical one, but captures the same spirit. In the present work, its counterpart in the exclusion process is also recovered in Proposition~\ref{prop.Caccioppoli}. The proof requires more work due to the microscopic behavior; see Lemma~\ref{lem.GlauberExchange} and Lemma~\ref{lem.variationBC}. As new inputs, the Glauber derivative and \emph{the reverse Efron--Stein inequality} are also involved. 

The multiscale Poincar\'e inequality (see \cite[Proposition~1.12 and Corollary~1.14]{AKMbook} for example) improves the estimate of the classical Poincar\'e inequality when the function has the spatial cancellation property, which is the case in homogenization. In particle systems, this inequality was derived in the previous work \cite{bulk}, but meets obstacles in the present paper due to the exclusion rule. Precisely, we need the $H^2$-estimate in the proof (see \cite[Corollary~1.14]{AKMbook}), which is only true for the standard Laplacian operator $\Delta$ rather than a general variable-speed divergence form $-\nabla \cdot \aa(x) \nabla$ in PDE setting. In exclusion processes, one may expect the generator of SSEP to be a natural analogue of $\Delta$. Unfortunately, it is not totally correct; see Remark~\ref{rmk.H2Challenge} for details. A heuristic explanation is that, particles in SSEP are still less free than independent random walks. For this reason, we believe the multiscale Poincar\'e inequality should live in the independent particles $\tilde \X = \N^{\Zd}$ where $\N = \{0,1,2,3, \cdots\}$, and we prove it in Section~\ref{subsec.free}.

Then a crucial problem is how to apply an inequality in $\XX = \N^{\Zd}$ to the functions in $\X = \{0,1\}^{\Zd}$. A similar problem was also posed in the percolation setting, and a possible solution is the coarse-grained strategy; see \cite{armstrong2018elliptic, dario2018optimal, gu2019efficient, dario2019quantitative}. We hope to implement this idea in the exclusion processes: for every function $u : \X \to \R$, we aim to find a coarsened function $[u] : \XX \to \R$ in the larger space such that for every $\eta \in \X$ as the grain of $\teta \in \tilde \X$, it satisfies 
\begin{align}\label{eq.RoughCoarseGrain}
	\teta \simeq \eta \Longrightarrow [u](\teta) \simeq u(\eta).
\end{align} 
A naive candidate for grain $\eta$ is the one close to $\teta$ under some distance. However, different from the Bernoulli percolation setting,  the space of grain $\X$ is a very sparse subset of $\XX$ in particle systems,  so we face \emph{the curse of dimension} which may extremely enlarge the error in \eqref{eq.RoughCoarseGrain}. This is also the key difficulty compared to the previous work \cite{bulk}. Our solution turns out to be not only a coarse-grained method of functions, but also a \emph{lift} from $\X$ to $\XX$, i.e. we can represent the function on Kawasaki dynamics using a coupled system of independent particles. More precisely, for every $\teta \in \XX$, we set its grain $[\teta] \in \X$ as 
\begin{align*}
	\forall x \in \Zd, \qquad [\teta]_x := \Ind{\teta_x \geq 1},
\end{align*}
and define the coarsened function of $u : \X \to \R$ as
\begin{align*}
	[u]:\XX \to \R, \qquad [u](\teta) := u([\teta]).
\end{align*}
This \emph{coarse-grained lifting} is introduced in Section~\ref{sec.coarse}. Based on this technique, we also obtain a \emph{gradient coupling} between two systems (see Propositions~\ref{prop.IdentityGradient} and \ref{prop.IdentityGradient2}), and a weighted multiscale Poincar\'e inequality on Kawasaki dynamics. They are the main tools to evaluate the flatness of the functions in Proposition~\ref{prop.L2Flat}.

\begin{figure}[h]
	\centering
	\includegraphics[width=0.45\linewidth]{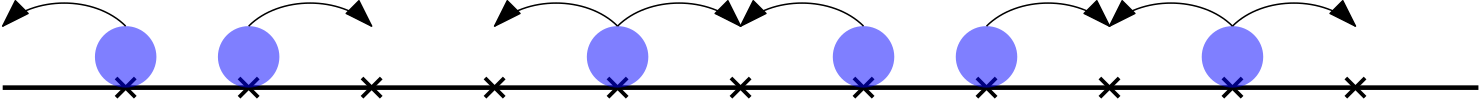}
	\includegraphics[width=0.45\linewidth]{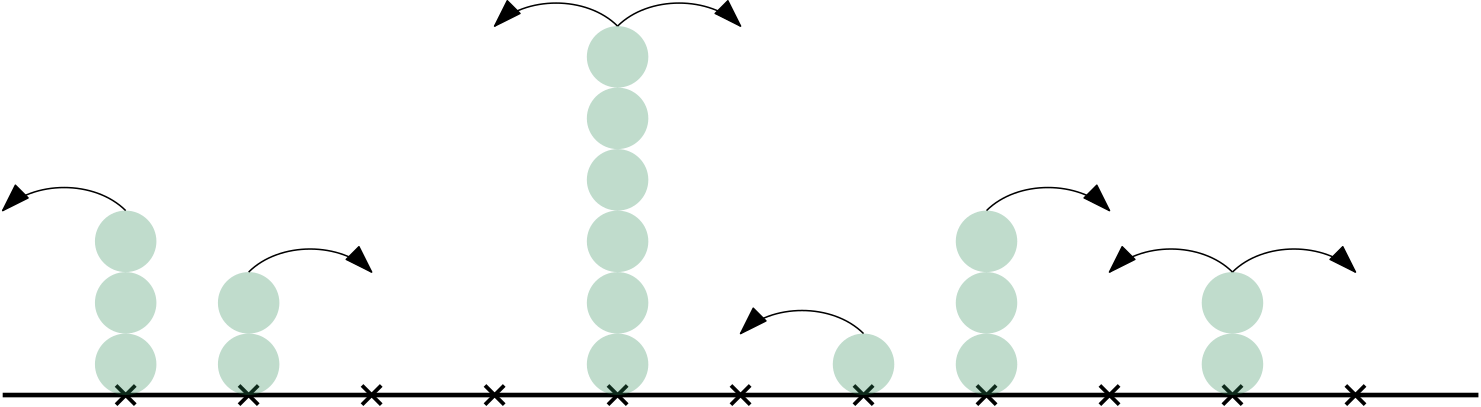}
	\caption{An illustration for the coarse-grained lifting between the Kawasaki dynamics and independent particles.}
\end{figure}

\bigskip
\subsubsection{Regularity and uniform convergence}
This part further develops the idea in the blue block in Figure~\ref{fig.outline}.
Usually the convergence rate depends on the particle density, so let us explain why a uniform convergence is valid. A first qualitative argument is that, our finite-volume approximation decreases to the limit, and the limit function $\rho \mapsto \c(\rho)$ is continuous thanks to \cite{naga1}, so Dini's theorem applies and the convergence is locally uniform. At the quantitative level, we should track the dependence of density carefully in analysis. Especially, the two key inputs above are both robust with respect to the density. We also notice that $\c$ has a trivial bound by $\chi(\rho) = \rho(1-\rho)$, and this can help us at two endpoints.


We still need some more ingredients to pass the results from Theorem~\ref{thm.main1} to Theorem~\ref{thm.mainUniform}. The uniform estimates in \eqref{eq.main1A} can be seen as a weak version
\begin{align*}
	\sup_{\rho \in [0,1]}	\inf_{F_{\rho, L} \in \F^d_0(\Lambda_L)}   R(\rho; F_{\rho, L})  \leq C L^{-\gamma_1},
\end{align*}
while quantity in \eqref{eq.mainUniform} is a strong version, and usually we have
\begin{align*}
	\sup_{\rho \in [0,1]}	\inf_{F_{\rho, L} \in \F^d_0(\Lambda_L)}   R(\rho; F_{\rho, L}) \leq \inf_{F_L \in \F^d_0(\Lambda_L)} \sup_{\rho \in [0,1]}  R(\rho; F_L).
\end{align*}
We do not know whether there exists any duality property in the function $R(\rho; F)$, thus we make the proof by a direct construction. The minimizer $F$ in this variational problem is actually \emph{the correctors} in homogenization theory. In the renormalization step, we already get a candidate $\phi_{\rho, \Lambda, \xi}$ for the problem \eqref{eq.defC2}, but it has dependence on the density. Our main task is to remove this dependence, so we propose a modified version of the local corrector: for disjoint union $\Lambda = \bigsqcup_{i=1}^N \Lambda^{(i)}$, we make a combination of the local correctors on every subset
\begin{align}\label{eq.CorrectorFreeRough}
	\tilde \phi_{\hat{\rho}, \Lambda, \xi} = \sum_{i=1}^N \phi_{\hat{\rho}, \Lambda_i, \xi},
\end{align} 
where $\hat{\rho}$ is an empirical density on the domain $\Lambda$ instead of a fixed designed density. Insert this function in the problem \eqref{eq.defC2}, we obtain a uniform convergence under the grand canonical ensemble by the following reasons.
\begin{enumerate}
	\item The homogenization appears in the large scale, so \eqref{eq.SubC} is nearly an equality and $\sum_{i=1}^N \phi_{\rho, \Lambda_i, \xi}$ nearly equals $\phi_{\rho, \Lambda, \xi}$.
	\item Given an empirical density $\hat{\rho}$, each local corrector lives as if under the grand canonical ensemble thanks to the local equivalence of ensembles. This is also the trick in the proof of \eqref{eq.main1B}.
	\item The empirical density $\hat{\rho}$ may also fluctuate when applying the Kawasaki operator, but this can be handled. On the one hand, we have the regularity of the mapping $\rho \mapsto \phi_{\rho, \Lambda_i, \xi}$, and each fluctuation of density is just $1/\vert \Lambda \vert$. On the other hand, such fluctuation only happens on the boundary layer of $\Lambda$, whose order is dominated by the volume order in \eqref{eq.defC2}.
\end{enumerate}
Similar argument actually has already appeared in the proof of \eqref{eq.RQualitative} in \cite[Lemma~2.1]{fuy}. Besides the quantitative homogenization in (1), we also need to calculate carefully the errors in (2) and (3), i.e. that from  local equivalence of ensembles and the regularity of density. They are discussed in detail in Sections~\ref{subsec.RateCano} and ~\ref{subsec.Regularity}, and then we justify the density-free corrector \eqref{eq.CorrectorFreeRough} in Section~\ref{subsec.FreeCorrector}.

\subsubsection{Hydrodynamic Limit} This part corresponds to the green block in Figure~\ref{fig.outline}.  Overall, Section~\ref{sec.hy} reformulates the relative entropy argument in \cite{fuy}, and integrates the elements from homogenization. As mentioned briefly in \eqref{eq.EntropyFlow}, Theorem~\ref{thm.mainUniform} resolves well the rate in the term $R(\rho; F_N)$. It remains to analyze the error term $Q_N$. The error $Q_N = Q^{\Om_1}_N+Q^{LD}_N+Q^{En}_N+Q^{\Om_2}_N$ has several sources, and most of them can be handled after careful investigation, but the term $Q^{\Om_1}_N$ also involves the homogenization. This term comes from the microscopic current in non-gradient model,  which does not have a gradient form and yields a diverging factor under the scaling.  To overcome this difficulty, we need to show that, under a large space-time domain, one can replace such a term by a well-behaving function of gradient form  asymptotically. This technique is called \emph{the gradient replacement} (see \cite[Theorem~3.2 and Lemma~3.4]{fuy}
and Section~\ref{subsubsec.grad_replacement}). To implement it, a standard approach is Varadhan's lemma (see \cite[Theorem~5.2]{varadhanII} and \cite[Theorem~4.1]{fuy}) which gives the characterization of closed forms defined on a configuration space. We observe a connection between the gradient replacement and the dual quantity employed in the renormalization approach (see Section~\ref{subsec.CLT}), thus our method provides another new route for the non-gradient hydrodynamic limit avoiding Varadhan's lemma.

\subsection{Perspectives}
It is natural to ask how to extend the results in this paper to other non-gradient particles, including GSEP, the multi-type SSEP, etc. Among the four steps in Figure~\ref{fig.outline}, the blue block and the green block respectively require only the local equivalence of the ensemble and the relative entropy, which are also developed in other models. The main problem is to derive Theorem~\ref{thm.main1} in other settings. 

The general framework of renormalization approach is also quite robust as it works in several different statistical physics models \cite{armstrong2018elliptic,dario2019phi, gradphi2, ArmstrongDario2024,dario2020massless}. The only issue is whether one can recover all the technical inputs in the red block in Figure~\ref{fig.outline}. They are quite standard in PDE setting, but face challenges in particle systems due to different microscopical behaviors. One major progress in this paper is the  coarse-grained lifting, which is inspired by \cite{armstrong2018elliptic} in percolation setting. This is not only a technique, but also reveals the essentials of homogenization. Roughly, the homogenization means the diffusion is close to the Brownian motion. If we make its analogue, an interacting particle system should be homogenized to independent random walks. Then the lifting allows us to compare the functions defined in two systems.

Meanwhile, the coarse-grained lifting in this paper still has some limitation, as it uses the specific rule of exclusion that at most $1$ particle per site. For GSEP studied in \cite{KLO94}, since $N$ particles are allowed on every site, then it is hard to find any one-parameter free particle system to implement the lifting. A possible approach is to further enlarger the space via a superposition of several free particle systems of different parameters. This extension is not obvious and requires a careful justification of the steps in Section~\ref{sec.coarse}.

The idea of coarse-graining is not new in conservative particle systems. In the literature, it usually indicates the convergence of local density in hydrodynamic limit. We believe it is also related to our argument in  Section~\ref{sec.coarse}, as the convergence of density plays role in Proposition~\ref{prop.WMPoincare}. We hope to clarify this connection, then establish a more robust coarse-graining argument to cover more examples in the future work.

\subsection{Notations}
We end the introduction by a summary of notations.

\subsubsection{Geometry}\label{subsubsec.Geometry}
We use $\vert \cdot \vert$ to stand for the usual $\ell^2$-norm for the finite dimensional vector or matrices. Meanwhile, for any $x,y \in \Zd$, we define
\begin{align}\label{eq.defDistance}
	\dist(x,y) := \max\{\vert x_1 - y_1 \vert, \vert x_2 - y_2 \vert, \cdots, \vert x_d - y_d\vert\}.
\end{align}
This also generalizes to $\dist(x,\Lambda):= \inf_{y \in \Lambda} \dist(x,y)$ for every subset $\Lambda \subset \Zd$.

We denote by $\Lambda_L := (-\frac{L}{2}, \frac{L}{2})^d \cap \Zd$ the hypercube of side length around $L$, where $L \in \R_+$ is not necessarily an integer for the flexibility. For every $m \in \N = \{0,1,2,\cdots\}$, we also denote by $\cu_m := \Ll(-\frac{3^m}{2}, \frac{3^m}{2}\Rr)^d \cap \Zd$ the hypercube of side length $3^m$. For any $n,m \in \N$ such that $n < m$, we denote by $\Z_{m,n} := 3^n \Zd \cap \cu_m$ and $\Z_n := 3^n \Zd$. Then using $\sqcup$ as the disjoint union, we have the following partition
\begin{align}\label{eq.CubeRenormalization1}
	\cu_m = \bigsqcup_{z \in \Z_{m,n}} (z+\cu_n),
\end{align}
which provides convenience to implement the renormalization. 

For any finite set $\Lambda \subset \Zd$, we denote by $\vert \Lambda\vert$ the number of vertices 
\begin{align}\label{eq.defVolume}
	\vert \Lambda\vert := \#\{x : x\in \Lambda\},
\end{align}
and define the diameter as 
\begin{align}\label{eq.defDiameter}
	\diam(\Lambda) := \max\{\vert x - y\vert:x,y\in \Lambda\}.
\end{align}
We also define $\partial \Lambda$ as the boundary set of $\Lambda$ that 
\begin{align}\label{eq.defBoundary}
	\partial \Lambda := \{ x \in \Lambda : \exists y \notin \Lambda, y \sim x\},
\end{align}
and denote by $\Lambda^-$ the interior of $\Lambda$
\begin{align}\label{eq.defMinus}
	\Lambda^- := \Lambda \setminus \partial \Lambda.
\end{align}
Recall that the set of bonds of $\Lambda$ is defined as $\Lambda^*$ in \eqref{eq.defBond}. We  define its enlarged version 
\begin{align}\label{eq.defBondLarge}
	\overline{\Lambda^*} := \{\{x,y\}: x \in \Lambda, y = x+e_i, i=1,2, \cdots, d\},
\end{align}
where $e_i \in \Zd$ is the $i$-th directed unit vector, and also denote by $\Lambda^+$ the vertices concerned in \eqref{eq.defBondLarge}
\begin{align} \label{eq.defVertexLarge}
	\Lambda^+:=\Lambda\cup\bigcup_{i=1}^d(\Lambda+e_i).
\end{align}
One motivation is that, for $n,m \in \N$ such that $n < m$, despite of \eqref{eq.CubeRenormalization1}, we observe that $\bigcup_{z \in \Z_{m,n}} (z+\cu_n)^* \subsetneq (\cu_m)^*$. On the other hand, \eqref{eq.defBondLarge} provides a better partition structure for bonds 
\begin{align}\label{eq.CubeRenormalization2}
	\overline{(\cu_m)^*} = \bigsqcup_{z \in \Z_{m,n}} \overline{(z+\cu_n)^*}.
\end{align}

For disjoint sets $\Lambda, \Lambda' \subset \Zd, \Lambda \cap \Lambda' = \emptyset$, we define $(\Lambda, \Lambda')^*$ as the set of bonds between $\Lambda$ and $\Lambda'$
\begin{align}
	(\Lambda, \Lambda')^* := 	\{\{x,y\}:x\in \Lambda,\ y\in\Lambda'\}.
\end{align}
Especially, $(\Lambda, \Lambda^c)^*$ is the set of bonds connecting $\Lambda$ and its complement.

\subsubsection{Probability spaces}\label{subsubsec.proba}
For every $\Lambda \subset \Zd$, we denote by $\fil_{\Lambda}$ the $\sigma$-algebra generated by the $(\eta_x)_{x \in \Lambda}$ and we write $\fil$ short for $\fil_{\Zd}$. Given $\rho \in (0,1)$ as the density of particle, and make use of $\Pr$ as the Bernoulli product measure $\operatorname{Ber}(\rho)^{\otimes \Zd}$ on $\X$, thus $(\X, \fil, \Pr)$ is the triplet of probability space most used in this paper. For the expectation under $\Pr$, we use the notation $\bracket{ \cdot }_{\rho}$ or $\Er[\cdot]$. We make use of $\P_{\rho, \Lambda}, \bracket{\cdot}_{\rho, \Lambda}$ when we restrict our measure on $(\eta_x)_{x \in \Lambda}$. We also denote by $\P_{\Lambda, N, \zeta}$ and $\bracket{\cdot}_{\Lambda, N, \zeta}$ the probability and expectation under the canonical ensemble, i.e. $N$ particles distributed uniformly on different sites of $\Lambda$ with the configuration $\zeta$ on $\Lambda^c$.  We sometimes omit $\zeta$ and just write them as  $\P_{\Lambda, N}$ and $\bracket{\cdot}_{\Lambda, N}$. See also the notation $\mathcal{G}_{\Lambda}$ defined in \eqref{eq.defGTildeKawa} about the canonical ensemble.

\subsubsection{Function spaces}\label{subsubsec.Functions}
For every $1 \leq p \leq \infty$, we denote by $\norm{\cdot}_{L^p}$ or $\norm{\cdot}_p$ the $L^p$ norm over the probability space $(\X, \fil, \Pr)$, and denote by $L^p(\X, \fil, \Pr)$ or shortly $L^p$ the set of random variables with finite norm. For any $\Lambda \subset \Zd$, let $\F_0(\Lambda)$ be the set of $\fil_{\Lambda}$-measurable local functions. We also define the Sobolev norm $H^1(\Lambda)$ for a measurable function:
\begin{align}\label{eq.defSobolev}
	\norm{f}^2_{H^1(\Lambda)} = \bracket{f^2}_{\rho} + \sum_{b \in \Lambda^*} \bracket{(\pi_b f)^2}_{\rho}.
\end{align}
For every local functions, we can calculate its $H^1(\Lambda)$ norm, and we also use $H^1(\Lambda)$ to represent the set of functions with finite $H^1(\Lambda)$ norm. Despite of the natural definition of $\F_0(\Lambda)$, the function space $\F_0(\Lambda^-)$ is the proper analogue of the function space $H^1_0$ in the domain $\Lambda$. To see this, we can verify the following identity easily
\begin{align}\label{eq.extension}
	\forall \Lambda \subset \Lambda' \subset \Zd, f \in \F_0(\Lambda^-), \qquad \norm{f}_{H^1(\Lambda)} = \norm{f}_{H^1(\Lambda')}.
\end{align}  
This is the important extension property of $H^1_0$  function, but a general $\F_0(\Lambda)$ function does not necessarily satisfy it. We will not use the notation $H^1_0(\Lambda)$ in the paragraphs for the conciseness of notation, while we keep in mind that $\F_0(\Lambda^-)$ plays the same role. 

Viewing the discussion above, we define the space of harmonic functions with respect to the Kawasaki dynamics
\begin{align}\label{eq.defHarmonic}
	\mcl A (\Lambda) := \{u \in H^1(\Lambda) : \forall v \in \F_0(\Lambda^-), \bracket{v (-\L_\Lambda u)}_\rho = 0\}.
\end{align}
Note that $u \in \mcl A (\Lambda)$ does \emph{not} imply that $u \in \F_0(\Lambda)$ and it can have dependence on the configuration outside $\Lambda$. 

\subsubsection{Operators}
The translation operator, exchange operator and Kawasaki operator are respectively defined in \eqref{eq.translation1}, \eqref{eq.translation2}, \eqref{eq.exchange} and \eqref{eq.Kawasaki}.  For $\eta \in \X$ and $\Lambda \subset \Zd$, we define  $(\eta \mres \Lambda)$ as the configuration restricted on $\Lambda$ that 
\begin{align}\label{eq.Restriction}
	\forall x \in \Zd, \qquad (\eta \mres \Lambda)_x := \eta_x \Ind{x \in \Lambda}.
\end{align}
We sometimes identify $\eta \in \X$ as $\eta = \sum_{x\in\Zd} \eta_x \delta_x$ for the convenience to manipulate.

The affine function defined by
\begin{align}\label{eq.defAffineFormal}
	\ell_p (\eta) := \sum_{x \in \Zd} (p \cdot x) \, \eta_x,
\end{align}
is just a formal sum as there are infinite terms, while $\pi_b \ell_p$ is well-defined as 
\begin{align}\label{eq.defPiAffine}
	\forall b = \{x,y\} \in (\Zd)^*, \qquad (\pi_b \ell_p)(\eta) := p \cdot (y-x)(\eta_x - \eta_y).
\end{align}
A rigorous version of \eqref{eq.defAffineFormal} is a sum restricted on the finite set $\Lambda \subset \Zd$
\begin{align}\label{eq.defAffineRigorous}
	\ell_{p, \Lambda} (\eta) := \sum_{x \in \Lambda} (p \cdot x) \, \eta_x.
\end{align} 

In Kawasaki dynamics, we define the tangent field along the direction $e_i$ at $x$ for $u: \X \to \R$ as
\begin{align}\label{eq.defKawaTagent}
	\nabla_{x, e_i}u := (\pi_{x, x+e_i} u) (\pi_{x, x+e_i} \ell_{e_i}).
\end{align}   
Some simple calculation gives us 
\begin{align}\label{eq.IdTagent}
	(\pi_{x, x+e_i} u) (\pi_{x, x+e_i} \ell_{e_i})(\eta) = \Ll(u(\eta^{x, x+e_i}) - u(\eta)\Rr) (\eta_{x} - \eta_{x+e_i}),
\end{align}
so the term is non-zero if and only if $(\eta_x, \eta_{x+e_i})=(1,0)$ or $(\eta_x, \eta_{x+e_i})=(0,1)$. Moreover, for both two non-zero cases, they evaluate the change of $u$ that a particle jumps from $x$ to $x+e_i$. Similarly, we define the gradient field of $u$ at $x$ as 
\begin{align}\label{eq.defKawaGradient}
	\nabla_x u := (\nabla_{x, e_1}u, \nabla_{x, e_2}u, \cdots, \nabla_{x, e_d}u).
\end{align} 
For every $p \in \Rd$, we also obtain that 
\begin{align}\label{eq.IdGradient}
	p \cdot \nabla_x u = \sum_{i=1}^d (\pi_{x, x+e_i} u) (\pi_{x, x+e_i} \ell_{p}).
\end{align}

The Glauber operator appears naturally in some steps of analysis. We denote by $\eta^x$ the flip operator at $x$ that 
\begin{align}\label{eq.Glauber}
	(\eta^{x})_z = \Ll\{\begin{array}{ll}
		\eta_z, & \qquad z \neq x; \\
		1-\eta_z, & \qquad z = x. 
	\end{array}\Rr.
\end{align}
Then the Glauber derivative for $f: \X \to \R$ is defined by 
\begin{align}
	\pi_x f := f(\eta^x) - f(\eta).
\end{align}
Clearly, $(\pi_x f)^2$ is independent of $\eta_x$.

\subsubsection{Constants}
We usually use $C$ to represent a positive finite constant and $C(\cdots)$ to indicate its dependence on other parameters. The value of $C$ may change from line to line. The following constants will be fixed and used throughout the paper.
\begin{itemize}
	\item $d \in \N_+ = \{1,2,3,\cdots\}$ for the dimension of lattice.
	\item $\lambda$ for the upper bound of the jump rate, i.e. $c_{b}(\eta) \leq \lambda$ for any $b \in (\Zd)^*, \eta \in \X$.
	\item $\r$ for the radius of dependence of jump rate as indicated in Hypothesis~\ref{hyp}.
\end{itemize}

\section{Analytic tools}\label{sec.Pre}

In this part, we collect all the necessary analytic tools in this paper. The two main results of this section are the modified Caccioppoli inequality (Proposition~\ref{prop.Caccioppoli}), and the weighted multiscale Poincar\'e inequality (Proposition~\ref{prop.MPoincare} and then Proposition~\ref{prop.WMPoincare}).

\subsection{Analytic tools on Kawasaki dynamics}

\subsubsection{Glauber operator meets Kawasaki operator}
Our first inequality comes from the observation in \cite[eq.(B.2)]{jlqy}, which states that we can exchange the site of the Glauber derivative by paying the error of the Kawasaki operator.  

\begin{lemma}\label{lem.GlauberExchange}
	Recall the $L^2$ function space defined in Section~\ref{subsubsec.Functions}, then we have
	\begin{align}\label{eq.GlauberExchange}
		\norm{\pi_x f}_{L^2} \leq \norm{\pi_y f}_{L^2} + \frac{1}{\sqrt{2 \chi(\rho)}} \norm{\pi_{x,y}f}_{L^2}.
	\end{align}
\end{lemma}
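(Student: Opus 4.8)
The plan is to prove the inequality by exhibiting an explicit coupling (or equivalently a direct pointwise/measure-theoretic manipulation) that relates the Glauber flip at $x$ to the Glauber flip at $y$ together with the Kawasaki exchange $\pi_{x,y}$. First I would reduce to the case $x \sim y$ (a single bond): the general statement follows by summing over a lattice path from $x$ to $y$ and using the triangle inequality in $L^2$, together with $\pi_{x,z}f = \pi_{x,y}f + (\text{shifted terms})$ telescoping along the path; but in fact the inequality as stated involves only one $\pi_{x,y}$ term, so I expect the intended regime is precisely $x \sim y$ and I would concentrate there (or note that for general $x,y$ one interprets $\pi_{x,y}$ as the full exchange operator and argues directly).

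The key algebraic identity I would look for is a decomposition of $\eta^x$ in terms of $\eta^y$ and $\eta^{x,y}$. Conditioning on the occupation numbers, the point is that flipping $x$ can be realized as: flip $y$, then possibly exchange $x$ and $y$ — and the correction is supported on the event where $\eta_x \ne \eta_y$, which is exactly where $\pi_{x,y}f \ne 0$. Concretely I would write, for the four configurations of $(\eta_x,\eta_y)$,
\begin{align*}
	f(\eta^x) - f(\eta) = \bigl(f(\eta^y) - f(\eta)\bigr) + \bigl(f(\eta^x) - f(\eta^y)\bigr),
\end{align*}
and observe that when $\eta_x = \eta_y$ we have $\eta^x$ and $\eta^y$ differ by an exchange of two equal-then-unequal sites so that $f(\eta^x)-f(\eta^y)$ is itself an exchange increment, while when $\eta_x\ne\eta_y$ one reshuffles to extract a $\pi_{x,y}f$ evaluated at an appropriate configuration. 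Then I would take $L^2(\Pr)$ norms, use the triangle inequality, and control the cross term: the measure-preserving nature of $\eta \mapsto \eta^y$ handles $\|f(\eta^y)-f(\eta)\|_{L^2} = \|\pi_y f\|_{L^2}$, and the exchange term is bounded by $\|\pi_{x,y}f\|_{L^2}$ up to a reweighting. The factor $\frac{1}{\sqrt{2\chi(\rho)}}$ should emerge precisely from this reweighting: the change of variables needed to line up the exchange increment with the Bernoulli($\rho$) measure produces a Radon–Nikodym density whose $L^\infty$ bound (or whose effect after Cauchy–Schwarz) is $\frac{1}{2\rho(1-\rho)}$, since the event $\{\eta_x\ne\eta_y\}$ has probability $2\rho(1-\rho)$ and one is essentially conditioning on it.

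The main obstacle I anticipate is bookkeeping the reweighting cleanly enough to land exactly the constant $\frac{1}{\sqrt{2\chi(\rho)}}$ rather than something merely comparable: one must be careful that $(\pi_{x,y}f)^2$ is independent of $(\eta_x,\eta_y)$ in the relevant sense (as with $(\pi_x f)^2$), so that integrating it against the conditional measure on $\{\eta_x \ne \eta_y\}$ really does produce $\frac{1}{2\chi(\rho)}\bracket{(\pi_{x,y}f)^2}_\rho$ after the two configurations $(1,0)$ and $(0,1)$ are accounted for. A secondary subtlety is that $\pi_y f$ and $\pi_{x,y}f$ are being evaluated at $\eta$ and at $\eta^{y}$ respectively in my decomposition, so I need the measure-preservation of the flip and exchange maps under $\Pr$ (which holds since $\Pr$ is a Bernoulli product measure, using Hypothesis~\ref{hyp}(3) nowhere here — just product structure) to rewrite all norms back over the ambient measure. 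Once those two points are nailed down, the triangle inequality in $L^2$ gives \eqref{eq.GlauberExchange} immediately; I would present the computation as a short conditional-expectation argument rather than an explicit coupling, as that keeps the constant transparent.
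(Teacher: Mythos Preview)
Your decomposition $\pi_x f = \pi_y f + \bigl(f(\eta^x) - f(\eta^y)\bigr)$ does not lead to the stated bound, and the vague ``reshuffling'' you invoke on $\{\eta_x\ne\eta_y\}$ is precisely where the argument breaks. Writing $F(\eta_x,\eta_y,\tilde\eta)=f(\eta)$ and computing case by case, the correction term $f(\eta^x)-f(\eta^y)$ equals $\pm\bigl(F(1,0,\tilde\eta)-F(0,1,\tilde\eta)\bigr)$ on $\{\eta_x=\eta_y\}$ (indeed an exchange increment), but on $\{\eta_x\ne\eta_y\}$ it equals $\pm\bigl(F(1,1,\tilde\eta)-F(0,0,\tilde\eta)\bigr)$, a \emph{double Glauber flip}, which is not an exchange increment and cannot be bounded by $\|\pi_{x,y}f\|_{L^2}$ alone. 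Concretely,
\[
\bigl\|f(\eta^x)-f(\eta^y)\bigr\|_{L^2}^2
=\bigl(\rho^2+(1-\rho)^2\bigr)\!\int\!\bigl(F(1,0)-F(0,1)\bigr)^2
+2\chi(\rho)\!\int\!\bigl(F(1,1)-F(0,0)\bigr)^2,
\]
and the second integral is not controlled by the exchange term.

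The paper avoids this by working at the level of the squared increment from the start: since $(\pi_x f)^2$ does not depend on $\eta_x$, the $L^2$ norm is already an integral over $(\eta_y,\tilde\eta)$ only, namely
\[
\|\pi_x f\|_{L^2}
=\Bigl(\int \rho\,\bigl(F(1,1)-F(0,1)\bigr)^2+(1-\rho)\,\bigl(F(1,0)-F(0,0)\bigr)^2\,\d\Pr(\tilde\eta)\Bigr)^{1/2}.
\]
No double-flip term ever appears. The triangle inequality in this weighted $L^2$ space is then applied by inserting and subtracting the \emph{exchanged} value $F(1,0)$ (resp.\ $F(0,1)$) in the first (resp.\ second) summand; one piece reconstitutes $\|\pi_y f\|_{L^2}$ exactly, and the other is $\bigl(\int(F(1,0)-F(0,1))^2\bigr)^{1/2}=(2\chi(\rho))^{-1/2}\|\pi_{x,y}f\|_{L^2}$, giving the sharp constant. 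The key step you are missing is to exploit the $\eta_x$-independence of $(\pi_x f)^2$ \emph{before} decomposing, rather than decomposing $\pi_x f$ pointwise.
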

\begin{proof}
	As we know, $(\pi_x f)^2$ does not depend on $\eta_x$, thus we decompose $\norm{\pi_x f}_{L^2}$ with respect to the state of $\eta_y$
	\begin{align*}
		\norm{\pi_x f}_{L^2} = \bracket{\Ll(\pi_x f\Rr)^2 \Ind{\eta_y = 1} + \Ll(\pi_x f\Rr)^2 \Ind{\eta_y = 0}}_\rho^{\frac{1}{2}}.
	\end{align*}
	We denote by $\tilde{\eta} = (\eta_z)_{z \in \Zd \setminus \{x, y\}}$ and $F(\eta_x, \eta_y, \tilde{\eta}) = f(\eta)$. Then we have 
	\begin{multline*}
		\norm{\pi_x f}_{L^2} \\
		= \Ll(\int_{\X} \rho(F(1,1,\tilde{\eta}) - F(0,1,\tilde{\eta}))^2 + (1-\rho)(F(1,0,\tilde{\eta}) - F(0,0,\tilde{\eta}))^2\, \d \Pr(\tilde{\eta})\Rr)^{\frac{1}{2}}.
	\end{multline*}
	We apply the triangle inequality for this norm. The trick is that we only replace the term involving $\eta_x \neq \eta_y$. For example, in the terms $(F(1,1,\tilde{\eta}) - F(0,1,\tilde{\eta}))^2$, we add an intermediate term $F(1,0,\tilde{\eta})$. This follows exactly the spirit of the Kawasaki operator $\pi_{x,y}$ and we obtain
	\begin{align*}
		&\norm{\pi_x f}_{L^2} \\
		&\leq \Ll(\int_{\X} \rho(F(1,1,\tilde{\eta}) - F(1,0,\tilde{\eta}))^2 + (1-\rho)(F(0,1,\tilde{\eta}) - F(0,0,\tilde{\eta}))^2\, \d \Pr(\tilde{\eta})\Rr)^{\frac{1}{2}} \\
		& \qquad + \Ll(\int_{\X} \rho(F(0,1,\tilde{\eta}) - F(1,0,\tilde{\eta}))^2 + (1-\rho)(F(1,0,\tilde{\eta}) - F(0,1,\tilde{\eta}))^2\, \d \Pr(\tilde{\eta})\Rr)^{\frac{1}{2}}\\
		&= \Ll(\int_{\X} \rho(F(1,1,\tilde{\eta}) - F(1,0,\tilde{\eta}))^2 + (1-\rho)(F(0,1,\tilde{\eta}) - F(0,0,\tilde{\eta}))^2\, \d \Pr(\tilde{\eta})\Rr)^{\frac{1}{2}} \\
		& \qquad + \Ll(\int_{\X} (F(1,0,\tilde{\eta}) - F(0,1,\tilde{\eta}))^2\, \d \Pr(\tilde{\eta})\Rr)^{\frac{1}{2}}. 
	\end{align*}
	We notice the identity in the last equation
	\begin{align*}
		\norm{\pi_y f}_{L^2} &= \Ll(\int_{\X} \rho(F(1,1,\tilde{\eta}) - F(1,0,\tilde{\eta}))^2 + (1-\rho)(F(0,1,\tilde{\eta}) - F(0,0,\tilde{\eta}))^2\, \d \Pr(\tilde{\eta})\Rr)^{\frac{1}{2}},\\
		\norm{\pi_{x,y}f}_{L^2} &= \Ll(\int_{\X} 2\rho(1-\rho)(F(1,0,\tilde{\eta}) - F(0,1,\tilde{\eta}))^2 \, \d \Pr(\tilde{\eta})\Rr)^{\frac{1}{2}},
	\end{align*}
	then we conclude the desired result.
\end{proof}

\subsubsection{Spectral inequality}

The spectral inequality is an important tool to analyze Markov processes. In this part, we summarize several spectral inequalities from the literature.

The most used spectral inequality is the one for independent random variables known as Efron--Stein inequality. Here we state it and  its reverse version.

\begin{lemma}[Efron--Stein inequality]
	Let $(X_i)_{1 \leq i \leq n}$ be i.i.d. random variables on $E$. (Then they can be regarded as random variables on $E^n$.) For any function $f=f(X_1, X_2, \cdots, X_n)$, we denote by 
	\begin{align}\label{eq.EfronSteinE}
		\E_{(i)}[\cdot] := \int_{E} (\cdot) \, \d \P_{X_i}, \qquad \E_{(-i)}[\cdot] := \int_{E^{n-1}} (\cdot) \,  \prod_{1 \leq j \leq n, j \neq i}\d\P_{X_j},  
	\end{align}\label{eq.EfronStein}
	and $\var_{(i)}, \var_{(-i)}$ for the corresponding variances. Then we have
	\begin{align}
		\sum_{i=1}^n \var_{(i)}[\E_{(-i)}[f]] \leq	\var[f] \leq \sum_{i=1}^n \E_{(-i)}[\var_{(i)}[f]].
	\end{align}
\end{lemma}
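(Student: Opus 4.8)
The plan is to deduce both inequalities from the Doob (martingale) decomposition of $f$ along the coordinates $X_1,\dots,X_n$. Assume $\var[f]<\infty$ (otherwise there is nothing to prove) and, replacing $f$ by $f-\E[f]$, that $\E[f]=0$. Introduce the filtration $\mathcal{G}_i:=\sigma(X_1,\dots,X_i)$ and write $\E^i[\,\cdot\,]:=\E[\,\cdot\mid\mathcal{G}_i\,]$, so $\E^0[f]=0$ and $\E^n[f]=f$. First I would set $\Delta_i:=\E^i[f]-\E^{i-1}[f]$, so that $f=\sum_{i=1}^n\Delta_i$ with $(\Delta_i)_{1\le i\le n}$ a sequence of martingale increments; being pairwise orthogonal in $L^2$, they give
\begin{align*}
	\var[f]=\sum_{i=1}^n\E[\Delta_i^2].
\end{align*}
The only genuinely probabilistic input is that, because the $X_j$ are independent, conditioning down one step coincides with integrating out a single coordinate: since $\E^i[f]$ depends only on $(X_1,\dots,X_i)$ one has $\E^{i-1}[f]=\E^i\bigl[\E_{(i)}[f]\bigr]$, whence $\Delta_i=\E^i\bigl[f-\E_{(i)}[f]\bigr]$.

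For the upper bound I would use that conditional expectation is an $L^2$-contraction; applying this to $f-\E_{(i)}[f]$ and then integrating out $X_i$ first,
\begin{align*}
	\E[\Delta_i^2]=\E\Bigl[\bigl(\E^i[f-\E_{(i)}f]\bigr)^2\Bigr] &\le \E\bigl[(f-\E_{(i)}f)^2\bigr] \\
	&= \E\bigl[\var_{(i)}[f]\bigr]=\E_{(-i)}\bigl[\var_{(i)}[f]\bigr],
\end{align*}
and then sum over $i$ using the martingale identity above. For the lower bound I would invoke the elementary ``Jensen for variance'': whenever $U,V$ are independent and $h=h(U,V)$, one has $\var_V[\E_U h]\le\E_U[\var_V h]$ (proved by writing $\var_V[\,\cdot\,]=\tfrac12\E_{V,V'}[(\,\cdot\,)^2]$ with $V'$ an independent copy of $V$ and applying Jensen in $U$ inside the square). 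Rewriting $\E[\Delta_i^2]=\E\bigl[\var_{(i)}[\E^i f]\bigr]$ and noting that $\E_{(-i)}[f]=\E_{(1,\dots,i-1)}\bigl[\E^i f\bigr]$, I would apply this inequality with $V=X_i$ and $U=(X_1,\dots,X_{i-1})$ to obtain $\var_{(i)}\bigl[\E_{(-i)}f\bigr]\le\E\bigl[\var_{(i)}[\E^i f]\bigr]=\E[\Delta_i^2]$, and summing yields the left-hand inequality. (An alternative, perhaps cleaner, route for the lower half avoids the martingale decomposition altogether: one checks directly that $\sum_{i=1}^n\bigl(\E_{(-i)}[f]-\E[f]\bigr)$ is the orthogonal projection of $f-\E[f]$ onto the closed subspace of additive functions $\{\sum_i g_i(X_i)\}$ lying in the orthogonal complement of the constants, so that by Pythagoras $\sum_{i=1}^n\var_{(i)}\bigl[\E_{(-i)}[f]\bigr]\le\var[f]$.)

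I do not expect a real obstacle, since this is the classical Efron–Stein inequality; the only thing to be careful about is the bookkeeping between the two expectation operators — the Doob conditional expectation $\E^i$ and the single-coordinate integration $\E_{(i)}$ — together with the repeated use of independence needed to commute and identify them. Once the martingale decomposition and the $L^2$-contraction and Jensen-for-variance lemmas are set up, both halves follow by summation.
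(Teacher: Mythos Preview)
Your proof is correct and complete. The paper itself does not spell out an argument: it simply cites \cite{boucheron2013concentration} for the upper bound and remarks that the lower bound ``follows exactly the same strategy of proof'', so your Doob martingale decomposition together with the $L^2$-contraction and Jensen-for-variance steps is precisely the classical route the paper is pointing to.
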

\begin{proof}
	The upper bound is the classical Efron--Stein inequality, and one can find its proof in \cite[Theorem~3.1]{boucheron2013concentration}. The lower bound, which could be seen as a reverse Efron--Stein inequality, is less well-known, but follows exactly the same strategy of proof. The authors learn the lower bound at first in \cite{EFI2}.
\end{proof}

A direct corollary of Efron--Stein inequality is the spectral inequality of the Glauber operator \eqref{eq.Glauber} under product Bernoulli measure.
\begin{corollary}[Spectral inequality for Glauber dynamics]
	For any $\Lambda \subset \Zd$, we have
	\begin{align}\label{eq.spectralGlauber}
		\var_{\rho,\Lambda}[f] \leq \chi(\rho)\sum_{x \in \Lambda} \bracket{\Ll(\pi_x f\Rr)^2}_{\rho, \Lambda}.
	\end{align}
\end{corollary}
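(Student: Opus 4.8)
The plan is to read the claim off directly from the upper bound in the Efron--Stein inequality applied to the family $(\eta_x)_{x\in\Lambda}$, which under $\P_{\rho,\Lambda}$ is a finite collection of i.i.d.\ $\operatorname{Ber}(\rho)$ random variables. I would first fix the configuration outside $\Lambda$ (on which $f$ may depend as a parameter), so that $f$ is viewed as a function of the $\vert\Lambda\vert$ coordinates $(\eta_x)_{x\in\Lambda}$, and then invoke the inequality with $n=\vert\Lambda\vert$ and $E=\{0,1\}$.

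Next I would compute the single-site conditional variance. For $x\in\Lambda$, writing $f=F(\eta_x,\teta)$ with $\teta=(\eta_z)_{z\neq x}$ and using that $\eta_x\sim\operatorname{Ber}(\rho)$, one gets $\var_{(x)}[f]=\rho(1-\rho)\bigl(F(1,\teta)-F(0,\teta)\bigr)^2=\chi(\rho)\,(\pi_x f)^2$, by \eqref{eq.defCompress} and the definition of the Glauber derivative. I would then emphasize the point noted just after \eqref{eq.Glauber}: $(\pi_x f)^2$ does not depend on $\eta_x$, so $\var_{(x)}[f]$ is genuinely a function of $\teta$ alone.

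Finally I would apply the upper bound $\var[f]\leq\sum_{x\in\Lambda}\E_{(-x)}\bigl[\var_{(x)}[f]\bigr]$, substitute the identity above, and use that $\E_{(-x)}[(\pi_x f)^2]$ — being the integral over all coordinates except $\eta_x$ of a quantity that is already $\eta_x$-independent — equals $\bracket{(\pi_x f)^2}_{\rho,\Lambda}$. This gives $\var_{\rho,\Lambda}[f]\leq\chi(\rho)\sum_{x\in\Lambda}\bracket{(\pi_x f)^2}_{\rho,\Lambda}$, which is the assertion; the reverse (lower-bound) half of Efron--Stein plays no role here.

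The argument is essentially bookkeeping, and I do not expect a genuine obstacle. The only step meriting a moment's care is the chain "$\var_{(x)}[f]=\chi(\rho)(\pi_x f)^2$ and this is independent of $\eta_x$", since it is precisely that independence which lets the partial expectation $\E_{(-x)}$ be replaced by the full expectation $\bracket{\cdot}_{\rho,\Lambda}$ in the last line; one should also keep track of the fact that $f$ is allowed to depend on the configuration on $\Lambda^c$, which is harmless as it enters only as a fixed parameter throughout.
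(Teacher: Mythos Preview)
Your proposal is correct and matches the paper's own proof, which is simply ``We apply the classical Efron--Stein inequality and the upper bound is the \rhs{} of \eqref{eq.spectralGlauber}.'' Your write-up just unpacks the single-site variance computation $\var_{(x)}[f]=\chi(\rho)(\pi_x f)^2$ and the passage from $\E_{(-x)}$ to $\bracket{\cdot}_{\rho,\Lambda}$ that the paper leaves implicit.
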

\begin{proof}
	We apply the classical Efron--Stein inequality and the upper bound is the {\rhs} of \eqref{eq.spectralGlauber}.
\end{proof}

With some more treatment, we can also obtain the spectral inequality for the Kawasaki operator \eqref{eq.Kawasaki} under product Bernoulli measure.
\begin{lemma}
	For any bounded set $\Lambda \subset \Zd$ and $f \in \F_0(\Lambda^-)$, we have
	\begin{align}\label{eq.spectralGradient}
		\var_{\rho}[f] \leq \diam(\Lambda)^2 \sum_{b \in \Lambda^*} \bracket{\Ll(\pi_b f\Rr)^2}_{\rho}.
	\end{align}
\end{lemma}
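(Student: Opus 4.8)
The plan is to reduce the Kawasaki spectral inequality to the Glauber one: by the Efron--Stein corollary \eqref{eq.spectralGlauber} we already control $\var_\rho[f]$ by the Glauber derivatives $\pi_x f$, and Lemma~\ref{lem.GlauberExchange} lets us transport a Glauber derivative from a site $x$ to a neighbour at the cost of one Kawasaki derivative. Iterating this along a nearest-neighbour path from $x$ out to $\partial\Lambda$, where $\pi_x f$ vanishes because $f\in\F_0(\Lambda^-)$, will express $\pi_x f$ purely in terms of bond derivatives $\pi_b f$ with $b\in\Lambda^*$. The point of going through Glauber rather than directly is that Kawasaki moves conserve the particle number, so a single flip cannot be written in terms of exchanges alone; Lemma~\ref{lem.GlauberExchange} is precisely the device that circumvents this.

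Concretely: since $f$ is $\fil_{\Lambda^-}$-measurable we have $\var_\rho[f]=\var_{\rho,\Lambda}[f]$ and $\pi_x f=0$ for $x\notin\Lambda^-$, so \eqref{eq.spectralGlauber} gives $\var_\rho[f]\le\chi(\rho)\sum_{x\in\Lambda^-}\bracket{(\pi_x f)^2}_\rho$. For each $x\in\Lambda^-$ I would take the axis-parallel path $x=x_0,x_1,\dots,x_{k_x}$ with $x_j=x+je_1$ and $k_x$ the first index with $x_{k_x}\in\partial\Lambda$; a short induction (a site of $\Lambda^-$ has no neighbour outside $\Lambda$, and $\Lambda$ is bounded) shows this is well defined, that $x_0,\dots,x_{k_x-1}\in\Lambda^-$, that each $b_j:=\{x_{j-1},x_j\}$ lies in $\Lambda^*$, and that $k_x=|x_{k_x}-x_0|\le\diam(\Lambda)$. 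Iterating Lemma~\ref{lem.GlauberExchange} along the path and using $\pi_{x_{k_x}}f=0$ yields $\norm{\pi_x f}_{L^2}\le\frac{1}{\sqrt{2\chi(\rho)}}\sum_{j=1}^{k_x}\norm{\pi_{b_j}f}_{L^2}$, and Cauchy--Schwarz gives $\bracket{(\pi_x f)^2}_\rho\le\frac{k_x}{2\chi(\rho)}\sum_{j=1}^{k_x}\bracket{(\pi_{b_j}f)^2}_\rho$. Substituting back, the two factors $\chi(\rho)$ cancel and
\[
	\var_\rho[f]\le\frac{\diam(\Lambda)}{2}\sum_{x\in\Lambda^-}\ \sum_{b\in\gamma_x}\bracket{(\pi_b f)^2}_\rho,\qquad \gamma_x:=\{b_1,\dots,b_{k_x}\}.
\]

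The remaining — and, I expect, only slightly delicate — step is the combinatorial bookkeeping: to interchange the order of summation I need each bond $b\in\Lambda^*$ to be used by only few of the paths. The choice of all paths in the single direction $e_1$ is what makes this transparent: the sites $x$ whose path traverses a fixed bond $\{z,z+e_1\}$ are exactly those of the form $z-je_1$ for which $z,z-e_1,\dots,z-je_1$ all lie in $\Lambda^-$, i.e.\ a single ``run'' of consecutive sites ending at $z$, all within $\ell^2$-distance $\le\diam(\Lambda)$ of $z$; hence at most $\diam(\Lambda)+1$ paths use any given bond. Interchanging sums then gives $\var_\rho[f]\le\frac{\diam(\Lambda)(\diam(\Lambda)+1)}{2}\sum_{b\in\Lambda^*}\bracket{(\pi_b f)^2}_\rho\le\diam(\Lambda)^2\sum_{b\in\Lambda^*}\bracket{(\pi_b f)^2}_\rho$, where the last inequality uses $\diam(\Lambda)\ge1$ (the case $\Lambda^-=\emptyset$, where $f$ is constant, being trivial). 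Note this argument does not use any special structure of $\Lambda$ beyond boundedness, so the lemma holds for arbitrary bounded $\Lambda$, not just cubes.
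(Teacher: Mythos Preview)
Your proof is correct and follows essentially the same route as the paper: reduce to the Glauber spectral inequality \eqref{eq.spectralGlauber}, iterate Lemma~\ref{lem.GlauberExchange} along an axis-parallel path from each interior site out to $\partial\Lambda$ (where the Glauber derivative vanishes since $f\in\F_0(\Lambda^-)$), apply Cauchy--Schwarz, and finish by counting how many paths traverse a given bond. Your bookkeeping is slightly more explicit than the paper's, but the argument is the same.
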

\begin{proof}
	We apply at first the spectral inequality for Glauber dynamics
	\begin{align}\label{eq.stepGlauber}
		\var_{\rho}[f] \leq \chi(\rho)\sum_{x \in \Lambda} \bracket{\Ll(\pi_x f\Rr)^2}_{\rho}.
	\end{align}
	We fix a direction in the canonical basis $e_i$, then for every $x \in \Lambda$, there exists a positive integer $\ell$ depending on $x$,
	\begin{align*}
		\ell(x) := \min \{k \in \N_+: x + k e_i \in \partial \Lambda\}.
	\end{align*}		
	Then we apply Lemma~\ref{lem.GlauberExchange} 
	\begin{align*}
		\norm{\pi_x f}_{L^2} \leq \norm{ \pi_{x + e_i} f }_{L^2} + \frac{1}{\sqrt{2 \chi(\rho)}} \norm{\pi_{x,x+e_i}f}_{L^2}.
	\end{align*}
	We sum this inequality along the path $x \to x+e_i \to x+2 e_i \cdots \to x + \ell(x) e_i$
	\begin{align*}
		\norm{\pi_x f}_{L^2} \leq \norm{\pi_{x + \ell(x) e_i}f}_{L^2} + \frac{1}{\sqrt{2 \chi(\rho)}} \sum_{j=1}^{\ell(x)} \norm{\pi_{x+(j-1)e_i,x+ j e_i}f}_{L^2}.
	\end{align*}
	Notice that $x + \ell(x) e_i \in \partial \Lambda$ and $f \in \F_0(\Lambda^-)$, so $f$ does not depend on $\eta_{x + \ell(x) e_i}$ and $\pi_{x + \ell(x) e_i}f = 0$. This implies 
	\begin{align}\label{eq.GlauberPath}
		\norm{\pi_x f}_{L^2} \leq \frac{1}{\sqrt{2 \chi(\rho)}} \sum_{j=1}^{\ell(x)} \norm{\pi_{x+(j-1)e_i,x+ j e_i}f}_{L^2}.
	\end{align}
	We put \eqref{eq.GlauberPath} back to \eqref{eq.stepGlauber} and apply Cauchy--Schwarz inequality
	\begin{align*}
		\var_{\rho}[f] &\leq \frac{1}{2}\sum_{x \in \Lambda} \ell(x) \sum_{j=1}^{\ell(x)} \bracket{(\pi_{x+(j-1)e_i,x+ j e_i}f)^2}_\rho \\
		&\leq \frac{1}{2} \diam(\Lambda)\sum_{x \in \Lambda} \sum_{j=1}^{\ell(x)} \bracket{(\pi_{x+(j-1)e_i,x+ j e_i}f)^2}_\rho.
	\end{align*}
	Here the factor $\chi(\rho)$ in \eqref{eq.GlauberPath} and \eqref{eq.stepGlauber} compensates, and we also make use of the fact $\ell(x) \leq \diam(\Lambda)$. We now exchange the order of the sum $\sum_{x \in \Lambda} \sum_{j=1}^{\ell(x)}$
	\begin{align*}
		\var_{\rho}[f] \leq \frac{1}{2} \diam(\Lambda) \sum_{b \in \Lambda^*} \sum_{x \in \Lambda: \exists j \in \N_+, x + j e_i \in b} \bracket{(\pi_{b}f)^2}_\rho.
	\end{align*}
	Because every bond $b$ can be counted at most $\diam(\Lambda)$ times along the direction $e_i$, we obtain the desired result.
\end{proof}

\bigskip

In \cite[Theorem 1]{LuYau}, Lu and Yau proved a generalized version of the spectral inequality for the Glauber dynamics. We do not need that one in this paper, but we will make use of \cite[Theorem 2]{LuYau}, the spectral inequality for Kawasaki dynamics under canonical ensemble. This sharp spectral gap estimate is also one of the key estimates in Varahdan's method.

\begin{lemma}[Theorem 2, \cite{LuYau}]
	There exists a positive constant $C=C(d)$, such that for any $L \in \N_+$ and any $N \in \N_+, N \leq \vert \Lambda_L\vert$, we have 
	\begin{align}\label{eq.Poingrand}
		\var_{\Lambda_L, N}[f]\leq C L^2 \sum_{b \in (\Lambda_L)^*}\bracket{(\pi_{b} f)^2}_{\Lambda_L, N}.
	\end{align}  
\end{lemma}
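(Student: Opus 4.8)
This is the classical lower bound on the spectral gap of the symmetric exclusion process in a box, with the crucial feature that the constant does not depend on the particle number $N$. Although the statement is quoted from \cite{LuYau}, let me indicate how I would reprove it by the comparison (canonical path) method, reducing to the mean-field dynamics; the recursive martingale decomposition of Lu and Yau is the alternative, more robust route (it is the one that survives non-constant rates), but the comparison argument is cleaner here because the rates are constant.

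\emph{Step 1 (reduction to the complete graph).} The uniform measure on $N$-particle configurations in $\Lambda_L$ is the reversible measure of the Bernoulli--Laplace model, whose Dirichlet form is, up to an absolute constant, $\frac{1}{|\Lambda_L|}\sum_{x,y\in\Lambda_L}\bracket{(\pi_{x,y}f)^2}_{\Lambda_L,N}$ (the sum running over \emph{all} pairs, not just bonds). The spectrum of this model is explicit, and its spectral gap is bounded below by an absolute constant \emph{independently of} $N$; equivalently,
\[
	\var_{\Lambda_L,N}[f]\ \le\ \frac{C}{|\Lambda_L|}\sum_{x,y\in\Lambda_L}\bracket{(\pi_{x,y}f)^2}_{\Lambda_L,N}.
\]
This is the only place where the $N$-uniformity enters, and it comes for free from the mean-field model.

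\emph{Step 2 (moving-particle lemma).} For each ordered pair $(x,y)$ I fix a nearest-neighbor path $\gamma_{x,y}=(x=z_0,z_1,\dots,z_\ell=y)$ changing one coordinate at a time, so that $\ell=\ell(x,y)\le d\,\diam(\Lambda_L)\le CdL$. The exchange $\eta\mapsto\eta^{x,y}$ is realized by $2\ell-1$ successive nearest-neighbor exchanges (push the content of $z_0$ forward to $z_\ell$, then pull the rest back), each performed on a permutation of $\eta$ and using every bond of $\gamma_{x,y}$ at most twice. Telescoping $\pi_{x,y}f$ along this sequence, applying Cauchy--Schwarz, and using that $\bracket{\cdot}_{\Lambda_L,N}$ is exchangeable on $\Lambda_L$ (hence invariant under each intermediate exchange), I obtain
\[
	\bracket{(\pi_{x,y}f)^2}_{\Lambda_L,N}\ \le\ C\,\ell(x,y)\sum_{b\in\gamma_{x,y}}\bracket{(\pi_b f)^2}_{\Lambda_L,N}\ \le\ CL\sum_{b\in\gamma_{x,y}}\bracket{(\pi_b f)^2}_{\Lambda_L,N}.
\]

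\emph{Step 3 (congestion bound).} Summing over $(x,y)$ and interchanging the order of summation,
\[
	\frac{1}{|\Lambda_L|}\sum_{x,y\in\Lambda_L}\bracket{(\pi_{x,y}f)^2}_{\Lambda_L,N}\ \le\ \frac{CL}{|\Lambda_L|}\sum_{b\in(\Lambda_L)^*}\Big(\#\{(x,y):b\in\gamma_{x,y}\}\Big)\bracket{(\pi_b f)^2}_{\Lambda_L,N},
\]
and a direct count shows that, with the staircase paths chosen above, no bond lies on more than $C|\Lambda_L|L$ of them. Combined with Step 1 this gives $\var_{\Lambda_L,N}[f]\le CL^2\sum_{b\in(\Lambda_L)^*}\bracket{(\pi_b f)^2}_{\Lambda_L,N}$.

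The two delicate points are exactly the two features that make this a theorem rather than a triviality: first, the $N$-uniformity, which here is inherited from the explicit Bernoulli--Laplace spectrum but in general forces the recursive approach of \cite{LuYau}; and second, obtaining the \emph{sharp} exponent $L^2$ (rather than $L^{2+\epsilon}$) in dimension $d\ge 2$, which requires the path family to be spread out enough that the maximal congestion $\max_b\#\{(x,y):b\in\gamma_{x,y}\}$ stays at the average order $|\Lambda_L|L$ — easiest to guarantee by averaging over all monotone lattice paths and invoking the flow/congestion inequality, rather than committing to a single deterministic path per pair.
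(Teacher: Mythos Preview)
The paper does not give its own proof of this lemma; it simply cites Theorem~2 of \cite{LuYau} and moves on. Your argument is therefore not to be compared against anything in the paper, but it is a correct and standard alternative to Lu--Yau's original proof: the comparison (canonical path) method, reducing to the Bernoulli--Laplace chain, goes back to Quastel and to Diaconis--Saloff-Coste, and the three steps you outline are all valid. The $N$-uniformity indeed comes for free from the explicit Bernoulli--Laplace spectrum, the moving-particle telescoping and Cauchy--Schwarz give the $O(L)$ factor per pair, and the congestion count gives the remaining $O(L)$.

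You yourself identify the trade-off correctly: your route is cleaner here because the rates in the statement are constant (this is the spectral gap for the \emph{simple} exclusion process under the canonical ensemble), whereas Lu--Yau's recursive martingale decomposition is what one needs once the rates depend on the configuration, as in the model the paper actually studies. Since the paper only invokes \eqref{eq.Poingrand} as a black box (for coercivity in the proof of existence of the maximizer $u(\cdot,\Lambda,q)$, and implicitly elsewhere), either argument suffices.
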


\subsubsection{Modified Caccioppoli  inequality}
In PDE setting, for the elliptic equation 
\begin{align*}
	-\nabla \cdot \aa(x) \nabla v = 0, \qquad \text{ in } B_r,
\end{align*}
with uniform ellipticity $\vert \xi \vert^2 \leq \xi \cdot \aa \xi \leq \lambda \vert \xi \vert^2$, then $v$ satisfies the classical Caccioppoli inequality
\begin{align*}
	\fint_{B_{r/2}} \vert \nabla v\vert^2 \leq \frac{C(\lambda)}{r^2} \fint_{B_r} v^2. 
\end{align*}
This inequality is also one key step to establish the quantitative homogenization, but was missing in the literature for infinite particle systems. A modified Caccioppoli inequality is proved in \cite[Proposition 3.9]{bulk}, which differs from the classical one by an extra term on the {\rhs}; see \eqref{eq.Caccioppoli}. Here we present its version in Kawasaki dynamics. The conditional expectation operator will be used in the following paragraphs. Recall $\fil_\Lambda$ defined in Section~\ref{subsubsec.proba}. For $\Lambda \subset \Zd$ and $f \in L^1$, we define 
\begin{align}\label{eq.defA}
	\A_{\Lambda} f := \Er \Ll[f \, \vert \fil_\Lambda\Rr].
\end{align}
Concretely, using the restriction operator defined in \eqref{eq.Restriction}, it is calculated as 
\begin{align*}
	\A_{\Lambda} f (\eta) = \int_{\X} f(\eta \mres \Lambda + \eta' \mres \Lambda^c) \, \d \Pr(\eta').
\end{align*}
We usually denote by $\A_{L} f \equiv \A_{\Lambda_L} f$ for short.

\begin{proposition}[Modified Caccioppoli inequality]\label{prop.Caccioppoli}
	There exists ${\theta(d, \lambda) \in (0,1)}$, finite positive constants $C(d, \lambda)$, and $R_0(d, \lambda, \r)$ such that for every $L \ge R_0$ and ${u \in \mcl A(\Lambda_{3L})}$ (defined in \eqref{eq.defHarmonic}), we have
	\begin{align}
		\label{eq.Caccioppoli}
		\frac{1}{\vert \Lambda_L \vert} \bracket{\A_{L + 2\r} u (-\L_{\Lambda_L} \A_{L + 2\r} u)}_\rho \leq  \frac{C L^{-2}}{ \vert \Lambda_{3L} \vert} \bracket{u^2}_\rho  + \frac{\theta}{ \vert \Lambda_{3L} \vert} \bracket{u (-\L_{\Lambda_{3L}} u)}_\rho  .
	\end{align}
\end{proposition}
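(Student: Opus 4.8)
The plan is to mimic the classical Caccioppoli argument — testing the equation against a cutoff times the solution — while accounting for the fact that in the particle system the natural ``cutoff'' is not a multiplier but the conditional-expectation operator $\A_{L+2\r}$, which is a projection and does not commute with $\L$. First I would set $u\in\mcl A(\Lambda_{3L})$ and write $w:=\A_{L+2\r}u$; note that since the jump rates have range $\r$, $w$ is $\fil_{\Lambda_{L+3\r}}$-measurable, so $w-\A_{L+2\r}$(anything) is an admissible test function in the definition \eqref{eq.defHarmonic} of $\mcl A(\Lambda_{3L})$ provided its support lies in $\Lambda_{3L}^-$, which holds once $L\ge R_0$ with $R_0$ chosen so that $L+3\r<3L$. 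The harmonicity of $u$ then gives $\bracket{(w-\A_{L+2\r}v)(-\L_{\Lambda_{3L}}u)}_\rho=0$ for suitable $v$; the goal is to insert this orthogonality to replace $\bracket{w(-\L_{\Lambda_L}w)}_\rho$ by something controlled by $\bracket{u(-\L_{\Lambda_{3L}}u)}_\rho$ with a small prefactor $\theta<1$, plus an $L^{-2}\bracket{u^2}_\rho$ remainder coming from the ``gradient of the cutoff.''

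The key steps, in order: (i) express $\bracket{w(-\L_{\Lambda_L}w)}_\rho$ via the integration-by-parts identity \eqref{eq.IPP} as $\tfrac12\sum_{b\in\ovs{\Lambda_L}}\bracket{c_b(\pi_b w)^2}_\rho$, and use that $\pi_b$ commutes with $\A_{L+2\r}$ when $b$ is well inside $\Lambda_{L+2\r}$ — so $\pi_b w=\A_{L+2\r}(\pi_b u)$ there, and Jensen gives $\bracket{c_b(\pi_b w)^2}_\rho\le\bracket{c_b(\pi_b u)^2}_\rho$; (ii) handle the boundary bonds $b$ near $\partial\Lambda_{L+2\r}$ where $\pi_b$ and $\A_{L+2\r}$ fail to commute: here the Glauber-exchange inequality of Lemma~\ref{lem.GlauberExchange}, the modified comparison of $\pi_b w$ with Glauber derivatives, and the reverse Efron--Stein inequality let one bound the local Dirichlet energy of $w$ on the boundary layer by $L^{-2}\bracket{u^2}_\rho$ plus a small multiple of the Dirichlet energy of $u$ in an annular region; (iii) use the freedom in the annulus $\Lambda_{3L}\setminus\Lambda_{L+O(\r)}$ together with a dyadic/sliced Caccioppoli iteration (testing against $u$ times a sequence of nested cutoffs, as in \cite{bulk,AKMbook}) to absorb the annular Dirichlet energy into the full $\bracket{u(-\L_{\Lambda_{3L}}u)}_\rho$ at the cost of a constant $\theta$ that can be made $<1$; (iv) normalize by the volumes $\vert\Lambda_L\vert$, $\vert\Lambda_{3L}\vert$ (which differ only by the fixed factor $3^d$) to reach \eqref{eq.Caccioppoli}.

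The main obstacle I expect is step (ii): controlling the non-commutation of $\A_{L+2\r}$ with the Kawasaki operator on the boundary layer of $\Lambda_{L+2\r}$. Unlike the continuous or percolation settings, conditioning on $\fil_{\Lambda_{L+2\r}}$ destroys the local exchange structure exactly where the cutoff transitions, and a crude bound there would produce an $O(1)$ — not small — multiple of the local Dirichlet energy. The remedy is precisely why Lemma~\ref{lem.GlauberExchange} and the reverse Efron--Stein inequality were isolated as ``new inputs'': they let one trade a mis-sited Glauber derivative of $w$ for a Kawasaki gradient of $u$ paid by $\tfrac{1}{\sqrt{2\chi(\rho)}}$, and then the reverse Efron--Stein inequality converts the resulting sum of Glauber energies back into a genuine variance/Dirichlet-energy bound — crucially with the $\chi(\rho)$ factors cancelling, so the estimate is uniform in the density $\rho$. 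I would also need to track the range-$\r$ dependence carefully to fix $R_0(d,\lambda,\r)$, and to verify that the $L^{-2}$ gain (rather than $L^{-1}$) genuinely comes out of the path-summation of Glauber-exchange steps across a layer of width $O(\r)$ combined with the Poincaré-type slicing — this is where the quadratic scaling, essential for the renormalization in Section~\ref{sec.Rate}, must be confirmed.
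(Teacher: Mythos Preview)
Your proposal correctly identifies the key difficulty (the non-commutation of $\A$ with $\pi_b$ at the boundary) and the right tools to handle it (Lemma~\ref{lem.GlauberExchange} and the reverse Efron--Stein inequality, which the paper packages as Lemma~\ref{lem.variationBC}). The broad architecture --- test against a cutoff version of $u$, isolate interior and boundary contributions, then iterate --- is also right. But there is a structural gap in how you plan to obtain the $L^{-2}$ factor.

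You propose testing directly with $w=\A_{L+2\r}u$, so the layer where $\pi_b$ and $\A$ fail to commute has width $O(\r)$, and you then hope that ``path-summation of Glauber-exchange steps across a layer of width $O(\r)$'' will produce the quadratic $L^{-2}$ gain. This is where the approach falls short: a layer of fixed width $O(\r)$ cannot by itself generate any power of $L$. The paper instead introduces a \emph{smoothed} cutoff $\A_{s,\ell}f:=\tfrac{1}{\ell}\int_0^\ell \A_{s+t}f\,\d t$ with $\ell$ of order $L$, and tests the harmonic equation against $\tilde\A_{s,\ell}u=(\A_{s,\ell}\circ\A_{s,\ell})u$. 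The smoothing spreads the boundary contribution over a layer of width $\ell$; after applying Lemma~\ref{lem.variationBC} at each intermediate radius, the resulting terms telescope via the \emph{martingale orthogonal decomposition} $\sum_n\bracket{(\A_{s+n+2}u-\A_{s+n}u)^2}_\rho=\bracket{(\A_{s+\ell}u-\A_su)^2}_\rho$, and a Young inequality with parameter $\gamma=\ell$ then delivers exactly the $\ell^{-2}$ prefactor on $\bracket{u^2}_\rho$. This --- the thick smoothing layer combined with the $L^2$-isometry of the martingale increments --- is the actual source of the $L^{-2}$; a thin $O(\r)$ layer as you describe will not produce it.

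Your step (iii) iteration is still needed, but for a different reason than you state: the weak inequality obtained above has contraction factor $\theta'=\tfrac{10\lambda}{1+10\lambda}<1$, but after volume normalization by $\vert\Lambda_L\vert/\vert\Lambda_{3L}\vert$ this becomes $3^d\theta'$, which is not $<1$. The paper fixes this by taking $\ell=\delta L$ with $\delta$ small enough that $(1+2\delta)^d\theta'<1$, and iterating the weak estimate across the annulus $\Lambda_{3L}\setminus\Lambda_L$ in many thin shells --- this bootstrap, not absorption of annular Dirichlet energy per se, is what yields the final $\theta<1$.
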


Its proof is similar to \cite[Proposition 3.9]{bulk}, which can be summarized in the following three steps.
\begin{enumerate}
	\item Test the harmonic function ${u \in \mcl A(\Lambda_{3L})}$ with its cutoff version $\A_L u$.
	\item Obtain the $L^2$-term using the quadratic variation of the martingale $(\A_n u)_{n \in \N_+}$.
	\item Bootstrap the result from its weak version with a correct normalization volume factor.
\end{enumerate} 
The first and second step can be seen as the main difference between particle system and PDE setting, where we make the cutoff in order to reduce the influence of particles from the boundary, and we also need the nice $L^2$-isometry of martingale to recover the $L^2$ term of $u$.

In Kawasaki dynamics, the quadratic variation structure is less obvious compared to the setting of continuous configuration space in \cite{bulk}. Let us make some explicit calculation at first. It is clear that 
\begin{equation}\label{eq.AdInOut}
	\begin{split}
		\forall b \in (\Lambda_n)^*, \qquad &\pi_b \A_n f = \A_n \pi_b f, \\
		\forall b \in (\Lambda_n^c)^*, \qquad &\pi_b \A_n f = 0.
	\end{split}
\end{equation}
That is to say, the operators $\A_n$ and $\pi_b$ are commutative when the bond $b$ stays in $(\Lambda_n)^*$, and the influence is $0$ when $b$ is outside $\Lambda_n$. When $b \in (\Lambda_n, \Lambda_n^c)^*$, the situation is subtle as we will see the perturbation 
\begin{align*}
	\forall x \in \partial \Lambda_n,  y \notin \Lambda_n, y \sim x, \qquad \pi_{x,y} \A_n f(\eta) = \A_n f(\eta^{x,y}) - \A_n f(\eta). 
\end{align*}
Since we apply the conditional expectation, the information of $\eta_x$ is no longer accessible in $\A_n f(\eta^{x,y})$ and we have  
\begin{align*}
	\A_n f(\eta^{x,y}) = \A_n f(\eta \mres (\Lambda_n \setminus \{x\}) + \eta_y \delta_x). 
\end{align*}
Thus, near the boundary the Kawasaki operator is like resampling the state at $x$ and we have 
\begin{align}\label{eq.AdBC}
	\forall x \in \partial \Lambda_n,  y \notin \Lambda_n, y \sim x, \qquad \bracket{(\pi_{x,y} \A_n f)^2 }_\rho = 2\chi(\rho) \bracket{\Ll(\pi_x \A_n f \Rr)^2}_\rho.
\end{align}
By the spectral inequality \eqref{eq.spectralGlauber}, we know that 
\begin{align*}
	\chi(\rho)\sum_{x \in \partial \Lambda_n} \bracket{\Ll(\pi_x  \A_n f \Rr)^2}_\rho \geq \bracket{(\A_n f - \A_{n-2} f)^2}_{\rho}.
\end{align*}
The inequality is not on the desired direction, because we hope to give an upper bound for the boundary perturbation. For this reason, we would like to study how to control this Glauber derivative near the boundary at first.

\begin{lemma}\label{lem.variationBC}
	For $\A_n \equiv \A_{\Lambda_n} $ defined in \eqref{eq.defA} and $f \in L^2$, the following estimate holds 
	\begin{align}
		\sum_{b \in (\Lambda_n, \Lambda_n^c)^*} \bracket{(\pi_{b} \A_n f)^2}_\rho \leq 4 \Ll(\sum_{b \in (\Lambda_n, \Lambda_n^c)^*} \bracket{( \A_{n+2} \pi_{b} f)^2}_\rho +  \bracket{(\A_{n+2} f - \A_n f)^2}_\rho\Rr).
	\end{align}
\end{lemma}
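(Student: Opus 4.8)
The plan is to turn the boundary Kawasaki terms into Glauber derivatives, perform an ``exchange trick inside the conditional expectation'', and close with the reverse Efron--Stein inequality. Every bond $b=\{x,y\}\in(\Lambda_n,\Lambda_n^c)^*$ has inner endpoint $x\in\partial\Lambda_n$ and outer endpoint $y=x\pm e_i\notin\Lambda_n$; since $y$ is a neighbour of $x\in\partial\Lambda_n$, one has $y\in\Lambda_{n+2}$, hence $b\in(\Lambda_{n+2})^*$. Set $g:=\A_{n+2}f$, which is $\fil_{\Lambda_{n+2}}$-measurable. Then $\A_n f=\A_n g$ by the tower property (as $\fil_{\Lambda_n}\subseteq\fil_{\Lambda_{n+2}}$), $\A_{n+2}\pi_b f=\pi_b g$ by \eqref{eq.AdInOut}, and $\bracket{(\pi_b\A_n f)^2}_\rho=2\chi(\rho)\bracket{(\pi_x\A_n g)^2}_\rho$ by \eqref{eq.AdBC}. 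Thus it suffices to bound $\chi(\rho)\bracket{(\pi_x\A_n g)^2}_\rho$, summed over $b$, by $\sum_b\bracket{(\pi_b g)^2}_\rho$ and by $\bracket{(g-\A_n g)^2}_\rho=\bracket{(\A_{n+2}f-\A_n f)^2}_\rho$.

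For a fixed such $b$, split a configuration as $(\eta_x,\eta_y,\eta_s,\eta_t)$ with $\eta_s$ the coordinates in $\Lambda_n\setminus\{x\}$ and $\eta_t$ those in $(\Lambda_{n+2}\setminus\Lambda_n)\setminus\{y\}$, and expand $\pi_x\A_n g$ as the average over $\eta_y,\eta_t$ of $\pi_x g$. Inserting and subtracting the value of $g$ at $(\eta_x,\eta_y)=(1,0)$, which is the manipulation from the proof of Lemma~\ref{lem.GlauberExchange} but performed \emph{before} the averages over $\eta_y,\eta_t$ are taken, writes $\pi_x\A_n g$ as a sum of (i) a conditional average of the Kawasaki gradient $\pi_b g=\A_{n+2}\pi_b f$ across $b$, and (ii) a conditional average of the Glauber derivative $\pi_y g$ at the outer site $y$, itself already averaged over the remaining annulus coordinates $\eta_t$, i.e.\ of $\A_{\Lambda_n\cup\{y\}}(\pi_y g)$. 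Squaring, applying conditional Jensen to the two averages, and using the bookkeeping identity $\bracket{(\pi_b g)^2}_\rho=2\chi(\rho)$ times the common value of $(\pi_b g)^2$ on $\{\eta_x\neq\eta_y\}$, this should give, for each $b=\{x,y\}$,
\[
	\bracket{(\pi_b\A_n f)^2}_\rho \;\le\; 2\,\bracket{(\A_{n+2}\pi_b f)^2}_\rho \;+\; 4\,\chi(\rho)\,\bracket{\big(\A_{\Lambda_n\cup\{y\}}(\pi_y g)\big)^2}_\rho .
\]

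Now sum over $b\in(\Lambda_n,\Lambda_n^c)^*$. Since $\Lambda_n$ is a cube, each annulus site $y$ has at most one neighbour in $\Lambda_n$, so $b\mapsto y$ is injective; the first term sums to $2\sum_b\bracket{(\A_{n+2}\pi_b f)^2}_\rho$ with no combinatorial loss, and the second to $4\chi(\rho)\sum_{y}\bracket{(\A_{\Lambda_n\cup\{y\}}\pi_y g)^2}_\rho$, the sum ranging over a subset of $\Lambda_{n+2}\setminus\Lambda_n$. Finally apply the reverse Efron--Stein inequality to $g$ with the i.i.d.\ coordinates $(\eta_z)_{z\in\Lambda_{n+2}\setminus\Lambda_n}$, conditionally on $\fil_{\Lambda_n}$: since $\var_{(y)}[\A_{\Lambda_n\cup\{y\}}g]=\chi(\rho)\big(\A_{\Lambda_n\cup\{y\}}\pi_y g\big)^2$, taking $\Er$ gives $\chi(\rho)\sum_{y}\bracket{(\A_{\Lambda_n\cup\{y\}}\pi_y g)^2}_\rho\le\bracket{\var[g\,\vert\,\fil_{\Lambda_n}]}_\rho=\bracket{(\A_{n+2}f-\A_n f)^2}_\rho$. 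Combining the pieces yields the stated inequality with constant $4$.

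I expect step (ii), and in particular the fact that one must not collapse the conditional expectation too early, to be the main obstacle. The naive route, namely $\bracket{(\pi_x\A_n g)^2}_\rho\le\bracket{(\pi_x g)^2}_\rho$ by Jensen followed by moving the Glauber site from $x$ to $y$ via Lemma~\ref{lem.GlauberExchange}, produces the term $\chi(\rho)\sum_{y\in\Lambda_{n+2}\setminus\Lambda_n}\bracket{(\pi_y g)^2}_\rho$, which is genuinely \emph{not} bounded by $\bracket{(\A_{n+2}f-\A_n f)^2}_\rho$ with a dimension-free constant: the annulus has $\sim n^{d-1}$ sites, and taking for instance $g=\prod_{y\in S}\eta_y$ makes the ratio of order $\vert S\vert$ as $\rho\to 0$. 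Carrying out the exchange manipulation inside $\A_n$ ensures that what survives at the outer site is the fully averaged derivative $\A_{\Lambda_n\cup\{y\}}(\pi_y g)$ rather than the raw $\pi_y g$, and it is precisely $\sum_y$ of the squares of these averages that the reverse Efron--Stein inequality controls by the conditional variance. Getting this commutation right, with the correct branches in the $(\eta_x,\eta_y)$ expansion and the correct signs, is where the real work sits; the remaining ingredients are Jensen's inequality, \eqref{eq.AdBC}, and the cube geometry.
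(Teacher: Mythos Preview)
Your proof is correct and follows essentially the same route as the paper: reduce the boundary Kawasaki term to a Glauber derivative via \eqref{eq.AdBC}, apply the exchange trick of Lemma~\ref{lem.GlauberExchange} to swap the inner site $x$ for the outer site $y$, and close with the reverse Efron--Stein inequality on the annulus $\Lambda_{n+2}\setminus\Lambda_n$. The only cosmetic difference is ordering: the paper first uses Jensen to pass from $\A_n f$ to $\A_{\Lambda_n\sqcup\{y\}} f$ and then invokes Lemma~\ref{lem.GlauberExchange} as a black-box $L^2$ inequality, whereas you perform the pointwise decomposition from that lemma's proof inside the $\A_n$-average and apply Jensen afterward; both routes produce exactly the same per-bond inequality and your diagnosis of why the naive route fails is also on point.
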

\begin{proof}
	The {\lhs} can be expressed with \eqref{eq.AdBC}
	\begin{equation}\label{eq.variationStep0}
		\begin{split}
			\sum_{b \in (\Lambda_n, \Lambda_n^c)^*} \bracket{(\pi_{b} \A_n f)^2}_\rho &= \sum_{x \in \partial \Lambda_n,  y \notin \Lambda_n, y \sim x} \bracket{(\pi_{x,y} \A_n f)^2}_\rho \\
			&= \sum_{x \in \partial \Lambda_n,  y \notin \Lambda_n, y \sim x} 2\chi(\rho) \bracket{\Ll(\pi_x \A_n f\Rr)^2}_\rho .
		\end{split}
	\end{equation}
	We notice that 
	\begin{equation}\label{eq.variationStep1}
		\begin{split}
			\pi_x \A_n f(\eta) &= \A_n f(\eta^x) -  \A_n f(\eta) \\
			&= \int_{\{0,1\}} \Ll(\A_{\Lambda_n \sqcup \{y\}} f(\eta^x) -  \A_{\Lambda_n \sqcup \{y\}}  f(\eta)\Rr) \, \d \Pr(\eta_y)\\
			&= \int_{\{0,1\}} \pi_x \A_{\Lambda_n \sqcup \{y\}} f  \, \d \Pr(\eta_y).
		\end{split}
	\end{equation}
	Using Jensen's inequality, we have 
	\begin{align}\label{eq.addOne}
		\bracket{\Ll(\pi_x \A_n f\Rr)^2}_\rho \leq \bracket{\Ll(\pi_x \A_{\Lambda_n \sqcup \{y\}} f \Rr)^2}_\rho
	\end{align}
	Then we apply Lemma~\ref{lem.GlauberExchange} to $\A_{\Lambda_n \sqcup \{y\}} f$ and obtain that 
	\begin{align}\label{eq.variationStep2}
		\norm{\pi_x \A_{\Lambda_n \sqcup \{y\}} f}_{L^2} \leq  \norm{\pi_y \A_{\Lambda_n \sqcup \{y\}} f}_{L^2} + \frac{1}{\sqrt{2 \chi(\rho)}} \norm{\pi_{x,y}\A_{\Lambda_n \sqcup \{y\}} f}_{L^2}.
	\end{align}
	We put \eqref{eq.variationStep2} and \eqref{eq.addOne} back to \eqref{eq.variationStep0}, and obtain that 
	\begin{multline}\label{eq.variationStep2Plus}
		\sum_{b \in (\Lambda_n, \Lambda_n^c)^*} \bracket{(\pi_{b} \A_n f)^2}_\rho \\
		\leq \sum_{x \in \partial \Lambda_n,  y \notin \Lambda_n, y \sim x} 4\chi(\rho) \Ll(\bracket{\Ll(\pi_y \A_{\Lambda_n \sqcup \{y\}} f\Rr)^2}_\rho + \frac{1}{2 \chi(\rho)}\bracket{\Ll(\pi_{x,y}\A_{\Lambda_n \sqcup \{y\}} f\Rr)^2}_\rho \Rr).
	\end{multline}
	For the first term on {\rhs}, it is exactly the fluctuation on $\partial \Lambda_{n+2}$, so we apply the reverse Efron--Stein inequality (the first inequality of \eqref{eq.EfronStein}) to $\A_{n+2} f$ under the expectation over the $\{\eta_y\}_{y \in \partial \Lambda_{n+2}}$
	\begin{align*}
		\sum_{y \in \partial \Lambda_{n+2}} \chi(\rho) \bracket{\Ll(\pi_y \A_{\Lambda_n \sqcup \{y\}} f\Rr)^2}_{\rho, \partial \Lambda_{n+2}} &= \sum_{y \in \partial \Lambda_{n+2}} \var_{\rho, \partial \Lambda_{n+2}}[ \E_{\rho, \partial \Lambda_{n+2}}[\A_{n+2} f \,\vert \eta_y]] \\
		&\leq \var_{\rho, \partial \Lambda_{n+2}}[\A_{n+2} f ]\\
		&=\bracket{(\A_{n+2} f - \A_{n} f)^2}_{\rho, \partial \Lambda_{n+2}}.
	\end{align*} 
	Recall that $\bracket{\cdot}_{\rho, \partial \Lambda_{n+2}}$ is defined in Section~\ref{subsubsec.proba}, and note that $\chi(\rho)$ also appears similarly in \eqref{eq.spectralGlauber} in a reversed inequality. We also use the identity $\E_{\rho, \partial \Lambda_{n+2}}[\A_{n+2} f \,\vert \eta_y] = \A_{\Lambda_n \sqcup \{y\}} f$ here. Then we take the expectation of other variables to yield the estimate of the first term on the \rhs of \eqref{eq.variationStep2Plus}
	\begin{align*}
		\sum_{x \in \partial \Lambda_n,  y \notin \Lambda_n, y \sim x} 4\chi(\rho) \bracket{\Ll(\pi_y \A_{\Lambda_n \sqcup \{y\}} f\Rr)^2}_\rho  \leq 4\bracket{(\A_{n+2} f - \A_{n} f)^2}_\rho.
	\end{align*}
	For the second term on the \rhs of \eqref{eq.variationStep2Plus}, we apply \eqref{eq.AdInOut} and once again Jensen's inequality to obtain that 
	\begin{align*}
		\sum_{x \in \partial \Lambda_n,  y \notin \Lambda_n, y \sim x} \bracket{\Ll(\pi_{x,y}\A_{\Lambda_n \sqcup \{y\}} f\Rr)^2}_\rho &= \sum_{x \in \partial \Lambda_n,  y \notin \Lambda_n, y \sim x} \bracket{\Ll(\A_{\Lambda_n \sqcup \{y\}} \pi_{x,y} f\Rr)^2}_\rho\\ &\leq \sum_{b \in (\Lambda_n, \Lambda_n^c)^*} \bracket{( \A_{n+2} \pi_{b} f)^2}_\rho.
	\end{align*}
	This concludes the desired result.
\end{proof}
\begin{remark}
	The proof above is inspired by the example in \cite[Proposition~3.9]{bulk} and \cite[Lemma~6.1, eq.(B.2)]{jlqy}, while similar arguments also appeared before in the intermediate steps in Varadhan's characterization of closed form (see \cite[Lemma~4.15, Appendix~A.4]{kipnis1998scaling}).  
\end{remark}

Once we develop Lemma~\ref{lem.variationBC}, the rest of the proof follows that in \cite[Proposition 3.9]{bulk}.
\begin{proof}[Proof of Proposition~\ref{prop.Caccioppoli}] The proof can be divided into three steps.
	
	\textit{Step 1: construction of test function.} In the first step, we do some preparation. Our object is to regularize $u$ such that it becomes a function in $\mcl F_0(\Lambda_{3L}^-)$. A very natural idea is to apply the conditional expectation operator \eqref{eq.defA}, then the information outside $3L$ will be averaged. In order to make this cutoff more smooth, we propose the following regularized version  
	\begin{align}\label{eq.defAs}
		\A_{s,\ell} f := \frac{1}{\ell} \int_{0}^{\ell} \A_{s + t} f \, \d t.
	\end{align}
	Recall $\Lambda_L$ is defined for $L \in \R_+$, so $\A_{s+t}$ does for $s,t \in \R_+$. Its Kawasaki derivative can be calculated using \eqref{eq.AdInOut}
	\begin{align}
		\label{eq.ADerivative}
		\pi_b \A_{s,\ell} f  = \left\{ 
		\begin{array}{ll}
			\A_{s,\ell} \pi_b f & \text{if } b \in (\Lambda_s)^*;\\
			\frac{1}{\ell} \int_{\tau(b)+}^{s+\ell} \A_{t} \pi_b f \, \d t + \frac{(\tau_b - s)\wedge 1}{\ell} \pi_b \A_{\tau_b} f \mathbf{1}_{b \in \Ll(\Lambda_{\tau(b)}, \Lambda_{\tau(b)}^c\Rr)^*}  & \text{if } b \in \Ll(\Lambda_s, \Lambda_{s+\ell}^c\Rr)^*;\\
			0 & \text{if } b \in (\Lambda_{s+\ell}^c)^*.
		\end{array} \right. 
	\end{align}
	Here the notation $\tau(b)$ is defined as 
	\begin{align}\label{eq.stopTime}
		\tau(b) := \inf \{s \in \R_+ : b \in (\Lambda_s)^*\}.
	\end{align}
	From the definition of hypercube in Section~\ref{subsubsec.Geometry}, we know $b \in \Lambda_{\tau(b)+}$ but $b \notin \Lambda_{\tau(b)}$.
	
	We will also make use of the following operator 
	\begin{align}\label{eq.Atilde}
		\tilde{\A}_{s,\ell} f := (\A_{s,\ell} \circ  \A_{s,\ell} )(f) =  \frac{2}{\ell^2} \int_{0}^{\ell} (\ell - t) \A_{s+ t} f \, \d t,
	\end{align}
	The motivation comes from the following identity that 
	\begin{align}\label{eq.AtildeIdentity}
		\bracket{(\A_{s,\ell} f)^2}_{\rho} = \bracket{f (\tilde{\A}_{s,\ell} f)}_\rho = \frac{2}{\ell^2} \int_{0}^{\ell} (\ell - t) \bracket{(\A_{s+ t} f)^2}_\rho \, \d t.
	\end{align}
	Similar to \eqref{eq.ADerivative}, we also calculate its Kawasaki derivative as preparation
	\begin{multline}
		\label{eq.AtildeDerivative}
		\pi_b \tilde{\A}_{s,\ell}f =
		\\
		\left\{ \begin{array}{ll}
			\A_{s,\ell} \pi_b f & \text{if } b \in (\Lambda_s)^*;\\
			\frac{2}{\ell^2}  \int_{\tau(b)+}^{s+\ell} (s + \ell - t) \A_{t} \pi_b f \, \d t &\\
			\qquad \qquad + \frac{2}{\ell^2}  \int_{(\tau(b)-2) \vee s}^{\tau(b)} (s + \ell - t) \pi_b \A_{\tau(b)} f \mathbf{1}_{b \in \Ll(\Lambda_{\tau(b)}, \Lambda_{\tau(b)}^c\Rr)^*} & \text{if } b \in \Ll(\Lambda_s, \Lambda_{s+\ell}^c\Rr)^*;\\
			0 & \text{if } b \in (\Lambda_{s+\ell}^c)^*.\end{array} \right. 
	\end{multline}

	\textit{Step 2: weak Caccioppoli inequality.} We then prove the weak Caccioppoli inequality at first. Fix $\theta'(\lambda) := \frac{10\lambda}{1 + 10\lambda} \in (0,1)$; recall that $\lambda \geq 1$ is the constant in Hypothesis~\ref{hyp}. For every $L > 0$, $s \geq L+2\r, \ell > 1, s + \ell < 3L$ and ${u \in \mcl A(\Lambda_{s+\ell+2})}$, we claim that 
	\begin{multline}\label{eq.weakCaccioppoli}
		\ell^{-2}\bracket{(\A_{s} u)^2}_\rho + \bracket{\A_{s, \ell} u (-\L_{\Lambda_L} \A_{s, \ell} u) }_\rho \\ \leq \theta' \Ll(\ell^{-2}\bracket{(\A_{s+\ell}u)^2}_\rho + \bracket{u (-\L_{\Lambda_{s+\ell}} u) }_\rho\Rr).
	\end{multline} 
	
	The main idea is to use the conditional expectation $ \tilde{\A}_{s,\ell} u$ given in \eqref{eq.Atilde}, because it provides a cutoff that $\tilde{\A}_{s,\ell} u \in \F_0 (\Lambda_{s+\ell+2}^{-})$. Then we test it with $u$
	\begin{align*}
		0 = \bracket{\tilde{\A}_{s,\ell} u(-\L_{s+\ell+2} u)}_\rho = \sum_{b \in (\Lambda_{s+\ell+2})^*} \bracket{c_b (\pi_b \tilde{\A}_{s,\ell} u)(\pi_b u)}_{\rho} = \mathbf{I} + \mathbf{II} + \mathbf{III}.
	\end{align*}
	Here decompose the {\rhs} into the sum of three terms
	\begin{equation}\label{eq.WeakDecom1}
		\begin{split}
			\mathbf{I} &:= \sum_{b \in (\Lambda_{s - 2\r})^*} \bracket{c_b (\pi_b \tilde{\A}_{s,\ell} u)(\pi_b u)}_{\rho},  \\
			\mathbf{II} &:= \sum_{b \in (\Lambda_s)^* \setminus (\Lambda_{s - 2\r})^*}  \bracket{c_b (\pi_b \tilde{\A}_{s,\ell} u)(\pi_b u)}_{\rho}, \\
			\mathbf{III} &:= \sum_{b \in (\Lambda_{s+\ell})^* \setminus (\Lambda_{s})^*} \bracket{c_b (\pi_b \tilde{\A}_{s,\ell} u)(\pi_b u)}_{\rho}, 
		\end{split}
	\end{equation}
	and we have the estimate 
	\begin{align}\label{eq.WeakDecom2}
		\vert \mathbf{I} \vert \leq \vert \mathbf{II} \vert  + \vert \mathbf{III} \vert. 
	\end{align}
	
	The term $ \mathbf{I} $ is the easiest one to treat
	\begin{equation}\label{eq.WeakI}
		\begin{split}
			\mathbf{I} &= \sum_{b \in (\Lambda_{s - 2\r})^*} \bracket{c_b (\pi_b \tilde{\A}_{s,\ell} u)(\pi_b u)}_{\rho}\\
			&= \sum_{b \in (\Lambda_{s - 2\r})^*} \bracket{(\pi_b \A_{s,\ell} u) \A_{s,\ell}(c_b \pi_b u)}_{\rho}\\
			&= \sum_{b \in (\Lambda_{s - 2\r})^*} \bracket{c_b (\pi_b \A_{s,\ell} u)^2}_{\rho}.
		\end{split}
	\end{equation}
	From the first line to the second line, we use the fact $\tilde{\A}_{s,\ell} = \A_{s,\ell} \circ \A_{s,\ell}$ and the reversibility of $\A_{s,\ell}$. From the second line to the third line, we use the fact $c_b$ is $\fil_{\Lambda_s}$-measurable when $b \in  (\Lambda_{s - 2\r})^*$ and $ \A_{s,\ell} \pi_b =  \pi_b \A_{s,\ell}$ from \eqref{eq.AtildeDerivative}.
	
	The identity \eqref{eq.WeakI} does not apply directly to $ \mathbf{II} $, because for $b \in (\Lambda_s)^* \setminus (\Lambda_{s - 2\r})^*$, the jump rate $c_b$ is no longer $\fil_{\Lambda_s}$-measurable. Therefore, we make use of the exact expression \eqref{eq.Atilde}
	\begin{equation}\label{eq.WeakII}
		\begin{split}
			\vert \mathbf{II}\vert &=  \sum_{b \in (\Lambda_s)^* \setminus (\Lambda_{s - 2\r})^*}\Ll\vert \frac{2}{\ell^2} \int_{0}^{\ell} (\ell - t) \bracket{c_b (\pi_b \A_{s+ t} u) (\pi_b u) }_\rho \, \d t \Rr\vert \\
			&\leq  \sum_{b \in (\Lambda_s)^* \setminus (\Lambda_{s - 2\r})^*}\frac{\lambda}{\ell^2} \int_{0}^{\ell} (\ell - t) \bracket{(\pi_b \A_{s+ t} u)^2 +  (\pi_b u)^2 }_\rho \, \d t  \\
			&\leq  \sum_{b \in (\Lambda_s)^* \setminus (\Lambda_{s - 2\r})^*}\frac{2\lambda}{\ell^2} \int_{0}^{\ell} (\ell - t) \bracket{(\pi_b u)^2 }_\rho \, \d t \\
			& = \sum_{b \in (\Lambda_s)^* \setminus (\Lambda_{s - 2\r})^*}  \lambda  \bracket{(\pi_b u)^2}_{\rho}.
		\end{split}
	\end{equation}
	From the first line to the second line, we make use of Young's inequality and $c_b \leq \lambda$. From the second line to the third line,  $ \A_{s,\ell} \pi_b =  \pi_b \A_{s,\ell}$ and $\bracket{( \A_{s+ t} \pi_b u)^2 }_\rho \leq \bracket{ (\pi_b u)^2 }_\rho$ is also applied thanks to Jensen's inequality.
	
	The term  $ \mathbf{III} $ has two integrals following \eqref{eq.AtildeDerivative}, which can be noted respectively by $ \mathbf{III.1} $ and $ \mathbf{III.2} $. The first part is similar to \eqref{eq.WeakII}
	\begin{multline}\label{eq.WeakIII1}
		\begin{split}
			\vert  \mathbf{III.1}  \vert &\leq \sum_{b \in (\Lambda_{s+\ell})^* \setminus (\Lambda_{s})^*}\Ll\vert \frac{2}{\ell^2}  \int_{\tau(b)+}^{s+\ell} (s + \ell - t) \bracket{c_b (\A_{t} \pi_b u) (\pi_b u)}_\rho \, \d t \Rr\vert \\
			&\leq \sum_{b \in (\Lambda_{s+\ell})^* \setminus (\Lambda_{s})^*} \lambda  \bracket{(\pi_b u)^2}_{\rho}.
		\end{split}
	\end{multline}
	The second part is the key to make appear the $L^2$ term
	\begin{equation*}
		\begin{split}
			\vert \mathbf{III.2}\vert &\leq	\sum_{b \in (\Lambda_{s+\ell})^* \setminus (\Lambda_{s})^*}\Ll\vert \frac{2}{\ell^2}   \int_{(\tau(b)-2) \vee s}^{\tau(b)} (s + \ell - t) \bracket{c_b(\pi_b \A_{\tau(b)} u)(\pi_b u)}_\rho  \mathbf{1}_{b \in \Ll(\Lambda_{\tau(b)}, \Lambda_{\tau(b)}^c\Rr)^*}  \, \d t \Rr\vert \\
			&\leq \sum_{b \in (\Lambda_{s+\ell})^* \setminus (\Lambda_{s})^* } \frac{2 \lambda}{\ell}    \bracket{\gamma^{-1}(\pi_b \A_{\tau(b)} u)^2 + \gamma(\pi_b u)^2}_\rho  \mathbf{1}_{b \in \Ll(\Lambda_{\tau(b)}, \Lambda_{\tau(b)}^c\Rr)^*}.  
		\end{split}
	\end{equation*}
	From the first line to the second line, Young's inequality is applied with $\gamma > 0$ to be fixed, together with the trivial bound $(s + \ell - t) \leq \ell$ for $t$ defied above. We rearrange the sum, and obtain 
	\begin{equation}\label{eq.WeakIII2}
		\begin{split}
			\vert \mathbf{III.2}\vert &\leq \sum_{n=0}^{\lfloor \ell \rfloor - 2} \sum_{b \in  \Ll(\Lambda_{n}, \Lambda_{n}^c\Rr)^*} \frac{2 \lambda}{\ell}    \bracket{\gamma^{-1}(\pi_b \A_{n} u)^2 + \gamma(\pi_b u)^2}_\rho \\
			&\leq  \sum_{n=0}^{\lfloor \ell \rfloor - 2} \Ll(\frac{8 \lambda}{\gamma \ell} \bracket{(\A_{s+n+2} u - \A_{s+n} u)^2}_\rho + \sum_{b \in (\Lambda_n, \Lambda_n^c)^*} \Ll(\frac{8 \lambda}{\gamma \ell} + \frac{2 \gamma \lambda}{\ell} \Rr)\bracket{(\pi_{b} u)^2}_\rho \Rr)\\
			&=  \frac{8 \lambda}{\gamma \ell} \bracket{(\A_{s+\ell} u - \A_{s} u)^2}_\rho + \sum_{n=0}^{\lfloor \ell \rfloor - 2} \sum_{b \in (\Lambda_n, \Lambda_n^c)^*} \Ll(\frac{8 \lambda}{\gamma \ell} + \frac{2 \gamma \lambda}{\ell} \Rr)\bracket{(\pi_{b} u)^2}_\rho.
		\end{split}
	\end{equation}
	Here we insert the estimate from Lemma~\ref{lem.variationBC} in the second line to handle the perturbation of boundary term, and then make use of the orthogonal decomposition of martingale from the second line to the third line. We choose $\gamma = \ell$, and put \eqref{eq.WeakI}, \eqref{eq.WeakII}, \eqref{eq.WeakIII1} and \eqref{eq.WeakIII2} back to \eqref{eq.WeakDecom2}, which concludes that 
	\begin{multline}\label{eq.WeakToFill}
		\sum_{b \in (\Lambda_{s - 2\r})^*} \bracket{c_b (\pi_b \A_{s,\ell} u)^2}_{\rho} \\
		\leq 10\lambda \Ll(\ell^{-2}\bracket{(\A_{s+\ell} u - \A_{s} u)^2}_\rho +  \sum_{b \in (\Lambda_{s+\ell})^* \setminus (\Lambda_{s-2\r})^* } \bracket{c_b(\pi_{b} u)^2}_\rho\Rr).
	\end{multline}
	Then Widman's filling-hole argument applies by adding $10\lambda \sum_{b \in (\Lambda_{s - 2\r})^*} \bracket{c_b (\pi_b \A_{s,\ell} u)^2}_{\rho}$ on the two sides (Jensen's inequality is also applied to the {\rhs})
	\begin{multline}\label{eq.WeakToFill2}
		(1+10\lambda)\sum_{b \in (\Lambda_{s - 2\r})^*} \bracket{c_b (\pi_b \A_{s,\ell} u)^2}_{\rho} \\
		\leq 10\lambda \Ll(\ell^{-2}\bracket{(\A_{s+\ell} u - \A_{s} u)^2}_\rho +  \sum_{b \in (\Lambda_{s+\ell})^*} \bracket{c_b(\pi_{b} u)^2}_\rho\Rr).
	\end{multline}
	We note the martingale property of $(\A_s u)_{s \geq 0}$ and divide $(1+10\lambda)$ on the two sides to obtain \eqref{eq.weakCaccioppoli} with $\theta' = \frac{10\lambda}{1+10\lambda}$.
	
	\textit{Step 3: bootstrap.} When we take $L$ large enough, $\ell = L$, and $s = 2L$ in  \eqref{eq.weakCaccioppoli}, we obtain 
	\begin{align}\label{eq.sBad}
		\frac{1}{\vert \Lambda_L \vert}\bracket{\A_{2L} u (-\L_{\Lambda_L} \A_{2L} u) }_\rho \leq 3^d \theta' \Ll(\frac{L^{-2}}{\vert \Lambda_{3L} \vert}\bracket{(\A_{3L}u)^2}_\rho + \frac{1}{\vert \Lambda_{3L} \vert}\bracket{u (-\L_{\Lambda_{3L}} u) }_\rho\Rr).
	\end{align}
	The factor before the term $\bracket{(\A_{3L}u)^2}_\rho$ is correct, but an additional volume factor is added before $\bracket{u (-\L_{\Lambda_{3L}} u) }_\rho$ compared to the desired result \eqref{eq.Caccioppoli}. This is not good as we do not necessarily have  $3^d \theta' < 1$. However, if we choose carefully $s = (1 + \delta)L$ and $\ell = \delta L$, we obtain  
	\begin{multline}\label{eq.sGood}
		\frac{1}{\vert \Lambda_L \vert}\bracket{\A_{L} u (-\L_{\Lambda_L} \A_{L} u) }_\rho \\
		\leq (1+2\delta)^d \theta' \Ll(\frac{(\delta L)^{-2}}{\vert \Lambda_{(1+2\delta)L} \vert}\bracket{(\A_{(1+2\delta)L}u)^2}_\rho + \frac{1}{\vert \Lambda_{(1+2\delta)L} \vert}\bracket{u (-\L_{\Lambda_{(1+2\delta)L}} u) }_\rho\Rr).
	\end{multline}
	We can choose $\delta$ small such that $(1+2\delta)^d \theta' < 1$, and then iterate the Dirichlet energy term on the {\rhs}, such that the domain increases progressively to $\Lambda_{3L}$. See \cite[eq.(56)-(60)]{bulk} for details, since this step is an algebraic iteration and independent of model. 
\end{proof}

\subsection{Analytic tools on independent particles}\label{subsec.free}
In this part, we develop the analytic tools on independent particles. We denote by $\XX := \N^{\Zd}$ for the configuration space of independent particles on $\Zd$, which does not have constraint for the number of particles on every site. For $\teta \in \tilde \X$ such that $\teta_x \geq 1$, we define the jump operator 
\begin{align*}
	\teta^{x,y} := \teta - \delta_x + \delta_y,
\end{align*}
and 
\begin{align*}
	(\tilde \pi_{x,y} \tilde u)(\teta) := \tilde u (\teta^{x,y}) - \tilde u(\teta)
\end{align*}
for any function $\tilde u$ on $\tilde \X$.

The generator of the independent particles is 
\begin{align}\label{eq.defIRW}
	(\tilde \L \tilde u)(\teta) := \sum_{x \in \Zd} \teta_x \sum_{y \sim x} \tilde \pi_{x,y} \tilde u.
\end{align}
This generates a dynamic that every particle jumps independently with rate $1$ to the nearest neighbor site.

Fix an $\alpha > 0$, we denote by $\poi(\alpha)$ the Poisson distribution on $\N$ with mean $\alpha$ and $\PP_{\alpha} := \poi(\alpha)^{\otimes \Zd}$ the probability measure on $\XX$ which is stationary with respect to $\tilde \L$ in \eqref{eq.defIRW},  and $\bbracket{\cdot}_{\alpha}$ its associated expectation. We usually write $\teta \in \XX$ as a canonical random variable sampled under $\PP_{\alpha}$. We also denote by $\tilde \P_{\Lambda, N}$ and $\bbracket{\cdot}_{\Lambda, N}$ for the probability and expectation under the canonical ensemble, i.e. $N$ particles distributed independently and uniformly in $\Lambda$.

\subsubsection{Mecke's identity}
The first lemma is about Mecke's identity, which simplifies some expectation under $\bbracket{\cdot}_{\alpha}$ by adding one additional particle. This identity is inspired from the reference \cite[Theorem~4.1]{bookPoisson}.
\begin{lemma}[Mecke's identity]\label{lem.Mecke1}
	For any $\alpha > 0$, any $x\in\Zd$ and any measurable function $F : \tilde \X\to \R$. If $\tilde \eta_x F$ is integrable under $\P_{\alpha}$, then the following identity holds
	\begin{align}\label{eq.Mecke}
		\bbracket{\teta_x F(\teta)}_{\alpha} = \alpha \bbracket{F(\teta+\delta_x)}_{\alpha}.
	\end{align}
\end{lemma}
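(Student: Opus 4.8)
The plan is to prove Mecke's identity \eqref{eq.Mecke} by reducing it to a one-site statement about the Poisson distribution, since $\PP_{\alpha} = \poi(\alpha)^{\otimes \Zd}$ is a product measure. First I would split the configuration $\teta$ into its value at the site $x$ and the rest, writing $\teta = n\delta_x + \teta'$ where $n = \teta_x \in \N$ and $\teta' = \teta \mres (\Zd \setminus \{x\})$ is independent of $\teta_x$ under $\PP_{\alpha}$. Then the left-hand side becomes
\begin{align*}
	\bbracket{\teta_x F(\teta, x)}_{\alpha} = \sum_{n=0}^{\infty} e^{-\alpha}\frac{\alpha^n}{n!} \, n \, \bbracket{F(n\delta_x + \teta', x)}_{\alpha},
\end{align*}
where the remaining expectation is over $\teta'$ only. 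The main (in fact only) computational step is the elementary Poisson identity $n \cdot e^{-\alpha}\alpha^n/n! = \alpha \cdot e^{-\alpha}\alpha^{n-1}/(n-1)!$ for $n \geq 1$ (the $n=0$ term vanishes), which after reindexing $m = n-1$ gives
\begin{align*}
	\sum_{n=1}^{\infty} e^{-\alpha}\frac{\alpha^n}{n!} \, n \, \bbracket{F(n\delta_x + \teta', x)}_{\alpha} = \alpha \sum_{m=0}^{\infty} e^{-\alpha}\frac{\alpha^m}{m!} \, \bbracket{F((m+1)\delta_x + \teta', x)}_{\alpha}.
\end{align*}

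Recognizing that $(m+1)\delta_x + \teta'$ is exactly the configuration $(m\delta_x + \teta') + \delta_x$, the right-hand sum is precisely $\alpha \bbracket{F(\teta + \delta_x, x)}_{\alpha}$, which yields \eqref{eq.Mecke}. I would also note that the interchange of summation and expectation over $\teta'$ is justified by Fubini/Tonelli under the integrability hypothesis (first for $|F|$, then for $F$), and that since $x \in \Lambda$ is fixed the statement is genuinely one-dimensional in the relevant variable. There is no real obstacle here: the result is the discrete analogue of the classical Mecke equation for Poisson point processes, and the product structure of $\PP_{\alpha}$ together with the single combinatorial identity $n \cdot \poi(\alpha)(n) = \alpha \cdot \poi(\alpha)(n-1)$ does all the work. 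The only point requiring a word of care is the integrability/Fubini justification, which is routine given the stated hypothesis that $F$ is integrable under $\PP_{\alpha}$.
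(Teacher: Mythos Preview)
Your proposal is correct and follows essentially the same approach as the paper: both condition on (or equivalently, split off) the value $\teta_x$, apply the elementary Poisson identity $n\,e^{-\alpha}\alpha^n/n! = \alpha\,e^{-\alpha}\alpha^{n-1}/(n-1)!$, and reindex to recognize the shifted configuration $\teta + \delta_x$. The only cosmetic difference is that the paper writes the decomposition via conditional expectations $\bbracket{\,\cdot\,\vert\,\teta_x = k}_{\alpha}$, whereas you invoke the product structure explicitly to separate $\teta'$; these are the same computation.
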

\begin{proof}
	We make the calculation directly
	\begin{align*}
		\bbracket{\teta_x F(\teta)}_{\alpha} &= e^{-\alpha} \sum_{k=0}^\infty \frac{\alpha^k}{k!} \bbracket{k  F(\teta) \, \vert \teta_x = k}_{\alpha}\\
		&= \alpha e^{-\alpha}\sum_{k=1}^\infty \frac{\alpha^{(k-1)}}{(k-1)!} \bbracket{ F(\teta) \, \vert \teta_x = k}_{\alpha}\\
		&= \alpha e^{-\alpha}\sum_{k=1}^\infty \frac{\alpha^{(k-1)}}{(k-1)!} \bbracket{ F(\teta+\delta_x) \, \vert \teta_x = k-1}_{\alpha} \\
		&= \alpha \bbracket{F(\teta+\delta_x)}_{\alpha}.
	\end{align*}
\end{proof}

The calculation over the independent particle system has a close connection with the finite difference operator on lattice $\Zd$ or $\Td_L$. Given a function $\tilde u: \XX \to \R$ which only depends on the configuration in $\Lambda$, and using the expression $\teta \mres \Lambda = \sum_{i=1}^N \delta_{x_i}$, then we have the following canonical projection $u_N:\Lambda^N\rightarrow\R$ written as
\begin{align}\label{eq.IRWHighDim}
	\tilde u_N(x_1, x_2, \cdots, x_N) := \tilde u(\teta),
\end{align}
where $\teta \mres \Lambda = \sum_{i=1}^N \delta_{x_i}$.

Moreover, $(\tilde u_N)_{N\in\N_+}$ are glued together as a function $\tilde u$  on $\N^\Lambda$ if and only if $\tilde u_N$ is invariant under permutation for all $N \in \N_+$; see \cite[Lemma~A.1]{bulk} for similar discussions.

We state some more properties using the expression \eqref{eq.IRWHighDim}. To better treat the high dimensional function, we define the following notation for shorthand, 
\begin{align*}
	\fint_{\Lambda^N}\Ll(\cdot\Rr) := \frac{1}{\vert \Lambda \vert^N} \sum_{x_1, \cdots, x_N \in \Lambda}\Ll(\cdot\Rr) ,
\end{align*}
then using the notation \eqref{eq.IRWHighDim},  we observe
\begin{align}\label{eq.IRWHighDimIntegral}
	\bbracket{\tilde u}_{\Lambda, N} = \fint_{\Lambda^N} \tilde u_N.
\end{align}
For any integer $1 \leq i \leq N$ and $e \in U:=\{\pm e_1, \pm e_2, \cdots, \pm e_d\}$, the finite difference operator $\DD_{x_i,e}$ is defined for $\tilde u_N$ as 
\begin{align}\label{eq.FiniteDifference}
	(\DD_{x_i,e} \tilde u_N)(x_1, \cdots, x_N) := \tilde u_N(x_1, \cdots, x_i+e, \cdots, x_N) - \tilde u_N(x_1, \cdots, x_i, \cdots, x_N), 
\end{align}
which is commutative in the sense
\begin{align}\label{eq.commutativity}
	\forall 1 \leq i,j \leq N,  \forall e,e' \in U, \qquad \DD_{x_i, e} \DD_{x_j, e'} = \DD_{x_j, e'}\DD_{x_i, e}. 
\end{align}
Combing the canonical projection \eqref{eq.IRWHighDim}, the finite difference operator is related to the generator $\tilde \L$ in \eqref{eq.defIRW} by the following identity
\begin{align}\label{eq.IDGeneratorIRW}
	\tilde \L \tilde u(\teta) = \sum_{i=1}^N \sum_{e \in U} \DD_{x_i, e} \tilde u_N(x_1, \cdots, x_N).
\end{align}
Then a lot of analytic tools on Euclidean space can be applied to independent particles.

\subsubsection{$H^2$-estimate}
In the following paragraphs, we will recall $H^2$ estimate for independent particles (indeed identity), which will be used in the multiscale Poincar\'e estimate later in Proposition~\ref{prop.MPoincare}. The proof follows \cite[Lemma~B.19]{AKMbook} and \cite[Proposition~3.10]{bordas2018homogeneisation} after a careful review. We will also explain in detail in Remark~\ref{rmk.H2Challenge} the difficulty met when developing the counterpart for Kawasaki dynamics.

\begin{lemma}[Dimension-free $H^2$ estimate on torus]\label{lem.H2Torus}
	For any $L, N \in \N_+$ and any function $u, f : (\Td_L)^N \to \R$ satisfying 
	\begin{align}\label{eq.harmonicHigh}
		-\sum_{i=1}^N \sum_{e \in U} \DD_{x_i, e} u = f,
	\end{align}
	the following identity holds
	\begin{align}\label{eq.H2Torus}
		\fint_{(\Td_L)^N} 	\sum_{i,j=1}^N \sum_{e,e' \in U} (\DD_{x_j,e'}\DD_{x_i, e} u)^2 = 4 \fint_{(\Td_L)^N} f^2.
	\end{align}
\end{lemma}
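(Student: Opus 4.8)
The plan is to prove the identity \eqref{eq.H2Torus} by a direct summation-by-parts computation, exploiting the fact that on the torus $(\Td_L)^N$ all discrete integration-by-parts operations have no boundary terms, and that the discrete Laplacian-type operator $\sum_{i}\sum_{e} \DD_{x_i,e}$ is (up to sign conventions) self-adjoint with respect to the normalized counting measure $\fint_{(\Td_L)^N}$. The key algebraic fact to set up first is the adjoint relation: for each fixed $i$ and $e\in U$, summation over the torus gives $\fint \,(\DD_{x_i,e} g)\, h = \fint\, g\, (\DD_{x_i,-e} h)$, equivalently $\fint (\DD_{x_i,e}g) h = -\fint g (\DD_{x_i,e}^* h)$ where $\DD_{x_i,e}^*$ is the backward difference; I would phrase it so that the backward and forward differences are adjoint up to a shift, and note the commutativity \eqref{eq.commutativity} which lets me freely reorder all the operators appearing.

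The main computation is then to start from \eqref{eq.H2Torus}'s left-hand side, write $(\DD_{x_j,e'}\DD_{x_i,e}u)^2 = (\DD_{x_j,e'}\DD_{x_i,e}u)(\DD_{x_j,e'}\DD_{x_i,e}u)$, and move one copy of each difference operator onto the other factor via the adjoint relation, picking up a sign for each. After moving $\DD_{x_j,e'}$ and $\DD_{x_i,e}$ across, one obtains $\fint \sum_{i,j,e,e'} (\DD_{x_j,-e'}\DD_{x_i,-e}\DD_{x_j,e'}\DD_{x_i,e}u)\, u$, and by commutativity this regroups as $\fint\big(\sum_{j,e'}\DD_{x_j,-e'}\DD_{x_j,e'}\big)\big(\sum_{i,e}\DD_{x_i,-e}\DD_{x_i,e}\big) u \cdot u$. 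The crucial one-dimensional identity to insert here is $\sum_{e\in U}\DD_{x_i,-e}\DD_{x_i,e} = -\sum_{e\in U}\DD_{x_i,e} + (\text{second-difference terms that telescope})$; more precisely, pairing $e$ with $-e$, one checks $\DD_{x_i,-e}\DD_{x_i,e}u + \DD_{x_i,e}\DD_{x_i,-e}u$ relates the forward-backward composition to the operator $\sum_e \DD_{x_i,e}$ itself, because on a single coordinate $\sum_{e\in U}\DD_{x_i,-e}\DD_{x_i,e}$ is exactly the discrete Laplacian on $\Td_L$ and equals $\sum_{e\in U}\DD_{x_i,e}$ restricted to nearest-neighbor hops — here one must be careful that $U$ ranges over all $2d$ unit vectors so that forward and backward are both included, and the symmetric combination collapses. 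Carrying this through, $\sum_{i,e}\DD_{x_i,-e}\DD_{x_i,e}$ acting on $u$ becomes a constant multiple of the operator in \eqref{eq.harmonicHigh}; tracking the constant gives the factor $4 = 2\times 2$, one factor of $2$ from each of the two index blocks (from pairing $e\leftrightarrow -e$), and then \eqref{eq.harmonicHigh} replaces each block by $f$, yielding $\fint\, f\cdot (\text{something})\cdot u$. A short additional integration by parts (or a second application of \eqref{eq.harmonicHigh}) converts $\fint (\text{operator}\, u)\cdot(\text{operator}\, u)$ into $\fint f^2$ up to the bookkeeping constant.

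The step I expect to be the main obstacle is getting the constant exactly right, i.e. justifying the ``$4$'' rather than a $2$ or $2d$ or $(2d)^2$. This requires being scrupulous about the convention in \eqref{eq.harmonicHigh}: the sum is over $e\in U$ with $U$ the full set of $2d$ unit vectors, so $\sum_{e\in U}\DD_{x_i,e}$ is \emph{not} the discrete Laplacian but rather the Laplacian plus a drift-like discrepancy; however, the operator $\sum_{e\in U}\DD_{x_i,-e}\DD_{x_i,e}$ that emerges after the double summation-by-parts \emph{is} symmetric, and one has to verify the identity $\sum_{e\in U}\DD_{x_i,-e}\DD_{x_i,e}u = -\tfrac12\sum_{e\in U}(\DD_{x_i,-e}+\DD_{x_i,e})(\DD_{x_i,e}-\DD_{x_i,-e})u$-type rearrangement, or more cleanly, observe that after moving operators across in the \emph{symmetric} way (moving half of each), the left side of \eqref{eq.H2Torus} equals $\fint \big(\sum_{i,e}\DD_{x_i,e}u\big)\big(\sum_{j,e'}\DD_{x_j,e'}u\big)$ up to the factor, i.e. $\fint f\cdot f$ times $4$. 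I would handle this by doing the $d=1$, $N=1$ case explicitly on $\Td_L$ (a pure finite-difference computation with the circulant second-difference matrix), confirming the factor $4$ there, and then noting that the general case is a formal consequence of commutativity \eqref{eq.commutativity} and the product structure, so no new phenomenon appears. Everything else — the absence of boundary terms, the exchange of finite sums, Fubini on the finite product — is routine and I would state it in one line. As noted in the text preceding the lemma, the analogue fails for Kawasaki dynamics precisely because the generator there does not split into commuting one-particle pieces with clean adjoints, so I would make sure the proof visibly uses \eqref{eq.IDGeneratorIRW} and \eqref{eq.commutativity} as the two structural inputs that are special to independent particles.
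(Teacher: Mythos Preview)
Your approach is essentially the same as the paper's: integration by parts on the torus plus commutativity \eqref{eq.commutativity}. But you introduce an unnecessary obstacle by claiming that $\sum_{e\in U}\DD_{x_i,e}$ is ``not the discrete Laplacian but rather the Laplacian plus a drift-like discrepancy.'' This is false. Since $U$ contains all $2d$ unit vectors, pairing $e_k$ with $-e_k$ gives
\[
\sum_{e\in U}\DD_{x_i,e}u(x)=\sum_{k=1}^d\bigl[u(x+e_k)+u(x-e_k)-2u(x)\bigr],
\]
which \emph{is} the discrete Laplacian in the $i$-th coordinate; there is no drift. The one-line identity the paper actually uses is
\[
\sum_{e\in U}\DD_{x_i,e}=\tfrac12\sum_{e\in U}\DD_{x_i,-e}\DD_{x_i,e},
\]
obtained by the same pairing (the sign is irrelevant since everything is squared). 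With this in hand, \eqref{eq.harmonicHigh} reads $\tfrac12\sum_{i,e}\DD_{x_i,-e}\DD_{x_i,e}u=f$, hence
\[
4\fint f^2=\fint\Bigl(\sum_{i,e}\DD_{x_i,-e}\DD_{x_i,e}u\Bigr)\Bigl(\sum_{j,e'}\DD_{x_j,-e'}\DD_{x_j,e'}u\Bigr),
\]
and two applications of the adjoint relation $\fint(\DD_{x_i,e}g)h=\fint g(\DD_{x_i,-e}h)$ together with commutativity turn each cross term into $\fint(\DD_{x_i,e}\DD_{x_j,e'}u)^2$. The factor $4$ thus falls out automatically from the $\tfrac12$ in the operator identity; there is no need to check the $d=1,\,N=1$ case separately or to worry about $2d$ versus $(2d)^2$. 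The paper starts from $\fint f^2$ rather than the $H^2$ side, which makes the bookkeeping marginally cleaner, but that is cosmetic.
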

\begin{proof}
	It is easy to verify the identity 
	\begin{align*}
		\sum_{i=1}^N \sum_{e \in U} \DD_{x_i, e} = -\frac{1}{2} \sum_{i=1}^N \sum_{e \in U} \DD_{x_i, -e} \DD_{x_i, e},
	\end{align*}
	so \eqref{eq.harmonicHigh} is equivalent to 
	\begin{align*}
		-\frac{1}{2} \sum_{i=1}^N \sum_{e \in U} \DD_{x_i, -e} \DD_{x_i, e} u = f.
	\end{align*}
	We evaluate the $L^2$ sum of the two sides
	\begin{equation}\label{eq.L2Torus}
		\begin{split}
			\fint_{(\Td_L)^N} f^2 &= \frac{1}{4} \fint_{(\Td_L)^N} \Ll(\sum_{i=1}^N \sum_{e \in U} \DD_{x_i, -e}\DD_{x_i, e} u \Rr)^2\\
			&= \frac{1}{4} \fint_{(\Td_L)^N} \Ll(\sum_{i=1}^N \sum_{e \in U} \DD_{x_i, -e}\DD_{x_i, e} u \Rr)\Ll(\sum_{j=1}^N \sum_{e' \in U} \DD_{x_j, -e'}\DD_{x_j, e'} u \Rr).
		\end{split}
	\end{equation}
	On torus, it is easy to verify the integration by part formula for $u,v : (\Td_L)^N \to \R$
	\begin{align*}
		\forall 1 \leq i \leq N, e \in U, \qquad \fint_{(\Td_L)^N} (\DD_{x_i, e} u)v = \fint_{(\Td_L)^N} u(\DD_{x_i, -e} v). 
	\end{align*}
	We apply it and the commutativity \eqref{eq.commutativity} to the {\rhs} of \eqref{eq.L2Torus}
	\begin{align*}
		\fint_{(\Td_L)^N} \Ll( \DD_{x_i, -e}\DD_{x_i, e} u \Rr)\Ll(\DD_{x_j, -e'}\DD_{x_j, e'} u \Rr) 	
		&= \fint_{(\Td_L)^N} \Ll( \DD_{x_i, e} u \Rr)\Ll(\DD_{x_i, e} \DD_{x_j, -e'}\DD_{x_j, e'} u \Rr) \\
		&= \fint_{(\Td_L)^N} \Ll( \DD_{x_i, e} u \Rr)\Ll(\DD_{x_j, -e'} \DD_{x_i, e} \DD_{x_j, e'} u \Rr) \\
		&= \fint_{(\Td_L)^N} \Ll( \DD_{x_j, e'} \DD_{x_i, e} u \Rr)\Ll( \DD_{x_i, e} \DD_{x_j, e'} u \Rr) \\
		&= \fint_{(\Td_L)^N} \Ll( \DD_{x_i, e} \DD_{x_j, e'} u \Rr)^2.   
	\end{align*}
	We put it back to the {\rhs} of \eqref{eq.L2Torus} and conclude \eqref{eq.H2Torus}.
\end{proof}
\begin{remark}\label{rmk.H2Challenge}
	We give a more conceptual proof of $H^2$ estimate on torus, so one notices the challenges facing exclusion. Consider the following equation on $\mathbb{T}^N$
	\begin{align*}
		-\Delta u = f.
	\end{align*}
	Then we have the following calculation using the $L^2$ estimate for the two sides
	\begin{align*}
		\int_{\mathbb{T}^N} f^2  = \int_{\mathbb{T}^N} (-\Delta u)^2 = \int_{\mathbb{T}^N} \Ll(\sum_{i=1}^N \partial_{i} \partial_{i} u\Rr)^2 =  \sum_{i,j=1}^N \int_{\mathbb{T}^N} (\partial_{i} \partial_{i} u)(\partial_{j} \partial_{j} u). 
	\end{align*}
	Using the integration by part, then we have 
	\begin{align*}
		\int_{\mathbb{T}^N} (\partial_{i} \partial_{i} u)(\partial_{j} \partial_{j} u) = \int_{\mathbb{T}^N} (\partial_{i} \partial_{j} u)^2.
	\end{align*}
	Then we conclude that 
	\begin{align*}
		\int_{\mathbb{T}^N} f^2  = \sum_{i,j=1}^N \int_{\mathbb{T}^N} (\partial_{i} \partial_{j} u)^2.
	\end{align*}
	
	Lemma~\ref{lem.H2Torus} is just a discrete version of the above proof. If one hopes to recover a similar identity on Kawasaki dynamics for $u,f : \X \to \R$, such that $-\sum_{b \in (\Td_L)^*} \pi_b u = f$, then we also have $\pi_b = -\frac{1}{2}\pi_b \pi_b$ and integration by part formula. However, the main difficulty appears in the commutativity. The identity  
	\begin{align*}
		\pi_b \pi_{b'} = \pi_{b'} \pi_b,
	\end{align*}
	holds when $b \cap b' = \emptyset$. Otherwise, when $b,b'$ shares common endpoint, as the symmetry group is not Abelian, some exotic term will generate and pose a challenge.
\end{remark}

We then extend the result above to the discrete Poisson equation on cube $\Lambda_L$. Here we add the Neumann boundary condition \eqref{eq.Neumann}, and the indicator in \eqref{eq.H2Cube} excludes the second-order finite difference outside $\Lambda_L^+$.   
\begin{corollary}[Dimension-free $H^2$ estimate on cube]\label{cor.H2}
	For any $L, N \in \N_+$ and any function $u, f : (\Lambda_L)^N \to \R$ satisfying 
	\begin{align*}
		-\sum_{i=1}^N \sum_{e \in U} \DD_{x_i, e} u = f,
	\end{align*}
	with the Neumann boundary condition
	\begin{align}\label{eq.Neumann}
		\DD_{x_i, e} u \Ind{x_i \in \partial \Lambda_L, x_i + e \notin  \Lambda_L} = 0,
	\end{align}
	we have
	\begin{align}\label{eq.H2Cube}
		\fint_{(\Lambda_L)^N} 	\sum_{i,j=1}^N \sum_{e,e' \in U} (\DD_{x_j,e'}\DD_{x_i, e} u)^2 \Ind{x_i = x_j, e=e', x_i + 2e \notin \Lambda_L} = 4 \fint_{(\Lambda_L)^N} f^2.
	\end{align}
\end{corollary}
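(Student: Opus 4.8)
The plan is to reduce Corollary~\ref{cor.H2} to the torus identity \eqref{eq.H2Torus} of Lemma~\ref{lem.H2Torus} by an even reflection (periodization) of the solution. Given $u : (\Lambda_L)^N \to \R$ with $\sum_i\sum_{e\in U}\DD_{x_i,e}u = f$ and the Neumann condition \eqref{eq.Neumann}, I would extend every Cartesian component of every particle coordinate by even reflection across the two faces of $\Lambda_L$ perpendicular to it, producing a function $\hat u$ on $(\Td_{2L})^N$; since each reflection seam sits at a half-integer position, the reflection is compatible with the periodic identification and $\hat u$ is genuinely defined on the torus. The point of the \emph{even} reflection is that \eqref{eq.Neumann} is exactly what makes the first finite differences $\DD_{x_i,e}\hat u$ match across every seam (both sides vanish there), so that the Neumann discrete Laplacian of $u$ reflects to the full discrete Laplacian of $\hat u$; hence $\hat u$ solves $\sum_i\sum_{e\in U}\DD_{x_i,e}\hat u = \hat f$ on $(\Td_{2L})^N$, where $\hat f$ is the even reflection of $f$.

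Applying Lemma~\ref{lem.H2Torus} to $\hat u,\hat f$ on $\Td_{2L}$ then gives
\[
\fint_{(\Td_{2L})^N}\sum_{i,j=1}^N\sum_{e,e'\in U}(\DD_{x_j,e'}\DD_{x_i,e}\hat u)^2 = 4\fint_{(\Td_{2L})^N}\hat f^2 = 4\fint_{(\Lambda_L)^N}f^2,
\]
the last equality because the folding map $\Td_{2L}\to\Lambda_L$ is exactly $2^{d}$-to-$1$ on each particle coordinate and hence pushes the uniform measure to the uniform measure. It then remains to unfold the left-hand side over the $2^{dN}$ reflected copies of $(\Lambda_L)^N$ that tile $(\Td_{2L})^N$. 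For a stencil not straddling a seam, $(\DD_{x_j,e'}\DD_{x_i,e}\hat u)^2$ is, up to reflection, the corresponding squared second difference of $u$, and summing over the copies reproduces the genuine $H^2$ sum of $u$ in \eqref{eq.H2Cube}. For a stencil straddling a seam, a short case analysis using \eqref{eq.Neumann} shows that every \emph{mixed} second difference (distinct particle indices, or the same index with distinct directions) vanishes at the seam, whereas a \emph{pure} same-direction second difference $\DD_{x_i,e}\DD_{x_i,e}\hat u$ whose stencil leaves $\Lambda_L$, i.e.\ with $x_i+2e\notin\Lambda_L$, collapses under the even reflection to a first difference of $u$; these are precisely the terms flagged by $\Ind{x_i=x_j,\,e=e',\,x_i+2e\notin\Lambda_L}$ in \eqref{eq.H2Cube}. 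Rearranging and dividing by the appropriate combinatorial normalization yields the stated identity.

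The routine parts are the reflection algebra and the $L^2$-average computation; the delicate step is the exhaustive analysis of seam-crossing stencils — checking that all genuinely mixed ones are killed by \eqref{eq.Neumann}, that the only survivors are the pure same-direction second differences protruding from $\Lambda_L^+$, and that the multiplicities coming from the $2^{dN}$ copies match on both sides of \eqref{eq.H2Cube}. An essentially equivalent route, staying closer to the proof of Lemma~\ref{lem.H2Torus} and avoiding the torus altogether, is to rerun that argument directly on the cube: the two discrete integrations by parts and the commutation \eqref{eq.commutativity} now produce boundary contributions on the faces of $\Lambda_L$, and one verifies that \eqref{eq.Neumann} cancels all of them except those recorded by the indicator. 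Either way the entire discrepancy with Lemma~\ref{lem.H2Torus} lives in these boundary terms, and, as Remark~\ref{rmk.H2Challenge} notes, the scheme has no Kawasaki analogue because the corresponding commutation of Kawasaki operators fails.
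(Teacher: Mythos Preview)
Your approach is essentially the same as the paper's: both reduce to Lemma~\ref{lem.H2Torus} by an even (mirror) reflection of $u$ and $f$ across the faces of $\Lambda_L$ to obtain periodic functions on $(\Td_{2L})^N$, apply the torus identity there, and then fold back using that the $2^{dN}$ reflected copies carry equal mass. The paper works on a half-integer grid so that the reflection seams sit between lattice points, which makes the mirror map $\tilde u(x_1,\dots,x_N)=u(|x_1|,\dots,|x_N|)$ clean, and then simply records that the boundary-straddling second differences are accounted for by the Neumann condition and the mirror symmetry---exactly the seam analysis you spell out in more detail. Your alternative route (rerunning the two integrations by parts directly on the cube and tracking the boundary terms produced) is a valid variant the paper does not pursue, but it leads to the same bookkeeping.
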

\begin{proof}
	The main idea of proof is periodization. Only in this proof, we work on half integers. With a translation and scaling, we denote by $\cu$ the cube of size $L$ that $\cu=(\mathbb Z+\frac{1}{2})^d\cap[0,L]^d$, and set $\tilde \cu=(\mathbb Z+\frac{1}{2})^d\cap[-L,L]^d$. We still denote by $u,f: (\cu)^N \to \R$ and extend them to $\tilde u, \tilde f: (\tilde \cu)^N \to \R$ by mirror symmetry
	\begin{align}\label{eq.Mirror}
		x_1, \cdots, x_N \in (\tilde \cu)^N, \qquad \tilde u(x_1, \cdots, x_N) := u (\vert x_1 \vert, \cdots, \vert x_N \vert).
	\end{align}
	Next, we identify the opposite sides of the cube $[-L,L]^d$ together to get a torus, and view $\tilde\cu$ as a lattice torus $\mathbb T^d_{2L}$. Then functions $\tilde u, \tilde f$ on $\tilde \cu$ can be regarded as functions on $\mathbb T^d_{2L}$. Moreover, the Neaumann boundary condition and the mirror symmetry implies that on the whole $\mathbb T^d_{2L}$, we have
	\begin{align*}
		\sum_{i=1}^N \sum_{e \in U} \DD_{x_i, e} \tilde u = \tilde f.
	\end{align*}
	Therefore, Lemma~\ref{lem.H2Torus} applies to obtain
	\begin{align*}
		\fint_{(\Td_{2L})^N} 	\sum_{i,j=1}^N \sum_{e,e' \in U} (\DD_{x_j,e'}\DD_{x_i, e} \tilde u)^2 = 4 \fint_{(\Td_{2L})^N} \tilde f^2.
	\end{align*}
	Each side counts $2^d$ times in the integration over $\cu$, which gives us 
	\begin{align*}
		\fint_{\cu^N} 	\sum_{i,j=1}^N \sum_{e,e' \in U} (\DD_{x_j,e'}\DD_{x_i, e} \tilde u)^2 = 4 \fint_{\cu^N} \tilde f^2.
	\end{align*}
	We realize that the second-order derivative of $\tilde u$ near the boundary vanishes due to the Neumann boundary condition and the mirror symmetry, then we conclude \eqref{eq.H2Cube}.

\end{proof}

\subsubsection{Multiscale Poincar\'e inequality}

In this part, we introduce the notion of spatial average and use it to develop some kind of multiscale Poincar\'e inequality.

We define the gradient by adding one more particle 
\begin{equation}\label{eq.defGradientFree}
	\begin{split}
		(\partial_k \tilde u)(\teta, x) &:= \tilde u(\teta + \delta_{x+e_k}) - \tilde u(\teta + \delta_{x}),\\
		(\tna \tilde u)(\teta, x) &:= \Ll((\partial_1 \tilde u)(\teta, x), (\partial_2 \tilde u)(\teta, x), \cdots, (\partial_d \tilde u)(\teta, x)\Rr).
	\end{split}
\end{equation}
This is related to \eqref{eq.FiniteDifference} by Lemma \ref{lem.Mecke1} and is particularly useful when we pass the gradient to the exclusion dynamics in the next section.

Recalling the definition of the enlarged domain in \eqref{eq.defVertexLarge}, we define the filtration
\begin{align}\label{eq.defGTilde}
	\tilde \G_{\Lambda^+} :=\sigma\Ll(\sum_{x\in \Lambda^+}\teta_x, \{\teta_y, y\in (\Lambda^+)^c\}\Rr).
\end{align}
Using $\Z_{m,n}$ and $\Z_n$ defined in Section~\ref{subsubsec.Geometry}, we also define the spatial average operator
\begin{align}\label{eq.defSn}
	\S_{n}(\tna \tilde u)(\teta, x) := \sum_{z \in \Z_n}  \Ll(\frac{1}{\vert \cu_n \vert} \sum_{y \in z + \cu_n} \bbracket{(\tna \tilde u)(\teta, y) \, \vert \tilde \G_{(z+\cu_n)^+}}\Rr)\Ind{x \in z + \cu_n}. 
\end{align}
That is, this operator makes spatial average over the added particle and over the local configuration. The enlarged domain $(z+\cu_n)^+$ is needed to include all the sites where the particle in $(\tna \tilde u(\tilde\eta,x))_{x \in z+\cu_n}$ is added (see \eqref{eq.defGradientFree}). Then note that $\bbracket{\cdot \, \vert \tilde \G_{(z+\cu_n)^+}}_\alpha$ does not depend on $\alpha > 0$, so we drop the density $\alpha$. This leads to the multiscale Poincar\'e inequality.

\begin{proposition}[Multiscale Poincar\'e inequality]\label{prop.MPoincare}
	There exists a finite positive constant $C{=C(d)}$ such that for all bounded functions $\tilde u : \XX \to \R$ such that $\bbracket{\tilde u  \, \vert \tilde \G_{\cu_m^+}} = 0$, we have
	\begin{multline}\label{eq.multiscale}
		\bbracket{\frac{1}{\vert \cu_m \vert}\tilde u^2}^{\frac{1}{2}}_{\alpha} \leq C\alpha^\frac{1}{2}\frac{1}{|\cu_m|}\sum_{x\in \cu_m}\bbracket{\vert\tilde\nabla\tilde u (\tilde\eta,x)\vert^2}_\alpha^\frac{1}{2} 
		\\+C\alpha^{\frac{1}{2}}\sum_{n=0}^m 3^n \Ll( \frac{1}{\vert \Z_{m,n}\vert}\sum_{z \in \Z_{m,n}}\bbracket{\Ll\vert \S_{n}\tna \tilde u\Rr\vert^2(\teta, z)}_{\alpha}\Rr)^{\frac{1}{2}}.  
	\end{multline}
\end{proposition}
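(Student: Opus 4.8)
The plan is to follow the by-now standard induction-on-scales argument for the multiscale Poincar\'e inequality (as in \cite[Proposition~1.12]{AKMbook}), transplanted to the independent-particle system $\XX=\N^{\Zd}$, where the dimension-free $H^2$ estimate of Corollary~\ref{cor.H2} is available. First I would set up the one-step telescoping: for a dyadic-like tower of cubes $\cu_0\subset\cu_1\subset\cdots\subset\cu_m$, write $\tilde u-\bbracket{\tilde u\,\vert\tilde\G_{\cu_m^+}}$ as a telescoping sum of conditional-expectation increments $\bbracket{\tilde u\,\vert\tilde\G_{\cu_{n-1}^+}}-\bbracket{\tilde u\,\vert\tilde\G_{\cu_n^+}}$ over the scales $n=1,\dots,m$, take the $L^2(\PP_\alpha)$ norm, and use orthogonality of the increments (a martingale decomposition with respect to the decreasing filtration $(\tilde\G_{\cu_n^+})_n$) so that the squared norm splits into a sum over scales. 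This reduces the problem to estimating, at each scale $n$, the $L^2$ norm of one increment by the spatial average $\S_n\tna\tilde u$ at that scale, with the prefactor $3^n$ coming from the diameter of $\cu_n$ and the factor $\alpha^{1/2}$ entering through Mecke's identity (Lemma~\ref{lem.Mecke1}) when one passes from the ``adding-a-particle'' gradient $\tna$ to the intrinsic fluctuations of $\tilde u$ under $\PP_\alpha$.

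Next I would carry out the single-scale estimate, which is the heart of the matter. Fix a cube $z+\cu_n$ and work on the canonical slice: condition on $\tilde\G_{(z+\cu_n)^+}$, i.e.\ on the total number $N$ of particles in $(z+\cu_n)^+$ and the configuration outside. On this slice $\tilde u$ becomes a permutation-symmetric function $\tilde u_N$ on $(\cu_n)^N$ (modulo the translation), and via the identification \eqref{eq.IRWHighDim}, \eqref{eq.IDGeneratorIRW} the generator acts as $\sum_i\sum_e\DD_{x_i,e}$. I would solve the discrete Poisson/corrector problem $\sum_i\sum_e\DD_{x_i,e}w=\tilde u_N-\fint\tilde u_N$ with Neumann boundary conditions on $\cu_n$, apply the Neumann Poincar\'e inequality on the product cube $(\cu_n)^N$ (dimension $Nd$, diameter $3^n$) to bound $\fint w^2$, then apply Corollary~\ref{cor.H2} to control the full second-difference $L^2$ norm of $w$ by $\fint(\tilde u_N-\fint\tilde u_N)^2$, and finally use the resulting $H^2$-type regularity of $w$ together with a discrete integration by parts to replace the raw gradient of $w$ by its spatial average over $z+\cu_n$. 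Summing the increment estimates over the $z\in\Z_{m,n}$ and reinstating the density factor through Mecke gives \eqref{eq.multiscale}; the constant depends only on $d$ because every inequality used (Poincar\'e on the cube, $H^2$, telescoping) is either exactly dimension-free in $N$ or depends only on the lattice dimension $d$.

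The main obstacle I anticipate is precisely this last replacement step: passing from the pointwise gradient of the corrector $w$ at an added particle to the scale-$n$ spatial average $\S_n\tna\tilde u$ while keeping constants uniform in both $N$ and $\alpha$. This is where one genuinely needs the $H^2$ bound — bounding the difference between $\tna\tilde u$ and its spatial average over $z+\cu_n$ by $3^n$ times its second difference, all in $L^2$ — and where the bookkeeping of the enlarged domains $(z+\cu_n)^+$ (needed so that all sites where a particle can be added in $(\tna\tilde u(\teta,x))_{x\in z+\cu_n}$ lie inside the conditioning $\sigma$-algebra) becomes delicate; a mismatch of one lattice layer here would either break the martingale orthogonality or spoil the measurability in \eqref{eq.defSn}. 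A secondary technical point is the correct accounting of the Mecke factor: each application of \eqref{eq.Mecke} produces one power of $\alpha$, and one must check that exactly the advertised $\alpha^{1/2}$ survives after taking the square root, which is why the gradient $\partial_k$ in \eqref{eq.defGradientFree} is defined by adding a particle rather than by a difference of occupation variables. Once these two points are handled the argument is a faithful adaptation of the Euclidean proof, so I would not expect further surprises in the routine summation over scales.
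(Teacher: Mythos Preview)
Your proposal takes a structurally different route from the paper and contains a genuine gap. The paper does \emph{not} telescope $\tilde u$ itself via a martingale decomposition; it solves one \emph{global} corrector equation on $\cu_m$, namely $\sum_{x\in\cu_m}\teta_x\sum_{y\sim x}\tilde\pi_{x,y}\tilde w=\tilde u$ with Neumann boundary, writes $\bbracket{\tilde u^2}_\alpha=\alpha\sum_{x\in\cu_m}\bbracket{\tna\tilde u\cdot\tna\tilde w(\teta,x)}_\alpha$ via Mecke, and then telescopes the \emph{gradient of the corrector}:
\[
\tna\tilde w=(\tna\tilde w-\S_0\tna\tilde w)+\sum_{n=0}^{m-1}(\S_n\tna\tilde w-\S_{n+1}\tna\tilde w)+\S_m\tna\tilde w.
\]
At each scale one pairs $\S_n\tna\tilde u$ (which appears because $\S_n$ is a projection) against $\S_n\tna\tilde w-\S_{n+1}\tna\tilde w$, bounds the latter in $L^2$ by $3^n$ times the second differences of $\tilde w$ via a Poincar\'e inequality on the particle system, and closes back to $\bbracket{\tilde u^2}_\alpha^{1/2}$ using the $H^2$ identity of Corollary~\ref{cor.H2}. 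Dividing by $\bbracket{\tilde u^2}_\alpha^{1/2}$ produces the linear-in-$n$ sum in \eqref{eq.multiscale}.

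The gap in your plan is the single-scale estimate: you need $\|I_n\|\lesssim 3^n\alpha^{1/2}\|\S_n\tna\tilde u\|$, where $I_n$ is the martingale increment of $\tilde u$ between the scale-$(n-1)$ and scale-$n$ partition filtrations, and this bound is in general false. Take $\tilde u$ whose added-particle gradient $\tna\tilde u$ has zero spatial-and-ensemble average on every $z+\cu_n$ sub-cube (so $\S_n\tna\tilde u=0$) yet oscillates within those cubes; then $I_n\neq 0$ while your bound forces $I_n=0$. The inequality \eqref{eq.multiscale} couples scales --- shortfalls at one $n$ are absorbed by the other terms in the sum --- and this coupling is carried by the single global corrector $\tilde w$, not by local ones. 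Your local Poisson problems on each $z+\cu_n$ with right-hand side $\tilde u_N-\fint\tilde u_N$ recover at best the ordinary Poincar\'e bound involving the full Dirichlet energy $\|\tna\tilde u\|$, not its coarse average $\|\S_n\tna\tilde u\|$; the step ``replace the raw gradient of $w$ by its spatial average'' is exactly where the argument breaks, because on a single cube the $(\id-\S_n)$ component of $\tna\tilde u$ does not vanish against $\tna w$. To fix the proof, drop the martingale decomposition of $\tilde u$ and instead telescope $\tna\tilde w$ for the global corrector, as above.
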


\begin{proof}
	We are only interested in the case that $\tilde u$ is integrable under $\bbracket{\cdot}_{\alpha}$. The proof follows \cite[Proposition~3.5]{bulk} and we give a sketch here. By Jensen's inequality, we may replace $u$ by $\tilde\E_\alpha[u|\sigma(\tilde\eta_x)_{x\in\cu_m^+}]$ and assume $u$ is a local function on $\cu_m^+$. Let $\tilde w$ solve the following equation: 
	\begin{align*}
		-\sum_{x \in \cu_m} \teta_x \sum_{y\sim x} \tilde \pi_{x, y} \tilde w =- \tilde u, 
	\end{align*}
	in the sense of the Poisson equation with the same Neumann boundary condition \eqref{eq.Neumann} for $\tilde w$ in Corollary~\ref{cor.H2}. More precisely, we can fix the number of particles $N$ in $\cu_m^+$ and the information outside $\cu_m^+$. Then by the identity \eqref{eq.IDGeneratorIRW}, the equation is written in the form of  Corollary~\ref{cor.H2}, which is solvable recursively. This will give us the $H^2$ estimate for $\tilde w$ using the projection. We have 
	\begin{align*}
		\frac{1}{\vert \cu_m \vert}\bbracket{\tilde u^2}_{\alpha} &= \frac{1}{\vert \cu_m \vert}  \sum_{i=1}^d \sum_{x \in \cu_m}  \bbracket{\teta_x (\tilde \pi_{x, x+e_i} \tilde u)(\tilde \pi_{x, x+e_i} \tilde w)}_\alpha \\
		&= \frac{\alpha}{\vert \cu_m \vert} \sum_{x \in \cu_m} \bbracket{(\tna \tilde u)(\tna \tilde w)(\teta, x)}_\alpha.
	\end{align*}
	Here we use the Neumann boundary condition and Mecke's identity in Lemma~\ref{lem.Mecke1}. Then we add the local averages
	\begin{align*}
		\frac{1}{\vert \cu_m \vert}\bbracket{\tilde u^2}_{\alpha} &= \frac{\alpha}{\vert \cu_m \vert} \sum_{x \in \cu_m} \bbracket{(\tna \tilde u)(\tna \tilde w - \S_0 \tna \tilde w)(\teta, x)}_\alpha \\
		& \qquad +  \sum_{n=0}^{m-1} \frac{\alpha}{\vert \cu_m \vert} \sum_{x \in \cu_m} \bbracket{(\S_n \tna \tilde u)( \S_n \tna \tilde w - \S_{n+1} \tna \tilde w)(\teta, x)}_\alpha \\
		& \qquad +  \frac{\alpha}{\vert \cu_m \vert} \sum_{x \in \cu_m} \bbracket{(\S_m \tna \tilde u)( \S_m \tna \tilde w) (\teta, x)}_\alpha.
	\end{align*}
	Here we use the tower property of conditional expectations.
	
	We just focus on one scale, which gives us 
	\begin{align*}
		&\Ll\vert \frac{1}{\vert \cu_m \vert} \sum_{x \in \cu_m} \bbracket{(\S_n \tna \tilde u)( \S_n \tna \tilde w - \S_{n+1} \tna \tilde w)(\teta, x)}_\alpha \Rr\vert \\
		&= \Ll\vert \frac{1}{\vert \Z_{m,n}\vert} \sum_{z \in \Z_{m,n}} \bbracket{(\S_n \tna \tilde u)( \S_n \tna \tilde w - \S_{n+1} \tna \tilde w)(\teta, z)}_\alpha \Rr\vert\\
		&\leq \Ll( \frac{1}{\vert \Z_{m,n}\vert}\sum_{z \in \Z_{m,n}}\bbracket{\Ll\vert \S_{n}\tna \tilde u\Rr\vert^2(\teta, z)}_{\alpha}\Rr)^{\frac{1}{2}} \Ll( \frac{1}{\vert \Z_{m,n}\vert}\sum_{z \in \Z_{m,n}}\bbracket{\Ll\vert \S_n \tna \tilde w - \S_{n+1} \tna \tilde w\Rr\vert^2(\teta, z)}_{\alpha}\Rr)^{\frac{1}{2}}\\
		&\leq C \alpha^{-\frac{1}{2}} 3^n \Ll( \frac{1}{\vert \Z_{m,n}\vert}\sum_{z \in \Z_{m,n}}\bbracket{\Ll\vert \S_{n}\tna \tilde u\Rr\vert^2(\teta, z)}_{\alpha}\Rr)^{\frac{1}{2}}\bbracket{\frac{1}{\vert \cu_m \vert}\tilde u^2}^{\frac{1}{2}}_{\alpha}.
	\end{align*}
	From the third line to the fourth line, we use the Poincar\'e inequality of independent particles for the term $\Ll\vert \S_n \tna \tilde w - \S_{n+1} \tna \tilde w\Rr\vert^2$, and this will give the factor $3^n$. The output is the second-order derivatives of $\tilde w$, which will be bounded by $\bbracket{\tilde u^2}_{\alpha}$ using the dimension-free $H^2$ estimate in Corollary~\ref{cor.H2}. Here the factor $\alpha^{-\frac{1}{2}}$ comes from another application of Mecke's identity \eqref{eq.Mecke}, and this concludes \eqref{eq.multiscale}.
\end{proof}

\section{Coarse-grained lifting}\label{sec.coarse}
This part introduces the key technique to handle the constraint of particle numbers in Kawasaki dynamics, which is the coarse-grained lifting to independent particles. Let  $\teta \in \XX = \N^{\Zd}$ stand for the configuration of independent particles, and $\eta \in \X = \{0,1\}^{\Zd}$ for the configuration with exclusion. We aim to embed $\X$ into $\XX$, and a natural idea is to define the following projection $[] : \XX \to \X$ that 
\begin{align}\label{eq.defCoarsen}
	\forall x \in \Zd, \qquad [\teta]_x := \Ind{\teta_x \geq 1}.
\end{align}
Therefore, $[\teta]$ only indicates whether the site is occupied, but does not care about the exact number of particles. This projection operator also induces an extension for every function $u:\X \to \R$  by pull-back  that
\begin{align}\label{eq.CoarsenFunction}
	[u]:\XX \to \R, \qquad [u](\teta) := u([\teta]),
\end{align}
and we call $[u]$ the coarsened function of $u$. In the following paragraphs, we will use $[\teta]$ to represent the configuration with exclusion, and explore several identities under this projection/coarsen.

\subsection{Grand canonical ensemble} Let the configuration of independent particles  $\teta$ follow the law $\PP_\alpha$, which is independent Poisson distribution of parameter $\alpha >0$. In order to make $[\teta]$ of the same law as sampled from $\P_\rho$, we make a specific choice between the parameters such that $e^{-\alpha} = 1-\rho$, i.e.
\begin{align}\label{eq.CouplePara}
	\forall \rho \in (0,1), \qquad \alpha(\rho) := - \log(1-\rho).
\end{align}
Under this specific choice of parameter, we can see the coarsen function $[u]$ as a \emph{lift} of $u$ thanks of the following proposition.
\begin{proposition}[Coarsen-grained lifting]\label{prop.coarsen}
	Given $\rho \in (0,1)$ and $\alpha(\rho)$ defined as \eqref{eq.CouplePara} and $\teta$ sampled from $\PP_{\alpha(\rho)}$, then $[\teta] \in \X$ follows the law $\P_{\rho}$. As a consequence, for every $u : \X \to \R$ integrable under $\P_{\rho}$, its coarsen function satisfies 
	\begin{align}\label{eq.IdCoarsen}
		\bbracket{[u]}_{\alpha(\rho)} = \bracket{u}_{\rho}.
	\end{align}
\end{proposition}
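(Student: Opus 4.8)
The plan is to establish the claimed distributional identity first, and then deduce the expectation identity as an immediate corollary. For the distributional part, I would show that if $\teta$ is sampled from $\PP_{\alpha}$ with $\alpha = \alpha(\rho) = -\log(1-\rho)$, then $[\teta]$ has law $\P_{\rho} = \operatorname{Ber}(\rho)^{\otimes \Zd}$. Since $\PP_{\alpha}$ is a product measure over the sites $x \in \Zd$, and the coarsening map $[\cdot]$ defined in \eqref{eq.defCoarsen} acts coordinatewise (each $[\teta]_x = \Ind{\teta_x \ge 1}$ depends only on $\teta_x$), the pushforward measure is also a product measure over $\Zd$. So it suffices to check the single-site marginal. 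First I would compute, for a fixed site $x$, that under $\teta_x \sim \poi(\alpha)$ we have $\PP_{\alpha}[[\teta]_x = 0] = \PP_{\alpha}[\teta_x = 0] = e^{-\alpha}$, and therefore $\PP_{\alpha}[[\teta]_x = 1] = 1 - e^{-\alpha}$. The choice $e^{-\alpha} = 1-\rho$ from \eqref{eq.CouplePara} then gives $\PP_{\alpha}[[\teta]_x = 0] = 1-\rho$ and $\PP_{\alpha}[[\teta]_x = 1] = \rho$, which is exactly $\operatorname{Ber}(\rho)$.

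Once the single-site marginals match, the product structure of both $\PP_{\alpha(\rho)}$ and $\P_{\rho}$ forces the full laws to agree: for any finite $\Lambda \subset \Zd$ and any $\omega \in \{0,1\}^{\Lambda}$, the cylinder probability factorizes as $\prod_{x \in \Lambda} \PP_{\alpha(\rho)}[[\teta]_x = \omega_x] = \prod_{x \in \Lambda} \P_{\rho}[\eta_x = \omega_x]$. Hence the pushforward of $\PP_{\alpha(\rho)}$ under $[\cdot]$ coincides with $\P_{\rho}$ on all cylinder events, and by a standard monotone class / Kolmogorov extension argument on the two product spaces it coincides as a measure on $(\X, \fil)$. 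For the second assertion, given any $u : \X \to \R$ integrable under $\P_{\rho}$, the coarsened function $[u](\teta) = u([\teta])$ from \eqref{eq.CoarsenFunction} is precisely the composition $u \circ [\cdot]$, so by the change-of-variables formula for pushforward measures we obtain
\begin{align*}
	\bbracket{[u]}_{\alpha(\rho)} = \int_{\XX} u([\teta]) \, \d \PP_{\alpha(\rho)}(\teta) = \int_{\X} u(\eta) \, \d ([\cdot]_* \PP_{\alpha(\rho)})(\eta) = \int_{\X} u(\eta) \, \d \P_{\rho}(\eta) = \bracket{u}_{\rho},
\end{align*}
which is \eqref{eq.IdCoarsen}. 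The integrability of $[u]$ under $\PP_{\alpha(\rho)}$ follows from the same change of variables applied to $|u|$.

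I do not expect any serious obstacle here; this is essentially a bookkeeping statement about product measures and pushforwards. The only mild point of care is the passage from agreement on cylinder events to agreement of the full infinite-product laws, but this is entirely standard (both measures are uniquely determined by their finite-dimensional marginals via Kolmogorov's extension theorem). It may also be worth recording, for use later in the section, the slightly finer fact that the coarsening map restricted to a finite box $\Lambda$ intertwines $\PP_{\alpha(\rho), \Lambda}$ with $\P_{\rho, \Lambda}$, which follows by the identical argument; this is what will make the lifting compatible with the conditional expectation operators and the local corrector constructions downstream.
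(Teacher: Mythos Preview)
Your proposal is correct and follows essentially the same approach as the paper: compute the single-site marginal $\PP_{\alpha(\rho)}[[\teta]_x=0]=e^{-\alpha(\rho)}=1-\rho$, invoke the product structure to conclude $[\teta]\sim\P_\rho$, and then deduce \eqref{eq.IdCoarsen} by change of variables. The paper's proof is simply terser, omitting the explicit cylinder-set and Kolmogorov extension justification that you spell out.
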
 
\begin{proof}
	With the choice of the parameter \eqref{eq.CouplePara} and the definition of the projection operator \eqref{eq.defCoarsen}, we have 
	\begin{align*}
		\forall x  \in \Zd, \qquad \PP_{\alpha(\rho)}[[\teta]_x = 0] &= \PP_{\alpha(\rho)}[\teta_x = 0] = e^{-\alpha(\rho)} = 1-\rho,\\
		\PP_{\alpha(\rho)}[[\teta]_x = 1] &= \PP_{\alpha(\rho)}[\teta_x \geq 1] = 1 - \PP_{\alpha(\rho)}[\teta_x = 0] = \rho.
	\end{align*}
	Therefore, $[\teta]_x$ follows the Bernoulli law with parameter $\rho$. Since $(\teta_x)_{x \in \Zd}$ are i.i.d. random variables, $[\teta]$ has the same law as $\P_\rho$. Combing this fact and  \eqref{eq.CoarsenFunction}, the identity \eqref{eq.IdCoarsen} is a direct corollary
	\begin{align*}
		\bbracket{[u]}_{\alpha(\rho)} = \bbracket{u([\teta])}_{\alpha(\rho)} = \bracket{u(\eta)}_{\rho}.
	\end{align*}
\end{proof}

\bigskip

Although we can couple the static configuration between the two systems and obtain the nice identity \eqref{eq.IdCoarsen}, similar result cannot be extended to the Dirichlet energy and we cannot expect a similar identity like 
\begin{align}\label{eq.IdWrong}
	\sum_{x \in \Lambda}  \sum_{y \in \Lambda, y \sim x}\bbracket{\teta_x (\tilde \pi_{x,y}[u])^2}_{\alpha(\rho)} = C(\rho)\sum_{x,y \in \Lambda, x \sim y} \bracket{(\pi_{x,y}u)^2}_{\rho}.
\end{align}
On the other hand, the coarse-grained lifting can be very useful when evaluating the spatial average of the gradient. We establish the following identity, which can be used as the gradient coupling. We recall the definition of the tangent field $\nabla_{x, e_i}$ for Kawasaki dynamics defined in \eqref{eq.defKawaTagent}.
\begin{proposition}[Gradient coupling]\label{prop.IdentityGradient}
	For every $\rho \in (0,1), \Lambda \subset \Zd$ and $u:\X \to \R$, the following identity holds for every $i \in \{1, \cdots, d\}$
	\begin{align}
		\sum_{x \in \Lambda} \bbracket{\teta_x \tilde \pi_{x,x+e_i} [u]}_{\alpha(\rho)} = \frac{\alpha(\rho)}{2 \rho} \sum_{x \in \Lambda} \bracket{\nabla_{x, e_i} u}_\rho .
	\end{align}
\end{proposition}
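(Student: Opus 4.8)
The plan is to strip off the weight $\teta_x$ using Mecke's identity, recognize the resulting expression as an expectation under the Bernoulli measure $\Pr$ via the coarse-grained lifting, and finally evaluate both sides by conditioning on the configuration away from the bond $\{x,x+e_i\}$. Throughout, $u$ is assumed integrable under $\Pr$ so that Mecke's identity and Fubini apply.

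Fix $i$ and $x\in\Lambda$. Applying Lemma~\ref{lem.Mecke1} with $F(\teta,x):=(\tilde\pi_{x,x+e_i}[u])(\teta)$ (this quantity is multiplied by $\teta_x$, so its value when $\teta_x=0$ is irrelevant, and on $\{\teta_x+1\ge 1\}$ it is defined) gives
\[
\bbracket{\teta_x\,\tilde\pi_{x,x+e_i}[u]}_{\alpha(\rho)}=\alpha(\rho)\,\bbracket{(\tilde\pi_{x,x+e_i}[u])(\teta+\delta_x)}_{\alpha(\rho)}.
\]
Since $(\teta+\delta_x)^{x,x+e_i}=\teta+\delta_{x+e_i}$ and $[u](\cdot)=u([\cdot])$, the integrand equals $u([\teta+\delta_{x+e_i}])-u([\teta+\delta_x])$. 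The key observation is that $[\teta+\delta_z]$ is a \emph{deterministic function of $[\teta]$}: it equals $[\teta]$ with the coordinate at $z$ forced to be $1$, because $(\teta+\delta_z)_z=\teta_z+1\ge1$ while all other coordinates are unchanged under $\mu\mapsto(\mathbf 1_{\mu_y\ge1})_y$. Writing $\eta^{z,+}$ for the configuration obtained from $\eta\in\X$ by setting the value at $z$ to $1$, Proposition~\ref{prop.coarsen} then yields
\[
\bbracket{(\tilde\pi_{x,x+e_i}[u])(\teta+\delta_x)}_{\alpha(\rho)}=\bracket{u(\eta^{x+e_i,+})-u(\eta^{x,+})}_{\rho}.
\]

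It remains to prove $\bracket{u(\eta^{x+e_i,+})-u(\eta^{x,+})}_{\rho}=\tfrac{1}{2\rho}\bracket{\nabla_{x,e_i}u}_{\rho}$ and to sum over $x\in\Lambda$. For this I would condition on $\fil_{\Zd\setminus\{x,x+e_i\}}$: under $\Pr$ the values $\eta_x,\eta_{x+e_i}$ are then independent $\mathrm{Ber}(\rho)$ variables, and if $\zeta^{(a,b)}$ denotes the conditioned configuration with values $(a,b)$ at $(x,x+e_i)$, a short case analysis over the four possibilities for $(\eta_x,\eta_{x+e_i})$ shows that the conditional expectation of $u(\eta^{x+e_i,+})-u(\eta^{x,+})$ equals $(1-\rho)\bigl(u(\zeta^{(0,1)})-u(\zeta^{(1,0)})\bigr)$, while, using the identity \eqref{eq.IdTagent}, the conditional expectation of $\nabla_{x,e_i}u$ equals $2\rho(1-\rho)\bigl(u(\zeta^{(0,1)})-u(\zeta^{(1,0)})\bigr)$. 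The factors match; taking expectations, multiplying by $\alpha(\rho)$, and summing over $x\in\Lambda$ gives the claim. The only genuinely delicate point is the observation in the second paragraph that $[\teta+\delta_z]$ depends on $[\teta]$ alone, which is precisely what allows the transfer from $\PP_{\alpha(\rho)}$ to $\Pr$; the remaining two-site computation is elementary bookkeeping of the signs in \eqref{eq.IdTagent}.
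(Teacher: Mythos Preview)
Your proof is correct and follows essentially the same route as the paper's. The paper packages your ``key observation'' $[\teta+\delta_z]=[\teta]\vee\delta_z$ as a separate Lemma~\ref{lem.ChangeVariable}, and your two-site conditional computation is precisely the content of Lemma~\ref{lem.Mecke2} (which rewrites $\bracket{\nabla_{x,e_i}u}_\rho=2\rho\bigl(\bracket{u\mid\eta_{x+e_i}=1}_\rho-\bracket{u\mid\eta_x=1}_\rho\bigr)$ and is proved by the same case analysis); otherwise the arguments are identical.
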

Proposition~\ref{prop.IdentityGradient} is the result of the following lemmas. We state them separately as they can be useful in other proofs. The first lemma is a similar version of Mecke's identity in Kawasaki dynamics. Here, instead of adding a particle, the added particle is understood as ``forcing the site occupied".
\begin{lemma}[Mecke's identity in Kawasaki dynamics]\label{lem.Mecke2}
	For every $\rho \in (0,1)$ and $u:\X \to \R$, the following identity holds
	\begin{align}\label{eq.GradientKawasaki}
		\bracket{\nabla_{x, e_i} u}_\rho = 2 \rho \Ll(\bracket{u \,\vert \eta_{x+e_i} = 1}_\rho - \bracket{u \,\vert \eta_x = 1}_\rho\Rr).
	\end{align}
\end{lemma}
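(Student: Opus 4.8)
### Proof plan for Lemma~\ref{lem.Mecke2}

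The plan is to prove the identity \eqref{eq.GradientKawasaki} by a direct computation that isolates the only configurations for which the tangent field $\nabla_{x,e_i}u = (\pi_{x,x+e_i}u)(\pi_{x,x+e_i}\ell_{e_i})$ is non-zero. By the identity \eqref{eq.IdTagent} recalled in the excerpt, we have
\begin{align*}
	\nabla_{x,e_i}u(\eta) = \bigl(u(\eta^{x,x+e_i}) - u(\eta)\bigr)(\eta_x - \eta_{x+e_i}),
\end{align*}
so the summand vanishes unless $(\eta_x,\eta_{x+e_i}) \in \{(1,0),(0,1)\}$. The first step is therefore to split $\bracket{\nabla_{x,e_i}u}_\rho$ according to these two events, which each have probability $\rho(1-\rho) = \chi(\rho)$ under $\P_\rho$. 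On the event $\{(\eta_x,\eta_{x+e_i})=(1,0)\}$ the prefactor $(\eta_x - \eta_{x+e_i})$ equals $+1$ and $\eta^{x,x+e_i}$ has $(\eta_x,\eta_{x+e_i})=(0,1)$; on the event $\{(\eta_x,\eta_{x+e_i})=(0,1)\}$ it equals $-1$ and the flipped configuration has $(1,0)$. Writing $\tilde\eta := (\eta_z)_{z \neq x, x+e_i}$ as in the proof of Lemma~\ref{lem.GlauberExchange}, both contributions reduce to the single quantity $\bracket{\,(u(\cdots,1,\cdots,0,\cdots) - u(\cdots,0,\cdots,1,\cdots))\,}$ averaged over $\tilde\eta$, each weighted by $\rho(1-\rho)$, so they add (rather than cancel) and give
\begin{align*}
	\bracket{\nabla_{x,e_i}u}_\rho = 2\rho(1-\rho)\,\E_\rho\bigl[u(\eta)\,\big|\,\eta_x = 1,\ \eta_{x+e_i}=0\bigr] - 2\rho(1-\rho)\,\E_\rho\bigl[u(\eta)\,\big|\,\eta_x=0,\ \eta_{x+e_i}=1\bigr].
\end{align*}

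The second step is to convert these two doubly-conditioned expectations into the singly-conditioned ones appearing on the right-hand side of \eqref{eq.GradientKawasaki}. Here one uses that, conditionally on $\eta_x = 1$, the variable $\eta_{x+e_i}$ is still an independent $\operatorname{Ber}(\rho)$ random variable, so
\begin{align*}
	\bracket{u \mid \eta_x = 1}_\rho = \rho\,\bracket{u \mid \eta_x=1, \eta_{x+e_i}=1}_\rho + (1-\rho)\,\bracket{u \mid \eta_x=1, \eta_{x+e_i}=0}_\rho,
\end{align*}
and symmetrically for $\bracket{u \mid \eta_{x+e_i}=1}_\rho$. Taking the difference $\bracket{u\mid\eta_{x+e_i}=1}_\rho - \bracket{u\mid\eta_x=1}_\rho$, the ``both occupied'' terms $\bracket{u\mid\eta_x=1,\eta_{x+e_i}=1}_\rho$ cancel (they carry the same coefficient $\rho$ on both sides), leaving exactly $(1-\rho)$ times the bracketed difference in the displayed formula above. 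Multiplying by $2\rho$ then recovers \eqref{eq.GradientKawasaki}.

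I do not expect a serious obstacle in this lemma: it is an elementary conditioning computation, and the only point requiring a little care is bookkeeping the signs — verifying that the two non-zero events \emph{add} rather than cancel, which is precisely because the sign flip in $(\eta_x - \eta_{x+e_i})$ is compensated by the swap of the roles of $x$ and $x+e_i$ inside $u(\eta^{x,x+e_i})$. One should also remark that the identity extends to general (not necessarily local) $u \in L^1(\P_\rho)$ by the usual approximation, though for the intended applications $u$ is local. This lemma, together with its independent-particle counterpart Lemma~\ref{lem.Mecke1} and the coupling $e^{-\alpha(\rho)} = 1-\rho$ of \eqref{eq.CouplePara}, will then feed directly into the proof of Proposition~\ref{prop.IdentityGradient}: one rewrites $\sum_x \bbracket{\teta_x \tilde\pi_{x,x+e_i}[u]}_{\alpha(\rho)}$ via Mecke's identity on $\XX$ as $\alpha(\rho)\sum_x \bbracket{[u](\teta+\delta_x) - [u](\teta+\delta_{x+e_i})}$, identifies $[\teta+\delta_x]$ with the event of forcing site $x$ occupied, and matches the resulting conditional expectations with those produced by \eqref{eq.GradientKawasaki}.
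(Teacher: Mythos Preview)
Your approach is essentially the same as the paper's: split according to the two non-trivial events $(\eta_x,\eta_{x+e_i})\in\{(1,0),(0,1)\}$, then convert doubly-conditioned expectations into the singly-conditioned ones appearing in \eqref{eq.GradientKawasaki}. The paper does the second step by adding and subtracting the $(1,1)$ contribution to pass from $\bracket{u\,\Ind{(0,1)}-u\,\Ind{(1,0)}}_\rho$ to $\bracket{u\,\Ind{\eta_{x+e_i}=1}-u\,\Ind{\eta_x=1}}_\rho$, whereas you decompose $\bracket{u\mid\eta_x=1}_\rho$ via the law of total probability in $\eta_{x+e_i}$; these are equivalent.

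One bookkeeping slip: in your displayed formula after Step~1 the sign is reversed. Tracing through, on $\{(\eta_x,\eta_{x+e_i})=(1,0)\}$ the prefactor is $+1$ and $u(\eta^{x,x+e_i})$ has occupation $(0,1)$, so the contribution is $u[(0,1)]-u[(1,0)]$; the $(0,1)$ event gives the same sign. Hence the correct intermediate result is
\[
\bracket{\nabla_{x,e_i}u}_\rho = 2\rho(1-\rho)\bigl(\E_\rho[u\mid (0,1)] - \E_\rho[u\mid (1,0)]\bigr),
\]
which then matches $(1-\rho)$ times the difference you compute in Step~2. Your final conclusion is correct; only the intermediate display needs its two terms swapped.
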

\begin{proof}
	Recall the identity \eqref{eq.IdTagent} and the term is non-zero if and only if $(\eta_x, \eta_{x+e_i})=(1,0)$ or $(\eta_x, \eta_{x+e_i})=(0,1)$. Then we obtain that 
	\begin{align*}
		\bracket{(\pi_{x, x+e_i} u) (\pi_{x, x+e_i} \ell_{e_i})}_\rho &=  2\bracket{u(\eta)\Ind{(\eta_x, \eta_{x+e_i})=(0,1)} - u(\eta)\Ind{(\eta_x, \eta_{x+e_i})=(1,0)}}_\rho.
	\end{align*}
	Here noticing that the term $(\eta_x, \eta_{x+e_i})=(1,1)$ is canceled on the {\rhs} of the equation, we can rewrite it as 
	\begin{align*}
		&\bracket{u(\eta)\Ind{(\eta_x, \eta_{x+e_i})=(0,1)} - u(\eta)\Ind{(\eta_x, \eta_{x+e_i})=(1,0)}}_\rho \\
		&=\bracket{u(\eta)\Ind{(\eta_x, \eta_{x+e_i})=(0,1)} + u(\eta)\Ind{(\eta_x, \eta_{x+e_i})=(1,1)}}_\rho \\
		& \qquad - \bracket{u(\eta)\Ind{(\eta_x, \eta_{x+e_i})=(1,0)} + u(\eta)\Ind{(\eta_x, \eta_{x+e_i})=(1,1)}}_\rho\\
		&= \bracket{u(\eta)\Ind{\eta_{x+e_i}=1} - u(\eta)\Ind{\eta_x=1}}_\rho.
	\end{align*}
	Therefore, we obtain the identity
	\begin{align*}
		\bracket{(\pi_{x, x+e_i} u) (\pi_{x, x+e_i} \ell_{e_i})}_\rho 
		&= 2\bracket{u(\eta)\Ind{\eta_{x+e_i}=1} - u(\eta)\Ind{\eta_x=1}}_\rho\\
		&= 2 \rho \Ll(\bracket{u \,\vert \eta_{x+e_i} = 1}_\rho - \bracket{u \,\vert \eta_x = 1}_\rho\Rr).
	\end{align*}
\end{proof}

The second lemma makes the bridge between Lemma~\ref{lem.Mecke1} and Lemma~\ref{lem.Mecke2}, and is a corollary from Proposition~\ref{prop.coarsen}. 
\begin{lemma}[Change of variable]\label{lem.ChangeVariable}
	For any $\rho \in (0,1)$ and $x,y\in \Zd$, let $\eta \in \X$ be sampled from $\P_\rho$ and $\tilde \eta \in \tilde \X$ sampled from $\tilde \P_{\alpha(\rho)}$, then we have
	\begin{align}\label{eq.ChangeVariable}
		([\tilde \eta + \delta_x], [\tilde \eta + \delta_y ]) \stackrel{(d)}{=} (\eta \vee \delta_x,\eta \vee \delta_y),
	\end{align}
	where $\eta \vee \delta_x \in \X$ is defined as $(\eta \vee \delta_x)_x = \eta_x \vee 1$ and $(\eta \vee \delta_x)_z = \eta_z$ for $z \neq x$.
\end{lemma}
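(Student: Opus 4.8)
# Proof Proposal for Lemma~\ref{lem.ChangeVariable}

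The plan is to deduce this from Proposition~\ref{prop.coarsen} by working site-by-site, exploiting the product structure of both measures. The statement asks to identify the joint law of the pair $([\tilde\eta + \delta_x], [\tilde\eta + \delta_y])$ with that of $(\eta\vee\delta_x, \eta\vee\delta_y)$. Since the two coordinates of each pair differ only at sites $x$ and $y$, the key observation is that the only sites where anything nontrivial happens are $x$ and $y$ themselves; at every other site $z \notin \{x,y\}$ we have $[\tilde\eta + \delta_x]_z = [\tilde\eta]_z = [\tilde\eta+\delta_y]_z$ and $(\eta\vee\delta_x)_z = \eta_z = (\eta\vee\delta_y)_z$. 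So I would first reduce, by the independence of coordinates under $\PP_{\alpha(\rho)}$ and $\P_\rho$, to checking the claimed identity in law for the restriction to the (one or two) relevant sites, with the remaining coordinates handled by Proposition~\ref{prop.coarsen} directly.

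Next I would split into the two cases $x = y$ and $x \neq y$. When $x = y$, both sides reduce to a single random configuration, and we must check that $[\tilde\eta + \delta_x]$ has the same law as $\eta\vee\delta_x$. For this, note that at site $x$, $[\tilde\eta+\delta_x]_x = \Ind{\tilde\eta_x + 1 \geq 1} = 1$ deterministically, while $(\eta\vee\delta_x)_x = \eta_x\vee 1 = 1$ deterministically; away from $x$ both agree with the respective base configuration, whose laws match by Proposition~\ref{prop.coarsen}. When $x \neq y$, I would compute the joint law of the two-site marginal $\big(([\tilde\eta+\delta_x]_x,[\tilde\eta+\delta_x]_y),([\tilde\eta+\delta_y]_x,[\tilde\eta+\delta_y]_y)\big)$ and compare it with that of $\big(((\eta\vee\delta_x)_x,(\eta\vee\delta_x)_y),((\eta\vee\delta_y)_x,(\eta\vee\delta_y)_y)\big)$. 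On the independent-particle side, $\tilde\eta_x$ and $\tilde\eta_y$ are independent $\poi(\alpha(\rho))$, so $[\tilde\eta+\delta_x]_x = 1$, $[\tilde\eta+\delta_x]_y = \Ind{\tilde\eta_y\geq 1}$, $[\tilde\eta+\delta_y]_x = \Ind{\tilde\eta_x\geq 1}$, $[\tilde\eta+\delta_y]_y = 1$; using $\PP[\tilde\eta_z\geq 1] = 1 - e^{-\alpha(\rho)} = \rho$ this is an explicit product law on $\{0,1\}^4$. On the Kawasaki side, $\eta_x,\eta_y$ are independent $\operatorname{Ber}(\rho)$, and $(\eta\vee\delta_x)_x = 1$, $(\eta\vee\delta_x)_y = \eta_y$, $(\eta\vee\delta_y)_x = \eta_x$, $(\eta\vee\delta_y)_y = 1$, which is the same product law. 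Combining the two-site computation with the matching of the remaining coordinates (again via Proposition~\ref{prop.coarsen}, or directly via the coupling $[\teta]_z = \Ind{\teta_z\geq 1}$) gives the equality in distribution of the full pairs.

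I do not expect any serious obstacle here; this is essentially a bookkeeping lemma. The only point requiring a little care is the \emph{joint} distribution of the two configurations rather than their marginals: one must be sure that the shared randomness $\tilde\eta$ (respectively $\eta$) is coupled consistently across the two coordinates of the pair, which is exactly why the claim is phrased with a single underlying $\tilde\eta$ (resp.\ $\eta$). This is automatic once one writes out the two-site marginal as above, since the same variables $\tilde\eta_x,\tilde\eta_y$ (resp.\ $\eta_x,\eta_y$) appear in both components of the pair. An alternative, perhaps cleaner, route would be to exhibit an explicit coupling: sample $\tilde\eta \sim \PP_{\alpha(\rho)}$ and set $\eta := [\tilde\eta]$, which by Proposition~\ref{prop.coarsen} has law $\P_\rho$; then verify the deterministic identity $[\tilde\eta+\delta_x] = [\tilde\eta]\vee\delta_x = \eta\vee\delta_x$ (and likewise for $y$), which holds because adding a particle at $x$ and then thresholding gives the same thing as thresholding first and then forcing site $x$ to be occupied. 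This deterministic pathwise identity makes the joint statement transparent and is the version I would ultimately write up.
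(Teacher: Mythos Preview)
Your proposal is correct, and in fact the ``alternative, perhaps cleaner'' route you sketch at the end---the deterministic pathwise identity $[\tilde\eta+\delta_x] = [\tilde\eta]\vee\delta_x$ followed by Proposition~\ref{prop.coarsen}---is exactly the paper's proof. Your initial site-by-site case analysis is also valid but unnecessarily elaborate; the one-line observation $[\tilde\eta+\delta_x]_z = \Ind{\tilde\eta_z + \Ind{z=x}\ge 1} = [\tilde\eta]_z \vee \Ind{z=x}$ handles both $x=y$ and $x\neq y$ at once and makes the joint statement immediate.
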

\begin{proof}
	The proof follows the observation that 
	\begin{align*}
		[\tilde \eta + \delta_x] = [\tilde \eta ] \vee \delta_x.
	\end{align*}
	Thus, we apply Proposition~\ref{prop.coarsen}
	\begin{align*}
		([\tilde \eta + \delta_x], [\tilde \eta + \delta_y ]) = ([\tilde \eta ] \vee \delta_x, [\tilde \eta ] \vee \delta_y) \stackrel{(d)}{=}  (\eta \vee \delta_x,\eta \vee \delta_y).
	\end{align*}
\end{proof}

\begin{proof}[Proof of Proposition~\ref{prop.IdentityGradient}]
	We combine the results in Lemma~\ref{lem.Mecke1}, Lemma~\ref{lem.Mecke2} and Lemma~\ref{lem.ChangeVariable}
	\begin{align*}
		\sum_{x \in \Lambda} \bbracket{\teta_x \tilde \pi_{x,x+e_i} [u]}_{\alpha(\rho)} &\stackrel{\eqref{eq.Mecke}}{=} \alpha(\rho)\sum_{x \in \Lambda} \bbracket{[u] (\tilde \eta + \delta_{x+e_i}) - [u] (\tilde \eta + \delta_{x})}_{\alpha(\rho)}\\
		&\stackrel{\eqref{eq.CoarsenFunction}}{=}\alpha(\rho)\sum_{x \in \Lambda} \bbracket{u ([\tilde \eta + \delta_{x+e_i}]) - u ([\tilde \eta + \delta_{x}])}_{\alpha(\rho)}\\
		&\stackrel{\eqref{eq.ChangeVariable}}{=}\alpha(\rho)\sum_{x \in \Lambda} \bracket{u (\eta \vee \delta_{x+e_i}) - u (\eta \vee \delta_{x})}_{\rho}\\
		&\stackrel{\eqref{eq.GradientKawasaki}}{=}\frac{\alpha(\rho)}{2 \rho} \sum_{x \in \Lambda} \bracket{(\pi_{x, x+e_i} u) (\pi_{x, x+e_i} \ell_{e_i})}_\rho.
	\end{align*}
	Here from the third line to the fourth line, we also use the fact $ \bracket{u \,\vert \eta_{x} = 1}_\rho = \bracket{ u (\eta \vee \delta_{x})}_{\rho}$. 
\end{proof}

At the end of this subsection, we give another application of Lemma~\ref{lem.Mecke2}. We recall the notation $\Lambda^-$ defined in \eqref{eq.defMinus} and the gradient $\nabla_x$ defined in \eqref{eq.defKawaGradient}.
\begin{corollary} For every bounded set $\Lambda \subset \Zd$ and $u \in \F_0(\Lambda^-)$, we have 
	\begin{align}\label{eq.MeanZeroLocal}
		\sum_{x \in \Lambda} \bracket{\nabla_x u}_\rho = 0.
	\end{align}
\end{corollary}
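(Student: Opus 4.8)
The plan is to reduce the claim, one coordinate direction at a time, to a telescoping sum and then exploit that $u$ does not see the boundary layer $\partial\Lambda$ — morally a discrete divergence theorem with homogeneous boundary data. First I would fix $i\in\{1,\dots,d\}$; since \eqref{eq.MeanZeroLocal} is a vector identity, it suffices to show $\sum_{x\in\Lambda}\bracket{\nabla_{x,e_i}u}_\rho=0$. Applying Lemma~\ref{lem.Mecke2} to each term and setting $g(x):=\bracket{u \,\vert\, \eta_x=1}_\rho$ for $x\in\Zd$, the sum becomes $2\rho\sum_{x\in\Lambda}\bigl(g(x+e_i)-g(x)\bigr)$, which after the substitution $y=x+e_i$ and cancellation of the common part $\Lambda\cap(\Lambda+e_i)$ equals $2\rho\bigl(\sum_{y\in(\Lambda+e_i)\setminus\Lambda}g(y)-\sum_{x\in\Lambda\setminus(\Lambda+e_i)}g(x)\bigr)$.

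Next I would check that every index surviving the telescoping lies outside the interior $\Lambda^-$: if $y\in(\Lambda+e_i)\setminus\Lambda$ then $y\notin\Lambda$, while if $x\in\Lambda\setminus(\Lambda+e_i)$ then $x-e_i\notin\Lambda$, so $x\in\partial\Lambda$ by \eqref{eq.defBoundary}. Because $u\in\F_0(\Lambda^-)$ does not depend on such a coordinate, conditioning on that site being occupied has no effect, i.e.\ $g$ equals the constant $\bracket{u}_\rho$ on both sets. Finally, translation invariance of the counting measure gives $\vert(\Lambda+e_i)\setminus\Lambda\vert=\vert\Lambda+e_i\vert-\vert(\Lambda+e_i)\cap\Lambda\vert=\vert\Lambda\vert-\vert(\Lambda+e_i)\cap\Lambda\vert=\vert\Lambda\setminus(\Lambda+e_i)\vert$, so the two boundary sums coincide and their difference is $0$.

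I expect no serious obstacle here: the whole argument is a short boundary/telescoping computation resting on Lemma~\ref{lem.Mecke2}. The only place demanding (minor) care is the geometric bookkeeping — confirming that the sites left after telescoping all belong to $\Lambda\setminus\Lambda^-$, so that the $\F_0(\Lambda^-)$-measurability of $u$ collapses $g$ to a constant there, and pairing this with the cardinality identity coming from translation invariance of $\Zd$.
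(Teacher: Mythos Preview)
Your proposal is correct and follows essentially the same route as the paper: fix a direction, apply Lemma~\ref{lem.Mecke2} (Mecke's identity for Kawasaki), telescope the sum so that only boundary sites survive, observe that these sites lie outside $\Lambda^-$ so that $g$ collapses to the constant $\bracket{u}_\rho$, and conclude by the equal-cardinality argument. The paper phrases the same steps using $\bracket{u(\eta\vee\delta_x)}_\rho$ in place of your $g(x)$ and the index sets $\{x\in\Lambda: x\pm e_i\notin\Lambda\}$ in place of your $(\Lambda+e_i)\setminus\Lambda$ and $\Lambda\setminus(\Lambda+e_i)$, but the argument is the same discrete Stokes-type cancellation.
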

\begin{proof}
	We just focus on the gradient field along one direction $e_i$ and apply \eqref{eq.GradientKawasaki}
	\begin{align*}
		\sum_{x \in \Lambda} \bracket{\nabla_{x,e_i} u}_\rho &=2 \rho	\sum_{x \in \Lambda} \Ll(\bracket{u (\eta \vee \delta_{x+e_i})}_\rho - \bracket{u (\eta \vee \delta_{x})}_\rho\Rr)\\
		&=2 \rho \Ll(\sum_{x\notin\Lambda,x-e_i\in\Lambda} \bracket{u (\eta \vee \delta_{x})}_\rho - \sum_{x \in \Lambda, x - e_i \notin \Lambda} \bracket{u (\eta \vee \delta_{x})}_\rho\Rr)\\
		&=2 \rho \Ll(\sum_{x\notin\Lambda,x-e_i\in\Lambda} \bracket{u (\eta)}_\rho - \sum_{x \in \Lambda, x - e_i \notin \Lambda} \bracket{u (\eta)}_\rho\Rr)\\
		&=0.
	\end{align*}
	In the first line, we also use the observation $\bracket{u \,\vert \eta_{x} = 1}_\rho = \bracket{ u (\eta \vee \delta_{x})}_{\rho}$. Then the proof is similar to the discrete Stokes' formula that all the terms except those on the boundary cancel, which yields the second line. The condition $u \in \F_0(\Lambda^-)$ implies that $u (\eta \vee \delta_{x}) = u (\eta)$ and passes the result from the second line to the third line. Finally, as the terms in $\sum_{x \in \Lambda, x + e_i \notin \Lambda}$ and $\sum_{x \in \Lambda, x - e_i \notin \Lambda}$ are coupled, we obtain $0$.
\end{proof}

\subsection{Canonical ensemble} We establish a gradient coupling similar to  Proposition~\ref{prop.IdentityGradient} under the canonical ensemble. Recall $\Lambda^+$ defined in \eqref{eq.defVertexLarge}.
\begin{proposition}[Gradient coupling]\label{prop.IdentityGradient2} 
	For every $M \in \N, \Lambda \subset \Zd$ and $u:\X \to \R$ a $\mcl F_{\Lambda^+}$-measurable function, let $(\teta_x)_{x \in \Lambda^+}$ be sampled from the canonical ensemble of $M$ independent particles $\tilde \P_{\Lambda^+, M}$ and $(P_{\Lambda, M, N})_{N \in \N}$ be the probability in $N$ of the number of occupied sites defined as 
	\begin{align}\label{eq.defPMN}
		P_{\Lambda, M, N} := \tilde \P_{\Lambda^+, M}\Ll[\sum_{z \in \Lambda^+ \setminus \{y\}} \Ind{\teta_z > 0} = N-1\Rr], \qquad y \in \Lambda^+,
	\end{align}
	then the following identity holds for every $i \in \{1, \cdots, d\}$
	\begin{align}\label{eq.IdentityGradient2}
		\sum_{x \in \Lambda} \bbracket{\partial_i [u](\teta, x)}_{\Lambda^+, M} = \sum_{N=1}^{M+}  \frac{\vert \Lambda^+ \vert P_{\Lambda, M, N}}{2 N} \sum_{x \in \Lambda} \bracket{\nabla_{x, e_i} u}_{\Lambda^+, N}.
	\end{align}
\end{proposition}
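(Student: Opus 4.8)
The plan is to mirror the grand-canonical argument of Proposition~\ref{prop.IdentityGradient}, replacing the Poissonization by the canonical ensemble of independent particles and inserting one combinatorial step that identifies the conditional law of the coarsened, ``particle-added'' configuration. Since $[u]=u\circ[\cdot]$ and $[\teta+\delta_z]=[\teta]\vee\delta_z$, unfolding the definition of $\partial_i$ in \eqref{eq.defGradientFree} gives
\begin{align*}
	\sum_{x\in\Lambda}\bbracket{\partial_i[u](\teta,x)}_{\Lambda^+,M}
	=\sum_{x\in\Lambda}\Ll(\bbracket{u([\teta]\vee\delta_{x+e_i})}_{\Lambda^+,M}-\bbracket{u([\teta]\vee\delta_{x})}_{\Lambda^+,M}\Rr),
\end{align*}
so it is enough to evaluate $\bbracket{u([\teta]\vee\delta_z)}_{\Lambda^+,M}=\bbracket{u([\teta+\delta_z])}_{\Lambda^+,M}$ for a fixed $z\in\Lambda^+$; here I use that $u$ is $\mcl F_{\Lambda^+}$-measurable, so this quantity depends only on the occupied sites of $\teta$ inside $\Lambda^+$.

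The core of the proof is the canonical analogue of Lemma~\ref{lem.ChangeVariable}: under $\tilde\P_{\Lambda^+,M}$, conditionally on $\{\sum_{w\in\Lambda^+\setminus\{z\}}\Ind{\teta_w>0}=N-1\}$, the coarsened configuration $[\teta+\delta_z]$ is uniform over all $\eta\in\X$ supported in $\Lambda^+$ with exactly $N$ occupied sites one of which is $z$; equivalently it has law $\P_{\Lambda^+,N}(\,\cdot\mid\eta_z=1)$. This is pure counting: writing $\teta=\sum_{j=1}^M\delta_{X_j}$ with $X_j$ i.i.d.\ uniform on $\Lambda^+$, for any $T\subset\Lambda^+\setminus\{z\}$ the probability that the occupied sites of $\teta$ inside $\Lambda^+\setminus\{z\}$ are exactly $T$ equals $\vert\Lambda^+\vert^{-M}$ times the number of maps from $\{1,\dots,M\}$ into $T\cup\{z\}$ whose image contains $T$, and by inclusion--exclusion this number depends only on $\vert T\vert$; hence, conditionally on $\vert T\vert=N-1$, the set $T$ is uniform among $(N-1)$-subsets of $\Lambda^+\setminus\{z\}$, and adjoining $z$ gives the claim. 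Recalling the definition of $P_{\Lambda,M,N}$ in \eqref{eq.defPMN} (and the invariance of that probability under permutations of the sites of $\Lambda^+$), this yields
\begin{align*}
	\bbracket{u([\teta+\delta_z])}_{\Lambda^+,M}=\sum_{N=1}^{M}P_{\Lambda,M,N}\,\bracket{u \,\vert\, \eta_z=1}_{\Lambda^+,N}.
\end{align*}

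Substituting $z=x+e_i$ and $z=x$ and summing over $x\in\Lambda$ reduces \eqref{eq.IdentityGradient2} to the identity
\begin{align*}
	\bracket{u \,\vert\, \eta_{x+e_i}=1}_{\Lambda^+,N}-\bracket{u \,\vert\, \eta_x=1}_{\Lambda^+,N}=\frac{\vert\Lambda^+\vert}{2N}\,\bracket{\nabla_{x,e_i}u}_{\Lambda^+,N},
\end{align*}
which is the canonical-ensemble version of Lemma~\ref{lem.Mecke2}: the algebraic rearrangement in the proof of that lemma (based on \eqref{eq.IdTagent} and on adding and subtracting the term supported on $\{\eta_x=\eta_{x+e_i}=1\}$) is valid under any exchangeable law, so $\bracket{\nabla_{x,e_i}u}_{\Lambda^+,N}=2\bracket{u\Ind{\eta_{x+e_i}=1}-u\Ind{\eta_x=1}}_{\Lambda^+,N}$, and dividing by the occupation probability $\P_{\Lambda^+,N}[\eta_z=1]=N/\vert\Lambda^+\vert$ gives the displayed identity. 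Assembling the three displays proves the proposition.

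I expect the main obstacle to be the change-of-variable step: one must isolate correctly the role of the forced site $z$, recognize that the right conditioning event counts occupied sites in $\Lambda^+\setminus\{z\}$ rather than in all of $\Lambda^+$, and verify that the resulting conditional law is genuinely uniform -- the surjection/inclusion--exclusion count is exactly what delivers this, and the same computation also identifies the weights $P_{\Lambda,M,N}$. The remaining ingredients -- the telescoping over $x\in\Lambda$ and the transfer of Lemma~\ref{lem.Mecke2} to the canonical ensemble -- are routine.
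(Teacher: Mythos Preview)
Your proof is correct and follows essentially the same approach as the paper: both arguments compute $\bbracket{[u](\teta+\delta_z)}_{\Lambda^+,M}$ by conditioning on the number of occupied sites in $\Lambda^+\setminus\{z\}$, identify the conditional law of $[\teta+\delta_z]$ as $\P_{\Lambda^+,N}(\cdot\mid\eta_z=1)$ via the same combinatorial symmetry, and then invoke the canonical-ensemble Mecke identity (the paper isolates this as Lemma~\ref{lem.Mecke3}). Your inclusion--exclusion justification of the uniformity is slightly more explicit than the paper's ``the number of configurations is the same'', but the content is identical.
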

\begin{remark}
	One can check that \eqref{eq.defPMN} is well-defined and does not depend on the choice of $y$. One can pick a specific $y$ and put it back to \eqref{eq.IdentityGradient2}. This will make the identity there a little strange, but it transfers the symmetry used in the proof.
\end{remark}

To prove this proposition, we also give a Mecke's identity like Lemma~\ref{lem.Mecke2} under the canonical ensemble.
\begin{lemma}[Mecke's identity under canonical ensemble]\label{lem.Mecke3}
	Given $u:\X \to \R$, then for every $N \in \N$, a finite subset $\Lambda \subset \Zd$ and $x \in \Lambda$, the following identity holds 
	\begin{align}\label{eq.GradientKawasaki2}
		\bracket{\nabla_{x, e_i} u}_{\Lambda^+, N} =  \frac{2 N}{\vert \Lambda^+\vert} \Ll(\bracket{u \,\vert \eta_{x+e_i} = 1}_{\Lambda^+, N} - \bracket{u \,\vert \eta_x = 1}_{\Lambda^+, N}\Rr).
	\end{align}
\end{lemma}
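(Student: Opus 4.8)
The plan is to mimic the proof of Lemma~\ref{lem.Mecke2}, replacing the Bernoulli product measure by the canonical ensemble $\bracket{\cdot}_{\Lambda^+,N}$ and exploiting its exchangeability. First I would recall from \eqref{eq.IdTagent} that, since $x\in\Lambda$ forces $x+e_i\in\Lambda^+$,
\[
	\nabla_{x,e_i}u(\eta) = \Ind{(\eta_x,\eta_{x+e_i})=(1,0)}\bigl(u(\eta^{x,x+e_i})-u(\eta)\bigr) + \Ind{(\eta_x,\eta_{x+e_i})=(0,1)}\bigl(u(\eta)-u(\eta^{x,x+e_i})\bigr),
\]
so that only configurations carrying exactly one particle on $\{x,x+e_i\}$ contribute. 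The key structural fact is that the exchange $\eta\mapsto\eta^{x,x+e_i}$ is a measure-preserving involution of $\P_{\Lambda^+,N}$: it fixes the configuration on $(\Lambda^+)^c$, preserves the number of particles in $\Lambda^+$, and is a bijection of the finite support of $\P_{\Lambda^+,N}$ onto itself, hence leaves the uniform law invariant.

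Next I would take $\bracket{\cdot}_{\Lambda^+,N}$ of the displayed identity and change variables $\eta\leftrightarrow\eta^{x,x+e_i}$ in the two terms containing $u(\eta^{x,x+e_i})$; since this map sends the event $\{(\eta_x,\eta_{x+e_i})=(1,0)\}$ to $\{(\eta_x,\eta_{x+e_i})=(0,1)\}$ (and conversely), the four resulting expectations collapse to
\[
	\bracket{\nabla_{x,e_i}u}_{\Lambda^+,N} = 2\bracket{\Ind{(\eta_x,\eta_{x+e_i})=(0,1)}u - \Ind{(\eta_x,\eta_{x+e_i})=(1,0)}u}_{\Lambda^+,N}.
\]
Adding and subtracting $\Ind{(\eta_x,\eta_{x+e_i})=(1,1)}u$ inside the bracket — exactly the cancellation used in Lemma~\ref{lem.Mecke2}, since $\Ind{(\eta_x,\eta_{x+e_i})=(0,1)}+\Ind{(\eta_x,\eta_{x+e_i})=(1,1)}=\Ind{\eta_{x+e_i}=1}$ and likewise for $\eta_x$ — turns the right-hand side into $2\bracket{\Ind{\eta_{x+e_i}=1}u - \Ind{\eta_x=1}u}_{\Lambda^+,N}$.

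Finally I would invoke exchangeability of the canonical ensemble once more: each of the $N$ particles occupies a uniformly random site among the $|\Lambda^+|$ sites of $\Lambda^+$, so $\P_{\Lambda^+,N}[\eta_z=1]=N/|\Lambda^+|$ for every $z\in\Lambda^+$, whence $\bracket{\Ind{\eta_z=1}u}_{\Lambda^+,N}=\tfrac{N}{|\Lambda^+|}\bracket{u\mid\eta_z=1}_{\Lambda^+,N}$ at $z=x$ and at $z=x+e_i$. Substituting these two identities yields precisely \eqref{eq.GradientKawasaki2}. I do not anticipate a genuine obstacle: the only points requiring care are checking that the exchange stays within $\Lambda^+$ (true because $x+e_i\in\Lambda^+$), that it preserves $\P_{\Lambda^+,N}$, and that one keeps track of the factor $2$ coming from the two non-vanishing configurations. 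Once Lemma~\ref{lem.Mecke3} is established, Proposition~\ref{prop.IdentityGradient2} will follow from it by conditioning on the number of occupied sites in $\Lambda^+$ and combining with Mecke's identity for independent particles (Lemma~\ref{lem.Mecke1}), in the same manner as the proof of Proposition~\ref{prop.IdentityGradient}.
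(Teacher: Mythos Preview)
Your proposal is correct and follows essentially the same route as the paper's proof: both use the exchange invariance of $\P_{\Lambda^+,N}$ to reduce to $2\bracket{\Ind{\eta_{x+e_i}=1}u-\Ind{\eta_x=1}u}_{\Lambda^+,N}$ via the $(1,1)$-cancellation trick from Lemma~\ref{lem.Mecke2}, and then invoke the marginal $\P_{\Lambda^+,N}[\eta_z=1]=N/|\Lambda^+|$. Your write-up is slightly more explicit about the change of variables, but the argument is the same.
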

\begin{proof}
	The proof follows the similar strategy as Lemma~\ref{lem.Mecke2}
	\begin{align*}
		\bracket{(\pi_{x, x+e_i} u) (\pi_{x, x+e_i} \ell_{e_i})}_{\Lambda^+, N} &=  2\bracket{u(\eta)\Ind{(\eta_x, \eta_{x+e_i})=(0,1)} - u(\eta)\Ind{(\eta_x, \eta_{x+e_i})=(1,0)}}_{\Lambda^+, N} \\
		&= 2\bracket{u(\eta)\Ind{\eta_{x+e_i}=1} - u(\eta)\Ind{\eta_x=1}}_{\Lambda^+, N}\\
		&= \frac{2 N}{\vert \Lambda^+ \vert} \Ll(\bracket{u \,\vert \eta_{x+e_i} = 1}_{\Lambda^+, N} - \bracket{u \,\vert \eta_x = 1}_{\Lambda^+, N}\Rr).
	\end{align*}
	Here from the first line to the second line, we use the fact that the case $(\eta_x, \eta_{x+e_i})=(1,1)$ cancels. From the second line to the third line, we notice that $x, x+e_i \in \Lambda^+$, so 
	\begin{align*}
		\P_{\Lambda^+, N} [\eta_x = 1] = \P_{\Lambda^+, N} [\eta_{x + e_i} = 1] = \frac{{(\vert \Lambda^+\vert - 1) \choose (N-1)}}{{\vert \Lambda^+\vert \choose N}} = \frac{N}{\vert \Lambda^+\vert}.
	\end{align*}
\end{proof}

\begin{proof}[Proof of Proposition~\ref{prop.IdentityGradient2}]
	Notice that, for every $x \in \Lambda^+$, we have
	\begin{equation}\label{eq.IdentityGradientStep1}
		\begin{split}
			&\bbracket{[u](\teta + \delta_{x})}_{\Lambda^+, M}\\
			&= \bbracket{u([\teta + \delta_{x}])}_{\Lambda^+, M} \\
			&= \sum_{N=1}^{M+1} \tilde \P_{\Lambda^+, M}\Ll[\sum_{z \in \Lambda^+ } \Ind{\teta_z + \delta_x > 0} = N\Rr] \bbracket{u([\teta + \delta_{x}]) \, \big\vert \sum_{z \in \Lambda^+ } \Ind{\teta_z + \delta_x > 0} = N}_{\Lambda^+, M} \\
			& = \sum_{N=1}^{M+1} P_{\Lambda, M, N} \bracket{u \, \vert \eta_x = 1}_{\Lambda^+, N}.
		\end{split}
	\end{equation}
	Here we apply the definition of the projection operator \eqref{eq.defCoarsen} from the first line to the second line, and the total probability formula from the second line to the third line. The passage from the third line to the fourth line requires some work. Firstly, we recall the identity \eqref{eq.defPMN}
	\begin{align*}
		\tilde \P_{\Lambda^+, M}\Ll[\sum_{z \in \Lambda^+ } \Ind{\teta_z + \delta_x > 0} = N\Rr] = \tilde \P_{\Lambda^+, M}\Ll[\sum_{z \in \Lambda^+ \setminus \{x\}} \Ind{\teta_z > 0} = N-1\Rr] = P_{\Lambda, M, N}. 
	\end{align*}
	We also remark that $P_{\Lambda, M, N}$ is well-defined and does not depend on the excluded site $x$. Secondly, we notice that, for every choice of vertex set $V \subset \Lambda^+ \setminus \{x\}$ such that  $\vert V\vert = N-1$, in the following set
	\begin{align*}
		\Ll\{ \sum_{z \in \Lambda^+ } \teta_z  = M : \teta_z > 0 \text{ for all } z \in V , \text{ and } \teta_z = 0 \text{ for all } z \in \Lambda^+ \setminus (\{x\} \cup V) \Rr\},
	\end{align*}
	the number of configurations is the same. Therefore, the configuration $[\teta + \delta_{x}]$ contains a particle on $x$, and the other occupied positions are uniformly distributed conditioned on the number of occupied sites. This generalizes the coarse-grained lifting under the canonical ensemble, and gives the passage concerning the conditional expectation in the third line of \eqref{eq.IdentityGradientStep1}.
	
	Applying \eqref{eq.IdentityGradientStep1}, we obtain 
	\begin{align*}
		&\sum_{x \in \Lambda} \bbracket{[u](\teta + \delta_{x+e_i}) - [u](\teta + \delta_x)}_{\Lambda^+, M} \\
		&= \sum_{N=1}^{M+1}  P_{\Lambda, M, N}  \sum_{x \in \Lambda}  \Ll(\bracket{u \,\vert \eta_{x+e_i} = 1}_{\Lambda^+, N} - \bracket{u \,\vert \eta_x = 1}_{\Lambda^+, N}\Rr)\\
		&= \sum_{N=1}^{M+1}  \frac{\vert \Lambda^+ \vert P_{\Lambda, M, N}}{2 N} \sum_{x \in \Lambda} \bracket{(\pi_{x, x+e_i} u) (\pi_{x, x+e_i} \ell_{e_i})}_{\Lambda^+, N}. 
	\end{align*}
	Here from the second line to the third line, we also use \eqref{eq.GradientKawasaki2}. This yields the desired result.
\end{proof}

\subsection{Weighted multiscale Poincar\'e inequality}
As the main result of this subsection, we combine the coarse-grained lifting and gradient coupling to obtain the weighted multiscale Poincar\'e inequality on Kawasaki dynamics. Here we define $\mathcal{G}_{\Lambda^+}$ similar to \eqref{eq.defGTilde}
\begin{equation}\label{eq.defGTildeKawa}
	\mathcal{G}_{\Lambda^+} :=\sigma\Ll(\sum_{x\in \Lambda^+}\eta_x, \{\eta_y, y\in (\Lambda^+)^c\}\Rr).
\end{equation}
Recall the notations about the canonical ensemble in Section~\ref{subsubsec.proba}, we actually have ${\P_{\Lambda, N, \zeta}[\cdot] = \Pr[ \cdot  \vert \mathcal{G}_{\Lambda}](N, \zeta)}$ for every $\rho \in [0,1]$.

\begin{proposition}[Weighted multiscale Poincar\'e inequality]\label{prop.WMPoincare} There exists a finite positive constant $C=C(d)$ such that for all measurable functions $u: \X \to \R$ such that $\bracket{u \, \vert \G_{\cu_m^+}} = 0$, the following estimate is established
	\begin{multline}\label{eq.WMPoincare2}
		\bracket{\frac{1}{|\cu_{m}|} u^2}_\rho^\frac{1}{2}  \\
		\leq C \sum_{n=0}^{m} 3^n \Ll( \frac{1}{\vert \Z_{m,n}\vert}\sum_{z \in \Z_{m,n}}\bracket{\frac{\vert \cu_n^+ \vert }{2 \mathbf{N}^*_{z,n}} \Ll\vert\frac{1}{\vert \cu_n \vert}\sum_{x \in z+\cu_n} \bracket{\nabla_{x}u|\G_{z+\cu_n^+}}\Rr\vert^2}_{\rho} \Rr)^{\frac{1}{2}},
	\end{multline}
	where the random variable $\mathbf{N}_{z,n}^*$ is defined as
	\begin{equation*}
		\mathbf{N}_{z,n}^*(\eta) :=\begin{cases}
			\sum_{x\in z+\cu_n^+}\eta_x,\ &\rho\leq\frac{1}{2},
			\\\sum_{x\in z+\cu_n^+}(1-\eta_x),\ &\rho>\frac{1}{2}.
		\end{cases}
	\end{equation*}
	The integrand in the expectation is defined to be zero if $\mathbf N_{z,n}^*=0$.
\end{proposition}

\begin{proof}
	\emph{Step~1: forward procedure.} In order to better estimate its $L^2$ norm, we need the multiscale Poincar\'e inequality \eqref{eq.multiscale}. Since this is only established in independent particle systems, we make use of the coarse-grained lifting developed in Proposition~\ref{prop.coarsen} as a bridge, that is
	\begin{align*}
		\bracket{\frac{1}{|\cu_{m}|} u^2}_\rho^\frac{1}{2} 
		&=\bbracket{\frac{1}{|\cu_{m}|} [u]^2}_{\alpha(\rho)}^\frac{1}{2}\\
		&\leq C\alpha^{\frac{1}{2}}(\rho)\sum_{n=1}^{m} 3^n \Ll( \frac{1}{\vert \Z_{m,n}\vert}\sum_{z \in \Z_{m,n}}\bbracket{\Ll\vert\S_{n}\tna [u]\Rr\vert^2(\teta, z)}_{\alpha(\rho)}\Rr)^{\frac{1}{2}}.
	\end{align*} 
	We also remark that $\bracket{u \, \vert \G_{\cu_m^+}} = 0$ ensures the condition $\bracket{[u] \, \vert \tilde \G_{\cu_m^+}} = 0$ to apply \eqref{eq.multiscale}.
	
	\emph{Step~2: backward procedure.}
	We focus on one term $\bbracket{\Ll(\S_{n}\tna [ u]\Rr)^2(\teta, z)}_{\alpha(\rho)}$, which is the spatial average in the independent particles. Our main task in this step is to bring $(\S_{n}\tna [ u])(\teta, z)$ back to the original Kawasaki dynamics from the coarsen operator. We apply the definition of $\S_n$ in \eqref{eq.defSn}
	\begin{multline}\label{eq.L2Step21}
		\bbracket{\Ll\vert\S_{n}\tna [ u] \Rr\vert^2(\teta, z)}_{\alpha(\rho)} \\
		= \sum_{M=0}^\infty \tilde \P_{\alpha(\rho)}\Ll[\sum_{y \in z+\cu_n^+}\teta_y=M\Rr]  \bbracket{\Ll\vert\frac{1}{\vert \cu_n \vert} \sum_{x \in z + \cu_n} \bbracket{\tna [u](\teta, x) }_{z+\cu_n^+, M}\Rr\vert^2}_{\alpha(\rho)}.
	\end{multline}
	More precisely, by $\bbracket{\tna [u](\teta, x) }_{z+\cu_n^+, M}$ we mean 
	\begin{align*}
		\bbracket{\tna [u](\teta, x) }_{z+\cu_n^+, M} = \int_{\XX} \tna [u](\teta' \mres (z+\cu_n^+) + \teta \mres (z+\cu_n^+)^c , x) \, \d \tilde \P_{z+\cu_n^+, M}(\teta'). 
	\end{align*}
	Then we apply the gradient coupling \eqref{eq.IdentityGradient2} to the term $\teta'$ in the {\rhs}
	\begin{align*}
		&\Ll(\frac{1}{\vert \cu_n \vert} \sum_{x \in z + \cu_n} \bbracket{\partial_i [u](\teta, x) }_{z+\cu_n^+, M}\Rr)^2 \\
		&= \Ll(\sum_{N=1}^{M+1} P_{z+\cu_n, M, N}  \frac{\vert \cu_n^+ \vert }{2 N}\Rr. \\
		& \qquad \Ll.\frac{1}{\vert \cu_n \vert}\sum_{x \in z+\cu_n} \bracket{(\nabla_{x, e_i}u) (\eta \mres (z+\cu_n^+) + [\teta] \mres  (z+\cu_n^+)^c )}_{z+\cu_n^+, N}\Rr)^2 \\
		&\leq \sum_{N=1}^{M+1} P_{z+\cu_n, M, N}  \Ll(\frac{\vert \cu_n^+ \vert }{2 N}\Rr)^2 \\
		& \qquad \Ll(\frac{1}{\vert \cu_n \vert}\sum_{x \in z+\cu_n} \bracket{(\nabla_{x, e_i}u) (\eta \mres (z+\cu_n^+) + [\teta] \mres  (z+\cu_n^+)^c )}_{z+\cu_n^+, N}\Rr)^2. \\
	\end{align*}
	From the second line to the third line, we make use of Jensen's inequality. We also remark that, our function is not $\mcl F_{z+\cu_n^+}$-measurable, thus we keep $[\teta] \mres  (z+\cu_n^+)^c$ after the local average. We put this result back to \eqref{eq.L2Step21} and apply  Proposition~\ref{prop.coarsen} to $[\teta] \mres  (z+\cu_n^+)^c$, which gives us  
	\begin{multline}\label{eq.L2Step22}
		\bbracket{\Ll(\frac{1}{\vert \cu_n \vert}\sum_{x \in z+\cu_n} \bracket{(\nabla_{x, e_i}u) (\eta \mres (z+\cu_n^+) + [\teta] \mres  (z+\cu_n^+)^c )}_{z+\cu_n^+, N}\Rr)^2}_{\alpha(\rho)} \\
		= \underbrace{\bracket{\Ll(\frac{1}{\vert \cu_n \vert}\sum_{x \in z+\cu_n} \bracket{\nabla_{x, e_i}u}_{z+\cu_n^+, N}\Rr)^2}_{\rho}}_{=:F(N)}.
	\end{multline}
	Using the definition \eqref{eq.defPMN} with a specific choice $y  = z$ there, we have  
	\begin{align*}
		P_{z+\cu_n, M, N} = \tilde \P_{z+\cu_n^+, M}\Ll[\sum_{x \in (z+\cu_n^+) \setminus \{z\}} \Ind{\teta_x > 0} = N-1\Rr].
	\end{align*}
	Then we apply Fubini's lemma to the double sum
	\begin{align*}
		&\bbracket{\Ll(\S_{n}(\partial_i  [ u])(\teta, z)\Rr)^2}_{\alpha(\rho)}\\
		&\leq \sum_{M=0}^\infty \tilde \P_{\alpha(\rho)}\Ll[\sum_{y \in z+\cu_n^+}\teta_y=M\Rr] \sum_{N=1}^{M+1} P_{z+\cu_n, M, N} \Ll(\frac{\vert \cu_n^+ \vert }{2 N}\Rr)^2  F(N)\\
		&= \sum_{N=1}^\infty  \Ll( \sum_{M\geq N-1} \tilde \P_{\alpha(\rho)}\Ll[\sum_{y \in z+\cu_n^+}\teta_y=M\Rr] \tilde \P_{z+\cu_n^+, M}\Ll[\sum_{x \in (z+\cu_n^+) \setminus \{z\}} [\teta]_x = N-1\Rr]\Rr) \Ll(\frac{\vert \cu_n^+ \vert }{2 N}\Rr)^2  F(N)\\
		&= \sum_{N=1}^\infty  \tilde \P_{\alpha(\rho)}\Ll[\sum_{x \in (z+\cu_n^+) \setminus \{z\}} [\teta]_x = N-1\Rr] \Ll(\frac{\vert \cu_n^+ \vert }{2 N}\Rr)^2  F(N)\\
		&= \sum_{N=1}^\infty  \P_{\rho}\Ll[\sum_{x \in (z+\cu_n^+) \setminus \{z\}} \eta_x = N-1\Rr] \Ll(\frac{\vert \cu_n^+ \vert }{2 N}\Rr)^2  F(N)\\
		&=\sum_{N=1}^\infty  \tbinom{|\cu_n^+|-1}{N-1}\rho^{N-1}(1-\rho)^{|\cu_n^+|-N} \Ll(\frac{\vert \cu_n^+ \vert }{2 N}\Rr)^2  F(N)\\
		&=\sum_{N=1}^\infty \frac{1}{2\rho} \tbinom{|\cu_n^+|}{N}\rho^{N}(1-\rho)^{|\cu_n^+|-N} \frac{\vert \cu_n^+ \vert }{2 N}  F(N)\\
		&=\frac{1}{2\rho}\sum_{N=1}^\infty  \P_{\rho}\Ll[\sum_{x \in (z+\cu_n^+)} \eta_x = N\Rr]\frac{\vert \cu_n^+ \vert }{2 N}  F(N).
	\end{align*}
	The identity from the third line to the fourth line comes from the definition of conditional probability. From the fourth line to the fifth line, we apply the coarse-grained lifting in Proposition~\ref{prop.coarsen} once again. We define the random variable ${\mathbf N_{z,n}=\sum_{x\in z+\cu_n^+}\eta_x}$ to be the number of particles in the cube $z+\cu_n^+$. Using this notation, the last term becomes $\frac{1}{2\rho}\bracket{\frac{\vert \cu_n^+ \vert }{2 \mathbf{N}_{z,n}}F(\mathbf{N}_{z,n})}_{\rho}$, and it gives us 
	\begin{multline}\label{eq.L2Step23}
		\bbracket{\Ll(\S_{n}(\partial_i  [ u])(\teta, z)\Rr)^2}_{\alpha(\rho)}
		\\ \leq \frac{1}{2\rho}\bracket{\frac{\vert \cu_n^+ \vert }{2 \mathbf{N}_{z,n}} \Ll(\frac{1}{\vert \cu_n \vert}\sum_{x \in z+\cu_n} \bracket{(\nabla_{x, e_i}u) }_{z+\cu_n^+, \mathbf{N}_{z,n}}\Rr)^2}_{\rho}.
	\end{multline}
	We remark that we define this integrand to be zero if $\mathbf N$ is zero. Plugging this back, we have
	\begin{multline}\label{eq.WMPoincare}
		\bracket{\frac{1}{|\cu_{m}|} u^2}_\rho^\frac{1}{2}  \\
		\leq C\left(\frac{\alpha(\rho)}{\rho}\right)^{\frac{1}{2}} \sum_{n=0}^{m} 3^n \Ll( \frac{1}{\vert \Z_{m,n}\vert}\sum_{z \in \Z_{m,n}}\bracket{\frac{\vert \cu_n^+ \vert }{2 \mathbf{N}_{z,n}} \Ll\vert\frac{1}{\vert \cu_n \vert}\sum_{x \in z+\cu_n} \bracket{\nabla_{x}u}_{z+\cu_n^+, \mathbf{N}_{z,n}}\Rr\vert^2}_{\rho} \Rr)^{\frac{1}{2}}.
	\end{multline}
	
	\textit{Step 3: flipping.} The result \eqref{eq.WMPoincare} is close to what we expect, since we retract the spatial average under the Kawasaki dynamics with a weight of particle numbers. The only issue is that the factor $\alpha(\rho)$ behaves badly when $\rho$ is close to $1$. However, as we pointed out, this lifting procedure $[u]$ breaks the symmetry $\bracket{u(\eta)}_\rho=\bracket{u(1-\eta)}_{1-\rho}$, where $(1-\eta)_x=1-\eta_x$. Therefore, we actually have another version by flipping $\rho$ and $1-\rho$.
	
	We define $\check u:\mcl X\rightarrow \R$ as $\check u(\eta)=u(1-\eta)$ and apply \eqref{eq.WMPoincare} to it under $\bracket{\cdot}_{1-\rho}$ to obtain
	\begin{align*}
		&\bracket{\frac{1}{|\cu_{m}|} u^2}_\rho^\frac{1}{2}=\bracket{\frac{1}{|\cu_{m}|} \check u^2}_{1-\rho}^\frac{1}{2}  \\
		\leq& C\left(\frac{-\log\rho}{1-\rho}\right)^{\frac{1}{2}} \sum_{n=0}^{m} 3^n \Ll( \frac{1}{\vert \Z_{m,n}\vert}\sum_{z \in \Z_{m,n}}\bracket{\frac{\vert \cu_n^+ \vert }{2 \mathbf{N}_{z,n}} \Ll\vert\frac{1}{\vert \cu_n \vert}\sum_{x \in z+\cu_n} \bracket{\nabla_{x}\check u}_{z+\cu_n^+, \mathbf{N}_{z,n}}\Rr\vert^2}_{1-\rho} \Rr)^{\frac{1}{2}}.
	\end{align*}
	Let us work on a single term.
	\begin{align*}
		&\bracket{\frac{\vert \cu_n^+ \vert }{2 \mathbf{N}_{z,n}(\eta)} \Ll\vert\frac{1}{\vert \cu_n \vert}\sum_{x \in z+\cu_n} \bracket{\nabla_{x}\check u}_{z+\cu_n^+, \mathbf{N}_{z,n}(\eta)}\Rr\vert^2}_{1-\rho}
		\\=&\bracket{\frac{\vert \cu_n^+ \vert }{2 \mathbf{N}_{z,n}(1-\eta)} \Ll\vert\frac{1}{\vert \cu_n \vert}\sum_{x \in z+\cu_n} \bracket{\nabla_{x}\check u}_{z+\cu_n^+, \mathbf{N}_{z,n}(1-\eta),(1-\eta)\mres (\cu_n^+)^c}\Rr\vert^2}_{\rho}
		\\= &\bracket{\frac{\vert \cu_n^+ \vert }{2 \left(|\cu_n^+|- \mathbf{N}_{z,n}(\eta)\right)} \Ll\vert\frac{1}{\vert \cu_n \vert}\sum_{x \in z+\cu_n} \bracket{\nabla_{x} u}_{z+\cu_n^+,\mathbf{N}_{z,n}(\eta), \eta \mres (\cu_n^+)^c}\Rr\vert^2}_{\rho}.
	\end{align*}
	Here from the first line to the second line, we use the observation that $\eta$ has the law $\P_\rho$ if and only if $1-\eta$ has the law $\P_{1-\rho}$. From the second line to the third line, we use the similar observation that $\eta$ has the law $\P_{\Lambda, N,\zeta}$ if and only if $1-\eta$ has the law $\P_{\Lambda, |\Lambda|-N,1-\zeta}$. This gives us a flipped version of previous result:
	\begin{multline}\label{eq.WMPoincareflip}
		\bracket{\frac{1}{|\cu_{m}|} u^2}_\rho^\frac{1}{2}  \\
		\leq C\left(\frac{-\log\rho}{1-\rho}\right)^{\frac{1}{2}} \sum_{n=0}^{m} 3^n \Ll( \frac{1}{\vert \Z_{m,n}\vert}\sum_{z \in \Z_{m,n}}\bracket{\frac{\vert \cu_n^+ \vert }{2 (|\cu_n^+|-\mathbf{N}_{z,n})} \Ll\vert\frac{1}{\vert \cu_n \vert}\sum_{x \in z+\cu_n} \bracket{\nabla_{x}u}_{z+\cu_n^+, \mathbf{N}_{z,n}}\Rr\vert^2}_{\rho} \Rr)^{\frac{1}{2}}.
	\end{multline}
	
	We combine the two results, and observe that the factor $\frac{-\log(1-\rho)}{\rho}$ is bounded on $[0,\frac{1}{2}]$, while $\frac{-\log\rho}{1-\rho}$ is bounded on $[\frac{1}{2},1]$, we can combine the two estimates into a single one and drop the dependence on $\rho$. We use $\eqref{eq.WMPoincare}$ for $\rho\leq\frac{1}{2}$ and \eqref{eq.WMPoincareflip} for $\rho>\frac{1}{2}$, and also simplify the notation using the conditional expectation with respect to $\G_{z+\cu_n^+}$. This proves \eqref{eq.WMPoincare2}.
\end{proof}

\section{Subadditive quantities}\label{sec.Sub}

In this section, we start the renormalization approach. We define at first several subadditive quantities and develop their elementary properties.

\subsection{Subadditive quantities $\nub$ and $\nub_*$}
For every finite set $\Lambda\subset\Zd$ and $p,q\in\Rd$, we define the quantities
\begin{equation}\label{eq.defNu}
	\begin{split}
		\nub(\rho,\Lambda,p) &:= \inf_{v\in\ell_{p,\Lambda^+} +\F_0(\Lambda^-)} \Ll\{ \frac{1}{2 \chi(\rho)\vert\Lambda\vert} \sum_{b\in\ov{\Lambda^*}} \bracket{ \frac{1}{2}c_b(\pi_b v)^2}_\rho \Rr\},\\
		\nub_*(\rho,\Lambda,q) &:=\sup_{v \in  \F_0} \Ll\{ \frac{1}{2 \chi(\rho)\vert\Lambda\vert}\sum_{b\in\ov{\Lambda^*}}  \bracket{ (\pi_b \ell_{q,\Lambda^+})(\pi_b v) - \frac{1}{2} c_b(\pi_b v)^2}_{\rho}\Rr\}.
	\end{split}
\end{equation}
Recall that the affine function $\ell_{p,\Lambda^+}$ defined in \eqref{eq.defAffineRigorous}, and $\Lambda^-$, $\ov{\Lambda^*}$ defined respectively in \eqref{eq.defMinus}, \eqref{eq.defBondLarge}. Compared to \eqref{eq.defC2}, here we add the factor $\chi(\rho)$ in the normalization,  which will make the notation lighter in the homogenization step.

In the following context, we also use $\ell_p$ defined in \eqref{eq.defAffineFormal} instead of $\ell_{p,\Lambda^+}$ to simplify the notation. This formal notation has well-defined gradients and one should consider it as $\ell_{p,\Lambda^+}$ when talking about the function itself.

We record some elementary properties satisfied by $\nub$ and $\nub_*$.

\begin{proposition}[Elementary properties of $\nub$ and $\nub_*$]\label{prop.Element}
	The following properties hold for every bounded $\Lambda\subset\Zd$ and $p,p',q,q'\in\Rd$.
	
	(1) There exists a unique solution for the optimization problem of $\nub(\rho,\Lambda,p)$ satisfying $\bracket{v-\ell_{p, \Lambda^+}}_\rho=0$; we denote it by $v(\cdot,\rho,\Lambda,p)$. For the optimization problem of $\nub_*(\rho,\Lambda.q)$, there exists a unique maximizer $u(\cdot,\Lambda,q)$ being independent of  $\rho$ and belonging to $\F_0(N_\r(\Lambda^+))$ satisfying $\mathbb{E}_\rho[u|\mathcal{G}_{\Lambda^+}]=0$, where $\G_{\Lambda^+}$ is defined in \eqref{eq.defGTildeKawa}	and $N_\r(\Lambda^+)$ is defined as
	\begin{equation*}
		N_\r(\Lambda^+):=\{x\in\mathbb{Z}^d:  \dist(x,\Lambda^+)\leq\r\}.
	\end{equation*}
	Moreover, the two optimizers are both harmonic in the sense $v(\cdot,\rho,\Lambda,p), u(\cdot,\Lambda,q)\in\mcl A(\Lambda)$, where $\mcl A(\Lambda)$ is defined in \eqref{eq.defHarmonic}.
	
	(2) There exist two $d\times d$ positive symmetric matrices $\D(\rho,\Lambda)$ and $\D_*(\rho,\Lambda)$ such that for every $p,q\in\mathbb{R}^d$
	\begin{align}\label{eq.quadraticNu}
		\nub(\rho,\Lambda,p)=\frac{1}{2}p\cdot \D(\rho,\Lambda) p, \qquad  \nub_*(\rho,\Lambda,q)=\frac{1}{2}q\cdot \D_*^{-1}(\rho,\Lambda) q       
	\end{align}
	and these matrices satisfy $\id \leq \D(\rho,\Lambda), \D_*(\rho,\Lambda) \leq \lambda \id$. Moreover, for ever $p', q' \in \Rd$, we have
	\begin{equation}\label{eq.bilinearNu}
		\begin{split}
			p'\cdot \D(\rho,\Lambda) p &=\bracket{\frac{1}{2\chi(\rho)|\Lambda|}\sum_{b\in\ov{\Lambda^*}}c_b (\pi_b \ell_{p'})(\pi_b v(\cdot,\rho,\Lambda,p))}_\rho, \\
			q'\cdot \D_*^{-1}(\rho,\Lambda) q &=\bracket{\frac{1}{2\chi(\rho)|\Lambda|}\sum_{b\in\ov{\Lambda^*}}(\pi_b \ell_{q'})(\pi_b u(\cdot,\Lambda,q))}_\rho.
		\end{split}
	\end{equation}

	(3) For every $v'\in\ell_{p,\Lambda^+}+\F_0(\Lambda^-)$, we have 
	\begin{equation} \label{eq.qrnu}
		\bracket{\frac{1}{2\chi(\rho)|\Lambda|}\sum_{b\in\ov{\Lambda^*}}\frac{1}{2}c_b(\pi_b(v-v'))^2}_\rho=\bracket{\frac{1}{2\chi(\rho)|\Lambda|}\sum_{b\in\ov{\Lambda^*}}\frac{1}{2}c_b(\pi_bv')^2}_\rho-\nub(\rho,\Lambda,p),
	\end{equation} 
	where $v = v(\cdot,\rho,\Lambda,p)$ is the minimizer defined in (1).
	
	Similarly, for every $u'\in\F_0$ and the maximizer $u = u(\cdot, \Lambda, q)$ defined in (1), we have 
	\begin{multline} \label{eq.qrnu*}
		\bracket{\frac{1}{2\chi(\rho)|\Lambda|}\sum_{b\in\ov{\Lambda^*}} \frac{1}{2}c_b(\pi_b(u-u'))^2}_\rho
		\\=\nub_*(\rho,\Lambda,q)-\bracket{\frac{1}{2\chi(\rho)|\Lambda|}\sum_{b\in\ov{\Lambda^*}}\Ll(-\frac{1}{2}c_b(\pi_b u')^2+(\pi_b \ell_{q})(\pi_b u')\Rr)}_\rho.
	\end{multline}
	
	(4) For any partition of vertices $\Lambda=\bigsqcup_{i=1}^m\Lambda_i$, we have 
	\begin{align}
		\nub(\rho,\Lambda,p) &\leq\sum_{i=1}^m\frac{|\Lambda_i|}{|\Lambda|}\nub(\rho,\Lambda_i,p), \label{eq.subadditivenu}\\
		\nub_*(\rho,\Lambda,p) &\leq\sum_{i=1}^m\frac{|\Lambda_i|}{|\Lambda|}\nub_*(\rho,\Lambda_i,p). \label{eq.subadditivenu*}
	\end{align}
	In particular, we have $\nub(\rho,\cu_{n+1},q)\leq\nub(\rho,\cu_n,q)$ and $\nub_*(\rho,\cu_{n+1},q)\leq\nub_*(\rho,\cu_n,q)$ for every $n \in \N_+$.

\end{proposition}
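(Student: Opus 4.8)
The four claims are the standard package of elementary facts about convex minimization/maximization problems associated with a quadratic Dirichlet-type energy, so I would follow the well-trodden route from \cite[Chapter~2]{AKMbook} adapted to the Kawasaki setting. For part~(1), I would note that $\nub(\rho,\Lambda,p)$ is the infimum of a strictly convex, coercive quadratic functional over the affine subspace $\ell_{p,\Lambda^+}+\F_0(\Lambda^-)$; coercivity comes from the lower bound $c_b\ge 1$ together with the spectral inequality \eqref{eq.spectralGradient}, which controls $\bracket{(v-\ell_{p,\Lambda^+})^2}_\rho$ by the Dirichlet energy since $v-\ell_{p,\Lambda^+}\in\F_0(\Lambda^-)$. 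Hence a unique minimizer exists; adding a constant does not change the energy (the Kawasaki derivative annihilates constants), so we may normalize $\bracket{v-\ell_{p,\Lambda^+}}_\rho=0$. The Euler--Lagrange equation obtained by perturbing $v\mapsto v+t\varphi$ with $\varphi\in\F_0(\Lambda^-)$ gives $\bracket{\varphi(-\L_\Lambda v)}_\rho=0$, i.e.\ $v\in\mcl A(\Lambda)$. For $\nub_*$ the functional is strictly concave in $v$ (the quadratic term $-\tfrac12 c_b(\pi_b v)^2$ enters with a minus sign), and one first checks the supremum is attained on functions measurable with respect to $\fil_{N_\r(\Lambda^+)}$: any bond in $\ov{\Lambda^*}$ has both endpoints in $\Lambda^+$, the coefficient $c_b$ depends only on sites within distance $\r$, and both $\pi_b\ell_q$ and $\pi_b v$ vanish unless $v$ genuinely depends on those sites, so replacing $v$ by $\A_{N_\r(\Lambda^+)}v$ (Jensen, exactly as in the proofs of Lemma~\ref{lem.variationBC}) does not decrease the functional; similarly one centers so that $\E_\rho[u\mid\G_{\Lambda^+}]=0$. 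The Euler--Lagrange equation now reads $\bracket{(\pi_b\ell_q)(\pi_b\varphi)}=\bracket{c_b(\pi_bu)(\pi_b\varphi)}$ summed over $b$, which is again a harmonic-type relation, so $u\in\mcl A(\Lambda)$.

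Part~(2) is bookkeeping: the map $p\mapsto v(\cdot,\rho,\Lambda,p)-\ell_{p,\Lambda^+}$ is linear (uniqueness plus linearity of the Euler--Lagrange equation), so $p\mapsto v(\cdot,\rho,\Lambda,p)$ is affine in $p$, and the energy, being quadratic in $v$, is a homogeneous quadratic form in $p$; call it $\tfrac12 p\cdot\D(\rho,\Lambda)p$, with $\D$ symmetric by polarization. Plugging the minimizer into the definition and using the Euler--Lagrange relation to replace $\bracket{c_b(\pi_b v)^2}$ by $\bracket{c_b(\pi_b\ell_p)(\pi_b v)}$ (valid since $v-\ell_{p,\Lambda^+}$ is an admissible test function) yields the bilinear formula for $\D$; the dual computation gives the one for $\D_*^{-1}$. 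The bounds $\id\le\D,\D_*\le\lambda\id$ follow by comparing the minimizer's energy against the trivial test function $v=\ell_{p,\Lambda^+}$ (upper bound, using $c_b\le\lambda$ and the normalization $\tfrac{1}{2\chi(\rho)|\Lambda|}\sum_{b\in\ov{\Lambda^*}}(\pi_b\ell_p)^2=|p|^2$ that motivates the $\chi(\rho)$-weight) and against the optimal one (lower bound, using $c_b\ge1$ and Cauchy--Schwarz on the bilinear form), and dually for $\D_*$.

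Part~(3) is the quadratic-response (Pythagoras) identity: for $v'=v+w$ with $w\in\F_0(\Lambda^-)$, expand $\bracket{\tfrac12 c_b(\pi_b v')^2}$ as $\bracket{\tfrac12 c_b(\pi_b v)^2}+\bracket{c_b(\pi_b v)(\pi_b w)}+\bracket{\tfrac12 c_b(\pi_b w)^2}$, sum over $b\in\ov{\Lambda^*}$, and observe that the cross term vanishes by the Euler--Lagrange equation from part~(1); rearranging gives \eqref{eq.qrnu}, and the dual expansion around the maximizer $u$ gives \eqref{eq.qrnu*}. Part~(4) is subadditivity: given a partition $\Lambda=\bigsqcup_i\Lambda_i$, take the minimizers $v_i=v(\cdot,\rho,\Lambda_i,p)$, glue them into $v:=\sum_i(v_i-\ell_{p,\Lambda_i^+})\mathbf{1}+\ell_{p,\Lambda^+}$ — more precisely the function equal to $v_i$ on each piece, which lies in $\ell_{p,\Lambda^+}+\F_0(\Lambda^-)$ because each $v_i-\ell_{p,\Lambda_i^+}\in\F_0(\Lambda_i^-)$ — and use it as a test function; the bond energy splits as $\sum_i\sum_{b\in\ov{\Lambda_i^*}}$ thanks to the partition property \eqref{eq.CubeRenormalization2} of $\ov{\Lambda^*}$, giving \eqref{eq.subadditivenu}. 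For $\nub_*$, test the variational problem on $\Lambda$ with the gluing of the $u_i$'s; the fact that each $u_i\in\F_0(N_\r(\Lambda_i^+))$ and bonds of $\ov{\Lambda^*}$ only see local coefficients makes the sum decouple, yielding \eqref{eq.subadditivenu*}. The monotonicity along $\cu_n\subset\cu_{n+1}$ is the special case of a partition of $\cu_{n+1}$ into translates of $\cu_n$ (using \eqref{eq.CubeRenormalization1}--\eqref{eq.CubeRenormalization2}) combined with translation invariance of $\nub,\nub_*$ from Hypothesis~\ref{hyp}(2). The one genuinely delicate point — the main obstacle — is the decoupling in part~(4) and the measurability claim in part~(1) for $\nub_*$: because $\ov{\Lambda^*}$ includes bonds straddling $\partial\Lambda$ and the coefficients $c_b$ have range $\r$, one must check carefully that gluing local maximizers does not create spurious cross-energy across the interfaces between the $\Lambda_i$, which is exactly why the enlargement $N_\r(\Lambda^+)$ and the partition identity \eqref{eq.CubeRenormalization2} for $\ov{\Lambda^*}$ (rather than $\Lambda^*$) were built into the definitions.
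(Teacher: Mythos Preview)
Your outline for parts (1)--(3) matches the paper's approach essentially verbatim. There is, however, a real gap in part (4) for $\nub_*$, and a smaller omission in part (1).

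\textbf{Subadditivity of $\nub_*$.} Your proposal is to glue the local maximizers $u_i=u(\cdot,\Lambda_i,q)$ and test the resulting function in the variational problem on $\Lambda$. This gives the wrong direction: since $\nub_*(\rho,\Lambda,q)$ is a \emph{supremum}, plugging in any competitor yields a \emph{lower} bound, so even if the energy decoupled perfectly you would obtain $\nub_*(\rho,\Lambda,q)\ge \sum_i\tfrac{|\Lambda_i|}{|\Lambda|}\nub_*(\rho,\Lambda_i,q)$, which is the opposite of \eqref{eq.subadditivenu*}. (And the decoupling is in any case not clean: the supports $N_\r(\Lambda_i^+)$ overlap across interfaces, so the quadratic term produces genuine cross-terms.) The paper proceeds dually: it takes the \emph{global} maximizer $u=u(\cdot,\Lambda,q)$ and uses it as a competitor in each \emph{local} problem $\nub_*(\rho,\Lambda_i,q)$ --- this is admissible because the supremum in \eqref{eq.defNu} runs over all of $\F_0$, with no measurability restriction to $\Lambda_i$. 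One gets
\[
\nub_*(\rho,\Lambda_i,q)\ge \frac{1}{2\chi(\rho)|\Lambda_i|}\sum_{b\in\ovs{\Lambda_i}}\bracket{(\pi_b\ell_q)(\pi_b u)-\tfrac12 c_b(\pi_b u)^2}_\rho,
\]
and summing with weights $|\Lambda_i|/|\Lambda|$ together with the bond partition \eqref{eq.CubeRenormalization2} recovers exactly $\nub_*(\rho,\Lambda,q)$ on the right, giving \eqref{eq.subadditivenu*}. This is the standard asymmetry between $\nub$ (glue local minimizers, test globally) and $\nub_*$ (restrict global maximizer, test locally).

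\textbf{Independence of $u$ from $\rho$.} The proposition asserts that $u(\cdot,\Lambda,q)$ does not depend on $\rho$, and your sketch does not address this. The paper's argument is to test the Euler--Lagrange equation with $\phi\,\mathbf{1}_{\{\sum_{x\in\Lambda^+}\eta_x=N\}}\mathbf{1}_{\{\eta_y=\epsilon_y\}}$ for arbitrary $N$ and finitely many $\epsilon_y$, which upgrades the variational equation to one under the conditional expectation $\E_\rho[\cdot\mid\G_{\Lambda^+}]$; since that conditional law is the uniform measure on configurations with the prescribed particle count and exterior, the resulting equation no longer sees $\rho$, and solving it separately on each atom of $\G_{\Lambda^+}$ produces a single $u$ that is optimal for every $\rho$. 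Relatedly, to conclude $u\in\mcl A(\Lambda)$ from the Euler--Lagrange equation one must check that the linear term $\sum_{b\in\ovs{\Lambda}}\bracket{(\pi_b\ell_q)(\pi_b\phi)}_\rho$ vanishes for $\phi\in\F_0(\Lambda^-)$; this is not automatic and uses \eqref{eq.MeanZeroLocal}.
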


\begin{proof}
	The proof of this proposition is elementary and standard. We give the complete proof of (1), which concerns some details. For other statements, we sketch the main idea of the proof. Readers may look for details in a similar setting in \cite[Proposition~4.1]{bulk}.
	
	(1) We first study $\nub(\rho,\Lambda,p)$. By a variational calculus, the minimizer can be characterized by the equation that for any $\phi\in\F_0(\Lambda^-)$,
	\begin{equation}\label{eq.variationnu}
		\bracket{\sum_{b\in\ov{\Lambda^*}}c_b(\pi_b(v-\ell_{p,\Lambda^+}))(\pi_b\phi)}_\rho=\bracket{\sum_{b\in\ov{\Lambda^*}}c_b(\pi_b(-\ell_{p,\Lambda^+}))(\pi_b\phi)}_\rho.
	\end{equation}
	We can define its solution in the space 
	\begin{equation*}
		V=\{f\in\F_0(\Lambda^-):\bracket{f}_\rho=0\}.
	\end{equation*}
	The coercivity is ensured by Poincar\'e inequality \eqref{eq.spectralGradient} and we can apply Lax--Milgram theorem to get the unique minimizer $v(\cdot,\rho,\Lambda,p)$. Moreover, \eqref{eq.variationnu} implies that $v(\cdot,\rho,\Lambda,p) \in\mcl A(\Lambda)$.

	Then we turn to $\nub_*(\rho,\Lambda,q)$. A first observation is that the maximizer can be found in $\F_0(N_\r(\Lambda^+))$. Because for any $u\in\F_0$, its conditional expectation $\A_{N_\r(\Lambda^+)}u$ (defined in \eqref{eq.defA}) reaches a larger value for the functional. More precisely, 
	\begin{align}
		\begin{split}
			&\bracket{\sum_{b\in\ov{\Lambda^*}}\left(-\frac{1}{2}c_b(\pi_b\A_{N_\r(\Lambda^+)}u)^2+(\pi_b \ell_{q})(\pi_b\A_{N_\r(\Lambda^+)}u\right)}_\rho
			\\&=\bracket{\sum_{b\in\ov{\Lambda^*}}\left(-\frac{1}{2}c_b(\A_{N_\r(\Lambda^+)} \pi_bu)^2+(\pi_b\ell_{q})(\pi_b\A_{N_\r(\Lambda^+)}u\right)}_\rho
			\\&\geq \bracket{\sum_{b\in\ov{\Lambda^*}}\left(-\frac{1}{2}c_b(\pi_bu)^2+(\pi_b\ell_{q})(\pi_bu)\right)}_\rho,
		\end{split}
	\end{align}
	where we use the locality of $c_b$ and \eqref{eq.AdInOut} from the first line to the second line, and Jensen's inequality from the second line to the third line.
	
	Similar to the discussion above, the maximizer of the functional can be characterized by the variational equation that for any $\phi\in\F_0$,
	\begin{align}\label{eq.variationnu*}
		\bracket{\sum_{b\in\ov{\Lambda^*}}c_b(\pi_bu)(\pi_b\phi)}_\rho=\bracket{\sum_{b\in\ov{\Lambda^*}}(\pi_b\ell_{q})(\pi_b\phi)}_\rho
	\end{align}
	Notice that for any $\mathcal{G}_{\Lambda^+}$ measurable function $\phi'$, the difference $\pi_b\phi'$ becomes zero and thus the above equation automatically holds. By replacing $\phi$ by $\phi-\mathbb{E}_\rho[\phi|\mathcal{G}_{\Lambda^+}]$ we may only solve the problem for all $\phi$ in the space $W=\{ f\in\F_0: \mathbb{E}_\rho[f|\mathcal{G}_{\Lambda^+}]=0\}$. Moreover, testing the equation with $\phi\mathbf{1}_{\{\sum_{x\in \Lambda^+}\eta_x=N\}}\mathbf{1}_{\{\eta_y=\epsilon_y, \text{ for finitely many } y\in (\Lambda^+)^c\}}$ for arbitrary $N\in\mathbb{N}$ and $\epsilon_y\in\{0,1\}$, we actually reinforce the equation \eqref{eq.variationnu*} in a stronger way
	\begin{align}\label{eq.variationnu*2}
		\mathbb{E}_\rho\left[\sum_{b\in\ovs\Lambda}c_b(\pi_bu)(\pi_b\phi)\ |\ \mathcal{G}_{\Lambda^+}\right]=\mathbb{E}_\rho\left[\sum_{b\in\ovs\Lambda}(\pi_b\ell_{q})(\pi_b\phi)\ |\ \mathcal{G}_{\Lambda^+}\right],
	\end{align} 
	and we may seek for the solution in the space $W$. In this space the Poincar\'e inequality \eqref{eq.Poingrand} ensures the coercivity, so Lax--Milgram theorem applies and we get the unique maximizer $u(\cdot,\Lambda,q)$. We emphasize that after taking conditional expectation with respect to proper $\G_{\Lambda^+}$, the equation \eqref{eq.variationnu*2} is independent of the choice of $\rho$. Thus, the optimizer is found separately for every conditional expectation $\mathbb{E}[\cdot|\G_{\Lambda^+}]$ and in particular, $u$ is an optimizer for all $\rho$. Moreover, testing \eqref{eq.variationnu*} with $\phi\in\F_0(\Lambda^-)$, the right hand side becomes zero thanks to \eqref{eq.MeanZeroLocal} and we get $u\in\mcl A(\Lambda)$.
	
	(2) We test \eqref{eq.variationnu} with $v(\cdot,\rho,\Lambda,p')-\ell_{p',\Lambda^+}$ and get 
	\begin{equation}
		\bracket{\sum_{b\in\ov{\Lambda^*}}c_b(\pi_bv(\cdot,\rho,\Lambda,p))(\pi_bv(\cdot,\rho,\Lambda,p'))}_\rho=\bracket{\sum_{b\in\ov{\Lambda^*}}c_b(\pi_bv(\cdot,\rho,\Lambda,p))(\pi_b\ell_{p',\Lambda^+})}_\rho.
	\end{equation}
	In particular, the above formula is linear in $p'$. For the same reason, it is also linear in $p$. In particular, $\nub$ is quadratic in $p$.
	To obtain the bound of $\D(\rho,\Lambda)$, we use the condition $1 \leq c_b \leq \lambda$ in Hypothesis~\ref{hyp}, 
	\begin{equation*}
		\begin{split}
			&\inf_{v\in \ell_{p,\Lambda^+}+\F_0(\Lambda^-)}\bracket{\frac{1}{4\chi(\rho)|\Lambda|}\sum_{b\in\ov{\Lambda^*}}(\pi_b v)^2}_\rho
			\\&\leq\inf_{v\in \ell_{p,\Lambda^+}+\F_0(\Lambda^-)}\bracket{\frac{1}{4\chi(\rho)|\Lambda|}\sum_{b\in\ov{\Lambda^*}}c_b(\pi_b v)^2}_\rho=\frac{1}{2}p\cdot \D(\rho,\Lambda)\cdot p
			\\&\leq\inf_{v\in \ell_{p,\Lambda^+}+\F_0(\Lambda^-)}\bracket{\frac{1}{4\chi(\rho)|\Lambda|}\sum_{b\in\ov{\Lambda^*}}\lambda(\pi_b v)^2}_\rho.
		\end{split}
	\end{equation*}
	We can observe that $\ell_{p,\Lambda^+}$ is the minimizer for $\inf_{v\in \ell_{p,\Lambda}+\F_0(\Lambda^-)}\bracket{\frac{1}{4\chi(\rho)|\Lambda|}\sum_{b\in\Lambda^*}(\pi_b v)^2}_\rho$, whose energy is precisely $\vert p\vert^2$. One may see this from the variational characterization \eqref{eq.variationnu}. This proves the first part of the proposition.
	
	The similar argument works for $\nub_*(\rho,\Lambda,q)$. Testing \eqref{eq.variationnu*} with $u(\cdot,\Lambda,q')$ yields the linearity. 
	Concerning the bound for $\D_*$, we use the bound for $c_b$ to obtain
	\begin{align*}
		&\sup_{v\in\F_0}\frac{1}{2\chi(\rho)|\Lambda|}\bracket{-\frac{\lambda}{2}\sum_{b\in\Lambda^*}(\pi_b v)^2+\sum_{b\in\Lambda^*}(\pi_b \ell_{q,\Lambda^+})(\pi_b v)}_\rho
		\\&\leq \nub_*(\rho,\Lambda,q)
		\\ &\leq \sup_{v\in\F_0}\frac{1}{2\chi(\rho)|\Lambda|}\bracket{-\frac{1}{2}\sum_{b\in\Lambda^*}(\pi_b v)^2+\sum_{b\in\Lambda^*}(\pi_b \ell_{q,\Lambda^+})(\pi_b v)}_\rho.
	\end{align*}
	One can see in the lower bound, the maximizer is $\ell_{\frac{q}{\lambda}.\Lambda^+}$, while in the upper bound, the maximizer is $\ell_{q,\Lambda^+}$. This gives the bound we want.

	(3) This is a direct calculation. We test \eqref{eq.variationnu} with $(v'-\ell_{p,\Lambda^+})$ to get the first equation, and test the equation \eqref{eq.variationnu*} with $u'$ to get the second equation.
	
	(4) For the quantity $\nub(\rho,\Lambda,p)$, 
	\begin{equation*}
		v'= \ell_{p,\Lambda^+} + \sum_{i=1}^n (v(\cdot,\rho,\Lambda_i,p)-\ell_{p,\Lambda_i^+})
	\end{equation*} 
	is a sub-minimizer of $\nub(\rho,\Lambda,p)$, and we use it to prove the sub-additivity of $\nub$. Concerning the quantity $\nub_*(\rho,\Lambda,q)$, we can not ``glue'' the local optimizers, but we use the fact that $u(\cdot,\Lambda,q)$ is a sub-maximizer for every $\nub_*(\rho,\Lambda_i,q)$ to get the sub-additivity of $\nub_*$. We also highlight the identity \eqref{eq.CubeRenormalization2}, which helps avoid the boundary layer and is the motivation we use $\ovs{\Lambda}$ in the definition \eqref{eq.defNu}.
	
\end{proof}
\begin{remark}\label{rmk.DualCanonical}
	Intuitively, the last part of the proof of item (1) actually says that the maximizer $u(\cdot,\Lambda,q)$ can be found in the following way: first, fix the number of particles inside $\Lambda^+$ (denoted by $n$) and the environment outside $\Lambda^+$ (denoted by $\zeta$), and seek for a maximizer $u_{n,\zeta}$ in this condition. Then the maximizer is exactly obtained by pasting all the $u_{n,\zeta}$'s.	
\end{remark}

Viewing the sub-additivity \eqref{eq.subadditivenu}, we have the following corollary.
\begin{corollary}\label{cor.defDLimit}
	For every $\rho \in (0,1)$, the following limit is well-defined
	\begin{align}\label{eq.defDLimit}
		\D(\rho) := \lim_{m \to \infty} \D(\rho, \cu_m).
	\end{align}
\end{corollary}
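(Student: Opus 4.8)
The plan is to reduce the convergence of the matrices $\D(\rho,\cu_m)$ to the convergence of the scalar sequences $m\mapsto\nub(\rho,\cu_m,p)$ for fixed $p\in\Rd$, and then recover the matrix statement by polarization. First I would fix $p\in\Rd$ and use the two facts already packaged in Proposition~\ref{prop.Element}: on the one hand, the monotonicity $\nub(\rho,\cu_{m+1},p)\leq\nub(\rho,\cu_m,p)$ recorded at the end of item (4) (which follows by applying the subadditivity \eqref{eq.subadditivenu} to the partition \eqref{eq.CubeRenormalization1} of $\cu_{m+1}$ into $3^d$ translated copies of $\cu_m$, all carrying the same value of $\nub$ by the translation invariance in Hypothesis~\ref{hyp}(2)); on the other hand, the identity and bound \eqref{eq.quadraticNu}, which give $\nub(\rho,\cu_m,p)=\tfrac12 p\cdot\D(\rho,\cu_m)p\geq\tfrac12|p|^2\geq 0$ from $\D(\rho,\cu_m)\geq\id$. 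A non-increasing sequence bounded below converges, so $\lim_{m\to\infty}\nub(\rho,\cu_m,p)$ exists for every $p\in\Rd$.

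Next I would upgrade this to convergence of the matrices. By Proposition~\ref{prop.Element}(2) each $\D(\rho,\cu_m)$ is symmetric, so the polarization identity yields, for all $1\leq i,j\leq d$,
\begin{align*}
	\D(\rho,\cu_m)_{ij} = e_i\cdot\D(\rho,\cu_m)e_j = \nub(\rho,\cu_m,e_i+e_j)-\nub(\rho,\cu_m,e_i)-\nub(\rho,\cu_m,e_j),
\end{align*}
which expresses every entry of $\D(\rho,\cu_m)$ as a fixed linear combination of values of $\nub(\rho,\cu_m,\cdot)$ at the three fixed vectors $e_i+e_j,e_i,e_j$. Each of these converges by the previous paragraph, hence each entry of $\D(\rho,\cu_m)$ converges; this is exactly the assertion that $\D(\rho):=\lim_{m\to\infty}\D(\rho,\cu_m)$ is well-defined. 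Passing to the limit in $\id\leq\D(\rho,\cu_m)\leq\lambda\id$ also shows $\id\leq\D(\rho)\leq\lambda\id$ and that $\D(\rho)$ is symmetric.

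I do not expect a serious obstacle here: all the substance — the subadditivity giving monotonicity, the uniform ellipticity bounds, and the quadratic-form structure — is already established in Proposition~\ref{prop.Element}, and the corollary is just the observation that a bounded monotone sequence converges, transported from scalars to symmetric matrices by polarization. The only point worth a line of care is that the monotonicity genuinely uses the translation invariance of the jump rates, so that the $3^d$ sub-cubes in \eqref{eq.CubeRenormalization1} each contribute the same value of $\nub$.
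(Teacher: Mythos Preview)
Your proposal is correct and is exactly the argument the paper leaves implicit: the paper states the corollary immediately after Proposition~\ref{prop.Element} with the single remark ``Viewing the subadditivity \eqref{eq.subadditivenu}, we have the following corollary,'' and your write-up simply spells out the bounded-monotone-sequence plus polarization details that this sentence encodes.
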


Recall the $L^\infty$ norm over $\X$ defined as $\norm{F}_{\infty}:= \sup_{\eta \in \X} \vert F(\eta)\vert$. Here we give an upper bound estimate of $v(\cdot, \rho, \Lambda, p)$ and $u(\cdot, \Lambda, q)$ defined in Proposition~\ref{prop.Element}.
\begin{lemma}[$L^{\infty}$ estimate]\label{lem.supNorm}
	There exists a constant $C(\lambda, d)$ such that for any connected domain $\Lambda$ of diameter $L$ and $p,q \in B_1:=\{p\in\Rd, |p|=1\}$, the following estimate is valid:
	\begin{align}\label{eq.supNorm}
		\norm{v(\cdot, \rho, \Lambda, p)}_{\infty} + \norm{u(\cdot, \Lambda, q)}_{\infty} \leq C L^{d+2} \log L.
	\end{align}
\end{lemma}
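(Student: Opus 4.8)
The plan is to reduce both optimizers to the solution of a finite Poisson equation for a conservative Kawasaki generator on a region of diameter comparable to $L$ with an $O(L^d)$‑bounded right‑hand side, and then to bound such a solution by the mixing time of that dynamics. First I would translate $\Lambda$ so that $0\in\Lambda$, which changes $v$ and $u$ only by a bounded affine function and makes $\max_{x\in\Lambda^+}|x|\le C(d)L$, and split $v=v(\cdot,\rho,\Lambda,p)=\ell_{p,\Lambda^+}+w$ with $w\in\F_0(\Lambda^-)$, $\bracket{w}_\rho=0$, as in Proposition~\ref{prop.Element}(1). Since $\norm{\ell_{p,\Lambda^+}}_\infty\le|p|\sum_{x\in\Lambda^+}|x|\le C(d)L^{d+1}$ is already below the target, it suffices to bound $\norm{w}_\infty$, and likewise $\norm{u}_\infty$ for $u=u(\cdot,\Lambda,q)$.

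The structural point I would exploit is that every bond of $\ov{\Lambda^*}$ has both endpoints in $\Lambda^+$, so $\L_\Lambda$ conserves $\sum_{x\in\Lambda^+}\eta_x$ and, since $\F_0(\Lambda^-)$‑perturbations never leave a sector, the variational problem \eqref{eq.defNu} defining $\nub$ decouples over the sectors obtained by fixing the number $N$ of particles in $\Lambda^+$ and the configuration $\zeta$ outside $\Lambda^+$; on each such sector the conditioned Bernoulli measure is the \emph{uniform} measure on arrangements of $N$ particles in $\Lambda^+$ (in particular, independent of $\rho$), and the rates $c_b$ become deterministic functions of the arrangement. On the sector $w$ equals some $w_{N,\zeta}$, and testing the Euler--Lagrange equation \eqref{eq.variationnu} and integrating by parts (using Hypothesis~\ref{hyp}(3), that $c_b$ is independent of its own endpoints) identifies $w_{N,\zeta}$ as the unique mean‑zero solution of $(-\L_\Lambda^{(N,\zeta)})w_{N,\zeta}=\sum_{b\in\ov{\Lambda^*}}c_b\,\pi_b\ell_p$, where $\L_\Lambda^{(N,\zeta)}$ is a genuine conservative Kawasaki generator on $\Lambda^+$ with $N$ particles; since $|\pi_b\ell_p|\le|p|\le1$, $c_b\le\lambda$ and $|\ov{\Lambda^*}|\le C(d)L^d$, this source is centered with sup norm $\le C(d,\lambda)L^d$. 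For the dual optimizer $u$ the same picture holds directly: Remark~\ref{rmk.DualCanonical} and the reinforced equation \eqref{eq.variationnu*2} show that the restriction of $u$ to each sector solves a Poisson equation of the same type for $\L_\Lambda^{(N,\zeta)}$ with a centered $O(L^d)$ source.

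Finally I would pass from the Poisson equation to the $L^\infty$ bound via the Green‑operator representation $(-\L_\Lambda^{(N,\zeta)})^{-1}g=\int_0^\infty P_t g\,\d t$ for a centered $g$ (the integral converging by the spectral gap), noting that $|P_t g(\eta)|$ is at most $2\norm{g}_\infty$ times the total‑variation distance of $P_t(\eta,\cdot)$ from the uniform measure on the sector; this yields $\norm{(-\L_\Lambda^{(N,\zeta)})^{-1}g}_\infty\le C\,\tx\,\norm{g}_\infty$, with $\tx$ the mixing time of $\L_\Lambda^{(N,\zeta)}$. By the Lu--Yau spectral gap \eqref{eq.Poingrand} the relaxation time is $\le C(d)L^2$, and, combined with the Lu--Yau logarithmic Sobolev inequality for bounded‑rate conservative Kawasaki dynamics on $\Lambda^+$ and the elementary bound $\log(1/p_{\min})\le|\Lambda^+|\log2\le C(d)L^d$ for the uniform measure on the sector, one gets $\tx\le C(d)L^2\log L$. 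Taking the supremum over $(N,\zeta)$ and combining with the first step yields \eqref{eq.supNorm}. I expect the main obstacle to be the sector decoupling of Step~2 --- in particular making precise that the conditioned measure is uniform (which is what both turns $\L_\Lambda^{(N,\zeta)}$ into a bona fide conservative generator and removes all $\rho$‑dependence from $\log(1/p_{\min})$) and checking that the source stays $O(L^d)$ once the boundary bonds of $\ov{\Lambda^*}$ are accounted for --- together with the need to invoke the logarithmic Sobolev inequality rather than merely the Poincar\'e inequality \eqref{eq.Poingrand}: the spectral gap alone would only give the weaker exponent $L^{2d+2}$, and the stated $L^{d+2}\log L$ genuinely requires the hypercontractive input.
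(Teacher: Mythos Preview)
Your proposal is correct and follows essentially the same route as the paper's proof in Appendix~B: represent the optimizers as solutions of a Poisson equation $\L_\Lambda u = f$ with $\|f\|_\infty \le C L^d$, use the Green--operator/Duhamel representation together with the total--variation bound $\|P_tf-\langle f\rangle\|_\infty\le 2d(t)\|f\|_\infty$, and then invoke the Lu--Yau log--Sobolev inequality to get $\tx\le CL^2\log L$. Your explicit sector decomposition by $(N,\zeta)$ and your remark that the spectral gap alone would only yield $L^{2d+2}$ are both spot on; the one place to be slightly careful is the pointwise Poisson equation for $w=v-\ell_{p,\Lambda^+}$, since the Euler--Lagrange identity \eqref{eq.variationnu} is tested only against $\F_0(\Lambda^-)$ rather than all of $\F_0$ --- but this is exactly the level of detail the paper leaves implicit when it writes ``the case for $v(\rho,\Lambda,\xi)$ can be done similarly.''
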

The proof relies on the mixing time of our non-gradient dynamic; see Appendix~\ref{appendix.B}. Note that for $1 \leq s < \infty$, we have a better estimate that 
\begin{align*}
	\norm{v(\cdot, \rho, \Lambda, p)}_{s} + \norm{u(\cdot, \Lambda, q)}_{s} \leq C L^{d+2}.
\end{align*}
See Remark~\ref{rmk.Lp} for details.

\subsection{Master quantities $J$}
We continue to explore the dual property between $\nub$ and $\nub_*$ in this subsection. For every bonded domain $\Lambda\subset\Zd$ and $p,q\in\Rd$, we define the quantities
\begin{equation} \label{eq.defJ}
	J(\rho,\Lambda,p,q):=\nub(\rho,\Lambda,p)+\nub_*(\rho,\Lambda,q)- p\cdot q.
\end{equation}
We first describe $J$ with a variational formula.
\begin{lemma} \label{lem.repJ}
	(1) For each $p,q\in\mathbb{R}^d$, we have the variational representation
	\begin{align} \label{eq.varJ}
		J(\rho,\Lambda,p,q) &:= \sup_{w\in\mcl A(\Lambda)}J(\rho,\Lambda,p,q; w),
	\end{align}
	where $J(\rho,\Lambda,p,q; w)$ is a functional defined as 
	\begin{multline}\label{eq.defJFunctional}
		J(\rho,\Lambda,p,q; w) \\
		:= \bracket{\frac{1}{2\chi(\rho)|\Lambda|}\sum_{b\in\ov{\Lambda^*}}\Ll(-\frac{1}{2}c_b(\pi_b w)^2-c_b(\pi_b\ell_{p})(\pi_b w)+(\pi_b\ell_{q})(\pi_b w)\Rr)}_\rho. 
	\end{multline}
	One maximizer is $(u(\cdot, \Lambda, q)-v(\cdot, \rho, \Lambda,p))$ with $v,u$ defined in (1) of Proposition~\ref{prop.Element}.
	
	(2) The master quantity $J$ is always positive, i.e.  $J(\rho,\Lambda,p,q) \geq 0$.
\end{lemma}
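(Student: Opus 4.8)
The plan is to establish the variational representation in~(1) by exhibiting an explicit maximizer and reducing the statement to a ``quadratic response'' identity, from which~(2) then follows at once. Throughout I write
$\mathcal B(f,g):=\frac{1}{2\chi(\rho)|\Lambda|}\sum_{b\in\ov{\Lambda^*}}c_b(\pi_bf)(\pi_bg)$ and
$\mathcal L_q(f):=\frac{1}{2\chi(\rho)|\Lambda|}\sum_{b\in\ov{\Lambda^*}}(\pi_b\ell_q)(\pi_bf)$,
so that $J(\rho,\Lambda,p,q;w)=-\tfrac12\mathcal B(w,w)-\mathcal B(\ell_p,w)+\mathcal L_q(w)$ and, by the integration by parts formula \eqref{eq.IPP}, $\mcl A(\Lambda)=\{w\in H^1(\Lambda):\mathcal B(w,\psi)=0\text{ for all }\psi\in\F_0(\Lambda^-)\}$; in particular $\mcl A(\Lambda)$ is a linear space. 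Write $v:=v(\cdot,\rho,\Lambda,p)$ and $u:=u(\cdot,\Lambda,q)$, which both lie in $\mcl A(\Lambda)$ by Proposition~\ref{prop.Element}, and set $w^\star:=u-v\in\mcl A(\Lambda)$.

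First I would record three elementary facts. Testing \eqref{eq.variationnu} with $v-\ell_{p,\Lambda^+}\in\F_0(\Lambda^-)$ and using $\pi_b\ell_{p,\Lambda^+}=\pi_b\ell_p$ for $b\in\ov{\Lambda^*}$ gives $\nub(\rho,\Lambda,p)=\tfrac12\mathcal B(v,v)=\tfrac12\mathcal B(\ell_p,v)$; testing \eqref{eq.variationnu*} with $\phi=u$ gives $\mathcal B(u,u)=\mathcal L_q(u)$, whence $\nub_*(\rho,\Lambda,q)=\mathcal L_q(u)-\tfrac12\mathcal B(u,u)=\tfrac12\mathcal L_q(u)$; and a direct computation using $\bracket{(\eta_x-\eta_{x+e_i})^2}_\rho=2\chi(\rho)$ gives $\mathcal L_q(\ell_p)=p\cdot q$. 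Combining these with the orthogonalities $\mathcal B(u,v-\ell_{p,\Lambda^+})=0$ (since $u\in\mcl A(\Lambda)$) and $\mathcal L_q(v-\ell_{p,\Lambda^+})=0$ (from \eqref{eq.MeanZeroLocal}), a short expansion of $J(\rho,\Lambda,p,q;w^\star)$ collapses to $\nub(\rho,\Lambda,p)+\nub_*(\rho,\Lambda,q)-p\cdot q=J(\rho,\Lambda,p,q)$. Part~(1) then follows from the ``quadratic response'' identity
\[
J(\rho,\Lambda,p,q;w)=J(\rho,\Lambda,p,q)-\tfrac12\,\mathcal B\!\left(w-w^\star,\,w-w^\star\right),\qquad w\in\mcl A(\Lambda),
\]
because $\mathcal B\ge 0$ forces the supremum over $\mcl A(\Lambda)$ to be attained precisely at $w^\star$ with value $J(\rho,\Lambda,p,q)$. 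Part~(2) is then immediate, since $0\in\mcl A(\Lambda)$ and $J(\rho,\Lambda,p,q;0)=0$, so $J(\rho,\Lambda,p,q)\ge 0$.

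The step I expect to be the main obstacle is the displayed quadratic response identity, equivalently the vanishing of the first variation of $J(\rho,\Lambda,p,q;\cdot)$ at $w^\star$. Expanding $J(\rho,\Lambda,p,q;w^\star+h)-J(\rho,\Lambda,p,q;w^\star)$ produces $-\tfrac12\mathcal B(h,h)$ together with a cross term $-\mathcal B(u,h)+\mathcal B(v-\ell_p,h)+\mathcal L_q(h)$; the middle piece vanishes because $\pi_b(v-\ell_p)=\pi_b(v-\ell_{p,\Lambda^+})$ on $\ov{\Lambda^*}$ with $v-\ell_{p,\Lambda^+}\in\F_0(\Lambda^-)$ and $h\in\mcl A(\Lambda)$, so one must still prove $\mathcal B(u,h)=\mathcal L_q(h)$ for \emph{every} $h\in\mcl A(\Lambda)$ --- whereas \eqref{eq.variationnu*} provides this only against test functions in $\F_0$, and a general $h\in\mcl A(\Lambda)$ may depend on the configuration arbitrarily far from $\Lambda$. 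The resolution is locality: for each $b\in\ov{\Lambda^*}$ with endpoint $x\in\Lambda$, both $c_b(\pi_bu)$ and $\pi_b\ell_q$ are $\fil_{N_\r(\Lambda^+)}$-measurable (because $u\in\F_0(N_\r(\Lambda^+))$ and $c_b$ has range $\r$ about $x$), so conditioning on $\fil_{N_\r(\Lambda^+)}$ and commuting $\A_{N_\r(\Lambda^+)}$ past $\pi_b$ --- the elementary extension of \eqref{eq.AdInOut} to bonds with both endpoints in $N_\r(\Lambda^+)$ --- gives $\mathcal B(u,h)=\mathcal B(u,\A_{N_\r(\Lambda^+)}h)$ and $\mathcal L_q(h)=\mathcal L_q(\A_{N_\r(\Lambda^+)}h)$. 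Since $\A_{N_\r(\Lambda^+)}h\in\F_0$, the variational equation \eqref{eq.variationnu*} applies to it and the two sides agree. Integrability of everything that appears is guaranteed by $h\in H^1(\Lambda)$, the $L^\infty$ bound on $u$ from Lemma~\ref{lem.supNorm}, and Cauchy--Schwarz.
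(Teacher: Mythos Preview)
Your proof is correct. For part~(1), both you and the paper rely on the same algebraic ingredients (the orthogonality of $v-\ell_{p,\Lambda^+}$ against $\mcl A(\Lambda)$, the identity \eqref{eq.MeanZeroLocal}, and the first-order condition \eqref{eq.variationnu*}), but the organization differs: the paper rewrites $\nub_*$ as a supremum over $\mcl A(\Lambda)$ and then performs the change of variable $w=u-v$ inside the sup, whereas you first verify $J(\rho,\Lambda,p,q;w^\star)=J(\rho,\Lambda,p,q)$ by direct expansion and then establish maximality via the quadratic response identity. Your explicit treatment of the first-variation identity $\mathcal B(u,h)=\mathcal L_q(h)$ for arbitrary $h\in\mcl A(\Lambda)$ via conditioning on $\fil_{N_\r(\Lambda^+)}$ makes visible a point the paper leaves implicit---it is the same localization argument used in the proof of Proposition~\ref{prop.Element}(1) to show the $\nub_*$-maximizer lies in $\F_0(N_\r(\Lambda^+))$, and is exactly what justifies replacing the sup over $\F_0$ by one over $\mcl A(\Lambda)$. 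For part~(2), the paper argues independently of~(1) by testing the $\nub_*$-functional with the $\nub$-minimizer $v$, while you deduce positivity directly from~(1) by evaluating the functional at $w=0$; your route is shorter once~(1) is available.
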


\begin{proof}
	(1) In the following paragraph, we use $v(\cdot,\rho,\Lambda,p)$ to denote the minimizer in the definition of $\nub$ and write $v=v(\cdot,\rho,\Lambda,p)$ for short. Since we have deduced that the maximizer of $\nub_*$ can be found in $\mcl A(\Lambda)$ in (1) of Proposition~\ref{prop.Element}, we may write 
	\begin{align*}
		J(\rho,\Lambda,p,q)=&\bracket{\frac{1}{4\chi(\rho)|\Lambda|}\sum_{b\in\ov{\Lambda^*}}c_b(\pi_b v)^2}_\rho
		\\&+\sup_{u\in\mcl A(\Lambda)}\bracket{\frac{1}{2\chi(\rho)|\Lambda|}\sum_{b\in\ov{\Lambda^*}}\Ll(-\frac{1}{2}c_b(\pi_b u)^2+(\pi_b\ell_{q})(\pi_b u)\Rr)}_\rho
		\\&-p\cdot q.
	\end{align*}
	Since $(v-\ell_{p,\Lambda^+})\in\F_0(\Lambda^-)$, we have 
	\begin{equation}\label{eq.MeanZerov}
		\bracket{\sum_{b\in\ov{\Lambda^*}}(\pi_b (v-\ell_{p,\Lambda^+}))(\pi_b \ell_{q,\Lambda^+})}_\rho = \sum_{x \in \Lambda}\bracket{q \cdot \nabla_x (v-\ell_{p,\Lambda^+})}_\rho = 0.
	\end{equation}
	Here the first equality comes from \eqref{eq.defKawaTagent}, \eqref{eq.defKawaGradient}, \eqref{eq.IdGradient} and the second equality comes from \eqref{eq.MeanZeroLocal}. Moreover, for any $u\in\mcl A(\Lambda)$, by its definition \eqref{eq.defHarmonic} and noting $(v-\ell_{p,\Lambda^+})\in\F_0(\Lambda^-)$ again, we have
	\begin{equation*}
		\bracket{\sum_{b\in\ov{\Lambda^*}}c_b(\pi_b u)(\pi_b v)}_\rho=\bracket{\sum_{b\in\ovs{\Lambda}}c_b(\pi_b u)(\pi_b \ell_{p,\Lambda^+})}_\rho,
	\end{equation*}
	and in particular this equation is true when taking $u=v$.
	
	Combining these results, we obtain
	\begin{align*}
		&J(\rho,\Lambda,p,q)\\
		&=\sup_{u\in\mcl A(\Lambda)}\Ll(\bracket{\frac{1}{4\chi(\rho)|\Lambda|}\sum_{b\in\ov{\Lambda^*}}c_b(\pi_b v)^2}_\rho+\bracket{\frac{1}{2\chi(\rho)|\Lambda|}\sum_{b\in\ovs{\Lambda}}\Ll(-\frac{1}{2}c_b(\pi_b u)^2+(\pi_b\ell_{q})(\pi_b u)\Rr)}_\rho \Rr.\\
		&\quad -\Ll.\bracket{\frac{1}{2\chi(\rho)|\Lambda|}\sum_{b\in\Lambda^*}(\pi_b\ell_{p})(\pi_b\ell_{q})}_\rho\Rr)\\
		&=\sup_{u\in\mcl A(\Lambda)}\bracket{\frac{1}{2\chi(\rho)|\Lambda|}\sum_{b\in\ovs{\Lambda}}\Ll(-\frac{1}{2}c_b(\pi_b(u-v))^2-c_b(\pi_b\ell_{p})\pi_b(u-v)+(\pi_b\ell_{q})\pi_b(u-v)\Rr)}_\rho\\
		&=\sup_{w\in\mcl A(\Lambda)}\bracket{\frac{1}{2\chi(\rho)|\Lambda|}\sum_{b\in\ovs{\Lambda}}\Ll(-\frac{1}{2}c_b(\pi_bw)^2-c_b(\pi_b\ell_{p})(\pi_bw)+(\pi_b\ell_{q})(\pi_b w)\Rr)}_\rho.
	\end{align*}
	From the definition of $u(\cdot,\Lambda,q)$, we conclude that $w=u(\cdot,\Lambda,q)-v(\cdot,\rho,\Lambda,p)$ is exactly a maximizer.
	
	(2) We test the functional in the definition of $\nub_*(\rho,\Lambda,q)$ with the minimizer $v=v(\cdot,\rho,\Lambda,p)$ of $\nub(\rho,\Lambda,p)$ and obtain
	\begin{align*}
		\nub_*(\rho,\Lambda,q)&\geq\bracket{\frac{1}{2\chi(\rho)|\Lambda|}\sum_{b\in\ovs{\Lambda}}\Ll(-\frac{1}{2}c_b(\pi_b v)^2+(\pi_b\ell_{q})(\pi_b v)\Rr)}_\rho\\
		&=-\nub(\rho,\Lambda,p)+\bracket{\frac{1}{2\chi(\rho)|\Lambda|}\sum_{b\in \ovs{\Lambda}}(\pi_b\ell_{q})(\pi_b \ell_{p})}_\rho \\
		&=-\nub(\rho,\Lambda,p)+ p \cdot q, 
	\end{align*}
	which proves $J(\rho,\Lambda,p,q)\geq0$. Here from the first line to the second line, we also use the identity \eqref{eq.MeanZerov}.
\end{proof}

Now we explain why $J$ is convenient in estimating the convergence rate. 

\begin{lemma} \label{lem.ratecontrolJ}
	(1) For any bounded $\Lambda \subset \Zd$ and $\rho \in (0,1)$, we have $\D(\rho,\Lambda)\geq \D_*(\rho,\Lambda)$.
	
	(2) There exists a constant $C(d,\lambda)$ such that for every symmetric matrix $\widetilde D$,
	we have 
	\begin{equation} \label{eq.ratecontrolJ}
		|\widetilde D-\D(\rho,\Lambda)|+|\widetilde D-\D_*(\rho,\Lambda)|\leq C\sup_{|p|=1}J(\rho,\Lambda,p,\widetilde Dp)^{\frac{1}{2}}.
	\end{equation}
\end{lemma}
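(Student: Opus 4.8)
The plan is to exploit the quadratic structure of $\nub$ and $\nub_*$ recorded in \eqref{eq.quadraticNu} together with the two ``minimal-energy'' identities \eqref{eq.qrnu} and \eqref{eq.qrnu*}. For part (1), I would start from the fact that $u(\cdot,\Lambda,q)-v(\cdot,\rho,\Lambda,p)$ is the maximizer of the functional $J(\rho,\Lambda,p,q;\cdot)$ in \eqref{eq.varJ}; since the functional is concave in $w$ and $\mcl A(\Lambda)$ is a linear space, testing it against the particular element $w=0$ gives $J(\rho,\Lambda,p,q)\ge J(\rho,\Lambda,p,q;0)=0$, but more usefully, evaluating the functional at its maximizer and using \eqref{eq.bilinearNu} expresses $J(\rho,\Lambda,p,q)$ as a genuine quadratic form in $(p,q)$. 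Choosing $q=\D(\rho,\Lambda)p$ should make several cross terms vanish (because $v(\cdot,\rho,\Lambda,p)$ solves the corrector equation), which will collapse $J$ and yield, after a short computation, the inequality $p\cdot(\D(\rho,\Lambda)-\D_*(\rho,\Lambda))p\ge 0$ for all $p$; by symmetry this is $\D(\rho,\Lambda)\ge\D_*(\rho,\Lambda)$.

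For part (2), the idea is the standard renormalization-theory computation: expand $J(\rho,\Lambda,p,\widetilde Dp)$ using \eqref{eq.defJ} and \eqref{eq.quadraticNu} into
\begin{align*}
J(\rho,\Lambda,p,\widetilde Dp)=\tfrac12 p\cdot\D(\rho,\Lambda)p+\tfrac12\widetilde Dp\cdot\D_*^{-1}(\rho,\Lambda)\widetilde Dp-p\cdot\widetilde Dp .
\end{align*}
Using $\id\le\D(\rho,\Lambda),\D_*(\rho,\Lambda),\widetilde D\le\lambda\id$ from \eqref{eq.quadraticNu} and the hypothesis, each of $\D(\rho,\Lambda),\D_*(\rho,\Lambda),\widetilde D$ lives in a fixed compact set of uniformly elliptic symmetric matrices, so the matrix identity
\begin{align*}
\tfrac12 A + \tfrac12 \widetilde D B^{-1}\widetilde D - \widetilde D = \tfrac12 (A-\widetilde D) + \tfrac12 \widetilde D B^{-1}(\widetilde D - B) + \tfrac12(\widetilde D - B) + \tfrac12(B - \widetilde D)
\end{align*}
(reorganized appropriately) shows that $J$ controls a positive-definite quadratic expression that is comparable, with constants depending only on $d,\lambda$, to $|p|^2\big(|\widetilde D-\D(\rho,\Lambda)|^2+|\widetilde D-\D_*(\rho,\Lambda)|^2\big)$ — here one uses part~(1), $\D(\rho,\Lambda)\ge\D_*(\rho,\Lambda)$, to sign the remaining cross term. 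Taking the supremum over $|p|=1$ and a square root then gives \eqref{eq.ratecontrolJ}.

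The routine but slightly delicate point is the matrix algebra in step~(2): one must verify that the quadratic form in $p$ appearing in $J(\rho,\Lambda,p,\widetilde Dp)$ is bounded below by $c(d,\lambda)\big(|\widetilde D-\D(\rho,\Lambda)|^2+|\widetilde D-\D_*(\rho,\Lambda)|^2\big)|p|^2$, which requires both $\D\ge\D_*$ from part~(1) and the uniform ellipticity to absorb the indefinite cross term $(\D-\D_*)$-piece. I expect this comparison-of-quadratic-forms estimate to be the only real content; everything else is bookkeeping with the variational identities already established in Proposition~\ref{prop.Element} and Lemma~\ref{lem.repJ}. An alternative to the brute-force algebra, which I would fall back on if the constants get messy, is to argue by compactness: the map $(A,B)\mapsto \sup_{|p|=1}\big(\tfrac12 p\cdot Ap+\tfrac12\widetilde Dp\cdot B^{-1}\widetilde Dp-p\cdot\widetilde Dp\big)$ is continuous and strictly positive on the compact set $\{\,\id\le A,B\le\lambda\id,\ (A,B)\ne(\widetilde D,\widetilde D)\,\}$ modulo scaling, giving the bound with a non-explicit but dimension-and-$\lambda$-dependent constant, which is all that is needed downstream.
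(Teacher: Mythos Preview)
Your approach to part~(1) is fine: you recover $J\ge 0$ by testing $w=0$ in \eqref{eq.varJ} and then substitute a particular $q$. The paper does exactly this, though it chooses $q=\D_*(\rho,\Lambda)p$ rather than your $q=\D(\rho,\Lambda)p$; with the paper's choice the computation is one line,
\[
0\le J(\rho,\Lambda,p,\D_* p)=\tfrac12 p\cdot\D p+\tfrac12 p\cdot\D_* p-p\cdot\D_* p=\tfrac12 p\cdot(\D-\D_*)p,
\]
whereas your choice yields $\D_*^{-1}\ge\D^{-1}$, equivalent but one step longer.

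Part~(2) has a genuine gap. The matrix identity you write,
\[
\tfrac12 A+\tfrac12\widetilde D B^{-1}\widetilde D-\widetilde D=\tfrac12(A-\widetilde D)+\tfrac12\widetilde D B^{-1}(\widetilde D-B),
\]
(the last two terms on your right-hand side cancel) is \emph{linear} in the matrix differences $A-\widetilde D$ and $\widetilde D-B$, not quadratic. It therefore cannot give the lower bound $J\gtrsim |p|^2(|A-\widetilde D|^2+|B-\widetilde D|^2)$ that you claim: each summand can have either sign since we have no ordering between $\widetilde D$ and $\D$ or $\D_*$ individually, only $\D\ge\D_*$. Your compactness fallback is also incomplete, because strict positivity on the punctured compact set does not by itself give a quadratic vanishing rate at the puncture; you would still need to compute the Hessian there, which is the same work as the direct argument.

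The paper's device is cleaner and worth knowing. Use part~(1) in the form $\D_*^{-1}\ge\D^{-1}$ to write
\[
J(\rho,\Lambda,p,q)\;\ge\;\tfrac12 p\cdot\D p+\tfrac12 q\cdot\D^{-1}q-p\cdot q\;=\;\tfrac12(\D p-q)\cdot\D^{-1}(\D p-q),
\]
an exact completed square. With $q=\widetilde D p$ and $\D^{-1}\ge\lambda^{-1}\id$ this gives $J\ge\tfrac{1}{2\lambda}|(\D-\widetilde D)p|^2$, hence $|\D-\widetilde D|\le C\sup_{|p|=1}J^{1/2}$. The bound for $|\D_*-\widetilde D|$ follows symmetrically by instead using $\D\ge\D_*$ to lower-bound $\tfrac12 p\cdot\D p$ by $\tfrac12 p\cdot\D_* p$ before completing the square with $\D_*$. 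So the ``slightly delicate matrix algebra'' you anticipate is in fact a one-line identity once you apply part~(1) \emph{before} completing the square rather than after.
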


\begin{proof}
	(1) Recall $J(\rho,\Lambda,p,q)\geq0$ from (2) of Lemma~\ref{lem.repJ}, and insert $q=\D_*(\rho,\Lambda)p$ to obtain
	\begin{align*}
		0\leq &J(\rho,\Lambda,p,\D_*(\rho,\Lambda)p)
		\\=&\frac{1}{2}p\cdot \D(\rho,\Lambda)p+\frac{1}{2}p\cdot \D_*(\rho,\Lambda) p-p\cdot \D_*(\rho,\Lambda) p
		\\=&\frac{1}{2}p\cdot \D(\rho,\Lambda) p-\frac{1}{2}p\cdot \D_*(\rho,\Lambda) p,
	\end{align*}
	so we have $\D(\rho,\Lambda)\geq\D_*(\rho,\Lambda)$.
	
	(2) Using the property $\D(\rho,\Lambda)\geq\D_*(\rho,\Lambda)$, we have 
	\begin{align*}
		J(\rho,\Lambda,p,q)=&\frac{1}{2}p\cdot \D(\rho,\Lambda) p+\frac{1}{2}q\cdot \D_*^{-1}(\rho,\Lambda) q-p\cdot q
		\\ \geq&\frac{1}{2}p\cdot \D(\rho,\Lambda) p+\frac{1}{2} q\cdot \D^{-1}(\rho,\Lambda) q-p\cdot q
		\\=&\frac{1}{2}(\D(\rho,\Lambda)p-q)\cdot \D^{-1}(\rho,\Lambda)(\D(\rho,\Lambda)p-q).
	\end{align*}
	Setting $|p|=1$ and $q=\widetilde D p$, we conclude that
	\begin{equation*}
		|\D(\rho,\Lambda)-\widetilde D|\leq C\sup_{|p|=1}J(\rho,\Lambda,p,\widetilde D p)^{\frac{1}{2}}.
	\end{equation*}
	The proof of the statement concerning $|\D_*(\rho,\Lambda)-\widetilde D|$ is similar.
\end{proof}

This lemma allows us to control the convergence rate of $ \D(\rho,\cu_m)$ by showing the convergence rate of $J(\rho,\cu_m,p,\D_*(\rho,\cu_m)p)$. Finally, we summarize some more properties of $J$ similar to the properties for $\nub$ and $\nub_*$.

\begin{proposition}[Elementary properties of $J$]\label{prop.ElementJ}
	For every $\rho\in(0,1)$, every bounded $\Lambda \subsetneq \mathbb{Z}^d$ and every $p,q\in\mathbb{R}^d$, the quantity $J(\rho,\Lambda,p,q)$ defined as \eqref{eq.defJ} satisfies the following properties:
	
	(1) First order variation and optimizer: the optimization problem in \eqref{eq.varJ} admits a unique solution $v(\cdot,\rho,\Lambda,p,q)\in\mcl A(\Lambda)$ such that $\mathbb{E}_\rho[v(\cdot,\rho,\Lambda.p,q)|\mathcal{G}_{\Lambda^+}]=0$, which can be expressed in terms of the optimizers of $\nub$ and $\nub_*$ as
	\begin{equation}\label{eq.defvJ}
		v(\cdot,\rho,\Lambda,p,q)=u(\cdot,\Lambda,q)-v(\cdot,\rho,\Lambda,q)-\mathbb{E}_\rho[u(\cdot,\Lambda,q)-v(\cdot,\rho,\Lambda,p)|\mcl G_{\Lambda^+}].
	\end{equation}
	This solution $v(\cdot,\rho,\Lambda,p,q)$ satisfies that for every $w\in\mcl A(\Lambda)$,
	\begin{equation}\label{eq.var2J}
		\bracket{\sum_{b\in\ovs\Lambda}(c_b\pi_b v(\cdot,\rho,\Lambda,p,q))(\pi_b w)}_\rho=\bracket{\sum_{b\in\ovs\Lambda}(-c_b(\pi_b\ell_{p})(\pi_b w)+(\pi_b\ell_{q})(\pi_b w))}_\rho,
	\end{equation}
	and the mapping $(p,q)\mapsto v(\cdot,\rho,\Lambda,p,q)$ is linear.

	(2) Quadratic response:	we have a quadratic expression for $J$
	\begin{equation}\label{eq.JQuadratic}
		J(\rho,\Lambda,p,q)=\bracket{\frac{1}{4\chi(\rho)|\Lambda|}\sum_{b\in\ovs\Lambda}c_b(\pi_bv(\cdot,\rho,\Lambda,p,q))^2}_\rho.
	\end{equation}
	For every $w\in\mcl A(\Lambda)$, with the functional defined in \eqref{eq.defJFunctional}, we have
	\begin{align} \label{eq.quadraresponseJ}
		\bracket{\frac{1}{4\chi(\rho)|\Lambda|}\sum_{b\in\ovs\Lambda}c_b(\pi_b(w-v(\cdot,\rho,\Lambda,p,q)))^2}_\rho
		=J(\rho,\Lambda,p,q)-J(\rho,\Lambda,p,q;w).
	\end{align}
	
	(3) For any partition of vertices $\Lambda=\bigsqcup_{i=1}^m\Lambda_i$, we have 
	\begin{equation}\label{eq.Jsub}
		J(\rho,\Lambda,p,q)\leq\sum_{i=1}^m\frac{|\Lambda_i|}{|\Lambda|}J(\rho,\Lambda_i,p,q).
	\end{equation}
	
	(4) Slope property: with the gradient operator $\nabla_x$ defined in \eqref{eq.defKawaGradient}, the optimizer $v(\cdot,\rho,\Lambda,p,q)$ satisfies the slop property
	\begin{align}\label{eq.SlopeJ}
		\bracket{\frac{1}{2\chi(\rho)|\Lambda|} \sum_{x\in\Lambda}\nabla_x v(\cdot,\rho,\Lambda,p,q)}_\rho = \D_*^{-1}(\rho, \Lambda) q- p.
	\end{align}
\end{proposition}

\begin{proof}
	(1) The equation \eqref{eq.var2J} comes directly from the first order variation calculus. The proof of existence and uniqueness of the solution $v(\cdot,\rho,\Lambda,p,q)$ is similar to the one for $\nub_*(\rho,\Lambda,q)$.        
	
	One can check directly that $v(\cdot,\rho,\Lambda,p_1,q_1)+v(\cdot,\rho,\Lambda,p_2,q_2)$ is the solution for the problem \eqref{eq.var2J} with parameter $(p_1+p_2,q_1+q_2)$ and it also satisfies the conditional expectation condition, so we have $v(\cdot,\rho,\Lambda,p_1+p_2,q_1+q_2)=v(\cdot,\rho,\Lambda,p_1,q_1)+v(\cdot,\rho,\Lambda,p_2,q_2)$ by the uniqueness, which implies that the mapping $(p,q)\mapsto v(\cdot,\rho,\Lambda,p,q)$ is linear. 
	
	The exact expression \eqref{eq.defvJ} follows directly from the fact that $\Ll(u(\cdot,\Lambda,q)-v(\cdot,\rho,\Lambda,p)\Rr)$ is a maximizer in \eqref{eq.varJ}, and we add the necessary regularization condition.

	(2)  We put $v=v(\cdot,\rho,\Lambda,p,q)$ in the first order variation \eqref{eq.var2J} to get
	\begin{align*}
		\bracket{\sum_{b\in\Lambda^*}(c_b(\pi_b v)^2+c_b(\pi_b\ell_{p})(\pi_b v)-(\pi_b \ell_{q,\Lambda^+})(\pi_b v))}_\rho=0.
	\end{align*}
	Then we plug this into \eqref{eq.varJ} to get the quadratic expression. The quadratic response \eqref{eq.quadraresponseJ} follows directly from \eqref{eq.qrnu*} by testing with $u=w-v(\cdot,\rho,\Lambda,p)$.
	
	(3) This is a consequence of \eqref{eq.subadditivenu} and \eqref{eq.subadditivenu*}.
	
	(4) Using the exact expression of $v(\cdot,\rho,\Lambda,p,q)$ in \eqref{eq.defvJ}, we have
	\begin{align*}
		&\bracket{\frac{1}{2\chi(\rho)|\Lambda|} \sum_{x\in\Lambda}\nabla_x v(\cdot,\rho,\Lambda,p,q)}_\rho  \\
		&=  \bracket{\frac{1}{2\chi(\rho)|\Lambda|} \sum_{x\in\Lambda}\nabla_x u(\cdot,\Lambda,q)}_\rho - \bracket{\frac{1}{2\chi(\rho)|\Lambda|} \sum_{x\in\Lambda}\nabla_x v(\cdot,\rho,\Lambda,q)}_\rho \\
		&= \D_*^{-1}(\rho, \Lambda) q- p.
	\end{align*}
	Here in the second line, we apply \eqref{eq.bilinearNu} to the first term and \eqref{eq.MeanZerov} to the second term.
\end{proof}

\section{Renormalization under grand canonical ensemble}\label{sec.Rate}

Based on the diffusion matrix $\D(\rho, \Lambda), \D_*(\rho,\Lambda)$ defined in Proposition~\ref{prop.Element} and using the Einstein relation \eqref{eq.Einstein}, we define the conductivity and its dual quantity on $(0,1)$ as 
\begin{align}\label{eq.defCGrand}
	\cc(\rho,  \Lambda) := 2\chi(\rho) \D(\rho, \Lambda), \qquad \cc_*(\rho, \Lambda) := 2 \chi(\rho) \D_*(\rho,\Lambda).
\end{align}
By default, when $\rho \in \{0,1\}$, we set $\cc(\rho,  \Lambda) = \cc_*(\rho,  \Lambda) = 0$. Especially, $\cc(\rho,  \Lambda)$ coincides with the definition \eqref{eq.defC2}. Then Corollary~\ref{cor.defDLimit} also defines a limit
\begin{align}\label{eq.defCLimit}
	\cc(\rho) := 2\chi(\rho)\D(\rho) = \lim_{m \to \infty} \cc(\rho, \cu_m).
\end{align}
In this section, we are ready to prove the convergence rate.
\begin{proposition}\label{prop.GrandCanonicalEnsemble}
	There exists an exponent $\gamma_1(d, \lambda, \r) > 0$ and a positive constant ${C(d, \lambda, \r) < \infty}$ such that for every $L \in \N_+$,
	\begin{align}\label{eq.main1A_2}
		\sup_{\rho \in [0,1]} \Ll(\Ll\vert \cc(\rho,  \Lambda_L) - \cc(\rho) \Rr\vert + \Ll\vert \cc_*(\rho,  \Lambda_L) - \cc(\rho) \Rr\vert\Rr)  \leq C L^{-\gamma_1}.
	\end{align}
\end{proposition}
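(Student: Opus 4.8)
\emph{Reduction to a geometric decay of the master quantity.} The plan is to run the renormalization scheme on the master quantity $J$ defined in \eqref{eq.defJ}. First I would record that, besides the limit $\D(\rho)=\lim_m\D(\rho,\cu_m)$ of Corollary~\ref{cor.defDLimit} (monotone by \eqref{eq.subadditivenu}), the quantity $\D_*(\rho,\cu_m)$ is nondecreasing in $m$ (because $\nub_*(\rho,\cu_{m+1},q)\le\nub_*(\rho,\cu_m,q)$), so that $\D_*(\rho):=\lim_m\D_*(\rho,\cu_m)$ exists and $\id\le\D_*(\rho)\le\D(\rho)\le\lambda\id$. Writing $\tau_m:=|\D(\rho,\cu_m)-\D_*(\rho,\cu_m)|$, the sandwich $\D_*(\rho,\cu_m)\le\D_*(\rho)\le\D(\rho)\le\D(\rho,\cu_m)$ gives $|\D(\rho,\cu_m)-\D(\rho)|+|\D_*(\rho,\cu_m)-\D(\rho)|\le 2\tau_m$, while Lemma~\ref{lem.ratecontrolJ} applied with $\widetilde D=\D(\rho,\cu_m)$ yields $\tau_m\le C\sup_{|p|=1}J(\rho,\cu_m,p,\D(\rho,\cu_m)p)^{1/2}$. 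Multiplying through by the prefactor $2\chi(\rho)$ from \eqref{eq.defCGrand} and using the crude bound $\cc,\cc_*\le 2\lambda\chi(\rho)$ near $\rho\in\{0,1\}$, it suffices to prove $J(\rho,\cu_m,p,\D(\rho,\cu_m)p)\le C3^{-2\gamma_1 m}$ for $|p|=1$, with a constant whose $\rho$-dependence is absorbed by $\chi(\rho)$. The passage from the triadic cube $\cu_m$ to a general $\Lambda_L$ with $3^m\le L<3^{m+1}$ is handled by sandwiching $\cu_m\subseteq\Lambda_L\subseteq\cu_{m+1}$ and invoking the sub/super-additivity of Proposition~\ref{prop.Element}(4) together with the general-domain stability recorded in Remark~\ref{rmk.Domain} and Lemma~\ref{lem.WhitneySub}.

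\emph{The renormalization step: additivity defect controls the flatness of the corrector.} The core follows the renormalization scheme of \cite[Section~5]{bulk} and \cite[Chapter~2]{AKMbook}. Fix $p$, set $q=q_m:=\D(\rho,\cu_m)p$, let $v^{(m)}:=v(\cdot,\rho,\cu_m,p,q)$ be the optimizer of Proposition~\ref{prop.ElementJ}(1) (so $\E_\rho[v^{(m)}\mid\G_{\cu_m^+}]=0$), and for $n<m$ introduce the additivity defect
\[
\tau_{m,n}:=\frac{1}{|\Z_{m,n}|}\sum_{z\in\Z_{m,n}}J(\rho,z+\cu_n,p,q)-J(\rho,\cu_m,p,q)\ \ge\ 0 ,
\]
which is nonnegative by Proposition~\ref{prop.ElementJ}(3). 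Since the terms $J(\rho,z+\cu_n,p,q)$ are all equal by translation invariance, $m\mapsto J(\rho,\cu_m,p,q)$ is nonincreasing and bounded, hence the defects across consecutive scales are summable. Testing each local problem $J(\rho,z+\cu_n,p,q)$ with the restriction of $v^{(m)}$ and using the quadratic response \eqref{eq.quadraresponseJ}, I would show that $\tau_{m,n}$ dominates, up to a constant, the total $\rho$-energy of the difference between $v^{(m)}$ and the local optimizers; combined with the slope property \eqref{eq.SlopeJ} and the bilinear identity \eqref{eq.bilinearNu}, this says that on each subcube $z+\cu_n$ the average of $\tfrac{1}{2\chi(\rho)}\nabla_x v^{(m)}$ is close in $L^2(\Pr)$ to the deterministic vector $\D_*^{-1}(\rho,\cu_n)q-p$, with discrepancy controlled by $\tau_{m,n}^{1/2}$.

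\emph{Feeding into the analytic inequalities and iterating.} Next I would insert this into the two substitutes for the classical estimates. Applying the weighted multiscale Poincar\'e inequality of Proposition~\ref{prop.WMPoincare} to $v^{(m)}$ bounds $\bracket{\tfrac1{|\cu_m|}(v^{(m)})^2}_\rho^{1/2}$ by $C\alpha(\rho)^{1/2}\sum_{n=0}^m 3^n$ times the weighted scale-$n$ averaged gradient, which by the previous step is at most $C(|\D_*^{-1}(\rho,\cu_n)q-p|+\tau_{m,n}^{1/2})$; for $q=\D(\rho,\cu_m)p$ the leading term $|\D_*^{-1}(\rho,\cu_n)q-p|$ is itself of order $\tau_n+\tau_m$ and hence small. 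Plugging this smallness of $\bracket{(v^{(m)})^2}_\rho$ into the modified Caccioppoli inequality \eqref{eq.Caccioppoli} applied to $v^{(m)}\in\mcl A(\cu_m)$ bounds the Dirichlet energy $\bracket{v^{(m)}(-\L_{\cu_{m-1}}v^{(m)})}_\rho$ — equivalently $J(\rho,\cu_m,p,q)$ through the quadratic expression \eqref{eq.JQuadratic} — by $C3^{-2m}\bracket{(v^{(m)})^2}_\rho$ plus $\theta$ times the energy on a comparable larger cube. Combining the two bounds produces a recursive inequality relating $J$ at scale $m$ to $J$ at nearby scales and the summable defects; iterating it exactly as in \cite[eq.~(56)--(60) and Section~5]{bulk} — an algebraic step independent of the model — yields $J(\rho,\cu_m,p,\D(\rho,\cu_m)p)\le C3^{-2\gamma_1 m}$, which by the reduction above gives the claimed bound. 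The $L^\infty$ estimate of Lemma~\ref{lem.supNorm}, together with the local equivalence of ensembles (Lemma~\ref{lem.localEquiv}), is used only to bound the error terms coming from the boundary layer of the $\A_{L+2\r}$-regularization in \eqref{eq.Caccioppoli} and from replacing canonical by grand-canonical averages.

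\emph{Main obstacle.} The genuine difficulty — and the reason the argument is not a verbatim transcription of \cite{bulk} — is the mismatch between the grand-canonical optimizers $v^{(m)}$ and the \emph{canonical}-ensemble, particle-number-\emph{weighted} spatial averages $\bracket{\nabla_x v^{(m)}}_{z+\cu_n^+,\mathbf{N}_{z,n}}$ appearing on the right-hand side of \eqref{eq.WMPoincare}: one must pass back to a grand-canonical quantity with quantitative control of the random weight $|\cu_n^+|/(2\mathbf{N}_{z,n})$, which requires concentration of $\mathbf{N}_{z,n}$ around $\rho|\cu_n^+|$ and the local equivalence of ensembles, and at the same time one must track how the resulting $\rho$-dependent constants ($\alpha(\rho)^{1/2}$, the inverse-density weights, and the Poincar\'e/Caccioppoli constants) degenerate as $\rho\to0,1$ and verify that the factor $\chi(\rho)$ in \eqref{eq.defCGrand} exactly compensates them, so that the final bound is genuinely uniform over $\rho\in[0,1]$.
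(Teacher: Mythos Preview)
Your overall architecture is right and matches the paper: reduce to a geometric decay of $J(\rho,\cu_m,p,D_mp)$ via Lemma~\ref{lem.ratecontrolJ}, compare scales via the quadratic response \eqref{eq.quadraresponseJ}, feed the multiscale Poincar\'e inequality (Proposition~\ref{prop.WMPoincare}) into the modified Caccioppoli inequality (Proposition~\ref{prop.Caccioppoli}) through Lemma~\ref{lem.Jestimate}, and iterate. You also correctly name the real issue --- the canonical-ensemble averages and the density-dependent weights $|\cu_n^+|/(2\mathbf N_{z,n})$ on the right of \eqref{eq.WMPoincare}.

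Where the proposal breaks is the last sentence: $\chi(\rho)$ does \emph{not} ``exactly compensate'' the singular $\rho$-factors. After combining Lemma~\ref{lem.Jestimate} with the flatness estimate (Proposition~\ref{prop.L2Flat}) one obtains a bound of the form \eqref{eq.mainFlatness}, containing $\alpha(\rho)\rho^{-2}$ and, from the low-density large-deviation event, $\chi^{-2}(\rho)\,3^{(4d+6)m}\exp(-\rho^2 3^{m/4}/2)$. Even after multiplication by $\chi^2(\rho)$ this second term is \emph{not} uniformly small: take $\rho\sim 3^{-m/8}$, so the exponential is $O(1)$ while the polynomial prefactor explodes. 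The paper closes this gap with an interpolation trick you do not mention (Lemma~\ref{lem.Juniform}): since $J\le 3\lambda$ trivially, one writes $\chi^2(\rho)J\le 3\lambda\big(\chi^{2s}(\rho)J\big)^{1/s}$ and chooses $s=16d+26$ so that the extra $\chi^{2s-2}(\rho)$ supplies enough powers of $\rho$ and $1-\rho$ to absorb both $\alpha(\rho)\rho^{-2}$ and to convert $\rho^{2s-2}3^{(4d+6)m}e^{-\rho^23^{m/4}/2}$ into $3^{-\beta m}$. Only then does one get a density-uniform inequality $\chi^2(\rho)J(\rho,\cu_m,p,D_mp)\le C(3^{-\kappa m}+\sum_n 3^{-\kappa(m-n)}\tau_n)$, which is what the iteration (\`a la \cite[Proposition~2.11]{AKMbook}) actually requires. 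Without this step your iteration produces a $\rho$-dependent exponent and the uniformity in \eqref{eq.main1A_2} fails.

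Two smaller remarks. First, the local equivalence of ensembles (Lemma~\ref{lem.localEquiv}) is not used here; it enters only in the canonical-ensemble statement (Proposition~\ref{prop.CanonicalEnsemble}). The passage from the canonical averages in \eqref{eq.WMPoincare} back to grand-canonical quantities is done directly inside the proof of Proposition~\ref{prop.L2Flat} via the coarse-grained lifting and a case split on $\mathbf N_{z,n}$ (typical/atypical density, small/large scale), with the $L^\infty$ bound of Lemma~\ref{lem.supNorm} controlling the atypical event. Second, the paper uses $D_m=\D_*(\rho,\cu_m)$ rather than $\D(\rho,\cu_m)$ as the test matrix; either works through Lemma~\ref{lem.ratecontrolJ}, so this is harmless.
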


Although Proposition~\ref{prop.GrandCanonicalEnsemble} is stated for $\cc(\rho,  \Lambda_L)$ and $\cc_*(\rho,  \Lambda_L)$, we will still rely on  $\D(\rho, \Lambda)$ and $\D_*(\rho,\Lambda)$ in the intermediate steps as they are already normalized; see (2) of Proposition~\ref{prop.Element}. Note that $\chi(\rho)$ in $\cc(\rho,  \Lambda_L)$ helps to prove the uniform convergence in $\rho \in [0,1]$. Throughout this section, we define the following shorthand expression
\begin{align}\label{eq.defDm}
	D_n :=\D_*(\rho,\cu_n).
\end{align}
By the subadditive quantity \eqref{eq.subadditivenu} and $\eqref{eq.subadditivenu*}$, we know that $\D(\rho,\cu_m)$ and $\D_*^{-1}(\rho,\cu_m)$ are decreasing. Therefore, it suffices to show the convergence rate for $|\D_*(\rho,\cu_m)-\D(\rho,\cu_m)|$, which is reduced to the decay rate of $\sup_{|p|=1}J(\rho,\cu_n,p,D_mp)$ after applying \eqref{eq.ratecontrolJ} with $\tilde D=D_m$. 

In this section, we will heavily use the master quantity $J(\rho, \Lambda, p, q)$ defined in \eqref{eq.varJ} and its optimizer $v(\cdot, \rho, \Lambda, p, q)$ defined in (1) of Proposition~\ref{prop.ElementJ}. The notation $\Z_{m,n} = 3^n \Zd \cap \cu_m$ is usually involved to make comparison between $J$ in different scales. We will also use the following gap of the master quantities very often in the proof
\begin{equation}\label{eq.deftau}
	\tau_n=\tau_n(\rho) :=\sup_{p,q\in B_1} (J(\rho,\cu_n,p,q)- J(\rho,\cu_{n+1},p,q)),
\end{equation}
where $B_1 := \{p \in \Rd: \vert p\vert = 1\}$.

Our first lemma makes use of the elliptic regularity, especially the modified Caccioppoli inequality \eqref{eq.Caccioppoli}, in our  master quantity. 
\begin{lemma}\label{lem.Jestimate}
	There exists a finite positive constant $C(d,\lambda)$ such that, for every $m \in \N_+$ satisfying $3^m>R_0$ for the constant in Proposition \ref{prop.Caccioppoli}, we have the following estimate
	\begin{align} \label{eq.Jestimate}
		J(\rho,\cu_m,p,D_mp)^\frac{1}{2} \leq C\tau_m^\frac{1}{2}+C3^{-m}\bracket{\frac{1}{2 \chi(\rho)|\cu_{m+1}|}v(\cdot,\rho,\cu_{m+1},p,D_mp)^2}_\rho^\frac{1}{2}.
	\end{align}
\end{lemma}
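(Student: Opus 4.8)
\emph{Overview.} The plan is to follow the renormalization step of \cite[Sections~4--5]{bulk}. Fix $|p|=1$ and abbreviate $v_m:=v(\cdot,\rho,\cu_m,p,D_mp)$, $v_{m+1}:=v(\cdot,\rho,\cu_{m+1},p,D_mp)$, and, for $z\in\Z_{m+1,m}$, $v_z:=v(\cdot,\rho,z+\cu_m,p,D_mp)$; for a scale-$m$ cube $\Lambda$ write $E_\Lambda(w):=\bracket{\tfrac{1}{4\chi(\rho)|\cu_m|}\sum_{b\in\ovs{\Lambda}}c_b(\pi_bw)^2}_\rho$. By Proposition~\ref{prop.ElementJ}(1) these optimizers are harmonic, and since $(z+\cu_m)^-\subset\cu_{m+1}^-$ one has $v_{m+1}\in\mcl A(z+\cu_m)$ for each $z$; by Proposition~\ref{prop.Element}(1) the function $v_m$ is \emph{local}, supported in $N_\r(\cu_m^+)$.

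\emph{Step 1 (the subadditivity defect is $O(\tau_m)$).} Apply the quadratic response \eqref{eq.quadraresponseJ} on each subcube $z+\cu_m$ with the admissible test function $v_{m+1}$. Using the bond partition \eqref{eq.CubeRenormalization2}, the additivity of the functional \eqref{eq.defJFunctional} over this partition, and the fact that $v_{m+1}$ is the optimizer at scale $m+1$ (so that $\tfrac1{3^d}\sum_zJ(\rho,z+\cu_m,p,D_mp;v_{m+1})=J(\rho,\cu_{m+1},p,D_mp)$), one gets the exact identity
\begin{equation*}
  J(\rho,\cu_m,p,D_mp)-J(\rho,\cu_{m+1},p,D_mp)=\frac{1}{3^d}\sum_{z\in\Z_{m+1,m}}E_{z+\cu_m}(v_{m+1}-v_z).
\end{equation*}
Since $(p,q)\mapsto J(\rho,\cu_m,p,q)-J(\rho,\cu_{m+1},p,q)$ is a nonnegative quadratic form and $|D_mp|\le\lambda$, the left-hand side is $\le C(\lambda)\tau_m$, hence each (nonnegative) summand is $\le C(d,\lambda)\tau_m$; in particular $E_{\cu_m}(v_{m+1}-v_m)\le C\tau_m$.

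\emph{Step 2 (passing to a cutoff of $v_{m+1}$).} Let $\bar v:=\A_{3^m+2\r}v_{m+1}$ be the cutoff appearing in the modified Caccioppoli inequality, Proposition~\ref{prop.Caccioppoli}, applied with $L=3^m$ and $u=v_{m+1}\in\mcl A(\cu_{m+1})$. Because $v_m$ is supported in $N_\r(\cu_m^+)$ — which lies inside the conditioning cube, after enlarging $\r$ by a harmless additive constant if necessary — and the bonds of $\ovs{\cu_m}$ together with the jump rates $c_b$ on them are measurable with respect to the conditioning $\sigma$-algebra, \eqref{eq.AdInOut} and Jensen's inequality give $\pi_b(\bar v-v_m)=\A_{3^m+2\r}\pi_b(v_{m+1}-v_m)$ for $b\in\ovs{\cu_m}$, whence $E_{\cu_m}(\bar v-v_m)\le E_{\cu_m}(v_{m+1}-v_m)\le C\tau_m$. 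Combining this with $J(\rho,\cu_m,p,D_mp)=E_{\cu_m}(v_m)$ (the quadratic expression \eqref{eq.JQuadratic}) and the triangle inequality in the weighted $\ell^2$-norm $(\,\sum_{b\in\ovs{\cu_m}}\bracket{c_b(\pi_b\cdot)^2}_\rho\,)^{1/2}$ yields $J(\rho,\cu_m,p,D_mp)^{1/2}\le C\tau_m^{1/2}+E_{\cu_m}(\bar v)^{1/2}$.

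\emph{Step 3 (Caccioppoli and absorption).} Rewriting the Dirichlet forms in \eqref{eq.Caccioppoli} via \eqref{eq.IPP} and using $\tfrac1{|\cu_{m+1}|}\bracket{v_{m+1}(-\L_{\cu_{m+1}}v_{m+1})}_\rho=2\chi(\rho)J(\rho,\cu_{m+1},p,D_mp)$ (again \eqref{eq.JQuadratic}), Proposition~\ref{prop.Caccioppoli} reads
\begin{equation*}
  E_{\cu_m}(\bar v)\le C3^{-2m}\bracket{\tfrac1{2\chi(\rho)|\cu_{m+1}|}v_{m+1}^2}_\rho+\theta\,J(\rho,\cu_{m+1},p,D_mp),\qquad \theta=\theta(d,\lambda)\in(0,1).
\end{equation*}
Inserting this into Step 2, using subadditivity of $t\mapsto t^{1/2}$ and then $J(\rho,\cu_{m+1},p,D_mp)\le J(\rho,\cu_m,p,D_mp)$ (Proposition~\ref{prop.ElementJ}(3)), we obtain
\begin{equation*}
  J(\rho,\cu_m,p,D_mp)^{1/2}\le C\tau_m^{1/2}+C3^{-m}\bracket{\tfrac1{2\chi(\rho)|\cu_{m+1}|}v_{m+1}^2}_\rho^{1/2}+\theta^{1/2}J(\rho,\cu_m,p,D_mp)^{1/2},
\end{equation*}
and since $\theta^{1/2}<1$ we absorb the last term on the left and rename constants to reach \eqref{eq.Jestimate}. (At $\rho\in\{0,1\}$ everything vanishes by convention, so one may assume $\chi(\rho)>0$.) The conceptual content is in Steps~1--2: that the defect of subadditivity is genuinely $O(\tau_m)$, and that swapping the true minimizer $v_m$ on $\cu_m$ for the cutoff $\bar v$ of $v_{m+1}$ costs only $O(\tau_m)$. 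The latter is where I expect the only real difficulty, and it rests on the locality of $v_m$ (Proposition~\ref{prop.Element}(1)): the conditional-expectation cutoff leaves $\pi_bv_m$ untouched on all bonds $b\in\ovs{\cu_m}$, so one never has to prove an a priori estimate on how a central Kawasaki increment of a harmonic function depends on far-away sites. Once Steps~1--2 are in hand, Step~3 is the same ``filling-hole'' manipulation as in \cite[eqs.(56)--(60)]{bulk}.
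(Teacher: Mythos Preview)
Your argument is correct and follows the same three-step skeleton as the paper: bound the subadditivity defect by $\tau_m$, pass to the cutoff $\bar v=\A_{3^m+2\r}v_{m+1}$, then apply the modified Caccioppoli inequality and absorb using $\theta<1$.

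The only noteworthy difference is in Step~2. The paper does \emph{not} invoke the locality of $v_m$; instead it checks that $\bar v\in\mcl A(\cu_m)$, applies the quadratic response \eqref{eq.quadraresponseJ} to write $E_{\cu_m}(v_m-\bar v)=J(\rho,\cu_m,p,D_mp)-J(\rho,\cu_m,p,D_mp;\bar v)$, and then shows $J(\cdot;\bar v)\ge J(\cdot;v_{m+1})$ by applying Jensen to the functional \eqref{eq.defJFunctional}. Your route---using $v_m\in\F_0(N_\r(\cu_m^+))$ to get $\A_Lv_m=v_m$ and then Jensen bond-by-bond---is more direct and avoids the separate harmonicity check, but it creates the off-by-one you flag (indeed $N_\r(\cu_m^+)\not\subset\Lambda_{3^m+2\r}$ because of the ``$+e_i$'' in $\cu_m^+$), so strictly one needs the cutoff at scale $3^m+2\r+2$ and to observe that Proposition~\ref{prop.Caccioppoli} is insensitive to this shift. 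One small citation slip: the locality of $v_m$ is not stated in Proposition~\ref{prop.Element}(1) alone; it follows from the explicit expression \eqref{eq.defvJ} in Proposition~\ref{prop.ElementJ}(1) combined with the locality of $u(\cdot,\Lambda,q)$ and $v(\cdot,\rho,\Lambda,p)$ from Proposition~\ref{prop.Element}(1).
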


\begin{proof}
	We write $v_m=v(\cdot,\rho,\cu_m,p,D_mp)$ as a shortened version in the proof. Recall the operator $\A_L$ defined in \eqref{eq.defA}, and denote by 
	\begin{align}\label{eq.defLocalL}
		L:=3^m + 2\r, 
	\end{align}
	as the length of localization in this proof with $3^m > 2 \r$. Then, by \eqref{eq.JQuadratic},  we decompose our quantity $J(\rho,\cu_m,p,D_mp)$ as 
	\begin{equation}\label{eq.JestimateDecom}
		\begin{split}
			&J(\rho,\cu_m,p,D_mp)^\frac{1}{2}\leq (\mathbf{I})^{\frac{1}{2}} + (\mathbf{II})^{\frac{1}{2}},\\
			\mathbf{I} &:= \bracket{\frac{1}{4\chi(\rho)|\cu_m|}\sum_{b\in\overline{\cu_m^*}}c_b(\pi_b (v_m-\A_Lv_{m+1}))^2}_\rho,\\
			\mathbf{II} &:= \bracket{\frac{1}{4\chi(\rho)|\cu_m|}\sum_{b\in \overline{\cu_m^*}}c_b(\pi_b \A_Lv_{m+1})^2}_\rho.
		\end{split}
	\end{equation}
	
	We justify at first $\A_Lv_{m+1}\in\mathcal A(\cu_m)$ by testing an arbitrary function $f\in\F_0(\cu_{m}^-)$
	\begin{align*}
		\sum_{b\in\overline{\cu_m^*}}\bracket{c_b(\pi_b \A_Lv_{m+1})(\pi_b f)}_\rho &= \sum_{b\in\overline{\cu_m^*}}\bracket{\A_L \Ll(c_b( \pi_bv_{m+1})(\pi_b f)\Rr)}_\rho \\
		&=\sum_{b\in\overline{\cu_m^*}}\bracket{c_b(\pi_bv_{m+1})(\pi_b f)}_\rho=0.
	\end{align*}
	Here in the first line, since $c_b$ and $f$ are both $\fil_{\Lambda_{L}}$-measurable, they commute with $\A_L$ operator. In the second line, we use the fact that $v_{m+1} \in \mcl A(\cu_{m+1}) \subset \mcl A(\cu_{m})$ from (1) of Proposition~\ref{prop.ElementJ}.

	We now turn to the two terms in \eqref{eq.JestimateDecom}. As we have proved $\A_Lv_{m+1}\in\mathcal A(\cu_m)$, we apply \eqref{eq.quadraresponseJ} to $\mathbf{I}$ and get
	\begin{align}\label{eq.JestimateTerm1}
		\mathbf{I} = J(\rho,\cu_m,p,D_mp) - J(\rho,\cu_m,p,D_mp; \A_Lv_{m+1}).
	\end{align}
	We investigate the expression of $J(\rho,\cu_m,p,D_mp; \A_Lv_{m+1})$ by \eqref{eq.defJFunctional}. By Jensen's inequality, we have 
	\begin{align*}
		\bracket{\frac{1}{2\chi(\rho)|\cu_m|}\sum_{b\in\overline{\cu_m^*}}\frac{1}{2}c_b(\pi_b\A_Lv_{m+1})^2}_\rho\leq\bracket{\frac{1}{2\chi(\rho)|\cu_m|}\sum_{b\in\overline{\cu_m^*}}\frac{1}{2}c_b(\pi_bv_{m+1})^2}_\rho,
	\end{align*}
	and by the definition of conditional expectation, we also have
	\begin{align*}
		\bracket{\frac{1}{2\chi(\rho)|\cu_m|}\sum_{b\in\overline{\cu_m^*}}(c_b(\pi_b\ell_{p})(\pi_b\A_Lv_{m+1})-(\pi_b\ell_{D_mp})(\pi_b\A_Lv_{m+1}))}_\rho
		\\=\bracket{\frac{1}{2\chi(\rho)|\cu_m|}\sum_{b\in\overline{\cu_m^*}}(c_b(\pi_b\ell_{p})(\pi_bv_{m+1})-(\pi_b\ell_{D_mp})(\pi_bv_{m+1}))}_\rho.
	\end{align*}
	These imply that $J(\rho,\cu_m,p,D_mp; \A_Lv_{m+1}) \geq J(\rho,\cu_m,p,D_mp; v_{m+1})$. Putting it back to \eqref{eq.JestimateTerm1} and using \eqref{eq.quadraresponseJ} again, we have 
	\begin{align*}
		\mathbf{I} &\leq J(\rho,\cu_m,p,D_mp) - J(\rho,\cu_m,p,D_mp; v_{m+1})\\ &=\bracket{\frac{1}{4\chi(\rho)|\cu_m|}\sum_{b\in\overline{\cu_m^*}}c_b(\pi_b (v_m-v_{m+1}))^2}_\rho.
	\end{align*}
	We claim that the right hand side of the inequality can be controlled by $C\tau_m$ defined in \eqref{eq.deftau} with some constant only depending on the dimension $d$. It suffices to apply \eqref{eq.quadraresponseJ} to $z+\cu_m$ with $z \in \Z_{m+1, m}$. We denote by $v_{m,z} =v(\cdot,\rho,z+\cu_m,p,D_mp)$ to simplify the notation in the following calculation
	\begin{align*} 
		&\bracket{\frac{1}{4\chi(\rho)|\cu_m|}\sum_{b\in\overline{\cu_m^*}}c_b(\pi_b (v_m-v_{m+1}))^2}_\rho\\
		&\leq \sum_{z\in \Z_{m+1, m}}\bracket{\frac{1}{4\chi(\rho)|\cu_m|}\sum_{b\in \ovs{(z+\cu_m)}}c_b(\pi_b (v_{m,z}-v_{m+1}))^2}_\rho\\
		&= \sum_{z\in \Z_{m+1, m}} \Ll(J(\rho,z+\cu_m,p,D_mp) - J(\rho,z+\cu_m,p,D_mp; v_{m+1}) \Rr)\\
		&=3^d(J(\rho,\cu_m,p,D_mp)-J(\rho,\cu_{m+1},p,D_mp)).
	\end{align*}
	This concludes that 
	\begin{align}\label{eq.JestimateDecom1}
		\mathbf{I} \leq 3^d \lambda \tau_m.
	\end{align}
	
	Concerning $\mathbf{II}$, as we have verified the condition in Proposition~\ref{prop.Caccioppoli} that $\A_Lv_{m+1}\in\mathcal A(\cu_m)$ and $3^m > R_0$ is assumed, we can apply the modified Caccioppoli inequality \eqref{eq.Caccioppoli} to  $\A_Lv_{m+1}$ to get
	\begin{align}\label{eq.JestimateDecom2}
		\mathbf{II} \leq \bracket{\frac{C3^{-2m}}{2\chi(\rho)|\cu_{m+1}|}v_{m+1}^2}_\rho+\theta J(\rho,\cu_{m+1},p,D_mp).
	\end{align}
	Here the factor $\theta \in (0,1)$ and the constant $C$ all come from Proposition~\ref{prop.Caccioppoli}, and only depend on $d, \lambda$. 
	
	Combining the two estimates \eqref{eq.JestimateDecom1} and \eqref{eq.JestimateDecom2}, we have 
	\begin{multline*}
		J(\rho,\cu_m,p,D_mp)^\frac{1}{2} \\ \leq \theta^\frac{1}{2}J(\rho,\cu_m,p,D_mp)^\frac{1}{2}+C3^{-m}\bracket{\frac{1}{2\chi(\rho)|\cu_{m+1}|}v(\cu_{m+1})^2}_\rho^\frac{1}{2}+ C \tau_m^{\frac{1}{2}}.
	\end{multline*}
	Noting the fact $\theta \in (0,1)$, we rearrange the expression above and conclude \eqref{eq.Jestimate}.
\end{proof}

\subsection{Flatness estimate}
This part gives a flatness estimate and this is the main challenge compared to its counterpart in \cite[Lemma~5.1]{bulk}. Due to the exclusion rule, we need the weighted multiscale Poincar\'e inequality to show the spatial cancellation of $v(\rho, \cu_{m+1},  p, D_m p)$. The whole Section~\ref{sec.coarse} is devoted to overcome the technical difficulty here. 

\begin{proposition}[$L^2$-flatness estimate]\label{prop.L2Flat}
	There exists an exponent $\beta(d) \in (0, \frac{1}{4})$ and a constant $C(d,\lambda, \r) < \infty$  such that for every $p,q \in B_1$ and $m \in \mathbb{N}$,
	\begin{multline}\label{eq.mainFlatness}
		\frac{1}{2 \chi(\rho)\vert \cu_{m+1}\vert} \bracket{\Ll(v(\rho, \cu_{m+1},  p, q) - \ell_{D_m^{-1}q - p, \cu_{m+1}}\Rr)^2}_{\rho} \\
		\leq C 3^{2m} \Ll(\frac{1}{\chi(\rho)^2}3^{-\beta m} +  \sum_{n=0}^m 3^{-\beta(m-n)}\tau_n\Rr).
	\end{multline}
\end{proposition}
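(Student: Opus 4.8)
\emph{Proof strategy.} The plan is to adapt the flatness estimate of \cite[Lemma~5.1]{bulk}, feeding in the \emph{weighted} multiscale Poincaré inequality of Proposition~\ref{prop.WMPoincare} in place of its classical counterpart; this substitution is exactly what will produce the density-dependent prefactors and the bad-event term in \eqref{eq.mainFlatness}. First I would set $v_{m+1}:=v(\cdot,\rho,\cu_{m+1},p,q)$ and $w:=v_{m+1}-\ell_{D_m^{-1}q-p,\,\cu_{m+1}}$. Since $v_{m+1}$ has vanishing $\G_{\cu_{m+1}^+}$-conditional expectation by item~(1) of Proposition~\ref{prop.ElementJ}, and $\ell_{D_m^{-1}q-p,\cu_{m+1}}$ does too because $\cu_{m+1}$ is centred (so $\sum_{x\in\cu_{m+1}}x=0$), the function $w$ satisfies $\bracket{w\,\vert\,\G_{\cu_{m+1}^+}}_\rho=0$ and Proposition~\ref{prop.WMPoincare} applies to it. This reduces the whole estimate to controlling, scale by scale, the weighted canonical spatial average of $\nabla w$ on the right-hand side of \eqref{eq.WMPoincare}, i.e. the quantities $\bracket{(\tfrac{\vert\cu_n^+\vert}{2\mathbf{N}_{z,n}})^2\,\vert\,\tfrac{1}{\vert\cu_n\vert}\sum_{x\in z+\cu_n}\bracket{\nabla_x w}_{z+\cu_n^+,\mathbf{N}_{z,n}}\,\vert^2}_\rho$ for $0\le n\le m+1$ and $z\in\Z_{m+1,n}$, after which one sums over scales with the weights $3^n$ and squares.

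Next I would split each such term over the event $\mathcal B_{z,n}:=\{\mathbf{N}_{z,n}<\tfrac12\rho\vert\cu_n^+\vert\}$ and its complement. On $\mathcal B_{z,n}$ the weight $\tfrac{\vert\cu_n^+\vert}{2\mathbf{N}_{z,n}}$ is only bounded by $\vert\cu_n^+\vert$, so I would discard the canonical average and dominate the contribution by the crude $L^\infty$ bound on $v_{m+1}$ from Lemma~\ref{lem.supNorm} together with a Hoeffding-type tail estimate $\mathbb{P}_\rho[\mathcal B_{z,n}]\lesssim e^{-c\rho^2 3^{n/4}}$ and a union bound over $z$; this is the origin of the term $\chi^{-2}(\rho)\,3^{(4d+6)m}\,e^{-\rho^2 3^{m/4}/2}$ in \eqref{eq.mainFlatness}, and it is also where the constraint $\beta<\tfrac14$ enters. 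On $\mathcal B_{z,n}^c$ the weight is of order $\rho^{-1}$, accounting for the $\rho^{-2}$; the overall $\alpha(\rho)$ and $3^{2m}$ will come from Proposition~\ref{prop.WMPoincare} itself (the factor $3^n$ there, saturated at the top scale).

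On the good event the core task is to show the scale-$n$ mean gradient of $w$ is small. Writing $v_{n,z}:=v(\cdot,\rho,z+\cu_n,p,q)$ I would expand $\tfrac{1}{\vert\cu_n\vert}\sum_x\bracket{\nabla_x w}$ as the mean canonical gradient of $v_{m+1}$ minus that of $\ell_{D_m^{-1}q-p,\cu_{m+1}}$. For the first: by the quadratic response \eqref{eq.quadraresponseJ}, the bond partition \eqref{eq.CubeRenormalization2} and the subadditivity of $J$ (item~(3) of Proposition~\ref{prop.ElementJ}), the normalized Dirichlet energy of $v_{m+1}-v_{n,z}$, averaged over $z$, telescopes into $\lesssim\sum_{k=n}^m\tau_k$ with $\tau_k$ as in \eqref{eq.deftau} (this is the scale-iterated version of the argument in Lemma~\ref{lem.Jestimate}), which by Cauchy--Schwarz bounds the discrepancy between the mean gradients of $v_{m+1}$ and $v_{n,z}$; meanwhile the mean canonical gradient of $v_{n,z}$ equals, by the slope property \eqref{eq.SlopeJ} transported from the grand-canonical to the canonical ensemble via the local equivalence of ensembles (Lemma~\ref{lem.localEquiv}, cf.\ Remark~\ref{rmk.DualCanonical}) at the cost of a polynomial error $3^{-\beta n}$, the vector $2\chi(\rho)\,(\D_*^{-1}(\rho,\cu_n)q-p)$ up to boundary-layer terms, and $\vert\D_*^{-1}(\rho,\cu_n)-D_m^{-1}\vert\lesssim(\sum_{k=n}^m\tau_k)^{1/2}$ by Lemma~\ref{lem.ratecontrolJ}. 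For the second, $\bracket{\nabla_x\ell_{D_m^{-1}q-p,\cu_{m+1}}}_{z+\cu_n^+,N}$ is explicit and on $\mathcal B_{z,n}^c$ equals $2\chi(\rho)\,(D_m^{-1}q-p)$ up to $O(\vert\cu_n^+\vert^{-1})$. Collecting these, the scale-$n$ weighted mean gradient is $\lesssim\rho^{-1}\big((\sum_{k=n}^m\tau_k)^{1/2}+3^{-\beta n/2}\big)$ on the good event; inserting this into $\sum_n 3^n(\cdots)^{1/2}$, squaring, and reorganizing the resulting double sum over $n\le k$ by Cauchy--Schwarz converts $\sum_n 3^n(\sum_{k\ge n}\tau_k)^{1/2}$ (together with the $3^{-\beta n/2}$ part) into $3^{m}\big(3^{-\beta m/2}+(\sum_{n=0}^m 3^{-\beta(m-n)}\tau_n)^{1/2}\big)$, and with the bad-event contribution and the $\alpha(\rho)$ prefactor this yields \eqref{eq.mainFlatness}.

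The hard part will be the step transporting the slope property and the subadditivity defects from the grand-canonical setting to the \emph{canonical}, randomly-weighted spatial averages that the coarse-grained lifting forces upon us in \eqref{eq.WMPoincare}: one must run the local equivalence of ensembles with an explicit polynomial rate uniformly in the density, simultaneously dominate the random particle counts $\mathbf{N}_{z,n}$ (hence the good/bad-event split), and organize the cross-scale errors so that they telescope into honest geometric decay in $m-n$ rather than a mere $m$-dependent prefactor. Since the subsequent argument of Section~\ref{sec.Rate} needs a bound that survives multiplication by $\chi(\rho)$ uniformly in $\rho\in[0,1]$, keeping careful track of the powers $\alpha(\rho)$, $\rho^{-2}$ and $\chi^{-2}(\rho)$ throughout is essential and is itself part of the difficulty.
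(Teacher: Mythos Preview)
Your outline tracks the paper closely in its overall architecture (apply Proposition~\ref{prop.WMPoincare}, split on a low-density bad event, handle the bad event via the $L^\infty$ bound plus Hoeffding, and telescope the subadditivity defect $\sum_{k\ge n}\tau_k$ via quadratic response), but there is a genuine gap in the treatment of the good event. The quantity you must bound at scale $n$ is the \emph{second moment} under $\P_\rho$ of the random variable
\[
G_{z,n}:=\frac{1}{|\cu_n|}\sum_{x\in z+\cu_n}\big\langle \nabla_x(v_{n,z}-\ell_{D_n^{-1}q-p})\big\rangle_{z+\cu_n^+,\,\mathbf N_{z,n}},
\]
which is still random through $\mathbf N_{z,n}$ and through the configuration in the boundary layer $N_\r((z+\cu_n)^+)\setminus(z+\cu_n)^+$. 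The slope property \eqref{eq.SlopeJ} only identifies $\langle G_{z,n}\rangle_\rho$ (a single vector), and the local equivalence of ensembles in Lemma~\ref{lem.localEquiv} compares first moments of a \emph{small}-support function between two ensembles on a \emph{large} domain; here the function $\nabla_x v_{n,z}$ fills the whole box $z+\cu_n^+$ over which the canonical expectation is taken, so Lemma~\ref{lem.localEquiv} does not even apply, and in any case it cannot furnish the fluctuation bound you need.

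The paper handles this by a much simpler move you omitted: apply Jensen to drop the inner canonical conditioning,
\[
\big\langle |G_{z,n}|^2\big\rangle_\rho \le \Big\langle\Big|\tfrac{1}{|\cu_n|}\sum_{x\in z+\cu_n}\nabla_x(v_{n,z}-\ell_{D_n^{-1}q-p})\Big|^2\Big\rangle_\rho,
\]
and then invoke a separate \emph{variance-decay} lemma (Lemma~\ref{lem.mainVariance}) which iterates the spatial independence of the local optimizers across sub-cubes to obtain the $3^{-\beta n}+\sum_k 3^{-\beta(n-k)}\tau_k$ rate. This lemma is the source of the exponent $\beta(d)$ and is indispensable; it is precisely what replaces the first-moment identity you relied on by an honest second-moment bound. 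A secondary omission is the scale dichotomy: the paper only runs the above machinery for large scales $n>\frac{m}{2(d+1)}$ and on small scales uses a trivial Dirichlet-energy bound (so that the Hoeffding tail can be uniformly bounded by $e^{-\rho^2 3^{m/4}/2}$, explaining your target exponent $\tfrac14$), rather than splitting good/bad at every scale as you do.
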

\begin{proof}
	In the proof, we write $v_{m+1}$ as a shorthand of $v(\rho, \cu_{m+1},  p, q)$. We apply the weighted multiscale Poincar\'e inequality in Proposition~\ref{prop.WMPoincare}, to obtain
	\begin{multline}\label{eq.FlatPoincare}
		\bracket{\frac{1}{2 \chi(\rho)|\cu_{m+1}|}(v_{m+1} - \ell_{D_m^{-1}q - p, \cu_{m+1}})^2}_\rho^\frac{1}{2} \\
		\leq C \sum_{n=0}^{m+1} 3^n \Ll( \frac{1}{\vert \Z_{m+1,n}\vert}\sum_{z \in \Z_{m+1,n}} \bracket{I_{z,n}}_\rho \Rr)^{\frac{1}{2}},
	\end{multline}
	where the term $I_{z,n}$ is the shorthand of 
	\begin{equation*} 
		I_{z,n} := \frac{1}{2 \chi(\rho)}\frac{1}{2 \mathbf{N}^*_{z,n}}\frac{1}{|\cu_n|} \Ll \vert\sum_{x \in z+\cu_n} \bracket{\nabla_{x} (v_{m+1} - \ell_{D_m^{-1}q - p})|\G_{z+\cu_n^+}}\Rr \vert^2,
	\end{equation*} 
	and $\mathbf{N}^*_{z,n}$ is the same as defined in Proposition~\ref{prop.WMPoincare}.	Here, the factor $\frac{|\cu_n^+|}{|\cu_n|}$ is absorbed into the constant. Recall \eqref{eq.defKawaGradient} for $\nabla_x$.  We need to treat the terms $I_{z,n}$.

	We use the comparison of Dirichlet energy with $v_{n,z} = v(\rho, z+\cu_n,  p, q)$ to write
	\begin{equation}\label{eq.L2Decom}
		\begin{split}
			\bracket{  I_{z,n}}_{\rho} & \leq \frac{3}{2 \chi(\rho)}\bracket{\frac{1}{\mathbf{N}^*_{z,n}}\frac{1}{|\cu_n|} \left\vert\sum_{x\in z+\cu_n}\bracket{\nabla_x(v_{n,z}-\ell_{D_{n}^{-1}q-p})|\G_{z+\cu_n^+}}\right\vert^2}_\rho\\
			& \quad +\frac{3}{2 \chi(\rho)}\bracket{\frac{1}{\mathbf{N}^*_{z,n}}\frac{1}{|\cu_n|} \left\vert \sum_{x\in z+\cu_n}\bracket{\nabla_x(v_{n,z}-v_{m+1})|\G_{z+\cu_n^+}}\right\vert^2}_\rho\\
			& \quad +\frac{3}{2 \chi(\rho)}\bracket{\frac{1}{\mathbf{N}^*_{z,n}}\frac{1}{|\cu_n|} \left\vert \sum_{x\in z+\cu_n}\bracket{\nabla_x(\ell_{D_m^{-1}q-D_n^{-1}q})|\G_{z+\cu_n^+}}\right\vert^2}_\rho.
		\end{split}
	\end{equation}
	
	We will estimate the first term by Lemma~\ref{lem.mainVariance} below, which studies the decay of variance of $\nabla_x(v_{n,z}-\ell_{D_n^{-1}q-p})$. Before applying the inequality, we first make use of some concentration of $\mathbf N_{z,n}^*$. We denote by 
	\begin{align*}
		\rho^* :=\bracket{\frac{\mathbf N_{z,n}^*}{|\cu_n^+|}}_\rho=\min\{\rho,(1-\rho)\},    
	\end{align*}
	as the mean of spatial average of $\mathbf N^*$.
	
	\textit{Case 1: high density $\mathbf N^*_{z,n}>\frac{1}{2}\rho^*|\cu_n^+|$}. For this case, the weight is of typical size and does not degenerate, so we have
	\begin{equation}\label{eq.L2Case11Weight}
		\begin{split}
			& \frac{3}{2 \chi(\rho)}\bracket{\1_{\{\mathbf N^*_{z,n}>\frac{1}{2}\rho^*|\cu_n^+|\}}\frac{1}{\mathbf{N}^*_{z,n}}\frac{1}{|\cu_n|} \left\vert\sum_{x\in z+\cu_n}\bracket{\nabla_x(v_{n,z}-\ell_{D_{n}^{-1}q-p})|\G_{z+\cu_n^+}}\right\vert^2}_\rho
			\\\leq & \frac{6}{2 \chi(\rho)\rho^*}\bracket{\left\vert\frac{1}{|\cu_n|} \sum_{x\in z+\cu_n}\bracket{\nabla_x(v_{n,z}-\ell_{D_{n}^{-1}q-p})|\G_{z+\cu_n^+}}\right\vert^2}_\rho
			\\\leq & \frac{6}{\chi(\rho)^2}\bracket{\left\vert\frac{1}{|\cu_n|} \sum_{x\in z+\cu_n}\nabla_x(v_{n,z}-\ell_{D_{n}^{-1}q-p})\right\vert^2}_\rho
			\\\leq &  \frac{C}{\chi(\rho)^2} 3^{-\beta n} + C\sum_{k=0}^{n-1}3^{-\beta(n-k)}\tau_k.
		\end{split}
	\end{equation}
	Here we observe that $\rho^*\geq\chi(\rho)\geq\rho^*/2$. The last line is from Lemma~\ref{lem.mainVariance} below. Note that the exponent $\beta$ only depends on $d$ and the constant $C$ depends on $\lambda, \r, d$ as stated in Lemma~\ref{lem.mainVariance}. We highlight this step, as it is where we gain the spatial cancellation from the weighted multiscale Poincar\'e inequality.

	\smallskip
	\textit{Case 2: low density $1 \leq \mathbf{N}^*_{z,n} < \frac{1}{2} \rho^* \vert \cu_n^+ \vert$.} 
	This case is quite rare by Hoeffding inequality (see \cite[Theorem~2.8]{boucheron2013concentration})
	\begin{align*}
		\mathbb{P}_\rho \Ll[\mathbf{N}^*_{z,n} < \frac{ \rho^* \vert \cu_n^+ \vert}{2} \Rr] \leq \exp\Ll(-\frac{(\rho^*)^2 |\cu_n^+|}{2}\Rr).
	\end{align*}
	We also make use of the $L^\infty$ estimate from Lemma~\ref{lem.supNorm}, and obtain 
	\begin{equation}\label{eq.L2Case12}
		\begin{split}
			&\frac{3}{2 \chi(\rho)}\bracket{\1_{\{1\leq\mathbf N^*_{z,n}<\frac{1}{2}\rho^*|\cu_n^+|\}}\frac{1}{\mathbf{N}^*_{z,n}}\frac{1}{|\cu_n|} \left\vert\sum_{x\in z+\cu_n}\bracket{\nabla_x(v_{n,z}-\ell_{D_{n}^{-1}q-p})|\G_{z+\cu_n^+}}\right\vert^2}_\rho 
			\\= & \frac{3}{2 \chi(\rho)}\bracket{\1_{\{1\leq\mathbf N^*_{z,n}<\frac{1}{2}\rho^*|\cu_n^+|\}}\frac{\mathbf{N}^*_{z,n}}{|\cu_n|} \left\vert\frac{1}{\mathbf{N}^*_{z,n}} \sum_{x\in z+\cu_n}\bracket{\nabla_x(v_{n,z}-\ell_{D_{n}^{-1}q-p})|\G_{z+\cu_n^+}}\right\vert^2}_\rho
			\\ \leq& \frac{3}{2 \chi(\rho)}\bracket{\1_{\{1\leq\mathbf N^*_{z,n}<\frac{1}{2}\rho^*|\cu_n^+|\}}\frac{1}{|\cu_n|} \sum_{x\in z+\cu_n}\left\vert\nabla_x(v_{n,z}-\ell_{D_{n}^{-1}q-p})\right\vert^2}_\rho
			\\\leq & \frac{3}{2\chi(\rho)}\P_\rho\left[\mathbf{N}^*_{z,n} < \frac{ \rho^* \vert \cu_n^+ \vert}{2}\right] 4(\Vert v_{n,z}\Vert_\infty^2+ |D_n^{-1}q-p|^2)
			\\ \leq &\frac{C}{\chi(\rho)}  \exp\Ll(-\frac{(\rho^*)^2 3^{nd}}{2}\Rr) 3^{2(d+3)n}.
		\end{split}
	\end{equation}
	Here $\mathbf N_{z,n}^*$ can be seen as the correct normalization factor, because the number of nonzero terms in the sum is no more than $\mathbf N_{z,n}^*$. Then  Jensen's inequality from the second line to the third line. In the case of large scale $3^\frac{n}{4}\geq\frac{1}{\rho^*}$, we have a natural bound
	\begin{align*}
		\frac{C}{\chi(\rho)}  \exp\Ll(-\frac{(\rho^*)^2 3^{nd}}{2}\Rr) 3^{2(d+3)n}\leq \frac{C}{\chi(\rho)}\exp(-\frac{3^{n(d-\frac{1}{2})}}{2})3^{(2d+6)n}\leq \frac{C'}{\chi(\rho)} 3^{-n}
	\end{align*}
	with the choice of constant $C'=C'(d)=C\sup_{t\geq 0}e^{-(d-\frac{1}{2})}t^{2d+6}<\infty$.
	
	When the scale is small $3^{\frac{n}{4}}<\frac{1}{\rho^*}$, we do not expect much concentration, so we use a trivial bound
	\begin{align*}
		&\frac{3}{2 \chi(\rho)}\bracket{\1_{\{1\leq\mathbf N^*_{z,n}<\frac{1}{2}\rho^*|\cu_n^+|\}}\frac{1}{|\cu_n|} \sum_{x\in z+\cu_n}\left\vert\nabla_x(v_{n,z}-\ell_{D_{n}^{-1}q-p})\right\vert^2}_\rho
		\\\leq & \frac{3}{2 \chi(\rho)}\bracket{\frac{1}{|\cu_n|} \sum_{x\in z+\cu_n}\left\vert\nabla_x(v_{n,z}-\ell_{D_{n}^{-1}q-p})\right\vert^2}_\rho
		\\ \leq &C(\lambda)\leq \frac{C(\lambda)}{(\rho^*)^2}3^{-\frac{n}{2}}\leq \frac{C}{\chi(\rho)^2}3^{-\frac{n}{2}}.
	\end{align*}
	
	\smallskip
	Combining the two cases above, we end up with the following bound on the first term:
	\begin{align*}
		&\frac{3}{2 \chi(\rho)}\bracket{\frac{1}{\mathbf{N}^*_{z,n}}\frac{1}{|\cu_n|} \left\vert\sum_{x\in z+\cu_n}\bracket{\nabla_x(v_{n,z}-\ell_{D_{n}^{-1}q-p})|\G_{z+\cu_n^+}}\right\vert^2}_\rho
		\\\leq &\frac{C}{\chi(\rho)^2}3^{-\beta n}+C\sum_{k=0}^{n-1}3^{-\beta(n-k)}\tau_k
	\end{align*}

	The second term in \eqref{eq.L2Decom} can be controlled by the gap of the subadditive quantities using \eqref{eq.quadraresponseJ} as the proof of Lemma~\ref{lem.Jestimate}
	\begin{align*}
		&\frac{1}{|\Z_{m+1,n}|}\sum_{z\in\Z_{m+1,n}}\frac{1}{2 \chi(\rho)}\bracket{\frac{1}{|\cu_n|}\frac{1}{\mathbf{N}^*_{z,n}} \left\vert \sum_{x\in z+\cu_n}\bracket{\left.\nabla_x(v_{n,z}-v_{m+1})\right|\G_{z+\cu_n^+}}\right\vert^2}_\rho\\
		&\leq \frac{1}{|\Z_{m+1,n}|}\bracket{\sum_{z\in\Z_{m+1,n}}\frac{1}{2 \chi(\rho)}\frac{1}{|\cu_n|}\bracket{  \left.\sum_{x\in z+\cu_n}\left\vert\nabla_x(v_{n,z}-v_{m+1})\right\vert^2\right|\G_{z+\cu_n^+}}}_\rho
		\\
		&\leq \frac{1}{|\Z_{m+1,n}|}\sum_{z\in\Z_{m+1,n}}\frac{1}{2 \chi(\rho)}\bracket{ \frac{1}{|\cu_n|}\sum_{x\in z+\cu_n} \vert \nabla_x(v_{n,z}-v_{m+1}) \vert^2 }_\rho\\
		&\leq J(\rho, \cu_{n},p,q)-J(\rho, \cu_{m+1},p,q)\\
		&\leq \sum_{k=n}^m\tau_k.
	\end{align*}
	Here from the first line to the second line, we use Jensen's inequality by noting that there are no more than $\mathbf N^*_{z,n}$ nonzero terms in the sum $\sum_{x\in z+\cu_n}\nabla_x (\cdot)$; see the similar trick in the first three lines in \eqref{eq.L2Case12}. From the second line to the third line, we use \eqref{eq.quadraresponseJ}.
	
	Similarly, the third term  in \eqref{eq.L2Decom} is also naturally bounded
	\begin{align*}
		&\frac{1}{2\chi(\rho)}\bracket{\frac{1}{\mathbf{N}_{z,n}^*}\frac{1}{|\cu_n|} \left\vert \sum_{x\in z+\cu_n}\bracket{\nabla_x(\ell_{D_m^{-1}q-D_n^{-1}q})\vert \G_{z+\cu_n^+}}\right\vert^2}_\rho\\
		\leq&\frac{1}{2\chi(\rho)}\bracket{\frac{1}{|\cu_n|}\sum_{x\in z+\cu_n}\vert\nabla_x\ell_{D_m^{-1}q-D_n^{-1}q}\vert^2}_\rho\\
		=&
		|D_m^{-1}q-D_{n}^{-1}q|^2\leq C(\lambda)\sum_{k=n}^{m-1}\tau_k.
	\end{align*}
	


	\smallskip

	\smallskip
	Plugging the estimates on the three terms into \eqref{eq.FlatPoincare}, we have
	\begin{multline*}
		\bracket{\frac{1}{2 \chi(\rho)|\cu_{m+1}|}(v_{m+1} - \ell_{D_m^{-1}q - p})^2}_\rho^\frac{1}{2}
		\leq C \sum_{n=0}^{m} 3^n \Ll(\frac{1}{\chi(\rho)^2}3^{-\beta n}+\sum_{k=0}^{n-1}3^{-\beta(n-k)}\tau_k+\sum_{k=n}^m\tau_k\Rr)^\frac{1}{2} 
	\end{multline*}
	Squaring this equation with Cauchy--Schwartz inequality, and shrinking $\beta$ if necessary,  we prove \eqref{eq.mainFlatness}.

\end{proof}

\subsection{Variance decay of the averaged gradient}
In this part, we prove the variance decay of gradient. Since its proof only uses the spatial independence, all its parameters are independent of $\rho$. In particular, the decay exponent $\beta$ only depends on the dimension $d$. 
\begin{lemma}[Variance decay]\label{lem.mainVariance}
	There exist $\beta(d) > 0$ and $C(d,\lambda, \r) < \infty$ such that for every $p,q \in B_1$ and $n \in \mathbb{N}$, we have 
	\begin{multline}\label{eq.mainVariance}
		\frac{1}{\chi(\rho)^2}\bracket{\Ll\vert\frac{1}{\vert \cu_n \vert} \sum_{x \in \cu_n} \nabla_x \Ll(v(\rho, \cu_n,  p, q) - \ell_{D_n^{-1}q - p}\Rr)\Rr\vert^2}_{\rho} \\
		\leq \frac{C}{\chi(\rho)^2} 3^{-\beta n} + C\sum_{k=0}^{n-1}3^{-\beta(n-k)}\tau_k.
	\end{multline}
\end{lemma}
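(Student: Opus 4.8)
The plan is to reduce the statement to a variance estimate for the spatially averaged gradient $G_n:=\frac{1}{|\cu_n|}\sum_{x\in\cu_n}\nabla_x v(\rho,\cu_n,p,q)$, and then to control $\var_\rho[G_n]$ by a renormalization argument that uses only spatial independence. First I would record that, by the slope property \eqref{eq.SlopeJ} together with the elementary identity $\bracket{\nabla_x\ell_w}_\rho=2\chi(\rho)\,w$ (which follows from \eqref{eq.defPiAffine} and $\P_\rho[\eta_x\neq\eta_{x+e_i}]=2\chi(\rho)$), the random variable $W:=\frac{1}{|\cu_n|}\sum_{x\in\cu_n}\nabla_x(v(\rho,\cu_n,p,q)-\ell_{D_n^{-1}q-p})$ has mean zero. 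Hence $\frac{1}{2\chi(\rho)}\bracket{|W|^2}_\rho=\frac{1}{2\chi(\rho)}\var_\rho[W]\le\frac{1}{\chi(\rho)}\big(\var_\rho[G_n]+\var_\rho[H_n]\big)$ with $H_n:=\frac{1}{|\cu_n|}\sum_{x\in\cu_n}\nabla_x\ell_{D_n^{-1}q-p}$; since $H_n$ is a normalized average of finitely‑range‑dependent terms bounded by $C(\lambda)$, one has $\var_\rho[H_n]\le C\chi(\rho)3^{-dn}$, which will be harmless. Everything is thus reduced to proving $\var_\rho[G_n]\le C\chi(\rho)\big(3^{-\beta n}+\sum_{k=0}^{n-1}3^{-\beta(n-k)}\tau_k\big)$.

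For the variance of $G_n$, I would fix a large integer constant $K$ and partition $\cu_n=\bigsqcup_{z\in\Z_{n,n-K}}(z+\cu_{n-K})$, writing $G_n=3^{-dK}\sum_z\tilde G_z$ with $\tilde G_z:=\frac{1}{|\cu_{n-K}|}\sum_{x\in z+\cu_{n-K}}\nabla_x v(\rho,\cu_n,p,q)$, and comparing $\tilde G_z$ to $G_z:=\frac{1}{|\cu_{n-K}|}\sum_{x\in z+\cu_{n-K}}\nabla_x v(\rho,z+\cu_{n-K},p,q)$. Two facts drive the argument. (i) Since $v(\rho,\cu_n,p,q)\in\mcl A(\cu_n)\subset\mcl A(z+\cu_{n-K})$, Jensen, $|\nabla_x w|^2\le\sum_i(\pi_{x,x+e_i}w)^2$, the quadratic response \eqref{eq.quadraresponseJ} on $z+\cu_{n-K}$, and the bond decomposition \eqref{eq.CubeRenormalization2} combined with $J(\rho,\cu_n,p,q;v(\rho,\cu_n,p,q))=J(\rho,\cu_n,p,q)$ give $\bracket{\big|3^{-dK}\sum_z(\tilde G_z-G_z)\big|^2}_\rho\le 3^{-dK}\sum_z\bracket{|\tilde G_z-G_z|^2}_\rho\le C\chi(\rho)\big(J(\rho,\cu_{n-K},p,q)-J(\rho,\cu_n,p,q)\big)\le C\chi(\rho)\sum_{j=n-K}^{n-1}\tau_j$. (ii) Using the explicit form of the $J$-corrector \eqref{eq.defvJ}: for $x$ in a cube $\Lambda$, the gradient of the $\mcl G_{\Lambda^+}$‑conditional‑expectation term vanishes (an exchange $\eta\mapsto\eta^{x,x+e_i}$ with $x,x+e_i\in\Lambda^+$ stays in the same $\mcl G_{\Lambda^+}$‑atom, so $\pi_{x,x+e_i}$ annihilates any $\mcl G_{\Lambda^+}$‑measurable function), while $u(\cdot,\Lambda,q)\in\F_0(N_\r(\Lambda^+))$ and $v(\cdot,\rho,\Lambda,p)\in\F_0(\Lambda^+)$ by Proposition~\ref{prop.Element}; hence $\nabla_x v(\cdot,\rho,\Lambda,p,q)$ is an $\F_0(N_\r(\Lambda^+))$‑measurable function, so $G_z$ is measurable with respect to an $O(\r)$‑neighbourhood of $z+\cu_{n-K}$, and by independence $G_z\perp G_{z'}$ whenever these neighbourhoods are disjoint — which excludes only a bounded number $N_0=N_0(d,\r)$ of neighbours. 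Consequently, by translation invariance, $\var_\rho[3^{-dK}\sum_z G_z]\le 3^{-dK}N_0\,\var_\rho[G_{n-K}]$. Combining (i) and (ii) via the triangle inequality in $L^2$ yields the one-step recursion
\[
\var_\rho[G_n]^{1/2}\le\theta\,\var_\rho[G_{n-K}]^{1/2}+\Big(C\chi(\rho)\sum_{j=n-K}^{n-1}\tau_j\Big)^{1/2},\qquad\theta:=\big(N_0\,3^{-dK}\big)^{1/2}.
\]

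Finally I would fix $K$ large enough that $\theta<1$, iterate this recursion down to scales below $K$ — where the trivial bound $\var_\rho[G_r]\le C\chi(\rho)$ holds, coming from \eqref{eq.JQuadratic} and $J(\rho,\cu_r,p,q)\le C(\lambda)$ for $p,q\in B_1$ — and expand the resulting geometric sum: grouping the defect terms by the scale‑block from which they originate and using $\theta^{i}\le C\,3^{-\beta(n-j)}$ for $j$ in the $i$‑th block, one obtains $\var_\rho[G_n]\le C\chi(\rho)\big(\theta^{2n/K}+\sum_{k=0}^{n-1}3^{-\beta(n-k)}\tau_k\big)$ for some $\beta>0$ depending only on $d$; shrinking $\beta$ makes $\theta^{2n/K}\le 3^{-\beta n}$ and $\beta<\tfrac14<d$, which also absorbs the $3^{-dn}$ term from $\var_\rho[H_n]$. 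Dividing by $2\chi(\rho)$ gives \eqref{eq.mainVariance}, with $C=C(d,\lambda,\r)$. The main obstacle is step (ii): one must carefully verify that $\nabla_x$ of the $J$‑corrector on a cube is a genuinely \emph{local} function — the vanishing of the gradient of the conditional‑expectation piece, and the $\r$‑enlargement bookkeeping, including for sub‑cubes meeting $\partial\cu_n$ and for small scales $n-K$ — because this locality is precisely what produces the contraction factor $\theta<1$ through finite‑range dependence. Everything else is the standard renormalization bookkeeping already used in the proof of Lemma~\ref{lem.Jestimate}.
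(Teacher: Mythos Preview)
Your proposal is correct and follows essentially the same renormalization scheme as the paper: compare the scale-$n$ corrector to independent scale-$(n-K)$ correctors via the quadratic response \eqref{eq.quadraresponseJ} (producing the $\tau_k$ terms), then exploit finite-range dependence of the small-scale pieces to obtain a contraction. The paper streamlines your argument in two ways: it takes $K=1$ directly (using that at least one pair of the $3^d$ sub-cubes is independent once $3^n>10\r$, which already gives a contraction factor $(3^{2d}-1)/3^{2d}<1$), and it keeps the centered quantity $\sigma_n^2:=\frac{1}{2\chi(\rho)}\bracket{|\,\cdot\,|^2}_\rho$ intact throughout the recursion, so that the shift $\ell_{D_{n-1}^{-1}q-p}\to\ell_{D_n^{-1}q-p}$ is absorbed by an explicit $|D_n^{-1}q-D_{n-1}^{-1}q|\le C\tau_{n-1}^{1/2}$ term rather than by your separate $\var_\rho[H_n]\le C\chi(\rho)3^{-dn}$ estimate.
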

\begin{proof}
	Since $\rho, p ,q$ does not change, we write $v_n$ as a shorthand for $v(\rho, \cu_n,  p, q)$ and $v_{n-1, z}$ as a shorthand of $v(\rho, z+\cu_{n-1},  p, q)$. Recall that $\rho^*=\min\{\rho,1-\rho\}$, which implies
	\begin{align}\label{eq.rho_trivial_bound}
		\forall \rho \in [0,1], \qquad \frac{\rho^*}{2} \leq \chi(\rho) \leq \rho^*.
	\end{align}
	From now on, we focus on the large $n$ and make the assumption that 
	\begin{align}\label{eq.mainVariance_assumption}
		n \geq n_0, \qquad n_0 := \min\Ll\{k \in \N_+: 3^k = 10\r + \frac{1}{(\rho^*)^4} \Rr\}.
	\end{align}
	We use a comparison between scale $n$ and scale $(n-1)$
	\begin{equation}\label{eq.mainVariance_ThreeTerms}
		\begin{split}
			&\bracket{\frac{1}{\chi(\rho)^2}\Ll\vert\frac{1}{\vert \cu_n \vert} \sum_{x \in \cu_n} \nabla_x \Ll(v_n - \ell_{D_n^{-1}q - p}\Rr)\Rr|^2}_{\rho}^\frac{1}{2}\\
			& \leq \bracket{\frac{1}{\chi(\rho)^2} \Ll\vert\frac{1}{\vert \cu_n \vert}\sum_{z\in\Z_{n,n-1}} \sum_{x \in z+\cu_{n-1}} \nabla_x \Ll(v_{n-1, z} - \ell_{D_{n-1}^{-1}q - p}\Rr)\Rr\vert^2}_{\rho}^\frac{1}{2}\\
			& \qquad + \bracket{\frac{1}{\chi(\rho)^2}\Ll\vert\frac{1}{\vert \cu_n \vert}\sum_{z\in\Z_{n,n-1}} \sum_{x \in z+\cu_{n-1}} \nabla_x \left(  v_{n-1,z} -  v_n\right)\Rr\vert^2}_{\rho}^\frac{1}{2}\\
			& \qquad + \bracket{\frac{1}{\chi(\rho)^2}\Ll\vert\frac{1}{\vert \cu_n \vert}\sum_{z\in\Z_{n,n-1}} \sum_{x \in z+\cu_{n-1}} \nabla_x  \ell_{D_n^{-1}q-D_{n-1}^{-1}q}\Rr\vert^2}_{\rho}^\frac{1}{2}.
		\end{split}
	\end{equation}

	We treat the three terms above separately. 
	
	Concerning the third term in \eqref{eq.mainVariance_ThreeTerms}, we observe that there are in total no more than $\mathbf N_{0,n}^*(\eta)$ nonzero term in the sum, where recall the definition of $\mathbf{N}^*_{0,n}$ in Proposition~\ref{prop.WMPoincare}.
	\begin{equation}\label{eq.mainVariance_3}
		\begin{split}
			&\bracket{\frac{1}{ \chi(\rho)^2}\Ll\vert\frac{1}{\vert \cu_n \vert}\sum_{z\in\Z_{n,n-1}} \sum_{x \in z+\cu_{n-1}} \nabla_x  \ell_{D_n^{-1}q-D_{n-1}^{-1}q}\Rr\vert^2}_{\rho}^\frac{1}{2}
			\\\leq&\frac{1}{\chi(\rho)^2}\bracket{\frac{1}{|\cu_n|^2}\mathbf N_{0,n}^*\sum_{x\in\cu_n}\vert\nabla_x \ell_{D_n^{-1}q-D_{n-1}^{-1}q}\vert^2}^\frac{1}{2}_\rho
			\\\leq& \frac{1}{\chi(\rho)}\frac{2}{\rho^*}\bracket{\frac{1}{|\cu_n|^2}(\mathbf N_{0,n}^*)^2 C |D_n^{-1}q-D_{n-1}^{-1}q|^2}_\rho^\frac{1}{2}
			\\\leq& C |D_n^{-1}q-D_{n-1}^{-1}q|
			\leq C(d, \lambda)\tau_{n-1}^\frac{1}{2}.
		\end{split}
	\end{equation}
	Here we make use of the fact that $\rho^*|\cu_n^+|$ is sufficiently large so that $$\bracket{(\mathbf N_{0,n}^*)^2}_\rho=(\rho^*|\cu_n^+|)^2+\chi(\rho)|\cu_n^+|\leq 2(\rho^*|\cu_n^+|)^2.$$
	
	Concerning the second term in \eqref{eq.mainVariance_ThreeTerms}, we use Jensen's inequality to obtain 
	\begin{align*}
		&\bracket{\frac{1}{\chi(\rho)^2}\Ll\vert\frac{1}{\vert \cu_n \vert}\sum_{z\in\Z_{n,n-1}} \sum_{x \in z+\cu_{n-1}} \nabla_x \left(  v_{n-1,z} -  v_n\right)\Rr\vert^2}_{\rho}\\
		&\leq \bracket{\frac{1}{\chi(\rho)}\frac{2}{\rho^*}\frac{1}{\vert \cu_n \vert^2}\mathbf N_{0,n}^*\sum_{z\in\Z_{n,n-1}} \sum_{x \in z+\cu_{n-1}} \Ll\vert\nabla_x   \Ll(v_{n-1,z} -  v_n\Rr) \Rr\vert^2}_{\rho}.
	\end{align*}
	Here, we need to use some concentration to give a control on $\mathbf N_{0,n}^*$. We consider two different cases of $\mathbf N_{0,n}^*$. When $\mathbf N_{0,n}^*\leq 2\rho^*|\cu_n^+|$, we have
	\begin{align*}
		&\bracket{\frac{1}{\chi(\rho)}\frac{2}{\rho^*}\frac{1}{\vert \cu_n \vert^2}\mathbf{1}_{\{\mathbf N_{0,n}\leq 2\rho^*|\cu_n^+|\}}\mathbf N_{0,n}^*\sum_{z\in\Z_{n,n-1}} \sum_{x \in z+\cu_{n-1}} \Ll\vert\nabla_x   \Ll(v_{n-1,z} -  v_n\Rr) \Rr\vert^2}_{\rho}
		\\\leq& \bracket{\frac{1}{\chi(\rho)}\frac{8}{\vert \cu_n \vert}\mathbf{1}_{\{\mathbf N_{0,n}\leq 2\rho^*|\cu_n^+|\}}\sum_{z\in\Z_{n,n-1}} \sum_{x \in z+\cu_{n-1}} \Ll\vert\nabla_x   \Ll(v_{n-1,z} -  v_n\Rr) \Rr\vert^2}_{\rho}
		\\\leq & \bracket{\frac{1}{\chi(\rho)}\frac{8}{\vert \cu_n \vert}\sum_{z\in\Z_{n,n-1}} \sum_{x \in z+\cu_{n-1}} \Ll\vert\nabla_x   \Ll(v_{n-1,z} -  v_n\Rr) \Rr\vert^2}_{\rho}
		\\\leq &8 \tau_{n-1},
	\end{align*}
	where the last line comes from the same argument as in Lemma~\ref{lem.Jestimate}. 
	
	Otherwise, when $\mathbf N_{0,n}>2\rho|\cu_n^+|$, we use the $L^\infty$ estimate from Lemma~\ref{lem.supNorm} to obtain
	
	\begin{align*}
		&\bracket{\frac{1}{\chi(\rho)}\frac{2}{\rho^*}\frac{1}{\vert \cu_n \vert^2}\mathbf{1}_{\{\mathbf N^*_{0,n}> 2\rho^*|\cu_n^+|\}}\mathbf N_{0,n}^*\sum_{z\in\Z_{n,n-1}} \sum_{x \in z+\cu_{n-1}} \Ll\vert\nabla_x   \Ll(v_{n-1,z} -  v_n\Rr) \Rr\vert^2}_{\rho}
		\\ \leq & \frac{1}{\chi(\rho)}\frac{1}{\rho^*}\P_\rho[\mathbf N^*_{0,n}> 2\rho^*|\cu_n^+|] 4(\Vert v_n\Vert_\infty^2+\Vert v_{n-1}\Vert_\infty^2)
		\\ \leq & \frac{1}{\chi(\rho)\rho^*}\exp(-2\times3^{nd}(\rho^*)^2)3^{2(d+3)n}
		\\ \leq & 3^{\frac{n}{2}}\exp(-2\times3^{nd}3^{-\frac{n}{2}})3^{2(d+3)n}
		\\ \leq & C 3^{-n},
	\end{align*}
	where $C=C(d)=\sup_{n\geq 0}\exp(-2\times3^{dn-\frac{n}{2}})3^{(2d+8)n}<\infty$. The concentration inequality is applied in the third line and the assumption \eqref{eq.mainVariance_assumption} is utilized in the fourth line above. Combining the two estimates, we conclude that
	\begin{align}\label{eq.mainVariance_2}
		\bracket{\frac{1}{ \chi(\rho)^2}\Ll\vert\frac{1}{\vert \cu_n \vert}\sum_{z\in\Z_{n,n-1}} \sum_{x \in z+\cu_{n-1}} \nabla_x \left(  v_{n-1,z} -  v_n\right)\Rr\vert^2}_{\rho}^\frac{1}{2}
		\leq  C\tau_{n-1}^\frac{1}{2}+C3^{-\frac{n}{2}}.
	\end{align}
	
	For the first term in \eqref{eq.mainVariance_ThreeTerms}, we define 
	\begin{equation*}
		X_z:=\frac{1}{|\cu_{n-1}|}\sum_{x\in z+\cu_{n-1}}\nabla_x \Ll(v(z+\cu_{n-1},p,q)) - \ell_{D_{n-1}^{-1}q - p}\Rr),
	\end{equation*}
	and expand the square
	\begin{align*}
		&\bracket{\Ll\vert\frac{1}{\vert \cu_n \vert}\sum_{z\in\Z_{n,n-1}} \sum_{x \in z+\cu_{n-1}} \nabla_x \Ll(v_{n-1,z} - \ell_{D_{n-1}^{-1}q - p}\Rr)\Rr\vert^2}_{\rho} \\
		&= \bracket{\left\vert\frac{1}{|\Z_{n,n-1}|}\sum_{z\in\Z_{n,n-1}}X_z\right\vert^2}_\rho\\
		&= \frac{1}{|\Z_{n,n-1}|^2} \sum_{z,w \in \Z_{n,n-1}}\bracket{X_z \cdot X_w}_\rho.
	\end{align*}
	Note that $X_z$ is an $\Rd$-valued random vector, and here $X_z \cdot X_w$ is the inner product between $X_z$ and $X_w$.
	
	For sufficiently large $n$ such that $3^n > 10 \r$, there exist two cubes $z+\cu_{n-1}$ and $w+\cu_{n-1}$ with distance greater than $2\r$. Recall that $X_z \in \F_0(N_\r(z+\cu_n^+))$ from (1) of Proposition~\ref{prop.Element}. Then we can use the local property and independence for such pair $X_z, X_w$ to get
	\begin{align*}
		\bracket{X_z \cdot X_w}_\rho=\bracket{X_z}_\rho \cdot \bracket{X_w}_\rho=0,
	\end{align*}
	where the last equal sign follows from the average slope property \eqref{eq.SlopeJ}. For other pairs, we just use Cauchy--Schwarz inequality and stationary property, which concludes
	\begin{align}\label{eq.mainVariance_1}
		\bracket{\left\vert\frac{1}{|\Z_{n,n-1}|}\sum_{z\in\Z_{n,n-1}}X_z\right\vert^2}_\rho \leq \frac{3^{2d}-1}{3^{2d}}\bracket{\vert X_0 \vert^2}_\rho.
	\end{align}
	
	We denote left hand side of \eqref{eq.mainVariance} by $\sigma_n^2$, then under the assumption \eqref{eq.mainVariance_assumption}, the estimates  \eqref{eq.mainVariance_3}, \eqref{eq.mainVariance_2} and \eqref{eq.mainVariance_1} hold and yield
	\begin{equation*}
		\forall n \geq n_0, \qquad \sigma_n\leq \theta\sigma_{n-1}+ C\tau_{n-1}^\frac{1}{2}+C3^{-\frac{n}{2}}
	\end{equation*}
	for some positive constant $\theta := \frac{3^{2d}-1}{3^{2d}} <1$. We implement an iteration to obtain that
	\begin{align*}
		\forall n \geq n_0, \qquad \sigma_n \leq \theta^{n-n_0}\sigma_{n_0}+C(d, \lambda)\sum_{k=n_0}^{n-1}\theta^{n-k}\tau_k^\frac{1}{2}.
	\end{align*}
	Here we observe the fact that $\theta>3^{-\frac{1}{4}}$, which implies that 
	\begin{align*}
		\theta^{-n_0} \leq 3^{\frac{n_0}{4}} \leq 3 \times \Ll(10\r + \frac{1}{(\rho^*)^4}\Rr)^{\frac{1}{4}} \leq C\left( 10\r + \frac{1}{\rho^*}\right).
	\end{align*}
	We also use the trivial bound $\sigma_0 \leq \lambda$, which ensures that
	\begin{align*}
		\forall n \geq n_0, \qquad \sigma_n \leq \frac{C(d,\lambda,\r)}{\chi(\rho)}\theta^{n} +C(d,\lambda)\sum_{k=0}^{n-1}\theta^{n-k}\tau_k^\frac{1}{2}. 
	\end{align*}
	Squaring this and shrinking $\theta$, we conclude that
	\begin{equation*}
		\forall n \geq n_0, \qquad  \sigma_n^2\leq \frac{C}{\chi(\rho)^2}3^{-\beta n}+C\sum_{k=0}^{n-1}3^{-\beta(n-k)}\tau_k.
	\end{equation*}
	It remains to prove the result when $n\leq n_0$. In this case, the bound is automatically true since we have a natural bound $\sigma_n^2\leq \frac{\lambda}{\chi(\rho)}\leq \frac{C}{\chi(\rho)^2}3^{-\beta n}$. This completes the proof.
\end{proof}

\begin{remark}
	Note that we can replace the bound for small $n$ as $\sigma_n^2\leq \frac{\lambda}{\chi(\rho)}\leq \frac{C}{\chi(\rho)^{1+\delta}}3^{-\beta n}$ if we choose sufficiently small $\beta$. This refined argument can be used to prove the quantitative convergence of $\chi(\rho)^{1+\delta} J(\rho,\cu_m,p,D_mp)$. But since we do not need this, we keep the factor $\chi(\rho)^2$.
\end{remark}

\subsection{Iterations and track of parameters}
In this part, we summarize the previous steps and conclude the proof. Especially, here we need to track the dependence of the density $\rho$ in our convergence. Recall \eqref{eq.defCGrand}, and combine the result \eqref{eq.ratecontrolJ} with $\tilde D = D_m$ defined in \eqref{eq.defDm},
\begin{equation}\label{eq.JResume}
	\begin{split}
		&\Ll\vert \cc(\rho,  \cu_m) - \cc_*(\rho,  \cu_m) \Rr\vert \\
		&= 2\chi(\rho) \Ll\vert \D(\rho,  \cu_m) - \D_*(\rho, \cu_m) \Rr\vert\\
		&\leq C(d,\lambda)\chi(\rho)\Ll(\sup_{|p|=1}J(\rho,\cu_m,p, D_m p)^{\frac{1}{2}}\Rr).
	\end{split}
\end{equation} 
Therefore, we need to study the uniform convergence of $\chi^2(\rho) J(\rho,\cu_m,p, D_m p)$, where the compressibility $\chi(\rho)$ helps control the convergence near two endpoints. We state the following lemma, which controls $\chi^2(\rho) J(\rho,\cu_m,p, D_m p)$ by the energy gap $\tau_n$ defined in \eqref{eq.deftau} uniformly in $\rho$.

\begin{proof}[Proof of Proposition~\ref{prop.GrandCanonicalEnsemble}]
	We combine the estimates \eqref{eq.Jestimate} and \eqref{eq.mainFlatness}, which yields 
	\begin{equation*}
		J(\rho,\cu_m,p, D_m p)
		\leq C \tau_m + C\Ll(\chi(\rho)^{-2}3^{-\beta m} +  \sum_{n=0}^m 3^{-\beta(m-n)}\tau_n \Rr).
	\end{equation*}
	Then we times $\chi^2(\rho)$ and obtain
	\begin{align}\label{eq.Juniform}
		\chi^2(\rho) J(\rho,\cu_m,p, D_m p) \leq C  \Ll(3^{- \beta m} +  \sum_{n=0}^m 3^{- \beta (m-n)}\chi^2(\rho)\tau_n\Rr). 
	\end{align}
	
	The remaining part is similar to the proof of \cite[Proposition~2.11]{AKMbook} and we give its sketch. We define 
	\begin{align}\label{eq.defFm}
		F_m := \sum_{i=1}^d \chi^2(\rho)  J(\rho, \cu_m, e_i, D_m e_i),
	\end{align}
	and its weighted version with $\beta > 0$ from \eqref{eq.Juniform} (which inherits from \eqref{eq.mainFlatness})
	\begin{align*}
		\tilde{F}_m := \sum_{n=0}^{m}3^{-\frac{\beta}{2}(m-n)}F_n.
	\end{align*}
	We also define a modified version of the gap in \eqref{eq.deftau} 
	\begin{align}
		\tilde{\tau}_n=\tilde{\tau}_n(\rho) :=\sup_{p,q\in B_1} \chi^2(\rho) (J(\rho,\cu_n,p,q)- J(\rho,\cu_{n+1},p,q))
	\end{align}
	Since  \eqref{eq.deftau} and \eqref{eq.Juniform} all satisfy the estimate independent of $\rho$, it suffices to follow the iteration in \cite[Proposition~2.11]{AKMbook}.
	
	Recalling the definition $$J(\rho,\cu_m,p,q)=\frac{1}{2}(q-D_mp)\cdot D_m^{-1}(q-\D_*(\rho,\cu_m)p)+\frac{1}{2}p\cdot D_mp,$$
	then we have $$F_{m+1}= \sum_{i=1}^d \chi^2(\rho)  J(\rho, \cu_{m+1}, e_i, D_{m+1} e_i)\leq \sum_{i=1}^d \chi^2(\rho)  J(\rho, \cu_{m+1}, e_i, D_{m} e_i)\leq F_m.$$
	
	Therefore, we may apply \eqref{eq.Juniform} to get
	\begin{align*}
		\tilde F_{m+1}&=\sum_{n=0}^{m+1}3^{-\frac{\beta}{2}(m+1-n)}F_n
		\\&\leq C3^{-\frac{\beta m}{2}}+\sum_{n=1}^{m+1}3^{-\frac{\beta}{2}(m+1-n)}F_{n-1}
		\\&\leq C3^{-\frac{\beta m}{2}}+C\sum_{n=0}^{m}3^{-\frac{\beta}{2}(m-n)}\left(3^{-\beta n}+\sum_{k=0}^n3^{-\beta(n-k)}\tilde{\tau}_k\right)
		\\&\leq C3^{-\frac{\beta m}{2}}+C3^{-\frac{\beta m}{2}}\sum_{k=0}^m\tilde{\tau}_k\sum_{n=k}^m3^{\beta k-\frac{\beta}{2}n}
		\\&\leq C3^{-\frac{\beta m}{2}}+C\sum_{k=0}^m3^{-\frac{\beta}{2}(m-k)}\tilde{\tau}_k.
	\end{align*}
	Here all constants are finite positive and depend on $\lambda,d,\r$ only.
	
	On the other hand, we have a lower bound
	\begin{align*}
		F_m-F_{m+1}\geq \sum_{i=1}^d\chi^2(\rho)(J(\rho,\cu_m,e_i,D_m e_i)-J(\rho,\cu_{m+1},e_i,D_m e_i))
		\geq C\tilde{\tau}_m, 
	\end{align*}
	for some finite positive constant $C=C(d)$.
	
	Combining the formulas above, we conclude that there exists a constant ${C(d, \lambda,\r) < \infty}$ such that
	\begin{align*}
		\tilde{F}_{m+1} \leq C(\tilde{F}_{m} - \tilde{F}_{m+1}) + C 3^{-\frac{\beta m}{2}}.
	\end{align*}
	This implies a contraction 
	\begin{align*}
		\tilde{F}_{m+1} \leq \Ll(\frac{C}{1+C}\Rr) \tilde{F}_{m}  + \Ll(\frac{C}{1+C}\Rr) 3^{-\frac{\beta m}{2}},
	\end{align*}
	which gives the decay of $\tilde{F}_{m}$. Following \eqref{eq.JResume} and \eqref{eq.defFm}, there exists $\gamma_1(d,\lambda, \r)>0$ and $C(d, \lambda,\r) < \infty$ such that
	\begin{align}\label{eq.cGapRate}
		\forall \rho \in [0,1], \qquad \Ll\vert \cc(\rho,  \cu_m) - \cc_*(\rho,  \cu_m) \Rr\vert \leq F_m \leq \tilde{F}_{m} \leq C 3^{-\gamma_1 m}.
	\end{align}
	Recall the monotone convergence in Corollary~\ref{cor.defDLimit} for a fixed $\rho$, thus as $m \to \infty$, the sequence $\cc(\rho,  \cu_m)$ decreases and  $\cc_*(\rho,  \cu_m)$ increases to the same limit $\cc(\rho)$. Then \eqref{eq.cGapRate} gives the desired uniform convergence rate along the triadic cubes, and Lemma~\ref{lem.WhitneySub} extends the result to a general cube, which concludes the proof. 	
\end{proof}

\section{Density-free local corrector with uniform convergence}\label{sec.Free}
There still remains some difference between our homogenization result in Proposition~\ref{prop.GrandCanonicalEnsemble} and Theorem~\ref{thm.mainUniform}. The optimizer $F_L$ in Theorem~\ref{thm.mainUniform} is a corrector-type function in homogenization, and we have already obtained a natural candidate from previous sections (see \eqref{eq.defNu} and (1) of Proposition~\ref{prop.Element})
\begin{equation}\label{eq.defCorrector}
	\phi_{\rho, \Lambda, \xi} := v(\rho, \Lambda, \xi) - \ell_{\Lambda^+, \xi}.
\end{equation}
This local corrector is of $\F_0(\Lambda^-)$, but has the dependence on $\rho$. In this part, we explore various properties about this function at first in Sections~\ref{subsec.Regularity} and ~\ref{subsec.RateCano}, then improve it by removing the dependence of density in Section~\ref{subsec.FreeCorrector}, and finally prove the main theorem (Theorem~\ref{thm.mainUniform}) in Section~\ref{subsec.Stationary} and ~\ref{subsec.pfMain}.

Section~\ref{subsec.CLT} is an application of the results obtained, and prepares for the hydrodynamic limit in Section~\ref{sec.hy}.

\subsection{Regularity of local corrector}\label{subsec.Regularity}

We summarize at first some properties about our local corrector $\phi_{\rho, \Lambda, \xi}$ from previous sections.

\begin{proposition}The local corrector $\phi_{\rho, \Lambda, \xi}$ satisfies the following properties.
	\begin{enumerate}\label{prop.ElementCorrector}
		\item (Elementary properties) $\phi_{\rho, \Lambda, \xi}$ is a $\F_0(\Lambda^-)$ function with $\bracket{\phi_{\rho, \Lambda, \xi}}_\rho = 0$, and $\xi \mapsto \phi_{\rho, \Lambda, \xi}$ is linear.
		\item (Approximation of conductivity) For every $L \in \N_+$, we have 
		\begin{align}\label{eq.rhoCorrectorHomo}
			\sup_{\rho \in [0,1], \xi \in B_1} \Ll\vert \frac{1}{\vert \Lambda_L \vert} \bracket{\sum_{b \in \overline{\Lambda_L^*}} \frac{1}{2} c_b (\pi_b (\ell_\xi + \phi_{\rho, \Lambda_L, \xi}))^2}_\rho - \frac{1}{2} \xi \cdot \cc(\rho) \xi \Rr\vert   \leq C L^{-\gamma_1}.
		\end{align}
		Here the exponent $\gamma_1(d, \lambda, \r)$ and the constant $C(d, \lambda, \r)$ are the same as in Proposition~\ref{prop.GrandCanonicalEnsemble}.
		\item (Sublinearity) There exists a constant $C(d, \lambda, \r)<\infty$, such that for every $L \in \N_+$, the following estimate holds for $\gamma_1(d, \lambda, \r)$ from Proposition~\ref{prop.GrandCanonicalEnsemble}
		\begin{align}\label{eq.Sublinear}
			\sup_{\rho \in [0,1], \xi \in B_1}\bracket{ \frac{1}{\vert \Lambda_L \vert}  \phi^2_{\rho, \Lambda_L, \xi} }_\rho \leq C L^{2-\gamma_1}.
		\end{align}
	\end{enumerate}
\end{proposition}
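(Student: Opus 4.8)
\emph{Plan of proof.} Parts (1) and (2) amount to bookkeeping on top of what has already been proved, and part (3) is where the work lies. For (1): the function $\phi_{\rho,\Lambda,\xi}=v(\cdot,\rho,\Lambda,\xi)-\ell_{\Lambda^+,\xi}$ belongs to $\F_0(\Lambda^-)$ and satisfies $\bracket{\phi_{\rho,\Lambda,\xi}}_\rho=0$ directly by item~(1) of Proposition~\ref{prop.Element}, while linearity in $\xi$ follows from the linearity in $p$ of the Euler--Lagrange equation \eqref{eq.variationnu} together with uniqueness of its solution. For (2): on every bond $b\in\overline{\Lambda_L^*}$ one has $\pi_b\ell_\xi=\pi_b\ell_{\Lambda_L^+,\xi}$ (both equal $\xi\cdot(y-x)(\eta_x-\eta_y)$), hence $\pi_b(\ell_\xi+\phi_{\rho,\Lambda_L,\xi})=\pi_b v(\cdot,\rho,\Lambda_L,\xi)$, so by the definition \eqref{eq.defNu} of $\nub$, item~(2) of Proposition~\ref{prop.Element}, and the definition \eqref{eq.defCGrand},
\[
\frac{1}{\vert\Lambda_L\vert}\bracket{\sum_{b\in\overline{\Lambda_L^*}}\frac{1}{2}c_b\bigl(\pi_b(\ell_\xi+\phi_{\rho,\Lambda_L,\xi})\bigr)^2}_\rho=2\chi(\rho)\,\nub(\rho,\Lambda_L,\xi)=\frac{1}{2}\,\xi\cdot\cc(\rho,\Lambda_L)\,\xi .
\]
Subtracting $\frac{1}{2}\xi\cdot\cc(\rho)\xi$ and applying Proposition~\ref{prop.GrandCanonicalEnsemble} gives \eqref{eq.rhoCorrectorHomo} for $\rho\in(0,1)$ and $\vert\xi\vert=1$; at $\rho\in\{0,1\}$ both sides vanish by convention.

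For (3) the plan is to reduce to triadic cubes $\Lambda=\cu_m$ (the general cube then following from Lemma~\ref{lem.WhitneySub}) and to $\rho\in(0,1)$, and then to rerun the scheme of the proof of Proposition~\ref{prop.L2Flat} with $\phi_{\rho,\cu_m,\xi}$ in place of the master-quantity optimizer. Concretely: apply the weighted multiscale Poincar\'e inequality of Proposition~\ref{prop.WMPoincare} to the recentered function $\phi_{\rho,\cu_m,\xi}-\bracket{\phi_{\rho,\cu_m,\xi}\,\vert\,\G_{\cu_m^+}}$, which has vanishing conditional expectation as required and whose tangent field agrees with $\nabla_x\phi_{\rho,\cu_m,\xi}$ on all but the boundary bonds (where $\phi_{\rho,\cu_m,\xi}$ is already constant). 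The resulting scale-$3^n$ spatial averages of $\nabla\phi_{\rho,\cu_m,\xi}$ are controlled exactly as in Cases~1--2 there: by the variance decay of Lemma~\ref{lem.mainVariance}, the subadditive defects $\tau_n$ from \eqref{eq.deftau} (through the energy comparison \eqref{eq.qrnu} applied on subcubes and the mean-zero property \eqref{eq.MeanZeroLocal}, which makes the top-scale average vanish in expectation and gives the cancellation in the independence step), with the degenerate-weight and large-deviation events absorbed via the $L^\infty$ estimate of Lemma~\ref{lem.supNorm} and Hoeffding's inequality. This produces, up to the recentering contribution, a bound of the form $\bracket{\vert\cu_m\vert^{-1}\phi_{\rho,\cu_m,\xi}^2}_\rho^{1/2}\le C\alpha(\rho)^{1/2}3^m\bigl(3^{-\kappa m}+\sum_{n\le m}3^{-\kappa(m-n)}\tau_n\bigr)^{1/2}$, after which the iteration of Section~\ref{sec.Rate} (which yields geometric decay of the weighted sums of $\tau_n$) sharpens the exponent to $L^{2-\gamma_1}$ and the $\chi$-interpolation of Lemma~\ref{lem.Juniform} makes all prefactors uniform in $\rho\in[0,1]$.

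The main obstacle, and the only genuinely new point beyond Proposition~\ref{prop.L2Flat}, is the control of the recentering term $\bracket{\phi_{\rho,\cu_m,\xi}\,\vert\,\G_{\cu_m^+}}$. Since $\phi_{\rho,\cu_m,\xi}$ is $\F_0(\cu_m^-)$-measurable this is a function of the single scalar $\mathbf S:=\sum_{x\in\cu_m^+}\eta_x$ alone, namely the canonical-ensemble expectation $\bracket{\phi_{\rho,\cu_m,\xi}}_{\cu_m^+,\mathbf S}$; I expect to show it is sublinear by exploiting the symmetry of the cube (which annihilates the leading, affine, density response of $\phi_{\rho,\cu_m,\xi}$ under a shift of density, so that only a lower-order response survives for $\mathbf S$ near $\rho\vert\cu_m^+\vert$), combined with the concentration of $\mathbf S$ and the $L^\infty$/$L^2$ bounds of Lemma~\ref{lem.supNorm} on the tail event, in the same spirit as the low-density Case~1.2 of Proposition~\ref{prop.L2Flat}. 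An alternative route, which bypasses the recentering, is to express $\phi_{\rho,\cu_m,\xi}$ through the master-quantity optimizer $v(\cdot,\rho,\cu_m,\xi,\D_*(\rho,\cu_{m-1})\xi)$ and the dual corrector via \eqref{eq.defvJ} and apply Proposition~\ref{prop.L2Flat} directly; there the point to watch is the careful cancellation of the affine remainders, which is again where the cube's symmetry enters.
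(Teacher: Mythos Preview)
Your treatment of (1) and (2) is correct and is exactly what the paper does: (1) is read off from Proposition~\ref{prop.Element}, and (2) follows by identifying the Dirichlet energy of $\ell_\xi+\phi_{\rho,\Lambda_L,\xi}$ with $\tfrac12\xi\cdot\cc(\rho,\Lambda_L)\xi$ and invoking Proposition~\ref{prop.GrandCanonicalEnsemble}.

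For (3), the paper is terse: it simply says ``(3) comes from Proposition~\ref{prop.L2Flat}, once we put the convergence rate \eqref{eq.main1A_2} into \eqref{eq.deftau}.'' Your plan---rerun the multiscale Poincar\'e argument of Proposition~\ref{prop.L2Flat} for $\phi_{\rho,\cu_m,\xi}$ and then feed in the now-established decay $\tau_n\le C3^{-\gamma_1 n}$---is precisely the intended content of that sentence, so you are on the same track. You are also right to single out the recentering $\bracket{\phi_{\rho,\cu_m,\xi}\,\vert\,\G_{\cu_m^+}}$ as the one point not literally covered by Proposition~\ref{prop.L2Flat}: there the function to which Proposition~\ref{prop.WMPoincare} is applied is $v(\cdot,\rho,\cu_{m+1},p,q)-\ell_{D_m^{-1}q-p,\cu_{m+1}}$, and both summands happen to have zero $\G_{\cu_{m+1}^+}$-conditional mean (the first by construction in Proposition~\ref{prop.ElementJ}(1), the second because $\sum_{x\in\cu_{m+1}}x=0$); for $\phi_{\rho,\cu_m,\xi}$ only the unconditional mean vanishes.

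Of your two routes, the second is closer to what the paper has in mind and is the more economical one: take $q=0$ in \eqref{eq.defvJ}, so that $u(\cdot,\Lambda,0)=0$ and $v(\cdot,\rho,\cu_m,\xi,0)=-v(\cdot,\rho,\cu_m,\xi)+\bracket{v(\cdot,\rho,\cu_m,\xi)\,\vert\,\G_{\cu_m^+}}$. Then $\phi_{\rho,\cu_m,\xi}$ differs from $-(v(\cdot,\rho,\cu_m,\xi,0)-\ell_{-\xi,\cu_m})$, which is exactly what Proposition~\ref{prop.L2Flat} controls, by two explicit pieces: the boundary-layer affine term $\ell_{\xi,\cu_m}-\ell_{\xi,\cu_m^+}$ and the $\G_{\cu_m^+}$-measurable term $\bracket{v(\cdot,\rho,\cu_m,\xi)\,\vert\,\G_{\cu_m^+}}=\bracket{\ell_{\xi,\cu_m^+}\,\vert\,\G_{\cu_m^+}}+\bracket{\phi_{\rho,\cu_m,\xi}\,\vert\,\G_{\cu_m^+}}$. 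The first of these contributes at most $O(\chi(\rho)3^{m})$ to $\vert\cu_m\vert^{-1}\bracket{(\cdot)^2}_\rho$ (it has $O(3^{(d-1)m})$ independent summands of size $O(3^m)$), which is within the target budget $3^{(2-\gamma_1)m}$; the $\bracket{\ell_{\xi,\cu_m^+}\,\vert\,\G_{\cu_m^+}}$ piece is a deterministic multiple of the particle count with a prefactor $\xi\cdot\sum_{x\in\cu_m^+}x=O(3^{(d-1)m})$, giving the same order. This leaves only $\bracket{\phi_{\rho,\cu_m,\xi}\,\vert\,\G_{\cu_m^+}}$, and here a clean dodge is the orthogonal decomposition $\bracket{\phi^2}_\rho=\bracket{(\phi-\bracket{\phi\,\vert\,\G})^2}_\rho+\bracket{\bracket{\phi\,\vert\,\G}^2}_\rho$: since $\bracket{\phi\,\vert\,\G}$ is $\G$-measurable its tangent field vanishes on interior bonds, so the already-proved bound on the first summand (your rerun of Proposition~\ref{prop.L2Flat}) together with the trivial inequality $\bracket{\bracket{\phi\,\vert\,\G}^2}_\rho\le\bracket{\phi^2}_\rho$ does \emph{not} close; you do need an independent estimate on it, as you say, and your concentration-plus-$L^\infty$ plan (splitting on $\{\vert\mathbf S-\rho\vert\cu_m^+\vert\vert\le\delta\vert\cu_m^+\vert\}$, using the canonical-ensemble continuity on the bulk and Lemma~\ref{lem.supNorm} on the tail) is the right way to finish. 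So your plan is sound; just be aware that the paper is content to point at Proposition~\ref{prop.L2Flat} without spelling out the $q=0$ specialization or the recentering, and that the ``careful cancellation of the affine remainders'' you flag is of strictly lower order and poses no difficulty.
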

\begin{proof}
	(1) can be deduced from Proposition~\ref{prop.Element}. (2) is the consequence from Proposition~\ref{prop.GrandCanonicalEnsemble}. (3) comes from Proposition~\ref{prop.L2Flat}, once we put the convergence rate \eqref{eq.main1A_2} to \eqref{eq.deftau}.
\end{proof}

The main task of this part is to explore the regularity on $\rho$. We propose the following factor to measure the one-sided bias when changing the probability:
\begin{align}\label{eq.defTheta}
	\forall \rho', \rho \in (0,1), \qquad \Theta_{\rho', \rho} := \max\Ll\{\frac{\rho'}{\rho}, \frac{1-\rho'}{1-\rho} \Rr\},
\end{align}
This quantity is used to control the continuity with respect to a small change of the density $\rho$. We remark that, the role of $\rho$ and $\rho'$ is not same here, since we should always view the second parameter as the targeted density and the first parameter as its perturbation. The following observation is obvious
\begin{align}\label{eq.RMKTheta}
	|\rho'-\rho|< \epsilon \min\{\rho,1-\rho\} \Longrightarrow \Theta_{\rho',\rho} \in [1, 1+\epsilon].
\end{align}
Especially, we only require the targeted density $\rho$ not to be degenerated to $0$ or $1$ to get a good estimate, but the perturbation $\rho'$ can be degenerate. We also define the two-sided bias factor
\begin{align}\label{eq.defTheta2}
	\tilde \Theta_{\rho', \rho} := \max \{ \Theta_{\rho', \rho},   \Theta_{\rho, \rho'}\},
\end{align}
where the role of $\rho$ and $\rho'$ is symmetric.

The following lemma is an example to apply the one-sided bias factor $\Theta_{\rho', \rho}$, and it will be useful throughout the section.
\begin{lemma}\label{lem.BiasProba}
	For bounded domain $\Lambda \subset \Zd$ and every local function $f \in \F_0(\Lambda)$, the following inequality holds for every $\rho', \rho \in (0,1)$ 
	\begin{align}\label{eq.BiasProba}
		\Ll\vert \bracket{f}_{\rho'} - \bracket{f}_\rho \Rr\vert \leq \Ll(\Theta_{\rho', \rho}^{\vert \Lambda \vert} - 1\Rr) \bracket{\vert f \vert}_\rho.
	\end{align}
	\begin{proof}
		We make the decomposition using the canonical ensemble. Denote by $X$ the random variable $X := \sum_{x\in\Lambda} \eta_x$
		\begin{align*}
			\Ll\vert \bracket{f}_{\rho'} - \bracket{f}_\rho \Rr\vert &= \Ll\vert \sum_{M=0}^{\vert \Lambda\vert} \Ll(\P_{\rho'}\Ll[X = M\Rr] - \P_{\rho}\Ll[X = M\Rr]\Rr) \bracket{f}_{\Lambda, M} \Rr\vert\\
			&\leq \sum_{M=0}^{\vert \Lambda\vert} \Ll\vert \P_{\rho'}\Ll[X = M\Rr] - \P_{\rho}\Ll[X = M\Rr]\Rr\vert \bracket{\vert f \vert}_{\Lambda, M} \\
			&= \sum_{M=0}^{\vert \Lambda\vert} \Ll\vert \frac{\P_{\rho'}\Ll[X = M\Rr]}{\P_{\rho}\Ll[X = M\Rr]} - 1 \Rr\vert \P_{\rho}\Ll[X = M\Rr]\bracket{\vert f \vert}_{\Lambda, M}. 
		\end{align*}
		Using the expression of Binomial distribution and the bound $M \leq \vert \Lambda \vert$, we obtain 
		\begin{align*}
			\Ll\vert \frac{\P_{\rho'}\Ll[X = M\Rr]}{\P_{\rho}\Ll[X = M\Rr]} - 1 \Rr\vert = \Ll\vert \frac{(\rho')^M (1-\rho')^{\vert \Lambda\vert - M}}{{\rho}^M (1-\rho)^{\vert \Lambda\vert - M}} - 1\Rr\vert \leq \Theta_{\rho', \rho}^{\vert \Lambda \vert} - 1.
		\end{align*}
		This concludes the desired result.
	\end{proof}
\end{lemma}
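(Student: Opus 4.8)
The plan is to route everything through the decomposition over the number of particles inside $\Lambda$, which concentrates all the $\rho$-dependence into Binomial weights, and then to control the Radon--Nikodym derivative between the two Bernoulli measures restricted to $\Lambda$ uniformly.

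First I would fix $\Lambda$ and set $X:=\sum_{x\in\Lambda}\eta_x$. Since $f\in\F_0(\Lambda)$ depends only on $(\eta_x)_{x\in\Lambda}$, one has
\[
	\bracket{f}_{\rho}=\sum_{M=0}^{\vert\Lambda\vert}\P_{\rho}[X=M]\,\bracket{f}_{\Lambda,M},
\]
and likewise with $\rho$ replaced by $\rho'$. The crucial point is that the conditional average $\bracket{f}_{\Lambda,M}$, being the expectation of $f$ under the uniform measure on configurations of $\Lambda$ with exactly $M$ occupied sites, is \emph{independent of the density}: conditioning a Bernoulli product measure on $\{X=M\}$ always produces that same uniform measure. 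Hence the whole difference is carried by the weights,
\[
	\bracket{f}_{\rho'}-\bracket{f}_{\rho}=\sum_{M=0}^{\vert\Lambda\vert}\Ll(\P_{\rho'}[X=M]-\P_{\rho}[X=M]\Rr)\bracket{f}_{\Lambda,M}.
\]

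Next I would pass to absolute values term by term, use $\vert\bracket{f}_{\Lambda,M}\vert\leq\bracket{\vert f\vert}_{\Lambda,M}$, and factor out $\P_{\rho}[X=M]$ to arrive at
\[
	\Ll\vert\bracket{f}_{\rho'}-\bracket{f}_{\rho}\Rr\vert\leq\sum_{M=0}^{\vert\Lambda\vert}\Ll\vert\frac{\P_{\rho'}[X=M]}{\P_{\rho}[X=M]}-1\Rr\vert\,\P_{\rho}[X=M]\,\bracket{\vert f\vert}_{\Lambda,M}.
\]
By the Binomial formula the ratio equals $(\rho'/\rho)^{M}((1-\rho')/(1-\rho))^{\vert\Lambda\vert-M}$; since $0\leq M\leq\vert\Lambda\vert$ and each of the two base ratios is at most $\Theta_{\rho',\rho}$, the ratio is bounded above by $\Theta_{\rho',\rho}^{\vert\Lambda\vert}$, so $\Ll\vert\frac{\P_{\rho'}[X=M]}{\P_{\rho}[X=M]}-1\Rr\vert\leq\Theta_{\rho',\rho}^{\vert\Lambda\vert}-1$. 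Inserting this and using $\sum_{M}\P_{\rho}[X=M]\bracket{\vert f\vert}_{\Lambda,M}=\bracket{\vert f\vert}_{\rho}$ closes the estimate.

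The only step with genuine content is this uniform control of the likelihood ratio, and the structural fact behind the choice of $\Theta_{\rho',\rho}$ is the identity $\rho\cdot\tfrac{\rho'}{\rho}+(1-\rho)\cdot\tfrac{1-\rho'}{1-\rho}=1$, which forces $\Theta_{\rho',\rho}\geq1$ and makes the elementary one-sided bound $(\cdot)\leq\Theta_{\rho',\rho}^{\vert\Lambda\vert}$ the operative one; this one-sided character is exactly why $\rho$ must be read as the targeted density and $\rho'$ as its perturbation, in line with \eqref{eq.RMKTheta}. I do not expect a serious obstacle beyond keeping careful track of the exponents $M$ and $\vert\Lambda\vert-M$ when bounding the Binomial ratio.
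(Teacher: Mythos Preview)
Your proof follows the paper's argument line by line---the decomposition over the particle number $X$, the passage from $\bracket{f}_{\Lambda,M}$ to $\bracket{|f|}_{\Lambda,M}$, the factoring of $\P_\rho[X=M]$, and the uniform bound on the Binomial likelihood ratio---so the approaches are identical.

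There is, however, a genuine gap that you share with the paper's own argument. From $\frac{\P_{\rho'}[X=M]}{\P_\rho[X=M]}\le\Theta_{\rho',\rho}^{|\Lambda|}$ you jump to $\bigl|\tfrac{\P_{\rho'}[X=M]}{\P_\rho[X=M]}-1\bigr|\le\Theta_{\rho',\rho}^{|\Lambda|}-1$, but an upper bound on a positive quantity does not control how far below $1$ it may fall: the smaller of $\rho'/\rho$ and $(1-\rho')/(1-\rho)$ can be tiny while $\Theta_{\rho',\rho}$ stays close to $1$. Concretely, with $|\Lambda|=1$, $\rho=0.1$, $\rho'=0.01$ and $f(\eta)=\eta_x$, one has $\Theta_{\rho',\rho}=1.1$, so the right side of \eqref{eq.BiasProba} equals $(1.1-1)\cdot 0.1=0.01$, whereas $|\bracket{f}_{\rho'}-\bracket{f}_\rho|=0.09$. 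What your ratio bound \emph{does} give is the one-sided estimate $\bracket{f}_{\rho'}\le\Theta_{\rho',\rho}^{|\Lambda|}\bracket{f}_\rho$ for $f\ge 0$, and this is in fact what the downstream uses (e.g.\ \eqref{eq.RegularityBase}, \eqref{eq.cRegularity1}) actually need; the two-sided version with the absolute value requires a matching lower bound on the ratio, for instance via the symmetric factor $\tilde\Theta_{\rho',\rho}$ of \eqref{eq.defTheta2}.
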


We now study the dependence of the density $\rho$ in several quantities. A similar argument can be found in \cite[Proposition~6.1]{giunti2021smoothness}.
\begin{proposition}[Regularity on density] For every $L \in N_+$, every $\rho, \rho', \rho'' \in (0,1)$ and $\xi \in B_1$, we have the following estimates using the factors $\Theta$ and $\tilde \Theta$ defined respectively in \eqref{eq.defTheta} and \eqref{eq.defTheta2}.
	\begin{enumerate}
		\item Regularity of conductivity: we have 
		\begin{align}\label{eq.cRegularity1}
			\cc(\rho', \Lambda_L)  \leq  \Theta_{\rho', \rho}^{(L+2\r)^d}  \cc(\rho, \Lambda_L), .
		\end{align} 
		and 
		\begin{align}\label{eq.cRegularity2}
			\vert \cc(\rho, \Lambda_L) - \cc(\rho', \Lambda_L)\vert 
			\leq  \Ll(\tilde \Theta_{\rho', \rho}^{(L+2\r)^d}  - 1\Rr) \max \Ll\{\vert \cc\vert(\rho, \Lambda_L), \vert\cc\vert(\rho', \Lambda_L)\Rr\}.
		\end{align}
		\item Regularity of mean:
		\begin{align}\label{eq.MeanRegularity}
			\frac{1}{\vert \Lambda_L \vert} \bracket{\phi_{\rho, \Lambda_L, \xi}}^2_{\rho'} \leq L^2 \Ll(\Theta_{\rho', \rho}^{(L+2\r)^d} -1\Rr)^2\vert\cc\vert(\rho, \Lambda_L)
		\end{align}
		\item Regularity of Dirichlet energy and $L^2$: if $\tilde \Theta_{\rho, \rho''}^{(L+2\r)^d}, \tilde \Theta_{\rho', \rho''}^{(L+2\r)^d} \leq 2$, we have 
		\begin{multline}\label{eq.L2Regularity}
			L^{-2} \bracket{ \frac{1}{\vert \Lambda_L \vert}  (\phi_{\rho', \Lambda_L, \xi} - \phi_{\rho, \Lambda_L, \xi})^2}_{\rho''} + \frac{1}{\vert \Lambda_L \vert} \bracket{\sum_{b \in \overline{\Lambda_L^*}} c_b (\pi_b \phi_{\rho', \Lambda_L, \xi} -  \pi_b \phi_{\rho, \Lambda_L, \xi})^2}_{\rho''} \\ \leq  10 \Ll(\max \Ll\{\tilde \Theta_{\rho, \rho''}^{(L+2\r)^d}, \tilde \Theta_{\rho', \rho''}^{(L+2\r)^d}\Rr\} -1\Rr) \max \Ll\{\vert \cc\vert(\rho, \Lambda_L), \vert\cc\vert(\rho', \Lambda_L), \vert\cc\vert(\rho'', \Lambda_L)\Rr\}.
		\end{multline}
	\end{enumerate}
\end{proposition}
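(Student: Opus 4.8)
\textit{Reduction to a change of measure.} By \eqref{eq.defCGrand} and \eqref{eq.defNu}, for every $\sigma\in(0,1)$ one has the variational identity
\[
\tfrac12\,\xi\cdot\cc(\sigma,\Lambda_L)\,\xi\;=\;\inf_{v\in\ell_{\xi,\Lambda_L^+}+\F_0(\Lambda_L^-)}\mathcal{E}_\sigma[v],\qquad \mathcal{E}_\sigma[v]:=\frac{1}{\vert\Lambda_L\vert}\bracket{\sum_{b\in\overline{\Lambda_L^*}}\tfrac12 c_b(\pi_b v)^2}_\sigma,
\]
the unique minimiser being $v(\sigma,\Lambda_L,\xi)=\ell_{\xi,\Lambda_L^+}+\phi_{\sigma,\Lambda_L,\xi}$ (see \eqref{eq.defCorrector}), which belongs to $\mcl A(\Lambda_L)$ by Proposition~\ref{prop.Element}(1). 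The only tool beyond this is an elementary change of measure: for any nonnegative $g\in\F_0(\Lambda)$ one has $\bracket{g}_{\rho'}\le\Theta_{\rho',\rho}^{\vert\Lambda\vert}\bracket{g}_\rho$, which is immediate from Lemma~\ref{lem.BiasProba} (or from the pointwise bound $\le\Theta_{\rho',\rho}^{\vert\Lambda\vert}$ on the density of $\P_{\rho'}$ with respect to $\P_\rho$ on $\fil_{\Lambda}$). We apply it to the Dirichlet integrand $\sum_{b\in\overline{\Lambda_L^*}}\tfrac12 c_b(\pi_b v)^2$, which is nonnegative and measurable with respect to at most $(L+2\r)^d$ coordinates, and (through Lemma~\ref{lem.BiasProba} itself) to the $\fil_{\Lambda_L^-}$-measurable corrector $\phi_{\sigma,\Lambda_L,\xi}$.

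\textit{Parts (1) and (2).} For \eqref{eq.cRegularity1}, test the variational problem for $\cc(\rho',\Lambda_L)$ with the minimiser at density $\rho$: minimality gives $\tfrac12\xi\cdot\cc(\rho',\Lambda_L)\xi\le\mathcal{E}_{\rho'}[v(\rho,\Lambda_L,\xi)]$, and change of measure on the nonnegative integrand turns the right-hand side into $\Theta_{\rho',\rho}^{(L+2\r)^d}\mathcal{E}_\rho[v(\rho,\Lambda_L,\xi)]=\Theta_{\rho',\rho}^{(L+2\r)^d}\cdot\tfrac12\xi\cdot\cc(\rho,\Lambda_L)\xi$; since $\xi$ is arbitrary and $\Theta_{\rho',\rho}\ge1$ this is exactly \eqref{eq.cRegularity1}. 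Applying this also with $\rho,\rho'$ interchanged and bounding $\Theta_{\rho,\rho'},\Theta_{\rho',\rho}$ by $\tilde\Theta_{\rho',\rho}$ sandwiches $\cc(\rho',\Lambda_L)-\cc(\rho,\Lambda_L)$ between $\pm$ multiples of $(\tilde\Theta_{\rho',\rho}^{(L+2\r)^d}-1)\cc(\rho,\Lambda_L)$ and $(\tilde\Theta_{\rho',\rho}^{(L+2\r)^d}-1)\cc(\rho',\Lambda_L)$, and \eqref{eq.cRegularity2} follows by passing to operator norms. For \eqref{eq.MeanRegularity}, use $\bracket{\phi_{\rho,\Lambda_L,\xi}}_\rho=0$ (Proposition~\ref{prop.ElementCorrector}(1)), so $\bracket{\phi_{\rho,\Lambda_L,\xi}}_{\rho'}=\bracket{\phi_{\rho,\Lambda_L,\xi}}_{\rho'}-\bracket{\phi_{\rho,\Lambda_L,\xi}}_\rho$; Lemma~\ref{lem.BiasProba} and Cauchy--Schwarz bound its modulus by $(\Theta_{\rho',\rho}^{(L+2\r)^d}-1)\bracket{\phi_{\rho,\Lambda_L,\xi}^2}_\rho^{1/2}$, while the a priori energy bound $\bracket{\phi_{\rho,\Lambda_L,\xi}^2}_\rho\le C L^2\vert\Lambda_L\vert\,\vert\cc\vert(\rho,\Lambda_L)$ — obtained from the spectral inequality \eqref{eq.spectralGradient} (valid as $\phi_{\rho,\Lambda_L,\xi}\in\F_0(\Lambda_L^-)$), the finite-volume energy identity $\sum_b\bracket{c_b(\pi_b v(\rho,\Lambda_L,\xi))^2}_\rho=\vert\Lambda_L\vert\,\xi\cdot\cc(\rho,\Lambda_L)\xi$ of Proposition~\ref{prop.Element}, and $\vert\cc\vert(\rho,\Lambda_L)\ge2\chi(\rho)$ to absorb the affine contribution — gives \eqref{eq.MeanRegularity} after squaring.

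\textit{Part (3).} Write $w:=\phi_{\rho',\Lambda_L,\xi}-\phi_{\rho,\Lambda_L,\xi}=v(\rho',\Lambda_L,\xi)-v(\rho,\Lambda_L,\xi)\in\F_0(\Lambda_L^-)$ (the affine parts cancel). The Dirichlet term is $\frac{1}{\vert\Lambda_L\vert}\bracket{\sum_b c_b(\pi_b w)^2}_{\rho''}=2\mathcal{E}_{\rho''}[w]$, and $\mathcal{E}_{\rho''}[\cdot]^{1/2}$ is a seminorm on functions (the associated bilinear form is positive semidefinite since $c_b\ge0$), so by the triangle inequality through the common reference $v(\rho'',\Lambda_L,\xi)$ it suffices to estimate $\mathcal{E}_{\rho''}[v(\sigma,\Lambda_L,\xi)-v(\rho'',\Lambda_L,\xi)]$ for $\sigma\in\{\rho,\rho'\}$. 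Since $v(\sigma,\Lambda_L,\xi)-v(\rho'',\Lambda_L,\xi)\in\F_0(\Lambda_L^-)$, the quadratic response \eqref{eq.qrnu} at density $\rho''$ (base point $v(\rho'',\Lambda_L,\xi)$) identifies this with $\mathcal{E}_{\rho''}[v(\sigma,\Lambda_L,\xi)]-\tfrac12\xi\cdot\cc(\rho'',\Lambda_L)\xi\le\Theta_{\rho'',\sigma}^{(L+2\r)^d}\cdot\tfrac12\xi\cdot\cc(\sigma,\Lambda_L)\xi-\tfrac12\xi\cdot\cc(\rho'',\Lambda_L)\xi$ by change of measure; splitting off $(\Theta_{\rho'',\sigma}^{(L+2\r)^d}-1)\tfrac12\xi\cdot\cc(\sigma,\Lambda_L)\xi$ and applying \eqref{eq.cRegularity2} to the residual difference of conductivities bounds it by $C(\tilde\Theta_{\sigma,\rho''}^{(L+2\r)^d}-1)\max\{\vert\cc\vert(\sigma,\Lambda_L),\vert\cc\vert(\rho'',\Lambda_L)\}$. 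For the $L^2$ term, decompose $\bracket{w^2}_{\rho''}=\var_{\rho''}[w]+\bracket{w}_{\rho''}^2$: the spectral inequality \eqref{eq.spectralGradient} controls $L^{-2}\frac{1}{\vert\Lambda_L\vert}\var_{\rho''}[w]$ by a constant multiple of $2\mathcal{E}_{\rho''}[w]$ (the $L^{-2}$ absorbs $\diam(\Lambda_L)^2$, and $c_b\ge1$), while $\bracket{w}_{\rho''}=\bracket{\phi_{\rho',\Lambda_L,\xi}}_{\rho''}-\bracket{\phi_{\rho,\Lambda_L,\xi}}_{\rho''}$ is handled by \eqref{eq.MeanRegularity}. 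The hypothesis $\tilde\Theta_{\rho,\rho''}^{(L+2\r)^d},\tilde\Theta_{\rho',\rho''}^{(L+2\r)^d}\le2$ then lets one replace each $(\tilde\Theta^{(L+2\r)^d}-1)^2$ by $(\tilde\Theta^{(L+2\r)^d}-1)$ and keeps all numerical factors bounded, so collecting the two terms gives \eqref{eq.L2Regularity}, the constant $10$ leaving room to spare.

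\textit{Main obstacle.} Parts (1) and (2) are essentially a change of measure plus the variational formula for $\cc$ and are routine. The genuine difficulty lies in part (3): the correctors $v(\rho,\Lambda_L,\xi)$ and $v(\rho',\Lambda_L,\xi)$ minimise \emph{different} functionals, so the strong-convexity identity \eqref{eq.qrnu} cannot be applied between them directly; one must detour through $v(\rho'',\Lambda_L,\xi)$, and the cost of that detour — comparing the three finite-volume conductivities $\cc(\rho,\Lambda_L)$, $\cc(\rho',\Lambda_L)$, $\cc(\rho'',\Lambda_L)$ — is precisely what part (1) supplies. What remains is the somewhat delicate but mechanical bookkeeping of fitting all errors under the single factor $\max\{\tilde\Theta_{\rho,\rho''}^{(L+2\r)^d},\tilde\Theta_{\rho',\rho''}^{(L+2\r)^d}\}-1$ permitted by the statement.
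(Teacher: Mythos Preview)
Your proof is correct and follows essentially the same route as the paper: change of measure (via Lemma~\ref{lem.BiasProba}) combined with the variational characterisation of $\cc$ for part~(1); mean-zero plus Lemma~\ref{lem.BiasProba} plus the spectral inequality \eqref{eq.spectralGradient} for part~(2); and, for part~(3), the detour through $v(\rho'',\Lambda_L,\xi)$ using the quadratic response \eqref{eq.qrnu}, then feeding parts~(1) and~(2) back in. The only cosmetic difference is that for the $L^2$ term in~(3) you split $\bracket{w^2}_{\rho''}$ directly into variance plus mean-squared, whereas the paper first triangulates through $\phi_{\rho'',\Lambda_L,\xi}$ and then centers each piece; your organisation is arguably cleaner since the variance part is bounded immediately by the Dirichlet term you have already controlled.
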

\begin{proof}
	(1) We test $\phi_{\rho', \Lambda_L, \xi}$ in the variational problem of $\cc(\rho, \Lambda)$, and apply \eqref{eq.BiasProba} to change the parameter of the grand canonical ensemble
	\begin{equation}\label{eq.RegularityBase}
		\begin{split}
			\frac{1}{2} \xi \cdot\cc(\rho, \Lambda_L)\xi &\leq  \frac{1}{\vert \Lambda_L \vert} \bracket{\sum_{b \in \overline{\Lambda_L^*}} \frac{1}{2} c_b \Ll(\pi_b (\ell_\xi + \phi_{\rho', \Lambda_L, \xi})\Rr)^2}_\rho \\
			&\leq \Theta_{\rho', \rho}^{(L+2\r)^d}\frac{1}{\vert \Lambda_L \vert} \bracket{\sum_{b \in \overline{\Lambda_L^*}} \frac{1}{2} c_b \Ll(\pi_b (\ell_\xi + \phi_{\rho', \Lambda_L, \xi})\Rr)^2}_{\rho'}\\
			&= \Theta_{\rho', \rho}^{(L+2\r)^d} \frac{1}{2} p \cdot \cc(\rho', \Lambda_L)p.
		\end{split}
	\end{equation}
	Here because of the correlation length $\r$, the integration $\sum_{b \in \overline{\Lambda_L^*}} \frac{1}{2} c_b \pi_b (\ell_\xi + \phi_{\rho', \Lambda_L, \xi})^2$ is in $\F_0(\Lambda_{L+2\r})$ and we need to enlarge the power for the factor $\Theta_{\rho', \rho}$. This proves \eqref{eq.cRegularity1}. By exchanging the role of $\rho$ and $\rho'$, we can obtain the estimate on the other direction similarly, which concludes \eqref{eq.cRegularity2} using $\tilde \Theta_{\rho', \rho}$ defined in \eqref{eq.defTheta2}.
	
	\smallskip
	
	(2) Recall that $\bracket{\phi_{\rho, \Lambda_L, \xi}}_{\rho} = 0$ from (1) of Proposition~\ref{prop.ElementCorrector}, then we apply \eqref{eq.BiasProba} to  $\frac{1}{\vert \Lambda_L \vert} \bracket{\phi_{\rho, \Lambda_L, \xi}}^2_{\rho'}$
	\begin{align*}
		\frac{1}{\vert \Lambda_L \vert} \bracket{\phi_{\rho, \Lambda_L, \xi}}^2_{\rho'} &= \frac{1}{\vert \Lambda_L \vert}\Ll\vert \bracket{\phi_{\rho, \Lambda_L, \xi}}_{\rho'} - \bracket{\phi_{\rho, \Lambda_L, \xi}}_{\rho}  \Rr\vert^2 \\
		&\leq \frac{1}{\vert \Lambda_L \vert}\Ll(\Theta_{\rho', \rho}^{(L+2\r)^d} -1\Rr)^2\bracket{\vert \phi_{\rho, \Lambda_L, \xi} \vert}^2_{\rho}\\
		&\leq \frac{1}{\vert \Lambda_L \vert}\Ll(\Theta_{\rho', \rho}^{(L+2\r)^d} -1\Rr)^2\bracket{\phi^2_{\rho, \Lambda_L, \xi}}_{\rho}\\
		&\leq L^2\Ll(\Theta_{\rho', \rho}^{(L+2\r)^d} -1\Rr)^2\vert\cc\vert(\rho, \Lambda_L).
	\end{align*}
	Here we apply Jensen's inequality from the second line to the third line, and the spectral inequality \eqref{eq.spectralGradient} from the third line to the fourth line.
	
	\smallskip
	
	(3) To compare the Dirichlet energy, we add $\phi_{\rho'', \Lambda_L, \xi}$ as an intermediate term, which gives
	\begin{multline*}
		\frac{1}{\vert \Lambda_L \vert} \bracket{\sum_{b \in \overline{\Lambda_L^*}} c_b (\pi_b \phi_{\rho', \Lambda_L, \xi} -  \pi_b \phi_{\rho, \Lambda_L, \xi})^2}_{\rho''} \\
		\leq  \frac{2}{\vert \Lambda_L \vert} \bracket{\sum_{b \in \overline{\Lambda_L^*}} c_b (\pi_b \phi_{\rho', \Lambda_L, \xi} - \pi_b\phi_{\rho'', \Lambda_L, \xi})^2}_{\rho''} +  \frac{2}{\vert \Lambda_L \vert} \bracket{\sum_{b \in \overline{\Lambda_L^*}} c_b (\pi_b\phi_{\rho, \Lambda_L, \xi} - \pi_b\phi_{\rho'', \Lambda_L, \xi})^2}_{\rho''}.
	\end{multline*}
	For each term, we can repeat the argument in \eqref{eq.RegularityBase}, and conclude that 
	\begin{multline}\label{eq.DirichletRegularity}
		\frac{1}{\vert \Lambda_L \vert} \bracket{\sum_{b \in \overline{\Lambda_L^*}} c_b (\pi_b \phi_{\rho', \Lambda_L, \xi} -  \pi_b \phi_{\rho, \Lambda_L, \xi})^2}_{\rho''} \\
		\leq \Ll(\tilde \Theta_{\rho, \rho''}^{(L+2\r)^d} + \tilde \Theta_{\rho', \rho''}^{(L+2\r)^d} -2\Rr) \max \Ll\{\vert\cc\vert(\rho, \Lambda_L), \vert\cc\vert(\rho', \Lambda_L), \vert\cc\vert(\rho'', \Lambda_L)\Rr\}.
	\end{multline}
	
	The $L^2$ term can be done similarly,
	\begin{multline*}
		\bracket{ \frac{1}{\vert \Lambda_L \vert}  ( \phi_{\rho', \Lambda_L, \xi} -   \phi_{\rho, \Lambda_L, \xi})^2}_{\rho''} \\
		\leq \bracket{ \frac{2}{\vert \Lambda_L \vert}  (\phi_{\rho', \Lambda_L, \xi} - \phi_{\rho'', \Lambda_L, \xi})^2}_{\rho''} + \bracket{ \frac{2}{\vert \Lambda_L \vert}  (\phi_{\rho, \Lambda_L, \xi} - \phi_{\rho'', \Lambda_L, \xi})^2}_{\rho''}.
	\end{multline*}
	Here, we hope to use Poincar\'e inequality, but the constant part of the function should be truncated. We take the term involving $\rho, \rho''$ for example
	\begin{multline*}
		\bracket{ \frac{2}{\vert \Lambda_L \vert}  (\phi_{\rho, \Lambda_L, \xi} - \phi_{\rho'', \Lambda_L, \xi})^2}_{\rho''}\\
		\leq \bracket{ \frac{4}{\vert \Lambda_L \vert}  (\phi_{\rho, \Lambda_L, \xi} - \bracket{\phi_{\rho, \Lambda_L, \xi}}_{\rho''} - \phi_{\rho'', \Lambda_L, \xi})^2}_{\rho''} + \frac{4}{\vert \Lambda_L \vert} \bracket{\phi_{\rho, \Lambda_L, \xi}}^2_{\rho''}.
	\end{multline*} 
	The Poincar\'e inequality \eqref{eq.spectralGradient} applies to the first term, and then we can use \eqref{eq.DirichletRegularity} 
	\begin{align*}
		&\bracket{ \frac{4}{\vert \Lambda_L \vert}  (\phi_{\rho, \Lambda_L, \xi} - \bracket{\phi_{\rho, \Lambda_L, \xi}}_{\rho''} - \phi_{\rho'', \Lambda_L, \xi})^2}_{\rho''} \\
		&\leq \frac{4 L^2}{\vert \Lambda_L \vert} \bracket{\sum_{b \in \overline{\Lambda_L^*}} c_b ( \pi_b \phi_{\rho, \Lambda_L, \xi} - \pi_b \phi_{\rho'', \Lambda_L, \xi})^2}_{\rho''}\\
		&\leq 4L^2\Ll(\tilde \Theta_{\rho, \rho''}^{(L+2\r)^d} -1\Rr) \max \Ll\{\vert\cc\vert(\rho, \Lambda_L),  \vert\cc\vert(\rho'', \Lambda_L)\Rr\}.
	\end{align*}
	Concerning the term $\frac{4}{\vert \Lambda_L \vert} \bracket{\phi_{\rho, \Lambda_L, \xi}}^2_{\rho''}$, we apply directly \eqref{eq.MeanRegularity}. Under the assumption $\tilde \Theta_{\rho, \rho''}^{(L+2\r)^d} \leq 2$, then we have$\Ll(\Theta_{\rho, \rho''}^{(L+2\r)^d} -1\Rr)^2 \leq \Ll(\Theta_{\rho, \rho''}^{(L+2\r)^d} -1\Rr)$ and the leading order should be $\Ll(\tilde \Theta_{\rho, \rho''}^{(L+2\r)^d} -1\Rr)$. This concludes the proof.
\end{proof}

\subsection{Convergence rate under canonical ensemble}\label{subsec.RateCano}
In this part, we give the convergence rate of the conductivity under the canonical ensemble. Inspired by the subadditive quantities in \eqref{eq.defNu}, we can also define their counterparts under the canonical ensemble that 
\begin{equation}\label{eq.DualCanonical}
	\begin{split}
		\frac{1}{2}p \cdot \Da(\Lambda, N) p &:= \inf_{v\in\ell_{p,\Lambda^+} +\F_0(\Lambda^-)} \Ll\{ \frac{1}{2 \chi(N / \vert \Lambda \vert)\vert\Lambda\vert} \sum_{b\in\ov{\Lambda^*}} \bracket{ \frac{1}{2}c_b(\pi_b v)^2}_{\Lambda, N} \Rr\}, \\
		\frac{1}{2}q\cdot \Da_*^{-1}(\Lambda, N) q  \\
		&:= \sup_{v \in  \F_0} \Ll\{ \frac{1}{2 \chi(N / \vert \Lambda \vert)\vert\Lambda\vert}\sum_{b\in \ovs{\Lambda}}  \bracket{ (\pi_b \ell_{q, \Lambda})(\pi_b v) - \frac{1}{2} c_b(\pi_b v)^2}_{\Lambda, N}\Rr\}.
	\end{split}
\end{equation}
Similarly to \eqref{eq.defCGrand}, we define the conductivity using the Einstein relation \eqref{eq.Einstein}
\begin{align}\label{eq.defCCanonical}
	\ca(\Lambda,N) := 2\chi(N/\vert \Lambda \vert) \Da(\Lambda, N), \qquad \ca_*(\Lambda,N) := 2 \chi(N/\vert \Lambda \vert) \Da_*(\Lambda, N).
\end{align}
Here $\ca(\Lambda,N)$ also coincides with the definition \eqref{eq.defC3}. Our main result in this subsection is the following proposition.
\begin{proposition}\label{prop.CanonicalEnsemble}
	Under Hypothesis~\ref{hyp}, there exists an exponent $\gamma_2(d, \lambda, \r) > 0$ and a positive constant $C(d, \lambda, \r) < \infty$ such that for every $L,M \in \N_+$,
	\begin{align}\label{eq.main1B_2}
		\Ll\vert \ca( \Lambda_L, M) - \cc(M/\vert \Lambda_L \vert) \Rr\vert + \Ll\vert \ca_*( \Lambda_L, M) - \cc(M/\vert \Lambda_L \vert) \Rr\vert  \leq C L^{-\gamma_2},
	\end{align}
	where $\cc(\rho)$ is the same as that defined in \eqref{eq.defCLimit}.
\end{proposition}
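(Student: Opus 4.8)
The plan is to compare the canonical quantities $\Da(\Lambda_L,M)$ and $\Da_*(\Lambda_L,M)$ with the grand canonical ones $\D(\rho,\Lambda_L)$ and $\D_*(\rho,\Lambda_L)$ at the matching density $\rho:=M/|\Lambda_L|$, and then conclude via Proposition~\ref{prop.GrandCanonicalEnsemble}. Since $\ca(\Lambda_L,M)=2\chi(\rho)\Da(\Lambda_L,M)$ and $\cc(\rho,\Lambda_L)=2\chi(\rho)\D(\rho,\Lambda_L)$ carry the same prefactor $\chi(\rho)\le\frac14$, and likewise for the starred quantities, it suffices to control $|\Da(\Lambda_L,M)-\D(\rho,\Lambda_L)|+|\Da_*(\Lambda_L,M)-\D_*(\rho,\Lambda_L)|$. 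As in the grand canonical case, the canonical matrices obey the a priori bounds $\id\le\Da(\Lambda_L,M),\Da_*(\Lambda_L,M)\le\lambda\,\id$ (same computation as in part~(2) of Proposition~\ref{prop.Element}, with $\bracket{\cdot}_\rho$ replaced by $\bracket{\cdot}_{\Lambda_L,M}$), so whenever $\chi(\rho)\le L^{-\gamma_2}$ the estimate $|\ca(\Lambda_L,M)-\cc(\rho)|+|\ca_*(\Lambda_L,M)-\cc(\rho)|\le C\lambda\,\chi(\rho)\le C\lambda L^{-\gamma_2}$ is automatic (and $M\in\{0,|\Lambda_L|\}$ is trivial, both sides being $0$). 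From now on I assume $\chi(\rho)\gtrsim L^{-\gamma_2}$, in which regime dividing the grand canonical rate \eqref{eq.main1A_2} by $2\chi(\rho)$ still leaves a positive power of $L$.

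For the inequality $\Da(\Lambda_L,M)\le\D(\rho,\Lambda_L)+(\text{error})$ I would exploit the exact bond partition \eqref{eq.CubeRenormalization2}: decomposing a triadic cube $\Lambda_L=\bigsqcup_i\Lambda_i$ into sub-cubes of an intermediate scale $\ell=L^{\theta}$, the set $\ovs{\Lambda_L}$ partitions into the $\ovs{\Lambda_i}$, so the glued competitor $v=\ell_{p,\Lambda_L^+}+\sum_i\phi_{\rho,\Lambda_i,p}$ (with $\phi_{\rho,\Lambda_i,p}$ the grand canonical corrector \eqref{eq.defCorrector}) is admissible, and crucially satisfies $\sum_{b\in\ovs{\Lambda_L}}c_b(\pi_bv)^2=\sum_i G_i$ \emph{exactly}, where $G_i:=\sum_{b\in\ovs{\Lambda_i}}c_b(\pi_b(\ell_p+\phi_{\rho,\Lambda_i,p}))^2$ is supported on $O(\ell^d)$ sites (no inter-block boundary term survives, which is the point of the $\ovs{\cdot}$ convention). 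Testing the canonical variational problem for $\Da(\Lambda_L,M)$ with $v$ and applying the local equivalence of ensembles (Lemma~\ref{lem.localEquiv}) block by block gives $\bracket{G_i}_{\Lambda_L,M}\le\bracket{G_i}_\rho+\mathrm{err}_i$, where $\mathrm{err}_i$ is $\frac{\ell^d}{L^d}$ times a power of $\ell$ coming from the $L^\infty$ bound on the corrector (Lemma~\ref{lem.supNorm}) and an inverse power of $\chi(\rho)$; the $\frac{|\Lambda_i|}{|\Lambda_L|}$-weighted sum of block contributions reproduces $\frac12 p\cdot\D(\rho,\Lambda_i)p=\frac12 p\cdot\D(\rho)p+O(\ell^{-\gamma_1})$ plus the accumulated equivalence-of-ensembles error, of order $\ell^{c}\chi(\rho)^{-2}L^{-d}$ for an explicit $c=c(d)$. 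Choosing $\theta$ to balance $\ell^{-\gamma_1}$ against $\ell^{c}L^{-d+2\gamma_2}$ and setting $\gamma_2$ to be a fixed fraction of $\gamma_1$ yields this half of \eqref{eq.main1B_2}.

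For the reverse inequality I would not test $\nub$ with the canonical minimiser, which does not decompose over blocks, but work instead with the dual quantities using Remark~\ref{rmk.DualCanonical}: conditioning the Bernoulli measure on $\G_{\Lambda^+}$ identifies $\nub_*(\rho,\Lambda,q)$ with a $\P_\rho$-average of the canonical dual problems over the particle count $n$ in $\Lambda^+$, up to the $\chi(\rho)^{-1}$ versus $\chi(n/|\Lambda^+|)^{-1}$ normalisation mismatch. Since $n\sim\mathrm{Binom}(|\Lambda^+|,\rho)$ concentrates within $O(\sqrt{|\Lambda^+|\log L})$ of $\rho|\Lambda^+|$ (Hoeffding's inequality, exactly as in Case~1.2 of the proof of Proposition~\ref{prop.L2Flat}), with the rare event killed by the bounds $\Da_*\ge\id$ and Lemma~\ref{lem.supNorm}, one reduces — again after a block decomposition and the density regularity of Section~\ref{subsec.Regularity} — to the typical-count regime, where $\nub_{*,\mathrm{can}}(\Lambda_L,M,q)$ is within the desired error of $\nub_*(M/|\Lambda_L|,\Lambda_L,q)$ and hence of $\cc(\rho)$; the exterior-configuration dependence of the canonical quantity, being a boundary effect, costs only $O(L^{-1})$. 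Combining this with the canonical analogue of Lemma~\ref{lem.ratecontrolJ}(1), namely $\Da(\Lambda_L,M)\ge\Da_*(\Lambda_L,M)$, and with the grand canonical gap $|\D(\rho,\Lambda_L)-\D_*(\rho,\Lambda_L)|\lesssim L^{-\gamma_1}\chi(\rho)^{-1}$, closes the comparison on both sides; finally Lemma~\ref{lem.WhitneySub} transfers the estimate from triadic cubes to a general $\Lambda_L$.

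I expect the delicate step to be the quantitative local equivalence of ensembles for the block energies $G_i$. These are not functions of a bounded number of coordinates, and the naïve bound $\bracket{G_i}_{\Lambda_i,N_i}\le\P_\rho[\textstyle\sum_{x\in\Lambda_i}\eta_x=N_i]^{-1}\bracket{G_i}_\rho$ loses a volume power $\ell^{d/2}$; one must use the sharp local-CLT form of the equivalence of ensembles together with the polynomial $L^\infty$-growth of the correctors, and keep the window confining the sub-block counts narrow enough for the density-regularity estimates of Section~\ref{subsec.Regularity} to be effective yet wide enough to carry almost all the mass — this is the trade-off fixed by the choice $\ell=L^{\theta}$, and it forces the resulting exponent $\gamma_2$ to be strictly smaller than $\gamma_1$.
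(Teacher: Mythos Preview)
Your sandwich strategy via block decomposition and local equivalence of ensembles is the same as the paper's, but you have over-engineered both halves and thereby convinced yourself that the equivalence-of-ensembles step is delicate when it is not.

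For the upper bound on $\Da(\Lambda_L,M)$, the block energies $G_i=\sum_{b\in\ovs{\Lambda_i}}c_b(\pi_b(\ell_p+\phi_{\hat\rho,\Lambda_i,p}))^2$ are \emph{pointwise nonnegative}, so part~(1) of Lemma~\ref{lem.localEquiv} applies with no hypothesis on the density and gives the clean multiplicative bound $\bracket{G_i}_{\Lambda_L,M}\le(1+4(\ell^2/L)^d)\bracket{G_i}_{\hat\rho}$. There is no need for $L^\infty$-bounds on the corrector, no need for a density threshold $\chi(\rho)\gtrsim L^{-\gamma_2}$, no need for a local CLT, and no $\chi(\rho)^{-2}$ anywhere; the paper never invokes Lemma~\ref{lem.supNorm} in this proof. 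The two errors to balance are $3^{-\gamma_1 n}$ from the homogenization rate at the block scale and $3^{d(2n-m)}$ from equivalence of ensembles (plus $3^{-(m-n)}$ from the boundary layer of cubes that are dropped), giving $\gamma_2=\frac{d\gamma_1}{2d+\gamma_1}$.

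For the lower bound on $\Da_*(\Lambda_L,M)$ the paper does \emph{not} write the grand canonical dual as a $\P_\rho$-mixture of canonical ones and then concentrate the particle count; that route would work but needlessly drags in the density regularity of Section~\ref{subsec.Regularity}. Instead the paper decomposes the canonical dual \emph{functional} over sub-blocks, observes (Remark~\ref{rmk.DualCanonical}) that on each block the grand-canonical maximiser $u_{z+\cu_n,\hat\rho,q}$ is also the maximiser under every canonical conditioning, so the block contribution is bounded above by a \emph{positive} local functional of $O(\ell^d)$ sites, and then Lemma~\ref{lem.localEquiv}(1) applies again. The inter-block and boundary terms are handled by the pointwise inequality $(\pi_b\ell_q)(\pi_bv)-\tfrac12 c_b(\pi_bv)^2\le\tfrac12(\pi_b\ell_q)^2$. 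This yields $\Da_*^{-1}(\cu_m,M)\le\D_*^{-1}(\hat\rho,\cu_n)+C(3^{-(m-n)}+3^{d(2n-m)})$, and after inverting and multiplying by $\chi(\hat\rho)$ the sandwich closes uniformly in $M$.
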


We establish at first a result of the local equivalence of ensembles. Similar result can be found in \cite[Appendix 2]{kipnis1998scaling} and other references. In our setting, for any $\Lambda \subset \Zd$ and $\epsilon \in (0,1)$, we define the following set of integers such that the empirical density is not degenerate 
\begin{align}\label{eq.defEmpricalTrunc}
	\M_{\epsilon}(\Lambda) := \{M \in \N_+: \epsilon \leq M/\vert \Lambda\vert \leq 1 - \epsilon\}.
\end{align}

\begin{lemma}\label{lem.localEquiv}
	Let $L, \ell \in \N_+$ and $M \in \N$, we denote by $\hat{\rho}$ the empirical density $\hat{\rho} := \frac{M}{\vert \Lambda_L \vert}$.
	\begin{enumerate}
		\item If $10 \ell^2 \leq L$ and $0 \leq M \leq L^d$, then for any local function $f \in \F_0(\Lambda_\ell)$ such that ${\bracket{f}_{\Lambda_{\ell}, N} \geq 0}$ for any $N \in \N$, we have 
		\begin{align}\label{eq.localEquiv1}
			\bracket{f}_{\Lambda_L, M} \leq \Ll(1 + 4 \Ll(\frac{\ell^2}{L}\Rr)^d\Rr) \bracket{f}_{\hat{\rho}}.
		\end{align}
		
		\item Given $\epsilon \in (0,1)$, if $10 \ell^{2d} \leq \epsilon L^d$ and $M \in \M_{\epsilon}(\Lambda_L)$, then for any local function $f \in \F_0(\Lambda_\ell)$, we have 
		\begin{align}\label{eq.localEquiv2}
			\Ll\vert \bracket{f}_{\Lambda_L, M} - \bracket{f}_{\hat{\rho}} \Rr\vert  \leq  \frac{1}{\epsilon}\Ll(\frac{\ell^2}{L}\Rr)^d \bracket{\vert f \vert }_{\hat{\rho}}.
		\end{align}
	\end{enumerate}
\end{lemma}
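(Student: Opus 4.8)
The plan is to prove Lemma~\ref{lem.localEquiv} by a direct comparison of the canonical measure $\bracket{\cdot}_{\Lambda_L, M}$ with the canonical measure restricted to $\Lambda_\ell$, expressed via the local equivalence of ensembles. First I would write, for a function $f \in \F_0(\Lambda_\ell)$,
\begin{align*}
	\bracket{f}_{\Lambda_L, M} = \sum_{k=0}^{\vert \Lambda_\ell \vert} \P_{\Lambda_L, M}\Ll[\textstyle\sum_{x \in \Lambda_\ell} \eta_x = k\Rr] \bracket{f}_{\Lambda_\ell, k},
\end{align*}
while, by the analogous decomposition for the Bernoulli measure of density $\hat\rho$,
\begin{align*}
	\bracket{f}_{\hat\rho} = \sum_{k=0}^{\vert \Lambda_\ell \vert} \P_{\hat\rho}\Ll[\textstyle\sum_{x \in \Lambda_\ell} \eta_x = k\Rr] \bracket{f}_{\Lambda_\ell, k}.
\end{align*}
Thus everything reduces to comparing the hypergeometric weight $w^{\mathrm{can}}_k := \binom{\vert \Lambda_\ell \vert}{k}\binom{\vert \Lambda_L \vert - \vert \Lambda_\ell \vert}{M-k}\big/\binom{\vert \Lambda_L \vert}{M}$ with the binomial weight $w^{\mathrm{gc}}_k := \binom{\vert \Lambda_\ell \vert}{k}\hat\rho^k(1-\hat\rho)^{\vert\Lambda_\ell\vert - k}$. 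The key estimate is that the ratio $w^{\mathrm{can}}_k / w^{\mathrm{gc}}_k$ is controlled, uniformly in $0 \le k \le \vert \Lambda_\ell \vert$, by $1 + C(\ell^2/L)^d$ (and, in the non-degenerate regime, bounded below by $1 - \epsilon^{-1}(\ell^2/L)^d$).

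For the ratio bound I would expand the falling factorials. Writing $V = \vert \Lambda_L \vert$, $v = \vert \Lambda_\ell \vert$ and $M = \hat\rho V$, one has
\begin{align*}
	\frac{w^{\mathrm{can}}_k}{w^{\mathrm{gc}}_k} = \frac{M(M-1)\cdots(M-k+1)\,(V-M)(V-M-1)\cdots(V-M-(v-k)+1)}{V(V-1)\cdots(V-v+1)\,\hat\rho^k(1-\hat\rho)^{v-k}}.
\end{align*}
Since $\hat\rho = M/V$ and $1 - \hat\rho = (V-M)/V$, each factor $(M-j)/(\hat\rho V) = 1 - j/M$ and $(V-M-i)/((1-\hat\rho)V) = 1 - i/(V-M)$, while the denominator contributes factors $(V-\ell)/V = 1 - \ell/V$. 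All of these deviate from $1$ by at most $v/M$, $v/(V-M)$, respectively $v/V$, and there are at most $v$ of them in total; hence the ratio lies between $\prod(1 - v/\min\{M, V-M\})$-type products and $(1-v/V)^{-v}$-type products. Using $v = \ell^d$, $V = L^d$ (up to constants absorbed in the geometry) and $\log(1+x)$-type estimates, one bounds $\log(w^{\mathrm{can}}_k/w^{\mathrm{gc}}_k)$ by $C v^2 / \min\{M, V-M, V\}$, which is $\le C(\ell^2/L)^d$ when $10\ell^{2d} \le \epsilon L^d$ and $M \in \M_\epsilon$, and by $C(\ell^2/L)^d$ when $M \le L^d$ for the one-sided statement (here we only use $v^2/V \le (\ell^2/L)^d$, noting that for the upper bound in item (1) we only need $w^{\mathrm{can}}_k \le (1 + 4(\ell^2/L)^d) w^{\mathrm{gc}}_k$, so only the denominator factors $1 - \ell/V$ matter and no lower bound on $M$ is needed). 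For item (1), the sign condition $\bracket{f}_{\Lambda_\ell, N} \ge 0$ then lets us sum the pointwise weight inequality directly; for item (2), I would subtract the two decompositions, bound $\vert w^{\mathrm{can}}_k - w^{\mathrm{gc}}_k \vert \le (\sup_k \vert w^{\mathrm{can}}_k/w^{\mathrm{gc}}_k - 1\vert) w^{\mathrm{gc}}_k$, insert $\vert\bracket{f}_{\Lambda_\ell,k}\vert$, and re-sum to get $\bracket{\vert f\vert}_{\hat\rho}$ times the ratio error, exactly as in Lemma~\ref{lem.BiasProba}.

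The main obstacle is controlling the hypergeometric-to-binomial ratio \emph{uniformly in $k$ over the full range $0 \le k \le \vert\Lambda_\ell\vert$}, including the extreme values of $k$, rather than just near the typical value $k \approx \hat\rho\vert\Lambda_\ell\vert$; the crude telescoping-product bound above is what makes this work, at the cost of the factor $\epsilon^{-1}$ in the non-degenerate regime (coming from $\min\{M, V-M\} \ge \epsilon V$) and of requiring $\ell^{2d} \ll \epsilon L^d$ rather than merely $\ell^d \ll L^d$. A secondary technical point is that $\Lambda_L$ may not be an exact cube of integer side, so $\vert\Lambda_L\vert$ is only comparable to $L^d$; this is harmless and the constants $4$ and $1/\epsilon$ in the statement already leave room for it. Once the weight comparison is in hand, both inequalities \eqref{eq.localEquiv1} and \eqref{eq.localEquiv2} follow by the summation arguments sketched above with no further difficulty.
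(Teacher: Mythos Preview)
Your proposal is correct and follows essentially the same approach as the paper: both decompose $\bracket{f}_{\Lambda_L,M}$ and $\bracket{f}_{\hat\rho}$ over the number of particles in $\Lambda_\ell$, reduce to a hypergeometric-versus-binomial weight comparison, bound the ratio $w^{\mathrm{can}}_k/w^{\mathrm{gc}}_k$ via elementary falling-factorial estimates, and then use positivity of $\bracket{f}_{\Lambda_\ell,k}$ for the one-sided bound in~(1) and the two-sided ratio control (requiring $\min\{M,V-M\}\ge\epsilon V$, hence the $\epsilon^{-1}$) for~(2). Your observation that only the denominator factors $(1-j/V)^{-1}$ matter for the upper bound in~(1), so that no lower bound on $M$ is needed, is exactly what the paper uses as well.
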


\begin{proof}
	The proof is similar to Lemma~\ref{lem.BiasProba}. For $f \in \F_0(\Lambda_\ell)$, we decompose the expectation as 
	\begin{align}\label{eq.localEquivDecom}
		\bracket{f}_{\Lambda_L, M} - \bracket{f}_{\hat{\rho}} =  \sum_{M=0}^{\vert \Lambda_L\vert} \Ll( \frac{ \P_{\Lambda_L, M}\Ll[\sum_{x\in \Lambda_\ell} \eta_x = N\Rr]}{\P_{\hat{\rho}}\Ll[\sum_{x\in \Lambda_\ell} \eta_x = N\Rr]} - 1 \Rr) \P_{\hat{\rho}}\Ll[\sum_{x\in \Lambda_\ell} \eta_x = N\Rr]\bracket{ f }_{\Lambda_\ell, N}. 
	\end{align}
	It remains to analyze the Radon--Nikodym derivative under different setting.
	\smallskip
	
	(1) Under this setting,  because $\bracket{f}_{\Lambda_{\ell}, N}$ is positive, it suffices to have an upper bound for the following probability
	\begin{align}\label{eq.RNStep1}
		\P_{\Lambda_L, M}\Ll[\sum_{x\in \Lambda_\ell} \eta_x = N\Rr] = \frac{{\vert \Lambda_L \setminus \Lambda_\ell\vert \choose M-N}}{{\vert \Lambda_L \vert \choose M}} 
		= \frac{\Ll(\frac{M!}{(M-N)!}\Rr)\Ll(\frac{(\vert \Lambda_L \vert - M)!}{(\vert \Lambda_L \setminus \Lambda_\ell\vert - (M-N))!}\Rr)}{\frac{\vert \Lambda_L \vert !}{\vert \Lambda_L \setminus \Lambda_\ell\vert !}}.
	\end{align}
	Notice that 
	\begin{equation}\label{eq.UpDown1}
		\begin{split}
			&\frac{\vert \Lambda_L \vert !}{\vert \Lambda_L \setminus \Lambda_\ell\vert !} \geq \vert \Lambda_L \setminus \Lambda_\ell\vert^{\vert \Lambda_\ell \vert}, \qquad \frac{M!}{(M-N)!} \leq M^N, \\
			&\frac{(\vert \Lambda_L \vert - M)!}{(\vert \Lambda_L \setminus \Lambda_\ell\vert - (M-N))!} \leq (\vert \Lambda_L \vert - M)^{\vert \Lambda_\ell\vert - N}, 
		\end{split}
	\end{equation}
	thus we have an upper bound for \eqref{eq.RNStep1}
	\begin{equation}\label{eq.RNStep2}
		\begin{split}
			\frac{\P_{\Lambda_L, M}\Ll[\sum_{x\in \Lambda_\ell} \eta_x = N\Rr]}{\P_{\hat{\rho}}\Ll[\sum_{x\in \Lambda_\ell} \eta_x = N\Rr]}
			&\leq \Ll(\frac{M^N (\vert \Lambda_L \vert - M)^{\vert \Lambda_\ell\vert - N}}{\vert \Lambda_L \setminus \Lambda_\ell\vert^{\vert \Lambda_\ell \vert}}\Rr) \Big/ \Ll(\frac{M^N (\vert \Lambda_L \vert - M)^{\vert \Lambda_\ell\vert - N}}{\vert \Lambda_L \vert^{\vert \Lambda_\ell \vert}}\Rr) \\
			&=  \Ll(\frac{\vert \Lambda_L \vert}{\vert \Lambda_L \setminus \Lambda_\ell\vert}\Rr)^{\vert \Lambda_\ell \vert}.
		\end{split}
	\end{equation} 
	It suffices to give an estimate for the term $\Ll(\frac{\vert \Lambda_L \vert}{\vert \Lambda_L \setminus \Lambda_\ell\vert}\Rr)^{\vert \Lambda_\ell \vert}$. Using the condition ${\frac{\ell^2}{L} \in (0, \frac{1}{10})}$, we have 
	\begin{align*}
		\Ll(\frac{\vert \Lambda_L \vert}{\vert \Lambda_L \setminus \Lambda_\ell\vert}\Rr)^{\vert \Lambda_\ell \vert} \leq \Ll(1 + 2 \Ll(\frac{\ell}{L}\Rr)^d\Rr)^{\ell^d}
		\leq e^{\log\Ll(1 + 2 \Ll(\frac{\ell}{L}\Rr)^d\Rr)\ell^d} \leq e^{2 \Ll(\frac{\ell^2}{L}\Rr)^d} \leq 1 + 4 \Ll(\frac{\ell^2}{L}\Rr)^d.
	\end{align*}
	This estimate together with \eqref{eq.RNStep1}, \eqref{eq.RNStep2} and \eqref{eq.localEquiv1} gives us the desired result for positive function $f$.
	
	\smallskip
	
	(2) Under this setting, $\bracket{f}_{\Lambda_{\ell}, N}$ can be negative, so we need two-sided estimate for the Radon--Nikodym derivative. The upper bound is already proved in \eqref{eq.RNStep2}, and we only need the lower bound, which is as follows
	\begin{equation}\label{eq.UpDown2}
		\begin{split}
			&\frac{\vert \Lambda_L \vert !}{\vert \Lambda_L \setminus \Lambda_\ell\vert !} \leq \vert \Lambda_L \vert^{\vert \Lambda_\ell \vert}, \qquad \frac{M!}{(M-N)!} \geq (M-N)^N, \\
			&\frac{(\vert \Lambda_L \vert - M)!}{(\vert \Lambda_L \setminus \Lambda_\ell\vert - (M-N))!} \geq ((\vert \Lambda_L \vert - M)-(\vert \Lambda_\ell \vert - N))^{\vert \Lambda_\ell\vert - N}.
		\end{split}
	\end{equation}
	This gives us a lower bound of the proportion
	\begin{align*}
		\frac{\P_{\Lambda_L, M}\Ll[\sum_{x\in \Lambda_\ell} \eta_x = N\Rr]}{\P_{\hat{\rho}}\Ll[\sum_{x\in \Lambda_\ell} \eta_x = N\Rr]} \geq \Ll(1 - \frac{N}{M}\Rr)^N \Ll(1 - \frac{\vert \Lambda_\ell \vert - N}{\vert \Lambda_L \vert - M}\Rr)^{\vert \Lambda_\ell\vert - N}.
	\end{align*}
	Because $M \in \M_\epsilon(\Lambda_L)$, we have 
	\begin{align*}
		0 \leq  \max \Ll\{\frac{N}{M}, \frac{\vert \Lambda_\ell \vert - N}{\vert \Lambda_L \vert - M}\Rr\} \leq \frac{\ell^d}{\epsilon L^{ d}},
	\end{align*}
	which results in
	\begin{equation}\label{eq.RNStep3}
		\frac{\P_{\Lambda_L, M}\Ll[\sum_{x\in \Lambda_\ell} \eta_x = N\Rr]}{\P_{\hat{\rho}}\Ll[\sum_{x\in \Lambda_\ell} \eta_x = N\Rr]} \geq \Ll(1 - \frac{\ell^d}{\epsilon L^{d}}\Rr)^{\ell^d} \geq 1- \frac{1}{\epsilon}\Ll(\frac{\ell^2}{L}\Rr)^d.
	\end{equation}
	Here we also make use of the condition $10 \ell^{2d} \leq \epsilon L^d$. This estimate and \eqref{eq.localEquivDecom} together conclude \eqref{eq.localEquiv2}.
\end{proof}

With this local equivalence of ensembles result, we can obtain the convergence rate of diffusion matrix under the canonical ensemble.
\begin{proof}[Proof of Proposition~\ref{prop.CanonicalEnsemble}]
	By similar analysis as Proposition~\ref{prop.Element} and Lemma~\ref{lem.ratecontrolJ} in previous sections, we obtain that 
	\begin{align}\label{eq.CanonicalPriDual}
		\id \leq \Da_*(\Lambda, N) \leq \Da(\Lambda, N) \leq \lambda \id.
	\end{align}
	The strategy is the following \emph{sandwich argument}: we prove that in scale $1 \ll \ell \ll L$, we have
	\begin{multline}\label{eq.CanonicalCompare}
		\D_*(M/\vert \Lambda_L\vert, \Lambda_\ell) - C \ell L^{-1}  \id \leq \Da_*(\Lambda_L, M) \\
		\leq \Da(\Lambda_L, M) \leq \D(M/\vert \Lambda_L\vert, \Lambda_\ell) + C \ell L^{-1} \id.
	\end{multline}
	Then the distance from $\D(M/\vert \Lambda_L \vert, \Lambda_\ell)$ (resp. $\D_*(M/\vert \Lambda_L\vert, \Lambda_\ell)$) to $\D(M/\vert \Lambda_L \vert)$ bounds that from $\Da(\Lambda_L, M)$ (resp. $\Da_*(\Lambda_L, M)$) to $\D(M/\vert \Lambda_L \vert)$.
	
	In the following, for the convenience to implement the renormalization step, we justify the two sides in \eqref{eq.CanonicalCompare} with $L=3^m, \ell=3^n, {m,n \in \N_+}$, but one can easily adapt it to the general case. We always suppose that the parameters $L=3^m, \ell=3^n$ satisfy the conditions of Lemma~\ref{lem.localEquiv}. We also denote by $\hat{\rho} := \frac{M}{\vert \cu_m \vert}$ to lighten the notation, which can be interpreted as the empirical density of particles under the canonical ensemble.
	
	\textit{Step 1: comparison between $\Da(\cu_m, M)$ and $\D(\hat{\rho}, \cu_n)$. } We propose a sub-minimizer $\ell_\xi + \tilde{\phi}_{\hat{\rho}, \cu_m, \xi}$ for the optimization problem of $\Da(\cu_m, M)$ that 
	\begin{align*}
		\tilde{\phi}_{\hat{\rho}, \cu_m, \xi} := \sum_{\substack{z \in \Z_{m,n}, \\ \dist(z, \partial \cu_m) > 3^n}} \phi_{\hat{\rho}, z + \cu_n, \xi}.
	\end{align*}
	Here we require the $\dist(z, \partial \cu_m) > 3^n$ in order to cutoff the influence of particles from the domain outside $\cu_m$. Since it is a sub-minimizer, we have 
	\begin{equation}\label{eq.CanoCompare1}
		\begin{split}
			&\frac{1}{2} \xi \cdot \Da(\cu_m, M) \xi \\
			&\leq \frac{1}{2 \chi(\hat{\rho})\vert \cu_m \vert} \bracket{ \frac{1}{2}\sum_{b\in\overline{\cu_m^*}}c_b(\pi_b(\ell_\xi + \tilde{\phi}_{\hat{\rho}, \cu_m, \xi}))^2}_{\cu_m, M}\\
			&\leq \frac{1}{\vert \Z_{m,n}\vert} \sum_{\substack{z \in \Z_{m,n}, \\ \dist(z, \partial \cu_m) > 3^n}}\frac{1}{2 \chi(\hat{\rho})\vert \cu_n \vert} \bracket{\frac{1}{2} \sum_{b\in\ovs{(z+\cu_n)}}c_b(\pi_b(\ell_\xi + \tilde{\phi}_{\hat{\rho}, z+\cu_n, \xi}))^2}_{\cu_m, M} \\
			& \qquad + \lambda 3^{-(m-n)}\vert p\vert^2.
		\end{split}
	\end{equation} 
	Here the last error term $\lambda 3^{-(m-n)}\vert p\vert^2$ comes from the calculation of Dirichlet energy in the boundary layer of width $3^n$, where we do not pose any corrector and the affine function can be calculated directly. For the Dirichlet energy in the interior and when $3^n > \r$, then for every $z \in \Z_{m,n}, \dist(z, \partial \cu_m) > 3^n$, we have that 
	\begin{align*}
		\sum_{b\in\ovs{(z+\cu_n)}}c_b(\pi_b(\ell_\xi + \tilde{\phi}_{\hat{\rho}, z+\cu_n, \xi}))^2 \in \F_0({z + \cu_{n+1}}),
	\end{align*}
	and $z+\cu_{n+1} \subset \cu_m$. Then Lemma~\ref{lem.localEquiv}-(1) applies to this case ($z \in \Z_{m,n}, {\dist(z, \partial \cu_m) > 3^n > \r}$) and we have 
	\begin{align*}
		&\frac{1}{2 \chi(\hat{\rho})\vert \cu_n \vert}\bracket{ \frac{1}{2}\sum_{b\in\ovs{(z+\cu_n)}}c_b(\pi_b(\ell_\xi + \tilde{\phi}_{\hat{\rho}, z+\cu_n, \xi}))^2}_{\cu_m, M} \\
		& \leq \frac{(1+3^{d(2n-m)})}{2 \chi(\hat{\rho})\vert \cu_n \vert}\bracket{\frac{1}{2} \sum_{b\in\ovs{(z+\cu_n)}}c_b(\pi_b(\ell_\xi + \tilde{\phi}_{\hat{\rho}, z+\cu_n, \xi}))^2}_{\hat{\rho}}\\
		& \leq (1+3^{d(2n-m)}) \Ll(\frac{1}{2} \xi \cdot \D(\cu_n, \hat{\rho})\xi\Rr). 
	\end{align*}
	We put this back to \eqref{eq.CanoCompare1} and concludes that 
	\begin{align}\label{eq.CanoCompare11}
		\Da(\cu_m, M) \leq \D(\cu_n, \hat{\rho}) + C(\lambda)(3^{-(m-n)} + 3^{-d(m-2n)})\id.
	\end{align}
	
	\textit{Step 2: comparison between $\Da_*(\cu_m, M)$ and $\D_*(\hat{\rho}, \cu_n)$. } This part is quite close to that in Step 1.
	We make the decomposition that 
	\begin{equation}\label{eq.CanoCompare2}
		\begin{split}
			&\frac{1}{2 \chi(\hat{\rho})\vert\cu_m\vert}\sum_{b \in \overline{\cu_m^*}}  \bracket{ (\pi_b \ell_{q})(\pi_b v) - \frac{1}{2} c_b(\pi_b v)^2}_{\cu_m, M} \\
			&= \frac{1}{\vert \Z_{m,n}\vert} \sum_{\substack{z \in \Z_{m,n}, \\ \dist(z, \partial \cu_m) > 3^n}} \frac{1}{2 \chi(\hat{\rho})\vert\cu_n\vert}\sum_{b\in\ovs{(z+\cu_n)}}  \bracket{  (\pi_b \ell_{q})(\pi_b v) - \frac{1}{2} c_b(\pi_b v)^2}_{\cu_m, M} \\
			& \qquad + \frac{1}{2 \chi(\hat{\rho})\vert\cu_m\vert}\sum_{\substack{b \in \overline{\cu_m^*}, \\\text{other bonds}}}  \bracket{ (\pi_b \ell_{q})(\pi_b v) - \frac{1}{2} c_b(\pi_b v)^2}_{\cu_m, M}
		\end{split}
	\end{equation}
	Let $u_{z+\cu_n, \hat{\rho}, q}$ be the maximizer for the problem $\nub_*(z+\cu_n, \hat{\rho},q)$. Then for the case $z \in \Z_{m,n}, {\dist(z, \partial \cu_m) > 3^n > \r}$ and by the definition of $\nub_*(z+\cu_n, \hat{\rho},q)$, we also have
	\begin{align*}
		&\frac{1}{2 \chi(\hat{\rho})\vert\cu_n\vert}\sum_{b\in\ovs{(z+\cu_n)}}  \bracket{ (\pi_b \ell_{q})(\pi_b v) - \frac{1}{2} c_b(\pi_b v)^2}_{\cu_m, M} \\
		&\leq \frac{1}{2 \chi(\hat{\rho})\vert\cu_n\vert}\sum_{b\in\ovs{(z+\cu_n)}}  \bracket{ (\pi_b \ell_{q})(\pi_b u_{z+\cu_n, \hat{\rho}, q}) - \frac{1}{2} c_b(\pi_b u_{z+\cu_n, \hat{\rho}, q})^2}_{\cu_m, M}\\
		&\leq (1+3^{d(2n-m)}) \Ll( \frac{1}{2} q \cdot \D_*^{-1}(\cu_n, \hat{\rho})q \Rr).
	\end{align*}
	Here from the first line to the second line, we use the fact that $u_{z+\cu_n, \hat{\rho}, q}$ is also the maximizer under the canonical ensemble and gives positive functional; see \eqref{eq.variationnu*2} and Remark~\ref{rmk.DualCanonical} for details. Then from the second line to the third line, note the positivity of the integral, Lemma~\ref{lem.localEquiv}-(1) applies. We only needs to treat the boundary layer and the layer between the small cubes appearing in the third line of \eqref{eq.CanoCompare2}, which can be solved by the following uniform point-wise estimate that 
	\begin{align*}
		(\pi_b \ell_{q})(\pi_b v) - \frac{1}{2} c_b(\pi_b v)^2 \leq \frac{1}{2}	(\pi_b \ell_{q})^2 + \frac{1}{2}	(\pi_b v)^2 -  \frac{1}{2} (\pi_b v)^2 = \frac{1}{2}	(\pi_b \ell_{q})^2.
	\end{align*}
	Therefore, \eqref{eq.CanoCompare2} can be reduced to 
	\begin{align*}
		\Da_*^{-1}(\cu_m, M) \leq \D_*^{-1}(\cu_n, \hat{\rho}) + C(\lambda)(3^{-(m-n)} + 3^{-d(m-2n)})\id,
	\end{align*}
	which implies, noting $\D_*(\cu_n, \hat{\rho}) \leq \lambda \id$, that 
	\begin{align}\label{eq.CanoCompare22}
		\Da_*(\cu_m, M) \geq \D_*(\cu_n, \hat{\rho}) - C(\lambda)\lambda^2(3^{-(m-n)} + 3^{-d(m-2n)})\id.
	\end{align}
	
	\textit{Step 3: conclusion. } Combining \eqref{eq.CanonicalPriDual}, \eqref{eq.CanoCompare11} and \eqref{eq.CanoCompare22}, we obtain 
	\begin{multline*}
		\D_*(\cu_n, \hat{\rho}) - C(\lambda)(3^{-(m-n)} + 3^{-d(m-2n)})\id \leq \Da_*(\cu_m, M) \\ 
		\leq \Da(\cu_m, M) \leq \D(\cu_n, \hat{\rho}) + C(\lambda)(3^{-(m-n)} + 3^{-d(m-2n)})\id.
	\end{multline*}
	Applying \eqref{eq.defCCanonical} and \eqref{eq.defCLimit}, this is interpreted as 
	\begin{multline*}
		\cc_*(\cu_n, \hat{\rho}) - C(\lambda)\chi(\hat{\rho})(3^{-(m-n)} + 3^{-d(m-2n)})\id \leq \ca_*(\cu_m, M) \\ 
		\leq \ca(\cu_m, M) \leq \cc(\cu_n, \hat{\rho}) + C(\lambda)\chi(\hat{\rho})(3^{-(m-n)} + 3^{-d(m-2n)})\id.
	\end{multline*}
	Inserting the result of homogenization \eqref{eq.main1A_2}, and by a choice of $n$ that
	\begin{align*}
		{d(m-2n) = \gamma_1 n \Longleftrightarrow n = \frac{d m}{2d + \gamma_1}},
	\end{align*}
	we obtain that 
	\begin{align*}
		\vert \ca(\cu_m, M) - \cc(\hat{\rho})\vert + \vert \ca_*(\cu_m, M) - \cc(\hat{\rho})\vert \leq C 3^{- \left(\frac{d\gamma_1}{2d+\gamma_1}\right)m}.
	\end{align*}
	This is the desired result \eqref{eq.main1B_2} by setting $\gamma_2 := \frac{d\gamma_1}{2d+\gamma_1}$.
\end{proof}

\subsection{Construction of density-free local corrector}\label{subsec.FreeCorrector}
This subsection is devoted to removing the dependence of density in the local corrector. A first natural idea is to consider the following variational problem for bounded set $\Lambda \subset \Zd$ and $\xi \in \Rd$
\begin{align}\label{eq.defmu}
	\mu(\Lambda, \xi) := \inf_{v \in \F_0(\Lambda^-)} \sup_{\rho \in [0,1]} \Ll\{ \frac{1}{\vert\Lambda\vert} \sum_{b\in\ov{\Lambda^*}} \bracket{ \frac{1}{2}c_b(\pi_b (\ell_\xi + v))^2}_\rho - \frac{1}{2} \xi \cdot \cc(\rho)\xi\Rr\}. 
\end{align}
As expected, the uniform convergence can be improved as follows. 
\begin{proposition}\label{prop.StrongCanonicalEnsemble}
	There exists an exponent $\gamma_3(d, \lambda, \r) > 0$ and a constant ${C(d, \lambda, \r) < \infty}$ such that for every $L \in \N_+$ and $\xi \in B_1$, we have 
	\begin{align}\label{eq.StrongCanonicalEnsemble}
		0 \leq \mu(\Lambda_L, \xi) \leq C L^{-\gamma_3}.
	\end{align}
\end{proposition}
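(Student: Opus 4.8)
The plan is to remove the density dependence by gluing together local correctors built with the empirical density, following the mechanism already sketched in the introduction and carried out in disguise in Step~1 of the proof of Proposition~\ref{prop.CanonicalEnsemble}. The lower bound $\mu(\Lambda_L,\xi)\ge 0$ is immediate: for any fixed $v\in\F_0(\Lambda_L^-)$ the supremum over $\rho$ of the bracket is at least its value at any $\rho$ for which $\cc(\rho)$ is the infimum of the finite-volume energies, and more simply, taking $v=\phi_{\rho,\Lambda_L,\xi}$ shows the inner energy is always $\ge \frac12\xi\cdot\cc(\rho)\xi$ by the variational characterization \eqref{eq.defC2}, so the sup is $\ge 0$ for every $v$, hence the inf is $\ge 0$. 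For the upper bound I would exhibit an explicit competitor. Partition $\Lambda_L$ (take $L=3^m$, a triadic cube, and pass to general $L$ via Lemma~\ref{lem.WhitneySub} at the end) into subcubes $z+\cu_n$ with $1\ll 3^n\ll 3^m$, discard a boundary layer of width $3^n$, and set
\begin{align*}
	\tilde\phi_{\cu_m,\xi} := \sum_{\substack{z\in\Z_{m,n},\\ \dist(z,\partial\cu_m)>3^n}} \phi_{\hat\rho_{z,n},\, z+\cu_n,\, \xi},
\end{align*}
where $\hat\rho_{z,n}$ is the empirical density of $\eta$ on a suitable neighbourhood of $z+\cu_n$ (a $\G$-measurable, hence ``locally constant'' quantity on each subcube). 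The point is that this competitor is a genuine element of $\F_0(\cu_m^-)$ and does \emph{not} depend on $\rho$.

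\textbf{Key steps.} First, for a fixed target density $\rho$, I bound the inner energy of the competitor $\ell_\xi+\tilde\phi_{\cu_m,\xi}$ under $\bracket{\cdot}_\rho$. By the partition structure \eqref{eq.CubeRenormalization2} the Dirichlet energy splits over subcubes plus a boundary-layer contribution of order $3^{-(m-n)}$ (where the bare affine function is used and contributes $\le\lambda|\xi|^2$ per bond). On each interior subcube one conditions on the empirical density $\hat\rho_{z,n}=N/|z+\cu_n^+|$; by the local equivalence of ensembles (Lemma~\ref{lem.localEquiv}) the $\bracket{\cdot}_\rho$-expectation of the local energy is comparable, up to a factor $1+3^{d(2n-m)}$, to the canonical expectation $\bracket{\cdot}_{z+\cu_n^+,N}$ — wait, more carefully: one should compare $\bracket{\cdot}_\rho$ first to the canonical ensemble via Lemma~\ref{lem.BiasProba} / the decomposition over $N$, and then, for each $N$, the local energy of $\phi_{\hat\rho,z+\cu_n,\xi}$ under $\bracket{\cdot}_{z+\cu_n^+,N}$ is close to $\frac12\xi\cdot\cc(\hat\rho,z+\cu_n)\xi$ by Proposition~\ref{prop.CanonicalEnsemble} (convergence under the canonical ensemble), which in turn is $L^{-\gamma}$-close to $\frac12\xi\cdot\cc(\hat\rho)\xi$. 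The remaining error comes from replacing $\cc(\hat\rho)$ by $\cc(\rho)$: this is exactly where the regularity of $\cc$ — Dini continuity inherited from \cite{naga1}, or better, the quantitative estimates \eqref{eq.cRegularity1}–\eqref{eq.cRegularity2} of the regularity-on-density proposition — is used, together with the concentration of $\hat\rho$ around $\rho$ (Hoeffding, giving $|\hat\rho-\rho|\le 3^{-n/2+\epsilon}$ with overwhelming probability, and the trivial bound $0\le\cc\le\chi$ on the rare event). Summing the scale-$n$, scale-$(m-n)$ and regularity errors and optimizing $n=cm$ yields \eqref{eq.StrongCanonicalEnsemble}. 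Finally, take the supremum over $\rho\in[0,1]$ of the resulting bound: since every error term in the estimate is \emph{uniform} in $\rho$, the sup is controlled by the same $C L^{-\gamma_3}$, and then the infimum over $v$ in \eqref{eq.defmu} is bounded by this single competitor's value.

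\textbf{Main obstacle.} The delicate point is handling the fluctuation of the empirical density $\hat\rho$ when the Kawasaki operator $\pi_b$ acts across the boundary of a subcube: applying $\pi_b$ can change $\hat\rho_{z,n}$ by $1/|z+\cu_n^+|$, so $\pi_b\phi_{\hat\rho,z+\cu_n,\xi}$ picks up, besides the ``frozen-density'' term, a term measuring $\partial_\rho\phi$ times this $O(3^{-dn})$ jump. Controlling this requires the quantitative regularity of $\rho\mapsto\phi_{\rho,\Lambda,\xi}$ in $L^2$ and in Dirichlet energy (estimate \eqref{eq.L2Regularity}), and crucially the observation — already exploited in \cite[Lemma~2.1]{fuy} and in Step~1 of Proposition~\ref{prop.CanonicalEnsemble} — that such density jumps only occur on the $O(3^{-(m-n)})$-fraction of bonds lying in boundary layers between subcubes, so their total contribution is dominated by the volume normalization. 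Making this bookkeeping precise, while keeping all constants independent of $\rho$ (using $\chi(\rho)$ to absorb degeneracies at the two endpoints, exactly as in Lemma~\ref{lem.Juniform}), is the technical heart of the argument; the rest is assembling the already-established pieces.
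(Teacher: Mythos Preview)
Your overall blueprint is the same as the paper's --- build a density-free competitor by gluing scale-$n$ correctors tagged with an empirical density, then control the energy under $\bracket{\cdot}_\rho$ via local equivalence of ensembles, regularity in $\rho$, and concentration --- but there is one crucial difference that makes your version fail as written: you use a \emph{local} empirical density $\hat\rho_{z,n}$ (on a neighbourhood of each subcube), whereas the paper uses the single \emph{global} empirical density $\bar\eta_0 := |\cu_m^-|^{-1}\sum_{x\in\cu_m^-}\eta_x$, truncated away from $0$ and $1$ at level $\epsilon$:
\[
\hat\phi^{(\epsilon)}_{m,n,\xi} := \sum_{\substack{z\in\Z_{m,n}\\ \dist(z,\partial\cu_m)>3^n}} \phi_{\bar\eta_0 \vee \epsilon \wedge (1-\epsilon),\, z+\cu_n,\, \xi}.
\]

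The reason your local-density version does not close is exactly the obstacle you flag, but your proposed resolution breaks. With $\hat\rho_{z,n}$ computed on $z+\cu_n^+$, a bond $b$ on an inter-subcube interface changes $\hat\rho_{z,n}$ by $|z+\cu_n^+|^{-1}\sim 3^{-dn}$, as you note. But the regularity estimate \eqref{eq.L2Regularity} carries the factor $\tilde\Theta^{(3^n+2\r)^d}-1$, and with density increment $3^{-dn}$ against truncation level $\epsilon$ one has $\tilde\Theta\le 1+3^{-dn}\epsilon^{-1}$, hence $\tilde\Theta^{3^{dn}}-1 \sim \epsilon^{-1}$: the exponent and the increment cancel and nothing decays. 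Concretely $\bracket{\Delta_z^2}\lesssim 3^{(d+2)n}\epsilon^{-1}$, and since there are $\sim 3^{dm-n}$ inter-subcube interface bonds (a fraction $3^{-n}$ of all bonds, not $3^{-(m-n)}$ as you write), the contribution to the normalized Dirichlet energy is $\sim 3^{(d+1)n}\epsilon^{-1}$, which diverges with $n$.

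The paper's global density avoids this: the increment is $|\cu_m^-|^{-1}\sim 3^{-dm}$, so $\tilde\Theta^{3^{dn}}-1 \sim 3^{d(n-m)}\epsilon^{-1}$ is genuinely small for $n\ll m$, and moreover the density is only perturbed by the $O(3^{(d-1)m})$ bonds in $(\cu_m,\cu_m^-)^*$. The price is that such a boundary bond now perturbs \emph{all} subcube correctors simultaneously, producing a term $\big(\sum_z \Delta_{M,z}\big)^2$; the paper handles the resulting off-diagonal cross terms $\bracket{\Delta_{M,z}\Delta_{M,z'}}$ by a decoupling argument (Step~3.2 in the proof of Proposition~\ref{prop.rhoFreeCorrectorHomo}), combining \eqref{eq.MeanRegularity} with the two-sided local equivalence of ensembles \eqref{eq.localEquiv2}. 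The $\epsilon$-truncation, which you omit, is also essential: it is what keeps the bias factors bounded uniformly in $\rho\in[0,1]$ (see \eqref{eq.densityBias}), and its effect near the endpoints is controlled separately using the trivial bound $\cc\le\chi$ (Steps~1.2--1.3).
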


Compared with \eqref{eq.defNu}, the variational problem \eqref{eq.defmu} is more complicated. Therefore, instead of attacking \eqref{eq.defmu} directly, we turn to construct \emph{one} sub-minimizer with a good uniform convergence for $\rho \in [0,1]$. This is the main task in the remaining part of this subsection.  We are inspired by the function used to prove the qualitative version of \eqref{eq.mainUniform} in the previous work \cite[Lemma~2.1]{fuy}, where the main idea is to make the linear combination of the local corrector using the empirical density. Thus, we propose our density-free local corrector built on \eqref{eq.defCorrector}
\begin{align}\label{eq.defFreeCorrector1}
	\hat{\phi}^{(\epsilon)}_{m,n,\xi} := \sum_{\substack{z \in \Z_{m,n}, \\ \dist(z, \partial \cu_m) > 3^n}}  \phi_{\bar{\eta}_0  \vee \epsilon \wedge (1- \epsilon), z+\cu_n, \xi}.
\end{align}
Recall $\Z_{m,n} = 3^n \Zd \cap \cu_m$. Here $\bar{\eta}_0$ is a (random) empirical density defined as ${\bar{\eta}_0 := \frac{1}{\vert \cu_m^-\vert} \sum_{x \in  \cu_m^-} \eta_x}$. The mapping $\rho \mapsto \rho \vee \epsilon \wedge (1- \epsilon)$ restricts the density away from $0$ and $1$. That is, we make the truncation both for the spatial boundary layer and for the density, in order to avoid the perturbation from the rare but degenerate cases. More explicitly, recall the notation $\M_\epsilon(\Lambda)$ defined in \eqref{eq.defEmpricalTrunc}, and denote by $M_*, M^*$
\begin{align}\label{eq.defMMinMax}
	M_* := \min \M_\epsilon(\cu_m^-), \qquad M^* := \max \M_\epsilon(\cu_m^-),
\end{align}
then we can rewrite this corrector as
\begin{equation}\label{eq.defFreeCorrector2}
	\hat{\phi}^{(\epsilon)}_{m,n,\xi} =  \sum_{M =0}^\infty \Ll(\sum_{\substack{z \in \Z_{m,n}, \\ \dist(z, \partial \cu_m) > 3^n}}  \phi_{\frac{M \vee M_* \wedge M^*}{\vert \cu_m^-\vert}, z+\cu_n, \xi}\Rr)\Ind{\sum_{x \in  \cu_m^-} \eta_x = M} . 
\end{equation}
When $n, \epsilon$ is well-chosen with respect to $m$, this corrector will give us uniform convergence under all grand canonical ensembles. 
\begin{proposition}\label{prop.rhoFreeCorrectorHomo}
	There exist an exponent $\gamma_4(d, \lambda, \r) > 0$ and a constant ${C(d, \lambda, \r) < \infty}$ such that for every $m \in \N_+$, by setting $n := \lfloor \frac{m}{9d+3}\rfloor$ and $\epsilon := 3^{- \frac{2d m}{9d+3}}$, we have 
	\begin{align}\label{eq.rhoFreeCorrectorHomo}
		\sup_{\rho \in [0,1], \xi \in B_1} \Ll\vert \frac{1}{\vert \cu_m \vert} \bracket{\sum_{b \in \overline{\cu_m^*}} \frac{1}{2} c_b (\pi_b (\ell_\xi + \hat{\phi}^{(\epsilon)}_{m,n,\xi}))^2}_\rho - \frac{1}{2} \xi \cdot \cc(\rho) \xi \Rr\vert   \leq C 3^{-\gamma_4 m}.
	\end{align}
\end{proposition}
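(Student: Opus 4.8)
The plan is to test the density-free corrector $\hat{\phi}^{(\epsilon)}_{m,n,\xi}$ defined in \eqref{eq.defFreeCorrector1}--\eqref{eq.defFreeCorrector2} in the Dirichlet-energy functional under a fixed grand canonical ensemble $\bracket{\cdot}_\rho$, and to decompose the resulting error into three sources, each of which is controlled by a result established earlier: (i) the intrinsic homogenization error of each local corrector $\phi_{\hat\rho, z+\cu_n, \xi}$, quantified by Proposition~\ref{prop.ElementCorrector}-(2); (ii) the error from replacing the grand canonical expectation $\bracket{\cdot}_\rho$ by the canonical expectation at the empirical density $\bar\eta_0$, i.e.\ the local equivalence of ensembles, Lemma~\ref{lem.localEquiv}; and (iii) the error from the fluctuation of the empirical density $\bar\eta_0$ around $\rho$ (which makes the random corrector differ from $\phi_{\rho,\cdot,\xi}$), together with the fact that applying the Kawasaki operator $\pi_b$ near the boundary layer between the cells $z+\cu_n$ can change $\bar\eta_0$ by $1/|\cu_m^-|$ --- these are handled by the regularity estimates \eqref{eq.cRegularity2}, \eqref{eq.MeanRegularity}, \eqref{eq.L2Regularity} in the regularity proposition, noting that the fluctuation is of order $|\cu_m^-|^{-1/2}$ by Hoeffding and only affects the boundary layer bonds, whose count is of lower order than the volume.

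Concretely, first I would expand the functional along the cell partition \eqref{eq.CubeRenormalization2}: the bonds split into those lying entirely inside some $z+\cu_{n+1}$ with $\dist(z,\partial\cu_m)>3^n$, and the remaining ``boundary'' bonds (inside the width-$3^n$ collar of $\cu_m$ and between cells). On the boundary bonds there is no corrector, so the pointwise bound $\frac12 c_b(\pi_b\ell_\xi)^2\le C$ gives a contribution of order $3^{-(m-n)}$ times the volume, i.e.\ a relative error $O(3^{-(m-n)})$, acceptable since $n<m/3$. For an interior cell, I would condition on $\{\sum_{x\in\cu_m^-}\eta_x=M\}$, so that inside the cell the corrector is the deterministic function $\phi_{\hat\rho_M, z+\cu_n,\xi}$ with $\hat\rho_M = (M\vee M_*\wedge M^*)/|\cu_m^-|$, and the bonds of $z+\cu_{n+1}$ are $\sigma(\{\eta_x\}_{x\in z+\cu_{n+1}})$-measurable hence a local function to which Lemma~\ref{lem.localEquiv}-(2) applies (with $\ell\sim 3^{n+1}$, $L\sim 3^m$, after verifying $10\ell^{2d}\le\epsilon L^d$, which is exactly why $n\approx m/(9d+3)$ and $\epsilon\approx 3^{-2dm/(9d+3)}$ are chosen), converting $\bracket{\cdot}_{\cu_m^-,M}$ into $\bracket{\cdot}_{\hat\rho_M}$ up to a multiplicative error $\frac1\epsilon(\ell^2/L)^d$. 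The degenerate cases $M\notin\M_\epsilon(\cu_m^-)$ are discarded using Hoeffding's inequality together with the $L^\infty$ bound $\norm{\phi}_\infty\le CL^{d+2}\log L$ from Lemma~\ref{lem.supNorm}, so that their total probability times $L^\infty$-squared is super-polynomially small provided $\rho$ is away from $0,1$; at the two endpoints one instead uses the trivial bound $\cc\le 2\chi(\rho)\to 0$ together with $\cc(\rho,\Lambda_L)\le\Theta^{(\cdots)}\cc(\rho)$ from \eqref{eq.cRegularity1} to get smallness for free. Finally I would use the regularity in the density: since for $M\in\M_\epsilon(\cu_m^-)$ the density $\hat\rho_M$ lies within $O(|\cu_m^-|^{-1/2+\delta})$ of $\rho$ with overwhelming probability, \eqref{eq.cRegularity2} replaces $\frac12\xi\cdot\cc(\hat\rho_M)\xi$ by $\frac12\xi\cdot\cc(\rho)\xi$ and \eqref{eq.L2Regularity}/\eqref{eq.MeanRegularity} replace $\phi_{\hat\rho_M,\cdot,\xi}$ by $\phi_{\rho,\cdot,\xi}$ in energy, at the cost of $O((\tilde\Theta^{(\ell+2\r)^d}-1))$, which is $O(3^{(n+1)d}|\cu_m^-|^{-1/2+\delta})$ --- tiny for the stated choice of $n$.

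The main obstacle, as in \cite[Lemma~2.1]{fuy} and in the sketch in the introduction, is step (iii): controlling the effect of the Kawasaki operator $\pi_b$ on the \emph{random} density $\bar\eta_0$ when $b$ straddles the collar $\cu_m^-$ or two cells. For such a bond, $\pi_b$ shifts $\bar\eta_0$ by $\pm 1/|\cu_m^-|$, so the two terms $\ell_\xi+\hat\phi^{(\epsilon)}_{m,n,\xi}$ evaluated at $\eta$ and at $\eta^b$ use correctors at slightly different densities; the difference must be absorbed using the $\rho$-regularity of $\phi$ (Lipschitz-type bound \eqref{eq.L2Regularity} for the Dirichlet energy and \eqref{eq.MeanRegularity} for the mean shift), and crucially one needs that the number of such ``density-sensitive'' bonds is $O(|\partial\cu_m^-|)=O(3^{(d-1)m})$, a factor $3^{-m}$ below the volume normalization $|\cu_m|$, so the contribution is of lower order even after multiplying by the worst-case gradient of $\phi$. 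Getting all the exponents to line up --- the localization scale $n=\lfloor m/(9d+3)\rfloor$ must simultaneously make the boundary-layer error $3^{-(m-n)}$, the local-equivalence error $\tfrac1\epsilon 3^{(2n-m)d}$, the homogenization error $3^{-\gamma_1 n}$, and the density-regularity error all decay in $m$ --- is the bookkeeping that fixes $\gamma_4(d,\lambda,\r)>0$, and I would optimize these last to extract the best exponent, exactly as in the proof of Proposition~\ref{prop.CanonicalEnsemble}.
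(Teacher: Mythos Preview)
Your overall plan matches the paper's proof and would succeed, but you understate the most delicate step. The paper follows your outline: it decomposes the Dirichlet energy into (I) interior bonds with $\dist(b,\partial\cu_m)>3^n$, (II) bonds in the collar that do not cross $\partial\cu_m^-$, and (III) bonds between $\cu_m^-$ and $\partial\cu_m$. Term II is trivial (no corrector, only $\ell_\xi$); term I is handled by conditioning on $M=\sum_{x\in\cu_m^-}\eta_x$, local equivalence of ensembles (the paper uses the one-sided version \eqref{eq.localEquiv1} since the integrand is nonnegative, which is slightly cleaner than your proposed use of \eqref{eq.localEquiv2}), and then density regularity to pass from $\cc(\hat\rho_M,\cu_n)$ to $\cc(\rho,\cu_n)$. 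One small correction: bonds between two interior cells $z+\cu_n$ and $z'+\cu_n$ do \emph{not} change $\bar\eta_0$ (both endpoints lie in $\cu_m^-$); they fall under term I and contribute only through $\pi_b\ell_\xi$, since each $\phi_{\cdot,z+\cu_n,\xi}\in\F_0((z+\cu_n)^-)$.

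The point you gloss over is term III. For a bond $b=\{y_1,y_2\}$ with $y_1\in\cu_m^-$, $y_2\in\partial\cu_m$, one has
\[
  \pi_b\hat\phi^{(\epsilon)}_{m,n,\xi}(\eta)
  \;=\;\sum_{z}\Bigl(\phi_{\frac{M\pm1}{|\cu_m^-|},\,z+\cu_n,\,\xi}-\phi_{\frac{M}{|\cu_m^-|},\,z+\cu_n,\,\xi}\Bigr)
  \;=:\;\sum_z\Delta_{M,z},
\]
so one must bound $\bracket{(\sum_z\Delta_{M,z})^2}_{\cu_m^-\setminus\{y_1\},M}$. There are $|\Z_{m,n}|\sim 3^{d(m-n)}$ summands, and a naive Cauchy--Schwarz on the off-diagonal terms together with the $L^2$-regularity bound $\bracket{\Delta_{M,z}^2}\lesssim 3^{(2d+2)n}3^{-dm}\epsilon^{-1}$ from \eqref{eq.L2Regularity} yields a total of order $3^{2d(m-n)}\cdot 3^{(2d+2)n}3^{-dm}\epsilon^{-1}$, which \emph{grows} in $m$ and cannot be absorbed by the boundary factor $3^{-m}$. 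The paper's resolution (its Step~3.2) is a decoupling argument: pass from $\bracket{\cdot}_{\cu_m^-\setminus\{y_1\},M}$ to the product measure $\bracket{\cdot}_{\hat\rho''}$ via the two-sided \eqref{eq.localEquiv2} (this is where that version is actually needed), then use that under the product measure the $\Delta_{M,z}$ for distinct $z$ are independent, so $\bracket{\Delta_{M,z}\Delta_{M,z'}}_{\hat\rho''}=\bracket{\Delta_{M,z}}_{\hat\rho''}^2$, which is of order $3^{-2dm}\epsilon^{-2}$ by the mean-regularity \eqref{eq.MeanRegularity} you cited. This extra factor $3^{-dm}$, coming from squaring the \emph{mean} rather than the $L^2$ norm, is exactly what makes the off-diagonal sum manageable. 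Your mention of \eqref{eq.MeanRegularity} shows you have the right tool in hand, but the phrase ``even after multiplying by the worst-case gradient of $\phi$'' hides that this worst-case gradient must itself be shown to be $O(3^{Cn})$ --- and that is precisely the off-diagonal cancellation. Finally, the parameter choice $n=\lfloor m/(9d+3)\rfloor$, $\epsilon=3^{-2dn}$ is fixed by balancing \emph{all} the resulting errors (the paper collects them as $3^{dn}\epsilon+3^{-dn}+3^{d(6n-m)}+3^{(2d+2)n-m}\epsilon^{-2}$), not by the single constraint $10\ell^{2d}\le\epsilon L^d$ you single out.
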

\begin{proof}
	We assume $\r \ll 3^n \ll 3^m$ and $0 < \epsilon \ll 1$ with $n, \epsilon$ to be determined in the end.  Since $\hat{\phi}^{(\epsilon)}_{m,n,\xi} \in \F_0(\cu_m^-)$, we compare it with the variational problem \eqref{eq.defNu}, which gives
	\begin{align}\label{eq.FreeLower}
		\frac{1}{\vert \cu_m \vert} \bracket{\sum_{b \in \overline{\cu_m^*}} \frac{1}{2} c_b (\pi_b (\ell_\xi + \hat{\phi}^{(\epsilon)}_{m,n,\xi}))^2}_\rho \geq \frac{1}{2} \xi \cdot \cc(\rho, \cu_m) \xi \geq \frac{1}{2} \xi \cdot \cc(\rho) \xi.
	\end{align}
	Thus it suffices to give the upper bound, and we can decompose the sum as
	\begin{equation}\label{eq.FreeDecom}
		\begin{split}
			&\frac{1}{\vert \cu_m \vert} \sum_{b \in \overline{\cu_m^*}} \bracket{ \frac{1}{2} c_b (\pi_b (\ell_\xi + \hat{\phi}^{(\epsilon)}_{m,n,\xi}))^2}_\rho = \mathbf{I} + \mathbf{II} + \mathbf{III},\\
			\mathbf{I}& :=  \sum_{b \in \overline{\cu_m^*}, \dist(b, \partial \cu_m) > 3^n}\frac{1}{\vert \cu_m \vert} \bracket{ \frac{1}{2} c_b (\pi_b (\ell_\xi + \hat{\phi}^{(\epsilon)}_{m,n,\xi}))^2}_\rho,\\
			\mathbf{II}& :=  \sum_{\substack{b \in \overline{\cu_m^*} \setminus (\cu_m, \cu_m^-)^* \\\dist(b, \partial \cu_m) \leq 3^n}}\frac{1}{\vert \cu_m \vert} \bracket{ \frac{1}{2} c_b (\pi_b (\ell_\xi + \hat{\phi}^{(\epsilon)}_{m,n,\xi}))^2}_\rho,\\
			\mathbf{III}& := \sum_{b \in (\cu_m, \cu_m^-)^*}\frac{1}{\vert \cu_m \vert} \bracket{ \frac{1}{2} c_b (\pi_b (\ell_\xi + \hat{\phi}^{(\epsilon)}_{m,n,\xi}))^2}_\rho.
		\end{split}
	\end{equation}
	Roughly, $\mathbf{I}$ is the main contribution, while the terms $\mathbf{II}$ and $\mathbf{III}$ come from the boundary layer. Since the order of boundary layer is $3^{(d-1)m}$, they vanish after the normalization $\frac{1}{\vert \cu_m\vert}$ when $m \nearrow \infty$. See Figure~\ref{fig.Free} for an illustration. We treat these three terms one by one in the remaining paragraphs.
	
	\begin{figure}
		\centering
		\includegraphics[height=150pt]{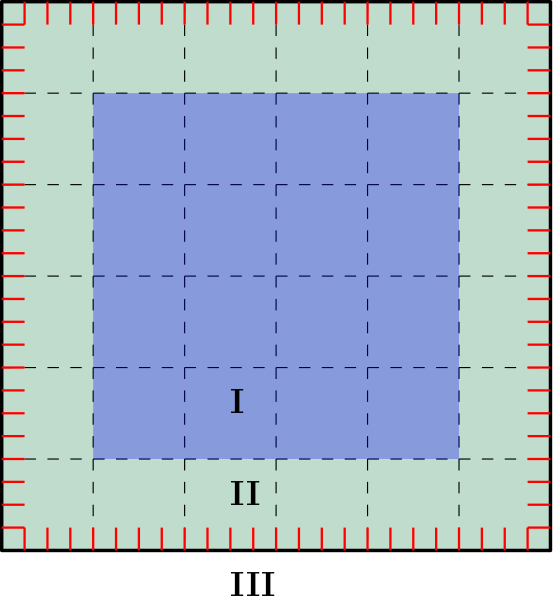}
		\caption{An illustration for the decomposition in \eqref{eq.FreeDecom}: the bonds in the term $\mathbf{I}$, $\mathbf{II}$ and $\mathbf{III}$ are respectively marked in blue, green and red.}\label{fig.Free}
	\end{figure}

	\bigskip
	
	\textit{Step~1: term $\mathbf{I}$.} This is the case that the bonds stay in the interior, and is the main contribution of the Dirichlet energy. This term can be further decomposed as 
	\begin{align*}
		\mathbf{I} \leq \frac{1}{\vert \Z_{m,n} \vert} \sum_{\substack{z \in \Z_{m,n}, \\ \dist(z, \partial \cu_m) > 3^n}} \frac{1}{\vert \cu_n\vert}\sum_{b \in \ovs{(z+\cu_n)}} \bracket{ \frac{1}{2} c_b (\pi_b (\ell_\xi + \hat{\phi}^{(\epsilon)}_{m,n,\xi}))^2}_\rho.
	\end{align*}
	We focus on one term $(z+\cu_n)$  above and $b \in \ovs{(z+\cu_n)}$, and have the following observation 
	\begin{equation}\label{eq.FreeIDiff}
		\pi_b \hat{\phi}^{(\epsilon)}_{m,n,\xi} (\eta) =  \sum_{M = 0}^\infty \Ind{\sum_{x \in  \cu_m^-} \eta_x = M}  \pi_b  \phi_{\frac{M \vee M_* \wedge M^*}{\vert \cu_m^-\vert}, z+\cu_n, \xi}(\eta),
	\end{equation}
	because the Kawasaki operator $\pi_b$ is conservative for the number of particles in $\cu_m^-$ and only one local corrector is perturbed. Therefore, we apply the decomposition of canonical ensemble for the sum over $\ovs{(z+\cu_n)}$
	\begin{align*}
		&\frac{1}{\vert \cu_n\vert}\sum_{b \in \ovs{(z+\cu_n)}} \bracket{ \frac{1}{2} c_b (\pi_b (\ell_\xi + \hat{\phi}^{(\epsilon)}_{m,n,\xi}))^2}_\rho\\
		&= \sum_{M =0}^\infty \P_\rho\Ll[\sum_{x \in  \cu_m^-} \eta_x = M\Rr] \frac{1}{\vert \cu_n\vert}\sum_{b \in \ovs{(z+\cu_n)}}\bracket{\frac{1}{2} c_b \Ll(\pi_b v\Ll(M \vee M_* \wedge M^*/\vert \cu_m^-\vert, z+\cu_n, \xi\Rr)\Rr)^2}_{M, \cu_m^-} \\
		&\leq (1+4 \cdot 3^{d(2n-m)}) \sum_{M =0}^\infty \P_\rho\Ll[\sum_{x \in  \cu_m^-} \eta_x = M\Rr]\\
		&\qquad \times\frac{1}{\vert \cu_n\vert} \sum_{b \in \ovs{(z+\cu_n)}}\bracket{\frac{1}{2} c_b \Ll(\pi_b v\Ll(M \vee M_* \wedge M^*/\vert \cu_m^-\vert, z+\cu_n, \xi\Rr)\Rr)^2}_{\frac{M}{\vert \cu_m^-\vert}}.
	\end{align*}
	We use the definition \eqref{eq.defCorrector} that $\ell_\xi + \phi_{\frac{M}{\vert \cu_m^-\vert}, z+\cu_n, \xi} =  v\Ll(M/\vert \cu_m^-\vert, z+\cu_n, \xi\Rr)$ from the first line to the second line. From the second line to the third line, we use the local equivalence of ensembles in \eqref{eq.localEquiv1} and need to assume
	\begin{align}\label{eq.FreeAssumption1}
		1 \leq n < \frac{m}{2}.
	\end{align} 
	We then study the last line and distinguish it in $3$ cases
	
	\smallskip
	
	\textit{Step~1.1: case $M \in \M_{\epsilon}(\cu_m^-)$.} Then there is no bias between the probability space and the associated corrector, thus we have 
	\begin{multline}\label{eq.FreeDirichletCase1}
		\forall M \in \M_{\epsilon}(\cu_m^-), \quad \frac{1}{\vert \cu_n\vert} \sum_{b \in \ovs{(z+\cu_n)}}\bracket{\frac{1}{2} c_b \Ll(\pi_b v\Ll(M \vee M_* \wedge M^*/\vert \cu_m^-\vert, z+\cu_n, \xi\Rr)\Rr)^2}_{\frac{M}{\vert \cu_m^-\vert}} \\
		=\Ll(\frac{1}{2} \xi \cdot \cc(M/\vert \cu_m^-\vert, z+\cu_n) \xi\Rr).
	\end{multline}
	
	\smallskip
	\textit{Step~1.2: case $0 \leq M \leq M_*$.} For this case, we have the following estimate
	\begin{align*}
		&\frac{1}{\vert \cu_n\vert} \sum_{b \in \ovs{(z+\cu_n)}}\bracket{\frac{1}{2} c_b \Ll(\pi_b v\Ll(M \vee M_* \wedge M^*/\vert \cu_m^-\vert, z+\cu_n, \xi\Rr)\Rr)^2}_{\frac{M}{\vert \cu_m^-\vert}} \\
		&= \frac{1}{\vert \cu_n\vert} \sum_{b \in \ovs{(z+\cu_n)}}\bracket{\frac{1}{2} c_b \Ll(\pi_b v\Ll(M_*/\vert \cu_m^-\vert, z+\cu_n, \xi\Rr)\Rr)^2}_{\frac{M}{\vert \cu_m^-\vert}}\\
		&\leq \Theta_{\frac{M}{\vert \cu_m^-\vert}, \frac{M_*}{\vert \cu_m^-\vert}}^{(3^n+2\r)^d} \frac{1}{\vert \cu_n\vert} \sum_{b \in \ovs{(z+\cu_n)}}\bracket{\frac{1}{2} c_b \Ll(\pi_b v\Ll(M_*/\vert \cu_m^-\vert, z+\cu_n, \xi\Rr)\Rr)^2}_{\frac{M_*}{\vert \cu_m^-\vert}}.
	\end{align*}
	Here from the second line to the third line, we use the regularity estimate \eqref{eq.BiasProba}, which gives an one-sided bias factor $\Theta_{\frac{M}{\vert \cu_m^-\vert}, \frac{M_*}{\vert \cu_m^-\vert}}^{(3^n+2\r)^d}$. This factor will not explode because the targeted density $\frac{M_*}{\vert \cu_m^-\vert}$ does not degenerate; recall the remark around \eqref{eq.RMKTheta}. More precisely, using the definition of \eqref{eq.defTheta}, $M \leq M_*$ and $\frac{M_*}{\vert \cu_m^-\vert} \simeq \epsilon \ll 1$, we have 
	\begin{align*}
		\Theta_{\frac{M}{\vert \cu_m^-\vert}, \frac{M_*}{\vert \cu_m^-\vert}} = \max\Ll\{ \frac{M}{M_*}, \frac{1- \frac{M}{\vert \cu_m^-\vert}}{1-\frac{M_*}{\vert \cu_m^-\vert}}\Rr\} = \frac{1- \frac{M}{\vert \cu_m^-\vert}}{1-\frac{M_*}{\vert \cu_m^-\vert}} \leq  1 + \frac{\frac{M_*}{\vert \cu_m^-\vert}}{1-\frac{M_*}{\vert \cu_m^-\vert}} \leq 1+2\epsilon,
	\end{align*}
	which results in 
	\begin{align*}
		\Theta_{\frac{M}{\vert \cu_m^-\vert}, \frac{M_*}{\vert \cu_m^-\vert}}^{(3^n+2\r)^d} \leq (1+2\epsilon)^{(3^n+2\r)^d} \leq 1 + 10 \cdot 3^{dn} \cdot \epsilon.
	\end{align*}
	Here we also need to assume 
	\begin{align}\label{eq.FreeAssumption2}
		0 < \epsilon \ll 3^{-dn}.
	\end{align} 
	Using the stationarity, this concludes that 
	\begin{multline}\label{eq.FreeDirichletCase2}
		\forall 0 \leq M < M_*, \quad \frac{1}{\vert \cu_n\vert} \sum_{b \in \ovs{(z+\cu_n)}}\bracket{\frac{1}{2} c_b \Ll(\pi_b v\Ll(M \vee M_* \wedge M^*/\vert \cu_m^-\vert, z+\cu_n, \xi\Rr)\Rr)^2}_{\frac{M}{\vert \cu_m^-\vert}} \\
		\leq \Ll(1 + 10 \cdot 3^{dn} \cdot \epsilon \Rr) \Ll(\frac{1}{2} \xi \cdot \cc(M_*/\vert \cu_m^-\vert, z+\cu_n) \xi\Rr).
	\end{multline}
	
	\smallskip
	
	\textit{Step~1.3: case $M_* < M < \vert \cu_m^- \vert$.} This case is similar to  Step~1.2, and we have 
	\begin{multline}\label{eq.FreeDirichletCase3}
		\forall M_* < M < \vert \cu_m^- \vert, \quad \frac{1}{\vert \cu_n\vert} \sum_{b \in \ovs{(z+\cu_n)}}\bracket{\frac{1}{2} c_b \Ll(\pi_b v\Ll(M \vee M_* \wedge M^*/\vert \cu_m^-\vert, z+\cu_n, \xi\Rr)\Rr)^2}_{\frac{M}{\vert \cu_m^-\vert}} \\
		\leq \Ll(1 + 10 \cdot 3^{dn} \cdot \epsilon \Rr) \Ll(\frac{1}{2} \xi \cdot \cc(M^*/\vert \cu_m^-\vert, z+\cu_n) \xi\Rr).
	\end{multline}
	
	\smallskip
	
	Combing \eqref{eq.FreeDirichletCase1}, \eqref{eq.FreeDirichletCase2}, \eqref{eq.FreeDirichletCase3} and the spatial homogeneity from (2) of Hypothesis~\ref{hyp}, we obtain an estimate of the term $\mathbf{I}$ under the assumption \eqref{eq.FreeAssumption1} and \eqref{eq.FreeAssumption2}
	\begin{multline}\label{eq.FreeIQuasi}
		\mathbf{I} \leq \Ll(1+10 \cdot 3^{d(2n-m)}+10 \cdot 3^{dn} \cdot \epsilon\Rr) \sum_{M =0}^\infty \P_\rho\Ll[\sum_{x \in  \cu_m^-} \eta_x = M\Rr]\\
		\Ll(\frac{1}{2} \xi \cdot \cc\Ll(\frac{M \vee M_* \wedge M^*}{\vert \cu_m^-\vert}, \cu_n\Rr) \xi\Rr).
	\end{multline}
	We aim to compare this quantity with the target $\frac{1}{2} \xi \cdot \cc\Ll(\rho, \cu_n\Rr) \xi$, which relies on the concentration of measure and the regularity of $\cc\Ll(\rho, \cu_n\Rr)$. We summarize this as Step~1.4. 
	
	\smallskip
	\textit{Step~1.4: regularity on density.} Recall the Markov inequality of density
	\begin{align}\label{eq.densityMarkov}
		\P_\rho\left[\Ll\vert \frac{1}{\vert \cu_m^-\vert}\sum_{x\in\cu_m^-}\eta_x - \rho \Rr\vert > \delta  \right] \leq \frac{\var_\rho[\frac{1}{\vert \cu_m^-\vert}\sum_{x\in\cu_m^-}\eta_x]}{\delta^2} = \frac{\rho(1-\rho)}{\vert \cu_m^-\vert \delta^2}.
	\end{align}
	We choose $\delta > 0$ in function of $\rho$ and distinguish two cases. Viewing the assumption \eqref{eq.FreeAssumption2}, here the threshold is tighter than $\epsilon$.
	
	\textit{Step~1.4.1: case $\rho \in [3^{-dn}, 1-3^{-dn}]$.}  We choose the window $\delta := 3^{-3dn}$ for such case, then treat the regime $\Ll\vert \frac{M}{\vert \cu_m^-\vert} - \rho\Rr\vert \leq \delta$ at first in \eqref{eq.FreeIQuasi}
	\begin{align*}
		&\sum_{M \in \N, \Ll\vert \frac{M}{\vert \cu_m^-\vert} - \rho\Rr\vert \leq \delta} \P_\rho\Ll[\sum_{x \in  \cu_m^-} \eta_x = M\Rr]	\Ll(\frac{1}{2} \xi \cdot \cc\Ll(\frac{M \vee M_* \wedge M^*}{\vert \cu_m^-\vert}, \cu_n\Rr) \xi\Rr)\\
		&\leq \sum_{M \in \N, \Ll\vert \frac{M}{\vert \cu_m^-\vert} - \rho\Rr\vert \leq \delta} \P_\rho\Ll[\sum_{x \in  \cu_m^-} \eta_x = M\Rr]	\Ll(\frac{1}{2} \xi \cdot \cc\Ll(\rho, \cu_n\Rr) \xi\Rr) \Theta_{\frac{M}{\vert \cu_m^-\vert}, \rho}^{(3^n+2\r)^d}\\
		&\leq (1+3^{dn}\delta)^{2\cdot3^{dn}}\Ll(\frac{1}{2} \xi \cdot \cc\Ll(\rho, \cu_n\Rr) \xi\Rr)\\
		&\leq (1+4 \cdot 3^{-dn})\Ll(\frac{1}{2} \xi \cdot \cc\Ll(\rho, \cu_n\Rr) \xi\Rr).
	\end{align*}
	Here because of the assumption \eqref{eq.FreeAssumption2}, we have ${M \vee M_* \wedge M^* = M}$ for all $\Ll\vert \frac{M}{\vert \cu_m^-\vert} - \rho\Rr\vert \leq \delta$. Then we apply \eqref{eq.cRegularity1} from the first line to the second line. From the second line to the third line, we apply \eqref{eq.RMKTheta} to give an upper bound for $\Theta_{\frac{M}{\vert \cu_m^-\vert}, \rho}^{(3^n+2\r)^d}$
	\begin{align*}
		\forall \rho \in [3^{-dn}, 1-3^{-dn}], \quad \Ll\vert \frac{M}{\vert \cu_m^-\vert} - \rho\Rr\vert \leq \delta &\Longrightarrow \frac{\Ll\vert\frac{M}{\vert \cu_m^-\vert}-\rho\Rr\vert}{\min\{\rho,1-\rho\}} < 3^{dn} \delta\\
		&\stackrel{\eqref{eq.RMKTheta}}{\Longrightarrow} \Theta_{\frac{M}{\vert \cu_m^-\vert}, \rho}^{(3^n+2\r)^d} \leq (1+3^{dn}\delta)^{2\cdot3^{dn}}. 
	\end{align*}
	Finally, we insert the choice $\delta = 3^{-3dn}$ from the third line to the fourth line.
	
	For the regime $\Ll\vert \frac{M}{\vert \cu_m^-\vert} - \rho\Rr\vert \geq \delta$, we just use the Markov inequality \eqref{eq.densityMarkov} and the trivial bound of $\cc$
	\begin{align*}
		\sum_{M \in \N, \Ll\vert \frac{M}{\vert \cu_m^-\vert} - \rho\Rr\vert \geq \delta} \P_\rho\Ll[\sum_{x \in  \cu_m^-} \eta_x = M\Rr]	\Ll(\frac{1}{2} \xi \cdot \cc\Ll(\frac{M \vee M_* \wedge M^*}{\vert \cu_m^-\vert}, \cu_n\Rr) \xi\Rr) \leq \lambda 3^{d(6n-m)}.
	\end{align*}
	Combing the two estimates above and \eqref{eq.FreeIQuasi}, we conclude the estimate for $\mathbf{I}$ under the condition  $\rho \in [3^{-dn}, 1-3^{-dn}]$
	\begin{align}\label{eq.FreeIQuasi1}
		\mathbf{I} \leq \Ll(\frac{1}{2} \xi \cdot \cc\Ll(\rho, \cu_n\Rr) \xi\Rr) + C(\lambda) \Ll( 3^{dn} \epsilon + 3^{-dn} + 3^{d(6n-m)}\Rr).
	\end{align}
	
	\textit{Step~1.4.2: case $\rho \in [0, 3^{-dn}) \cup (1-3^{-dn}, 1]$.} We choose the window $\delta := 3^{-\frac{dm}{2}}$, then for ${\Ll\vert \frac{M}{\vert \cu_m^-\vert} - \rho\Rr\vert \leq \delta}$, we have $ \frac{M}{\vert \cu_m^-\vert}  \in [0,2\cdot3^{-dn}] \cup [1-2\cdot3^{-dn},1]$ since we assume \eqref{eq.FreeAssumption1}. Then with the \textit{a priori} bound $\Ll(\frac{1}{2} \xi \cdot \cc\Ll(\frac{M}{\vert \cu_m^-\vert}, \cu_n\Rr) \xi\Rr) \leq \chi(\frac{M}{\vert \cu_m^-\vert})$, we have
	\begin{align*}
		&\sum_{M \in \N, \Ll\vert \frac{M}{\vert \cu_m^-\vert} - \rho\Rr\vert \leq \delta} \P_\rho\Ll[\sum_{x \in  \cu_m^-} \eta_x = M\Rr]
		\Ll(\frac{1}{2} \xi \cdot \cc\Ll(\frac{M \vee M_* \wedge M^*}{\vert \cu_m^-\vert}, \cu_n\Rr) \xi\Rr) \\
		&\leq \sum_{M \in \N, \Ll\vert \frac{M}{\vert \cu_m^-\vert} - \rho\Rr\vert \leq \delta} \P_\rho\Ll[\sum_{x \in  \cu_m^-} \eta_x = M\Rr] \chi\Ll(\frac{M}{\vert \cu_m^-\vert}\Rr)\\
		&\leq C(\lambda) 3^{-dn}.
	\end{align*}
	
	For ${\Ll\vert \frac{M}{\vert \cu_m^-\vert} - \rho\Rr\vert \geq \delta}$, we use Markov inequality \eqref{eq.densityMarkov} to give
	\begin{align*}
		\sum_{M \in \N, \Ll\vert \frac{M}{\vert \cu_m^-\vert} - \rho\Rr\vert \geq \delta} \P_\rho\Ll[\sum_{x \in  \cu_m^-} \eta_x = M\Rr]
		\Ll(\frac{1}{2} \xi \cdot \cc\Ll(\frac{M \vee M_* \wedge M^*}{\vert \cu_m^-\vert}, \cu_n\Rr) \xi\Rr) \leq C(\lambda) 3^{-dn}.
	\end{align*}
	Therefore, under \eqref{eq.FreeAssumption1} and \eqref{eq.FreeAssumption2}, the estimate in \eqref{eq.FreeIQuasi} has an upper bound for $\rho \in [0, 3^{-dn}) \cup (1-3^{-dn}, 1]$
	\begin{align}\label{eq.FreeIQuasi2}
		\mathbf{I} \leq  C(\lambda) 3^{-dn}.
	\end{align}
	
	Combing \eqref{eq.FreeIQuasi1} and \eqref{eq.FreeIQuasi2}, we conclude the uniform estimate for Step~1 that 
	\begin{align}\label{eq.FreeI}
		\mathbf{I} \leq \Ll(\frac{1}{2} \xi \cdot \cc\Ll(\rho, \cu_n\Rr) \xi\Rr) + C(\lambda) \Ll( 3^{dn} \epsilon + 3^{-dn} + 3^{d(6n-m)}\Rr).
	\end{align}
	
	\bigskip

	\textit{Step~2: term $\mathbf{II}$.} This term is the contribution of the boundary layer. For this case, because the local correctors and the indicator are both invariant when applying $\pi_b$, we have $\pi_b \hat{\phi}^{(\epsilon)}_{m,n,\xi} (\eta) = 0$. The only contribution comes from the affine function, which gives us 
	\begin{align}\label{eq.FreeII}
		\mathbf{II} \leq \sum_{\substack{b \in \overline{\cu_m^*} \setminus (\cu_m, \cu_m^-)^* \\\dist(b, \partial \cu_m) \leq 3^n}}\frac{1}{\vert \cu_m \vert} \bracket{ \frac{1}{2} c_b (\pi_b \ell_\xi )^2}_\rho \leq \lambda \chi(\rho) 3^{(n-m)}.
	\end{align}
	This term is small under the assumption \eqref{eq.FreeAssumption1}.
	
	\textit{Step~3: term $\mathbf{III}$.} This term is also the contribution of the boundary layer, but its estimate is more delicate. We denote by $b = \{y_1, y_2\}$, where $y_1 \in \cu_m^-$ and $y_2 \in \partial \cu_m$. The operator $\pi_{y_1, y_2}$ makes difference only when $\eta_{y_1} \neq \eta_{y_2}$. For example, for the case $\eta_{y_1} = 1$, $\eta_{y_2} = 0$, we have 
	\begin{align*}
		&(\pi_{y_1,y_2} \hat{\phi}^{(\epsilon)}_{m,n,\xi})(\eta) \Ind{\eta_{y_1} = 1, \eta_{y_2} = 0} \\
		&= \Ll(\hat{\phi}^{(\epsilon)}_{m,n,\xi}(\eta^{y_1,y_2}) - \hat{\phi}^{(\epsilon)}_{m,n,\xi}(\eta)\Rr) \Ind{\eta_{y_1} = 1, \eta_{y_2} = 0}\\
		&=  \sum_{M \in \M_\epsilon(\cu_m^-)} \Ll(\sum_{\substack{z \in \Z_{m,n}, \\ \dist(z, \partial \cu_m) > 3^n}}   \Ll(\phi_{\frac{M}{\vert \cu_m^-\vert}, z+\cu_n, \xi} -\phi_{\frac{M+1}{\vert \cu_m^-\vert}, z+\cu_n, \xi}\Rr)\Rr)\Ind{\sum_{x \in  \cu_m^-, x \neq y_1} \eta_x = M} \Ind{\eta_{y_1} = 1, \eta_{y_2} = 0}.
	\end{align*} 
	Here we have $\phi_{\rho, z+\cu_n, \xi}(\eta^{y_1, y_2}) = \phi_{\rho, z+\cu_n, \xi}(\eta)$, because $y_1, y_2$ are both far from the support of the local function when $3^n \geq \r$. Meanwhile, the operator $\pi_{y_1, y_2}$ will change the empirical density, that is the perturbation in the third line. We should also remark that such perturbation will vanish when $M \notin \M_\epsilon(\cu_m^-)$, since there is a regularization of density $\rho \mapsto \rho \vee \epsilon \wedge (1- \epsilon)$ in our definition \eqref{eq.defFreeCorrector1}.
	
	The case $\eta_{y_1} = 0, \eta_{y_2} = 1$ is similar. As the indicator $ \Ind{\eta_{y_1} = 1, \eta_{y_2} = 0}$ is independent of the other terms in the last line under $\P_\rho$, we obtain 
	\begin{multline}\label{eq.DensityPerturbation}
		\bracket{ c_{y_1,y_2}(\pi_{y_1,y_2}  (\ell_\xi + \hat{\phi}^{(\epsilon)}_{m,n,\xi}))^2}_\rho \\
		\leq 2 \lambda \chi(\rho) \Ll(1+ \sum_{M \in \M_\epsilon(\cu_m^-)} \P_\rho\Ll[\sum_{x \in  \cu_m^-, x \neq y_1} \eta_x = M\Rr] \bracket{\Ll(\sum_{\substack{z \in \Z_{m,n}, \\ \dist(z, \partial \cu_m) > 3^n}}\Delta_{M,z}\Rr)^2}_{\cu^-_m \setminus \{y_1\}, M}\Rr),
	\end{multline}
	where we define $\Delta_{M,z}$ to simplify the notation
	\begin{align}\label{eq.defDeltaMz}
		\Delta_{M,z} :=  \phi_{\frac{M+1}{\vert \cu_m^-\vert}, z+\cu_n, \xi} - \phi_{\frac{M}{\vert \cu_m^-\vert}, z+\cu_n, \xi},  \qquad M\in \M_\epsilon(\cu_m^-).
	\end{align}
	Because there are $3^{(d-1)m}$ terms like \eqref{eq.DensityPerturbation} from boundary layer $b \in (\cu_m, \cu_m^-)^*$ and the normalization factor is $\frac{1}{\vert \cu_m \vert}$, our object is to show that each term above is of order $O(3^{K n + s m})$ with some $s \in (0,1)$ and $K \in \R_+$. Then roughly we get an estimate 
	\begin{align}\label{eq.FreeIIIRough}
		\mathbf{III} \leq \sum_{b \in (\cu_m, \cu_m^-)^*}\frac{1}{\vert \cu_m \vert} \bracket{ \frac{1}{2} c_b (\pi_b (\ell_\xi + \hat{\phi}^{(\epsilon)}_{m,n,\xi}))^2}_\rho \leq O(3^{K n + (s-1) m}).
	\end{align}
	By a careful choice of $n \ll m$, we can make the contribution from  $\mathbf{III}$ small.
	
	To obtain the estimate above, we also need to treat the spatial cancellation in $\Ll(\sum_{\substack{z \in \Z_{m,n}, \\ \dist(z, \partial \cu_m) > 3^n}}\Delta_{M,z}\Rr)^2$. Thus, we develop it as
	\begin{multline*}
		\bracket{\Ll(\sum_{\substack{z \in \Z_{m,n}, \\ \dist(z, \partial \cu_m) > 3^n}}\Delta_{M,z}\Rr)^2}_{\cu^-_m \setminus \{y_1\}, M} 
		= \sum_{\substack{z \in \Z_{m,n}, \\ \dist(z, \partial \cu_m) > 3^n}} \bracket{\Ll(\Delta_{M,z}\Rr)^2}_{\cu^-_m \setminus \{y_1\}, M} \\
		+ \sum_{\substack{z,z' \in \Z_{m,n}, z \neq z'\\ \dist(z, \partial \cu_m) > 3^n, \dist(z', \partial \cu_m) > 3^n}} \bracket{\Delta_{M,z}\Delta_{M,z'}}_{\cu^-_m \setminus \{y_1\}, M}.
	\end{multline*}
	We treat the diagonal terms and off-diagonal terms separately.
	
	\smallskip
	
	\textit{Step~3.1: the diagonal term $\bracket{\Ll(\Delta_{M,z}\Rr)^2}_{\cu^-_m \setminus \{y_1\}, M}$.} For this term, we use the local equivalence of ensembles from \eqref{eq.localEquiv1}
	\begin{align}\label{eq.DeltaMzL2DiagonalPre}
		\bracket{\Ll(\Delta_{M,z}\Rr)^2}_{\cu^-_m \setminus \{y_1\}, M} &\leq (1+4 \cdot 3^{d(2n-m)}) \bracket{\Ll(\Delta_{M,z}\Rr)^2}_{\frac{M}{\vert \cu^-_m\vert - 1}}.
	\end{align}
	Assuming that $n < \frac{m}{2}$ as \eqref{eq.FreeAssumption1}, the factor $ (1+4 \cdot 3^{d(2n-m)})$ is smaller than $2$.
	To simplify the notation, we introduce the following shorthand expression 
	\begin{align}\label{eq.defrhoShort}
		\hat{\rho} := \frac{M}{\vert \cu_m^-\vert}, \qquad \hat{\rho}' := \frac{M+1}{\vert \cu_m^-\vert}, \qquad \hat{\rho}'' := \frac{M}{\vert \cu_m^-\vert-1}.
	\end{align}
	Then for $M\in \M_\epsilon(\cu_m^-)$, as the difference between $\hat{\rho}, \hat{\rho}', \hat{\rho}''$ is roughly $3^{-dm}$ and the three terms are not degenerate at $0$ or $1$, we have an estimate for the two-sided bias factor of \eqref{eq.defTheta2} 
	\begin{align}\label{eq.densityBias}
		\forall M\in \M_\epsilon(\cu_m^-), \qquad 1 \leq \tilde \Theta_{\hat{\rho}, \hat{\rho}'}, \tilde \Theta_{\hat{\rho}, \hat{\rho}''} \leq 1+  3^{-dm}\epsilon^{-1}.
	\end{align}
	Insert the notation \eqref{eq.defrhoShort} to \eqref{eq.defDeltaMz}, the quantity $\Delta_{M,z}$ has a clear expression 
	\begin{align}\label{eq.defDeltaMzClear}
		\Delta_{M,z} = \phi_{\hat{\rho}', z+\cu_n, \xi} - \phi_{\hat{\rho}, z+\cu_n, \xi},
	\end{align}
	and $\bracket{\Ll(\Delta_{M,z}\Rr)^2}_{\frac{M}{|\cu_m^-| -1}}$ is just the expectation under the grand canonical ensemble of density $\hat{\rho}''$ for difference between the correctors of densities $\hat{\rho}$ and $\hat{\rho}'$
	\begin{align}\label{eq.defDeltaMzSquare}
		\bracket{\Ll(\Delta_{M,z}\Rr)^2}_{\frac{M}{|\cu_m^-| -1}} = \bracket{(\phi_{\hat{\rho}', z+\cu_n, \xi} - \phi_{\hat{\rho}, z+\cu_n, \xi})^2}_{\hat{\rho}''}.
	\end{align}
	Therefore,  we use the continuity in density \eqref{eq.L2Regularity} and \eqref{eq.densityBias}
	\begin{equation}\label{eq.DeltaMzL2Diagonal}
		\begin{split}
			\bracket{\Ll(\Delta_{M,z}\Rr)^2}_{\cu^-_m \setminus \{y_1\}, M} &\leq 2 \bracket{\Ll(\Delta_{M,z}\Rr)^2}_{\hat{\rho}''}\\
			&\leq 10\lambda \cdot 3^{(d+2)n}\Ll(\tilde \Theta_{\hat{\rho}, \hat{\rho}''}^{(3^n+2\r)^d} + \tilde \Theta_{\hat{\rho}', \hat{\rho}''}^{(3^n+2\r)^d} -2\Rr)\\
			&\leq 20\lambda \cdot 3^{(d+2)n} \cdot \Ll(\Ll(1+  3^{-dm}\epsilon^{-1}\Rr)^{3^{dn}} - 1\Rr)\\
			&\leq 20\lambda \cdot 3^{(2d+2)n} \cdot 3^{-dm} \cdot \epsilon^{-1}.
		\end{split}
	\end{equation}
	Here we also need to add an assumption
	\begin{align}\label{eq.FreeAssumption3}
		3^{dn} \ll 3^{dm} \cdot \epsilon. 
	\end{align}
	Under this assumption, the contribution of the diagonal terms are 
	\begin{align}\label{eq.FreeIIIDiagonal}
		\sum_{\substack{z \in \Z_{m,n}, \\ \dist(z, \partial \cu_m) > 3^n}} \bracket{\Ll(\Delta_{M,z}\Rr)^2}_{\cu^-_m \setminus \{y_1\}, M}  &\leq 20\lambda \cdot 3^{d(m-n)} \cdot 3^{(2d+2)n} \cdot 3^{-dm} \cdot \epsilon^{-1}\\
		&= 20\lambda \cdot 3^{(d+2)n}  \cdot \epsilon^{-1}. \nonumber
	\end{align}
	
	\smallskip
	\textit{Step~3.2: the off-diagonal term $\bracket{\Delta_{M,z}\Delta_{M,z'}}_{\cu^-_m \setminus \{y_1\}, M}$.} Before starting the proof, we remark that the error in each off-diagonal term should be smaller than that of the diagonal term. A naive application of Cauchy--Schwarz inequality and \eqref{eq.DeltaMzL2Diagonal} does not work here, because the factor $3^{-dm}$ in \eqref{eq.DeltaMzL2Diagonal} is not enough to dominate the total number of $3^{2d(m-n)}$ off-diagonal terms. 
	
	We observe that $\bracket{\Delta_{M,z}\Delta_{M,z'}}_{\cu^-_m \setminus \{y_1\}, M}$  is actually asymptotically independent, and we need a decoupling inequality to justify it. Thus we make the following decomposition
	\begin{equation}\label{eq.FreeIIIOffDiagonalDecom}
		\begin{split}
			&\bracket{\Delta_{M,z}\Delta_{M,z'}}_{\cu^-_m \setminus \{y_1\}, M} \\
			&= \bracket{\Delta_{M,z}\Delta_{M,z'}}_{\frac{M}{\vert \cu^-_m\vert - 1}} + \Ll(\bracket{\Delta_{M,z}\Delta_{M,z'}}_{\cu^-_m \setminus \{y_1\}, M} - \bracket{\Delta_{M,z}\Delta_{M,z'}}_{\frac{M}{\vert \cu^-_m\vert - 1}}\Rr) \\
			&= \bracket{\Delta_{M,z}}^2_{\frac{M}{\vert \cu^-_m\vert - 1}} + \Ll(\bracket{\Delta_{M,z}\Delta_{M,z'}}_{\cu^-_m \setminus \{y_1\}, M} - \bracket{\Delta_{M,z}\Delta_{M,z'}}_{\frac{M}{\vert \cu^-_m\vert - 1}}\Rr).
		\end{split}
	\end{equation}
	From the second line to the third line, we use the fact that under $\P_\rho$, the variables $\Delta_{M,z}$ and $\Delta_{M,z'}$ are independent and of the same law. We continue to estimate the two terms respectively.

	For the first term in \eqref{eq.FreeIIIOffDiagonalDecom}, we make use of the definition \eqref{eq.defDeltaMzClear}. It transforms the estimate as the regularity of mean
	\begin{align*}
		\bracket{\Delta_{M,z}}^2_{\frac{M}{\vert \cu^-_m\vert - 1}} = \bracket{\phi_{\hat{\rho}', z+\cu_n, \xi} - \phi_{\hat{\rho}, z+\cu_n, \xi}}^2_{\hat{\rho}''},
	\end{align*}
	so \eqref{eq.MeanRegularity} applies
	\begin{equation}\label{eq.FreeIIIOffDiagonal1}
		\begin{split}
			\bracket{\Delta_{M,z}}^2_{\frac{M}{\vert \cu^-_m\vert - 1}} &\leq 2\bracket{\phi_{\hat{\rho}', z+\cu_n, \xi}}^2_{\hat{\rho}''} + 2\bracket{\phi_{\hat{\rho}, z+\cu_n, \xi}}^2_{\hat{\rho}''}\\
			& \leq 2\lambda\cdot 3^{(d+2)n} \Ll((\tilde \Theta_{\hat{\rho}, \hat{\rho}''}^{(3^n+2\r)^d} - 1)^2 + (\tilde \Theta_{\hat{\rho}', \hat{\rho}''}^{(3^n+2\r)^d} - 1)^2\Rr)\\
			& \leq 4\lambda \cdot 3^{(d+2)n}  \cdot \Ll(\Ll(1+  3^{-dm}\epsilon^{-1}\Rr)^{3^{dn}} - 1\Rr)^2\\
			& \leq 4\lambda \cdot 3^{(3d+2)n} \cdot 3^{-2dm} \cdot \epsilon^{-2}.
		\end{split}
	\end{equation}
	Here from the second line to the third line, we use the estimate of the two-sided bias factor in \eqref{eq.densityBias} for $ M\in \M_\epsilon(\cu_m^-)$. We also need to assume \eqref{eq.FreeAssumption3} from the third line to the fourth line. Compare the regularity of mean \eqref{eq.MeanRegularity} and  of $L^2$ \eqref{eq.L2Regularity}, we gain another factor of type $(\tilde \Theta_{\hat{\rho}, \hat{\rho}''}^{(3^n+2\r)^d} - 1)$, that is why we have $3^{-2dm}$ in the last line. 
	
	For the second term in \eqref{eq.FreeIIIOffDiagonalDecom}, we use the local equivalence of ensembles. Especially, since  $\Delta_{M,z}\Delta_{M,z'}$ is not necessarily positive, we need apply the version of \eqref{eq.localEquiv2}, which requires some supplementary conditions. These conditions are satisfied as we recall that $ M\in \M_\epsilon(\cu_m^-)$ and assume \eqref{eq.FreeAssumption3}. Then we obtain
	\begin{equation}\label{eq.FreeIIIOffDiagonal2}
		\begin{split}
			&\Ll\vert \bracket{\Delta_{M,z}\Delta_{M,z'}}_{\cu^-_m \setminus \{y_1\}, M} - \bracket{\Delta_{M,z}\Delta_{M,z'}}_{\frac{M}{\vert \cu^-_m\vert - 1}}\Rr\vert\\
			&\leq 3^{d(2n-m)} \cdot \epsilon^{-1} \cdot \bracket{\vert \Delta_{M,z}\Delta_{M,z'}\vert}_{\frac{M}{\vert \cu^-_m\vert - 1}}\\
			&\leq 3^{d(2n-m)} \cdot \epsilon^{-1} \cdot \bracket{(\Delta_{M,z})^2}_{\frac{M}{\vert \cu^-_m\vert - 1}} \\
			&\leq 20 \lambda  \cdot 3^{(4d+2)n} \cdot 3^{-2dm} \cdot \epsilon^{-2}.
		\end{split}
	\end{equation}
	Here we use Cauchy--Schwarz inequality from the second line to the third line, and insert the estimate \eqref{eq.DeltaMzL2Diagonal} in the last line. Finally, we also gain the factor  $3^{-2dm}$.
	
	We combine the estimates \eqref{eq.FreeIIIOffDiagonal1} and \eqref{eq.FreeIIIOffDiagonal2} and obtain an upper bound for the off-diagonal terms
	\begin{equation}\label{eq.FreeIIIOffDiagonal}
		\begin{split}
			&\Ll\vert \sum_{\substack{z,z' \in \Z_{m,n}, z \neq z'\\ \dist(z, \partial \cu_m) > 3^n, \dist(z', \partial \cu_m) > 3^n}} \bracket{\Delta_{M,z}\Delta_{M,z'}}_{\cu^-_m \setminus \{y_1\}, M} \Rr\vert   \\
			&\leq 20 \lambda \cdot 3^{2d(m-n)} \cdot 3^{(4d+2)n} \cdot 3^{-2dm} \cdot \epsilon^{-2}\\
			&= 20 \lambda \cdot 3^{(2d+2)n}  \cdot \epsilon^{-2}. 
		\end{split}
	\end{equation}

	The error \eqref{eq.FreeIIIOffDiagonal} from the off-diagonal terms is the leading order compared to   \eqref{eq.FreeIIIDiagonal} from the diagonal terms. We put them back to \eqref{eq.FreeIIIRough} and \eqref{eq.DensityPerturbation}, and obtain the estimates for the contribution from $\mathbf{III}$
	\begin{align}\label{eq.FreeIII}
		\mathbf{III} \leq 20 \lambda \cdot 3^{-m} \cdot \Ll( 3^{(2d+2)n}  \cdot \epsilon^{-2} +  3^{(d+2)n}  \cdot \epsilon^{-1}\Rr).
	\end{align}

	\textit{Step~4: track of parameters.} We collect all the estimates from \eqref{eq.FreeI}, \eqref{eq.FreeII} and \eqref{eq.FreeIII}, and put them back to \eqref{eq.FreeDecom} to obtain
	\begin{multline*}
		\frac{1}{\vert \cu_m \vert} \sum_{b \in \overline{\cu_m^*}} \bracket{ \frac{1}{2} c_b (\pi_b (\ell_\xi + \hat{\phi}^{(\epsilon)}_{m,n,\xi}))^2}_\rho \leq  \Ll(\frac{1}{2} \xi \cdot \cc\Ll(\rho, \cu_n\Rr) \xi\Rr) \\
		\qquad + C(\lambda) \Ll( 3^{dn} \epsilon + 3^{-dn} + 3^{d(6n-m)} + 3^{(n-m)} + 3^{(2d+2)n-m} \epsilon^{-2} \Rr)\id.
	\end{multline*}
	The choice of parameters should satisfy the assumptions \eqref{eq.FreeAssumption1}, \eqref{eq.FreeAssumption2}, \eqref{eq.FreeAssumption3}, and also make the upper bound above be small. A possible choice is 
	\begin{align*}
		\epsilon = 3^{-2dn}, \qquad n = \Ll\lfloor\frac{m}{9d+3}\Rr\rfloor,
	\end{align*}
	which gives 
	\begin{align*}
		\frac{1}{\vert \cu_m \vert} \sum_{b \in \overline{\cu_m^*}} \bracket{ \frac{1}{2} c_b (\pi_b (\ell_\xi + \hat{\phi}^{(\epsilon)}_{m,n,\xi}))^2}_\rho \leq  \Ll(\frac{1}{2} \xi \cdot \cc\Ll(\rho, \cu_n\Rr) \xi\Rr) + C(\lambda)3^{- \frac{m}{12}}.
	\end{align*}
	Then we apply the uniform estimate from Proposition~\ref{eq.main1A_2}, which gives us 
	\begin{align*}
		\frac{1}{\vert \cu_m \vert} \sum_{b \in \overline{\cu_m^*}} \bracket{ \frac{1}{2} c_b (\pi_b (\ell_\xi + \hat{\phi}^{(\epsilon)}_{m,n,\xi}))^2}_\rho \leq  \frac{1}{2} \xi \cdot \cc\Ll(\rho\Rr) \xi + C(d, \lambda)3^{- \frac{(1\wedge \gamma_1)m}{12}}.
	\end{align*}
	Together with \eqref{eq.FreeLower}, this concludes \eqref{eq.rhoFreeCorrectorHomo} by setting $\gamma_4 := \frac{(1\wedge \gamma_1)}{12}$.
	
\end{proof}

\begin{proof}[Proof of Proposition~\ref{prop.StrongCanonicalEnsemble}]
	Viewing Proposition~\ref{prop.Element} and Corollary~\ref{cor.defDLimit}, for any ${v \in \F_0(\Lambda^-)}$, we have 
	\begin{align*}
		\frac{1}{\vert\Lambda\vert} \sum_{b\in\ov{\Lambda^*}} \bracket{ \frac{1}{2}c_b(\pi_b (\ell_\xi + v))^2}_\rho \geq \frac{1}{2} \xi \cdot \cc(\rho, \Lambda) \xi \geq \frac{1}{2} \xi \cdot \cc(\rho) \xi,
	\end{align*}
	thus $\mu(\Lambda, \xi) \geq 0$. Moreover, Proposition~\ref{prop.rhoFreeCorrectorHomo} implies
	\begin{align*}
		0 \leq \mu(\cu_m, \xi) \leq \frac{1}{\vert \cu_m \vert} \bracket{\sum_{b \in \overline{\cu_m^*}} \frac{1}{2} c_b (\pi_b (\ell_\xi + \hat{\phi}^{(\epsilon)}_{m,n,\xi}))^2}_\rho - \frac{1}{2} \xi \cdot \cc(\rho) \xi \leq C 3^{-\gamma_4 m},
	\end{align*}
	then we prove \eqref{eq.StrongCanonicalEnsemble} along a subsequence $\{\cu_m\}_{m \in \N_+}$. Like (4) of Proposition~\ref{prop.Element}, we can prove that $\mu(\cu_m, \xi)$ is a subadditive quantity, so Lemma~\ref{lem.WhitneySub} applies and generalizes the uniform convergence \eqref{eq.StrongCanonicalEnsemble} to general cubes $\Lambda_L$ with $\gamma_3 := \gamma_4$.
\end{proof}

\subsection{Stationary corrector}\label{subsec.Stationary}
Recall that we have two definitions of the conductivity, that is \eqref{eq.defC} for $\c(\rho)$ and \eqref{eq.defCLimit} for $\cc(\rho)$. In this part, we will establish the identity $\c(\rho) = \cc(\rho)$, so the results proved in the previous sections are indeed for the convergence to $\c(\rho)$. Secondly, we will give the concrete construction of $\Phi_{L}$ valid for \eqref{eq.mainUniform}.

We first establish several auxiliary lemmas on the stationary function. Recall that a mapping $y \mapsto f(y, \eta)$ is stationary iff $f(y, \eta) = \tau_y f(0, \eta) = f(0, \tau_y \eta)$. 

\begin{lemma}\label{lem.Stationary}
	The following properties hold.
	\begin{enumerate}
		\item For every $x, y, z \in \Zd$, we have 
		\begin{align}\label{eq.communtePT}
			\pi_{x+z, y+z} \tau_z = \tau_z \pi_{x,y}.
		\end{align}
		\item For every integer $1 \leq i \leq d$, every $\xi \in \Rd$ and every local function $f \in \F_0$, the following mapping is stationary.
		\begin{align}\label{eq.stationaryEnergy}
			y \mapsto c_{y,y+e_i} \pi_{y,y+e_i} \Ll(\ell_\xi + \sum_{x \in \Zd} \tau_x f\Rr)^2.
		\end{align}
		\item For every $\xi \in \Rd, f \in \F_0(\Lambda_L^-)$, we have 
		\begin{align}\label{eq.StatAvg}
			\sum_{i=1}^d \bracket{c_{0,e_i} \Ll(\pi_{0,e_i} \big(\ell_\xi + \frac{1}{\vert \Lambda_L \vert}\sum_{x \in \Zd} \tau_x f\big)\Rr)^2}_\rho 
			\leq \frac{1}{\vert \Lambda_L \vert}\sum_{b \in \overline{\Lambda_L^*}} \bracket{c_{b} \Ll(\pi_{b} \Ll( \ell_\xi + f \Rr)\Rr)^2}_\rho.
		\end{align}
	\end{enumerate}
\end{lemma}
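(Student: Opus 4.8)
\textbf{Proof plan for Lemma~\ref{lem.Stationary}.}

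The plan is to prove the three items in order, each being a short computation. For item (1), I would just unfold the definitions: for a test configuration $\eta$, apply $\tau_z$ first and then $\pi_{x+z,y+z}$, using \eqref{eq.exchange} and \eqref{eq.translation1} to check that both sides send $\eta$ to the configuration obtained by translating by $z$ and then exchanging sites $x$ and $y$. The only thing to verify is the bookkeeping $(\tau_z \eta)^{x,y} = \tau_z(\eta^{x+z,y+z})$, which follows because $(\tau_z\eta)_w = \eta_{w+z}$ and exchanging the $x$- and $y$-coordinates of $\tau_z\eta$ is the same as exchanging the $(x+z)$- and $(y+z)$-coordinates of $\eta$ before translating.

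For item (2), I would first note that the formal sum $\sum_{x\in\Zd}\tau_x f$ is not itself a well-defined function, but $\pi_{y,y+e_i}$ applied to it \emph{is} well-defined: since $f$ is local with range $\r$, only finitely many translates $\tau_x f$ are not invariant under $\pi_{y,y+e_i}$, and moreover $\pi_{y,y+e_i}\ell_\xi$ is well-defined by \eqref{eq.defPiAffine}. Writing $g_y := \pi_{y,y+e_i}\bigl(\ell_\xi + \sum_x \tau_x f\bigr)$, I want to show $c_{y,y+e_i}\, g_y^2 = \tau_y\bigl(c_{0,e_i}\, g_0^2\bigr)$. The jump rate part is immediate from the spatial homogeneity in Hypothesis~\ref{hyp}(2): $c_{y,y+e_i} = \tau_y c_{0,e_i}$. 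For $g_y$ I would use item (1): $\pi_{y,y+e_i}\tau_x = \tau_y \pi_{0,e_i}\tau_{x-y}$, hence $\pi_{y,y+e_i}\sum_x \tau_x f = \tau_y \sum_{x}\pi_{0,e_i}\tau_{x-y}f = \tau_y\bigl(\pi_{0,e_i}\sum_{x'}\tau_{x'}f\bigr)$ after reindexing $x'=x-y$; and $\pi_{y,y+e_i}\ell_\xi = \xi\cdot e_i(\eta_y - \eta_{y+e_i}) = \tau_y(\pi_{0,e_i}\ell_\xi)$ directly from \eqref{eq.defPiAffine}. So $g_y = \tau_y g_0$, and squaring and multiplying by the rate gives stationarity.

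For item (3), I would combine stationarity with the translation-invariance of $\bracket{\cdot}_\rho$. Fix $f\in\F_0(\Lambda_L^-)$ and set $h := \sum_{x\in\Zd}\tau_x f$ (again only meaningful after applying a Kawasaki operator). By item (2), for each $i$ the quantity $c_{y,y+e_i}\,\bigl(\pi_{y,y+e_i}(\ell_\xi + \tfrac{1}{|\Lambda_L|}h)\bigr)^2$ is stationary, so its $\bracket{\cdot}_\rho$-expectation does not depend on $y$; averaging over $y\in\Lambda_L$,
\begin{align*}
\sum_{i=1}^d \bracket{c_{0,e_i}\bigl(\pi_{0,e_i}(\ell_\xi + \tfrac{1}{|\Lambda_L|}h)\bigr)^2}_\rho
= \frac{1}{|\Lambda_L|}\sum_{y\in\Lambda_L}\sum_{i=1}^d \bracket{c_{y,y+e_i}\bigl(\pi_{y,y+e_i}(\ell_\xi + \tfrac{1}{|\Lambda_L|}h)\bigr)^2}_\rho.
\end{align*}
The bonds $\{y,y+e_i\}$ with $y\in\Lambda_L$, $1\le i\le d$ are exactly $\overline{\Lambda_L^*}$ by \eqref{eq.defBondLarge}, so the right side is $\frac{1}{|\Lambda_L|}\sum_{b\in\overline{\Lambda_L^*}}\bracket{c_b(\pi_b(\ell_\xi + \tfrac{1}{|\Lambda_L|}h))^2}_\rho$. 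The final step — and the one needing the most care — is to replace $\tfrac{1}{|\Lambda_L|}h = \tfrac{1}{|\Lambda_L|}\sum_x\tau_x f$ by $f$ and pick up the inequality. Here I would use that $f\in\F_0(\Lambda_L^-)$: for a fixed bond $b\in\overline{\Lambda_L^*}$, only the translates $\tau_x f$ with $x$ in a bounded neighborhood of $b$ respond to $\pi_b$, and by a rearrangement of the sum over $b$ and over translates (exactly the kind of Fubini/reindexing used in the proof of \eqref{eq.spectralGradient}), the cross terms organize so that $\sum_{b\in\overline{\Lambda_L^*}}\bracket{c_b(\pi_b(\ell_\xi+\tfrac1{|\Lambda_L|}h))^2}_\rho \le \sum_{b\in\overline{\Lambda_L^*}}\bracket{c_b(\pi_b(\ell_\xi + f))^2}_\rho$; intuitively the left side is a convex average (over the $|\Lambda_L|$ shifts placing the "window" of $f$ at different locations, periodized appropriately) of Dirichlet energies each bounded by the right side, so Jensen closes it. I expect verifying this last averaging/Jensen step — making precise how the shifted copies of the local Dirichlet energy of $\ell_\xi+f$ tile $\overline{\Lambda_L^*}$ without overcounting, given that $f$ vanishes on $\partial\Lambda_L$ so boundary bonds only see the affine part — to be the main obstacle, though it is essentially the standard periodization argument underlying \eqref{eq.defQuadra} versus \eqref{eq.defC2}.
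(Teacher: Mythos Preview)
Your plan for items (1) and (2) is correct and matches the paper's proof essentially verbatim.

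For item (3), your approach works but takes an unnecessary detour that obscures where the real content lies. Your first move---using (2) to replace the single-bond expectation by the average $\frac{1}{|\Lambda_L|}\sum_{b\in\overline{\Lambda_L^*}}\bracket{c_b(\pi_b(\ell_\xi+\tfrac1{|\Lambda_L|}h))^2}_\rho$---is a trivial equality: by stationarity every term in that average already equals the left side, so you have not advanced. All the work is then packed into what you call ``the final step needing the most care,'' which you describe only heuristically.

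The paper's route is cleaner and you should adopt it: stay at the single bond $\{0,e_i\}$ and do Jensen first. Concretely, use $f\in\F_0(\Lambda_L^-)$ to see that $\pi_{0,e_i}\tau_x f=0$ for $x\notin\Lambda_L$ (since the support of $\tau_x f$ then misses both $0$ and $e_i$), and use $\pi_{0,e_i}\tau_x\ell_\xi=\pi_{0,e_i}\ell_\xi$ (the Kawasaki operator kills the constant shift in $\tau_x\ell_\xi$) to write
\[
\pi_{0,e_i}\Bigl(\ell_\xi+\tfrac1{|\Lambda_L|}\textstyle\sum_{x\in\Zd}\tau_x f\Bigr)=\frac{1}{|\Lambda_L|}\sum_{x\in\Lambda_L}\pi_{0,e_i}\,\tau_x(\ell_\xi+f).
\]
Now Jensen on this honest convex combination gives
\[
\bracket{c_{0,e_i}\bigl(\pi_{0,e_i}(\ell_\xi+\tfrac1{|\Lambda_L|}h)\bigr)^2}_\rho\le\frac{1}{|\Lambda_L|}\sum_{x\in\Lambda_L}\bracket{c_{0,e_i}\bigl(\pi_{0,e_i}\tau_x(\ell_\xi+f)\bigr)^2}_\rho,
\]
and \emph{then} translation invariance of $\P_\rho$ together with (1) turns the $x$-th summand into $\bracket{c_{-x,-x+e_i}(\pi_{-x,-x+e_i}(\ell_\xi+f))^2}_\rho$; as $-x$ ranges over $\Lambda_L$ and $i$ over $\{1,\dots,d\}$ these bonds are exactly $\overline{\Lambda_L^*}$. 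No ``periodization'' or ``tiling without overcounting'' is needed---each shift contributes exactly one bond.
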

\begin{proof}
	The identity \eqref{eq.communtePT} can be proved by a direct verification. To prove \eqref{eq.stationaryEnergy}, it suffices to prove 
	\begin{align}\label{eq.stationaryEnergyPf}
		c_{y,y+e_i} \pi_{y,y+e_i} \Ll(\ell_\xi + \sum_{x \in \Zd} \tau_x f\Rr)^2 = \tau_y \Ll(c_{0,0+e_i} \pi_{0,e_i} \Ll(\ell_\xi + \sum_{x \in \Zd} \tau_x f\Rr)^2\Rr).
	\end{align}
	We start from its {\rhs} and evaluate the term one by one. From the definition of $c_{y,y+e_i}$, we have ${\tau_y c_{0,0+e_i} = c_{y, y+e_i}}$. For the term $\ell_\xi$, we have 
	\begin{align*}
		\tau_y  \pi_{0,e_i} \ell_\xi &= \tau_y \Ll(-\xi \cdot e_i (\eta_{e_i} - \eta_0) \Rr) = -\xi \cdot e_i (\eta_{y + e_i} - \eta_y) = \pi_{y, y+e_i} \ell_\xi.
	\end{align*}
	For the term involving $f$, we apply \eqref{eq.communtePT}
	\begin{align*}
		\tau_y \pi_{0,e_i} \Ll(\sum_{x \in \Zd} \tau_x f\Rr) &= \pi_{y, y+e_i}  \tau_y \Ll(\sum_{x \in \Zd} \tau_x f\Rr) \\
		&= \pi_{y, y+e_i} \Ll(\sum_{x \in \Zd} \tau_{x+y} f\Rr)\\
		&= \pi_{y, y+e_i} \Ll(\sum_{x \in \Zd} \tau_{x} f\Rr).
	\end{align*}
	Then we obtain \eqref{eq.stationaryEnergyPf} and establish the stationary property.
	
	Finally, we verify the inequality \eqref{eq.StatAvg}. Notice that $\pi_{y,y+z} \tau_{x} \ell_\xi = \pi_{y,y+z} \ell_\xi$, we have 
	\begin{equation}\label{eq.StatAvgJensen}
		\begin{split}
			&\sum_{i=1}^d \bracket{c_{0,e_i} \Ll(\pi_{0,e_i} \big(\ell_\xi + \frac{1}{\vert \Lambda_L \vert}\sum_{x \in \Zd} \tau_x f\big)\Rr)^2}_\rho \\
			&= \sum_{i=1}^d \bracket{c_{0,e_i} \Ll(\pi_{0,e_i} \big(  \frac{1}{\vert \Lambda_L \vert}\sum_{x \in \Lambda_L} \tau_x (\ell_\xi + f)\big) \Rr)^2}_\rho \\
			& \leq \sum_{i=1}^d \frac{1}{\vert \Lambda_L \vert}\sum_{x \in \Lambda_L} \bracket{c_{0,e_i} \Ll(\pi_{0,e_i} \Ll(  \tau_x (\ell_\xi + f) \Rr)\Rr)^2}_\rho.
		\end{split}
	\end{equation}
	From the first line to the second line, we also use the fact that $f \in \F_0(\Lambda_L^-)$, so the derivative vanish when translation is outside $\Lambda_L$. From the second line to the third line above, we apply Jensen's inequality. Then we simplify the result
	\begin{align*}
		\bracket{c_{0,e_i} \Ll(\pi_{0,e_i} \Ll(  \tau_x (\ell_\xi + f) \Rr)\Rr)^2}_\rho &= \bracket{\tau_{-x}c_{0,e_i} \Ll(\pi_{0,e_i} \Ll(  \tau_x (\ell_\xi + f) \Rr)\Rr)^2}_\rho \\
		&= \bracket{c_{-x,-x+e_i} \Ll(\pi_{-x,-x+e_i} \Ll( \ell_\xi + f \Rr)\Rr)^2}_\rho.
	\end{align*}
	The equality in the first line comes from the stationarity of $\P_\rho$, i.e. $\bracket{\tau_x F}_\rho = \bracket{F}_\rho$ for all $x \in \Zd$, while the equality from the second line comes from \eqref{eq.communtePT}. We put this identity back to \eqref{eq.StatAvgJensen} and conclude the desired result
	\begin{align*}
		&\sum_{i=1}^d \bracket{c_{0,e_i} \Ll(\pi_{0,e_i} \big(\ell_\xi + \frac{1}{\vert \Lambda_L \vert}\sum_{x \in \Lambda_L} \tau_x f\big)\Rr)^2}_\rho \\
		&\leq \sum_{i=1}^d \frac{1}{\vert \Lambda_L \vert}\sum_{x \in \Lambda_L} \bracket{c_{-x,-x+e_i} \Ll(\pi_{-x,-x+e_i} \Ll( \ell_\xi + f \Rr)\Rr)^2}_\rho \\
		&\leq \frac{1}{\vert \Lambda_L \vert}\sum_{b \in \overline{\Lambda_L^*}} \bracket{c_{b} \Ll(\pi_{b} \Ll( \ell_\xi + f \Rr)\Rr)^2}_\rho.
	\end{align*}
\end{proof}

With the preparation of Lemma~\ref{lem.Stationary} and Proposition~\ref{prop.StrongCanonicalEnsemble}, we can now prove the main result in this subsection.
\begin{proposition}\label{prop.ProofMain}
	For every $\rho \in [0,1]$, the quantities defined in \eqref{eq.defC} and \eqref{eq.defCLimit} coincide
	\begin{align*}
		\c(\rho) = \cc(\rho).
	\end{align*}
	Moreover, let $\phi_{\Lambda, \xi}$ be an optimizer for $\mu(\Lambda, \xi)$ defined in \eqref{eq.defmu} and $\Phi_{L} \in \F^d_0(\Lambda_L)$ be defined as
	\begin{align}\label{eq.main1C_F}
		\Phi_{L} := \Ll(\frac{1}{\vert \Lambda_L \vert}\phi_{\Lambda_L, e_1}, \frac{1}{\vert \Lambda_L \vert}\phi_{\Lambda_L, e_2}, \cdots, \frac{1}{\vert \Lambda_L \vert}\phi_{\Lambda_L, e_d} \Rr),
	\end{align}
	then there exists an exponent $\gamma(d,\lambda, \r) > 0$ and a positive constant $C(d, \lambda, \r) < \infty$, such that 
	\begin{align}\label{eq.main1C_2}
		\sup_{\rho \in [0,1]}\vert \c(\rho; \Phi_{L}) - \c(\rho)\vert \leq C L^{-\gamma}.
	\end{align}
\end{proposition}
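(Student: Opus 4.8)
The plan is to first rewrite the infinite-volume conductivity $\c(\rho;F)$ in a translation-averaged form, then sandwich $\c(\rho)$ between finite-volume quantities to obtain $\c(\rho)=\cc(\rho)$, and finally extract the quantitative rate for $\Phi_L$ from the near-optimality of the $\mu$-correctors. Throughout we may restrict to $\rho\in(0,1)$: at $\rho\in\{0,1\}$ the measure $\P_\rho$ is a point mass, every Kawasaki derivative vanishes, and $\c(\rho;F)$, $\c(\rho)$, $\cc(\rho)$ are all $0$. Using $\xi\cdot x(\eta_x-\eta_0)=-\pi_{0,x}\ell_\xi$ together with Lemma~\ref{lem.Stationary}, for every $F\in\F_0^d$ one records
\[
	\xi\cdot\c(\rho;F)\xi=\sum_{i=1}^d\bracket{c_{0,e_i}\Ll(\pi_{0,e_i}\big(\ell_\xi+\textstyle\sum_{y\in\Zd}\tau_y(\xi\cdot F)\big)\Rr)^2}_\rho=\frac{1}{\vert\cu_m\vert}\sum_{b\in\overline{\cu_m^*}}\bracket{\tfrac12 c_b\Ll(\pi_b\big(\ell_\xi+\textstyle\sum_{y}\tau_y(\xi\cdot F)\big)\Rr)^2}_\rho,
\]
valid for every $m$; the first equality merges the $\pm e_i$ directions in \eqref{eq.defQuadra} via translation invariance, and the second is the stationarity of the bond energy (Lemma~\ref{lem.Stationary}(2); the formal sum is harmless since $\pi_b$ kills all but finitely many terms).

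For the identity $\c(\rho)=\cc(\rho)$, the inequality $\cc(\rho)\le\c(\rho)$ is routine: given $F$ supported in $\Lambda_\ell$, the truncation $w_m:=\sum_{y:\,\supp(\tau_y(\xi\cdot F))\subset\cu_m^-}\tau_y(\xi\cdot F)\in\F_0(\cu_m^-)$ is admissible in \eqref{eq.defC2} and agrees with $\sum_y\tau_y(\xi\cdot F)$ on every bond at distance $>\ell+\r$ from $\partial\cu_m$; the $O(\ell\,3^{(d-1)m})$ boundary bonds carry a bounded energy, contributing $O(3^{-m})$ after the factor $|\cu_m|^{-1}$, so $\tfrac12\xi\cdot\cc(\rho,\cu_m)\xi\le\tfrac12\xi\cdot\c(\rho;F)\xi+o(1)$; sending $m\to\infty$ and taking the infimum over $F$ yields $\cc(\rho)\le\c(\rho)$. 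The reverse inequality will follow from the quantitative estimate below, since $\c(\rho)\le\c(\rho;\Phi_L)\le\cc(\rho)+CL^{-\gamma}\id\to\cc(\rho)$.

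For the quantitative estimate, the lower bound $\c(\rho;\Phi_L)\ge\c(\rho)$ is immediate because $\Phi_L\in\F_0^d(\Lambda_L)$ is admissible in \eqref{eq.defC}. For the upper bound, set $g_\xi:=\sum_{i=1}^d\xi_i\,\phi_{\Lambda_L,e_i}\in\F_0(\Lambda_L^-)$, so that $\xi\cdot\Phi_L=\vert\Lambda_L\vert^{-1}g_\xi$ and the identity above combined with \eqref{eq.StatAvg} (Lemma~\ref{lem.Stationary}(3), applied with $f=g_\xi$) gives $\xi\cdot\c(\rho;\Phi_L)\xi\le\xi\cdot\mathcal M\xi$, where $\mathcal M=\mathcal M(\rho,L)$ is the symmetric matrix defined by $\tfrac12\xi\cdot\mathcal M\xi:=\vert\Lambda_L\vert^{-1}\sum_{b\in\overline{\Lambda_L^*}}\bracket{\tfrac12 c_b(\pi_b(\ell_\xi+g_\xi))^2}_\rho$. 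On the diagonal, $g_{e_i}=\phi_{\Lambda_L,e_i}$ is an optimizer of $\mu(\Lambda_L,e_i)$, so Proposition~\ref{prop.StrongCanonicalEnsemble} forces $\tfrac12 e_i\cdot\mathcal M e_i-\tfrac12 e_i\cdot\cc(\rho)e_i\le\mu(\Lambda_L,e_i)\le CL^{-\gamma_3}$, uniformly in $\rho$. On the other hand $g_\xi$ is admissible in \eqref{eq.defC2}, whence $\mathcal M\ge\cc(\rho,\Lambda_L)$, and Proposition~\ref{prop.GrandCanonicalEnsemble} gives $\cc(\rho,\Lambda_L)\ge\cc(\rho)-CL^{-\gamma_1}\id$. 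Thus $B:=\mathcal M-\cc(\rho)+CL^{-\gamma_1}\id$ is positive semidefinite with diagonal entries at most $2CL^{-\gamma_3}+CL^{-\gamma_1}$, so $\|B\|\le\Tr B\le d(2CL^{-\gamma_3}+CL^{-\gamma_1})$ and hence $\mathcal M\le\cc(\rho)+C'L^{-\gamma}\id$ with $\gamma:=\min\{\gamma_1,\gamma_3\}$. Combining, $0\le\c(\rho;\Phi_L)-\c(\rho)\le C'L^{-\gamma}\id$ uniformly in $\rho\in(0,1)$ (and trivially at $\rho\in\{0,1\}$), which is \eqref{eq.main1C_2} and, together with the previous paragraph, also closes $\c(\rho)=\cc(\rho)$.

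The only genuine obstacle is that $\mu(\Lambda_L,\cdot)$ is not a quadratic form in the slope, so the coordinate correctors $\phi_{\Lambda_L,e_i}$ need not assemble linearly into an optimal corrector for a general direction $\xi$; the glued corrector $g_\xi$ therefore yields a matrix $\mathcal M$ that is controlled by $\mu(\Lambda_L,e_i)$ only on the diagonal. This is precisely what the positive-semidefiniteness of $B$ repairs, upgrading diagonal control to an operator-norm bound. The boundary-layer estimates used in the $\cc\le\c$ comparison and the tracking of constants are routine.
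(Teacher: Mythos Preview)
Your proof is correct. It differs from the paper's in two places, and in one of them it is arguably more careful.

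For the inequality $\cc(\rho)\le\c(\rho)$, the paper does not truncate: it plugs $v_{p,K}=\ell_p+\sum_x p\cdot\tau_xF_K$ (with $F_K$ a near-minimizer of $\c(\rho)$) into the \emph{dual} functional $\nub_*(\rho,\Lambda_L,q)$, uses stationarity to evaluate the resulting per-bond average, and reads off $\tfrac12 q\cdot\D^{-1}(\rho)q\ge p\cdot q-\tfrac{1}{4\chi(\rho)}p\cdot\c(\rho)p$, which with $q=\D(\rho)p$ gives $\c(\rho)\ge\cc(\rho)$. Your truncation argument is more elementary and self-contained; the paper's route is shorter once the dual machinery of Section~\ref{sec.Sub} is already in hand.

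For the rate \eqref{eq.main1C_2}, the paper asserts $\xi\cdot\Phi_L=\tfrac{1}{|\Lambda_L|}\phi_{\Lambda_L,\xi}$ ``from the linear map $\xi\mapsto\phi_{\Lambda_L,\xi}$'' and then invokes $\mu(\Lambda_L,\xi)\le CL^{-\gamma_3}$ directly for every $\xi$. This linearity is clear for the explicit density-free corrector of Proposition~\ref{prop.rhoFreeCorrectorHomo}, but is not obvious for a generic optimizer of the inf--sup problem $\mu(\Lambda_L,\xi)$, which is exactly the issue you flag. Your remedy---controlling only the diagonal of $\mathcal M$ via $\mu(\Lambda_L,e_i)$ and then using $\mathcal M\ge\cc(\rho,\Lambda_L)\ge\cc(\rho)-CL^{-\gamma_1}\id$ together with the trace bound for the PSD matrix $B$---is a clean way to make the statement work as written, without choosing a specific (linear) corrector.

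One cosmetic slip: in your first displayed identity the rightmost expression carries an extra $\tfrac12$; the stationarity computation gives $\xi\cdot\c(\rho;F)\xi=|\cu_m|^{-1}\sum_{b\in\overline{\cu_m^*}}\bracket{c_b(\pi_b(\ell_\xi+\sum_y\tau_y(\xi\cdot F)))^2}_\rho$. This does not propagate, since you apply \eqref{eq.StatAvg} with the correct normalization afterwards.
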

\begin{proof}
	The proof is similar to the sandwich argument in \eqref{eq.CanonicalCompare} and can be divided the into three steps. Similar arguments can be found in \cite[Theorem~B.1]{bulk}.
	
	\textit{Step 1: $\c(\rho) \leq  \cc(\rho)$. } We recall \eqref{eq.defQuadra} that 
	\begin{align*}
		\xi \cdot \c(\rho; \Phi_{L}) \xi =  \sum_{i=1}^d \bracket{c_{0,e_i}\Ll(\xi \cdot \Ll\{ e_i(\eta_{e_i} - \eta_0) - \pi_{0,e_i}(\sum_{y \in \Zd} \tau_y \Phi_{L})\Rr\}\Rr)^2}_{\rho}.
	\end{align*}
	and notice the fact that 
	\begin{align*}
		\xi \cdot x(\eta_x - \eta_0) &= - \pi_{0,x} \ell_{\xi},\\
		\xi \cdot \Phi_{L} &= \frac{1}{\vert \Lambda_L \vert}\phi_{\Lambda_L, \xi},
	\end{align*}
	where the second identity comes from the linear map $\xi \mapsto \phi_{\Lambda_L, \xi}$. Therefore, we have 
	\begin{align*}
		\xi \cdot \c(\rho; \Phi_{L}) \xi &= \sum_{i=1}^d \bracket{c_{0,e_i}\Ll(\pi_{0,e_i}\Ll(\ell_\xi + \frac{1}{\vert \Lambda_L \vert} \sum_{y \in \Zd} \tau_y \phi_{\Lambda_L, \xi}\Rr)\Rr)^2}_{\rho} \\
		&\leq \frac{1}{\vert \Lambda_L \vert} \sum_{b \in \overline{\Lambda_L^*}} \bracket{c_{b} \Ll(\pi_{b} \Ll( \ell_\xi + \phi_{ \Lambda_L, \xi} \Rr)\Rr)^2}_\rho \\
		&\leq \xi \cdot \cc(\rho) \xi + \mu(\Lambda_L, \xi),
	\end{align*}
	where we apply \eqref{eq.StatAvg} from the first line to the second line and \eqref{eq.defmu} from the second line to the third line. Using the estimate \eqref{eq.StrongCanonicalEnsemble} about $\mu(\Lambda_L, \xi)$, we obtain an important inequality chain with $C, \gamma_3$ independent of $\rho \in [0,1]$ 
	\begin{align}\label{eq.Dchain}
		\xi \cdot \c(\rho) \xi \leq  \xi \cdot\c(\rho; \Phi_{L})\xi  \leq  \xi \cdot \cc(\rho) \xi + C L^{-\gamma_3} \vert \xi \vert^2.
	\end{align}
	We take the first one and the third one, and let $L \nearrow \infty$, and obtain the desired result 
	\begin{align*}
		\c(\rho) \leq \lim_{L \to \infty } \cc(\rho, \Lambda_L) = \cc(\rho).
	\end{align*}

	\textit{Step 2: $\c(\rho) \geq  \cc(\rho)$. } We pick $F_K$ as a sequence of local functions to approximate $\c(\rho)$, and compare $v_{p,K} = \ell_p + \sum_{x \in \Zd} p \cdot \tau_x F_K$ in the functional of $\nub_*(\rho, \Lambda_L, q)$
	\begin{align*}
		\nub_*(\rho, \Lambda_L, q) &= \frac{1}{2}q\cdot \D_*^{-1}(\rho,\Lambda_L)\cdot q \\
		&\geq  \frac{1}{2 \chi(\rho)\vert\Lambda\vert}\sum_{b\in \ovs{\Lambda} }  \bracket{ (\pi_b \ell_{q})(\pi_b v_{p,K}) - \frac{1}{2} c_b(\pi_b v_{p,K})^2}_{\rho}.
	\end{align*} 
	Here we pick $F_K \in \mcl F_0(\Lambda_L)$. Applying the stationarity of mapping in \eqref{eq.stationaryEnergy} and \eqref{eq.MeanZeroLocal}, we obtain that 
	\begin{align*}
		\frac{1}{2}q\cdot \D_*^{-1}(\rho,\Lambda_L) q \geq p \cdot q - \frac{1}{4\chi(\rho)} p \cdot \c(\rho;F_K) p.
	\end{align*}
	Letting $L,K \to \infty$, this yields 
	\begin{align*}
		\frac{1}{2}q\cdot \D^{-1}(\rho)q \geq p \cdot q - \frac{1}{4\chi(\rho)} p \cdot \c(\rho) p.
	\end{align*}
	By taking $q = \D(\rho)p$ and recall $2\chi(\rho)\D(\rho) = \cc(\rho)$ from \eqref{eq.defCLimit}, we conclude $\c(\rho) \geq  \cc(\rho)$.
	
	\textit{Step 3: Error estimate. } Once we identify that $\c(\rho) = \cc(\rho)$, we go back to \eqref{eq.Dchain} to obtain
	\begin{align*}
		\xi \cdot \c(\rho) \xi \leq  \xi \cdot\c(\rho; \Phi_{L})\xi  \leq  \xi \cdot \c(\rho) \xi + C L^{-\gamma_3} \vert \xi \vert^2.
	\end{align*}
	Since $C, \gamma_3$ come from Proposition~\ref{prop.StrongCanonicalEnsemble} and are independent of $\rho$, this concludes the estimate \eqref{eq.main1C_2} by setting $\gamma := \gamma_3$.
\end{proof}

\subsection{Proof of Theorem~\ref{thm.mainUniform} and Theorem~\ref{thm.main1}}\label{subsec.pfMain}
We summarize the proof of the main theorems in this paper.
\begin{proof}[Proof of Theorem~\ref{thm.mainUniform} and Theorem~\ref{thm.main1}]
	The finite-volume approximation defined in \eqref{eq.defC2} gives a subadditive quantity $\cc(\rho, \Lambda_L)$, which defines a limit $\cc(\rho)$ in \eqref{eq.defCLimit}. Using the dual quantity, we prove the convergence rate of this approximation in Proposition~\ref{prop.GrandCanonicalEnsemble} and Proposition~\ref{prop.CanonicalEnsemble}. In Lemma~\ref{lem.Stationary}, we identify that $\cc(\rho)$ coincides with $\c(\rho)$ defined in \eqref{eq.defC}, therefore Proposition~\ref{prop.GrandCanonicalEnsemble} together with Proposition~\ref{prop.CanonicalEnsemble} proves Theorem~\ref{thm.main1}. Finally, Proposition~\ref{prop.StrongCanonicalEnsemble} removes the dependence of density of the local corrector and Lemma~\ref{lem.Stationary} proves Theorem~\ref{thm.mainUniform}, where \eqref{eq.main1C_F} gives a concrete construction of the density-free local corrector.
\end{proof}

\subsection{Application: CLT variance in gradient replacement}\label{subsec.CLT}
This subsection is an application of the results in Theorems~\ref{thm.mainUniform} and ~\ref{thm.main1}. We will calculate the CLT variance in the gradient replacement, which corresponds to \cite[Section~5]{fuy}. Especially, the homogenization theory provides an alternative approach besides Varadhan's characterization of closed form (\cite[Theorem~4.1]{fuy}), together with a convergence rate.

Let us make our setting clear.
For any finite set $\Lambda \subset \Zd$ and $M\in \N:
0\le M \le |\La|$, we denote by
\begin{align*}
	\X_{\Lambda} := \{0,1\}^\Lambda, \qquad \X_{\Lambda,M} := \Ll\{\eta\in \X_{\Lambda}; \sum_{x\in \La}\eta_x=M \Rr\}.
\end{align*}
Using the notation \eqref{eq.Restriction}, every configuration
$\eta \in \X=\{0,1\}^{\Zd}$ can be decomposed into two parts 
\begin{align}\label{eq.defConfigDecom}
	\eta = \xi \cdot \zeta, \qquad \xi := \eta_{\vert \Lambda} \in \X_{\Lambda}, \qquad \zeta := \eta_{\vert \Lambda^c} \in \X_{\Lambda^c}.
\end{align}
We let $\mathcal{L}_{\Lambda, \zeta}$ be a restriction of the generator $\mathcal{L}$
defined in \eqref{eq.Generator} on $\Lambda$ with the exterior condition
$\zeta$
\begin{align}\label{eq.defGeneratorExt}
	\mathcal{L}_{\Lambda, \zeta} f(\xi) := \sum_{b \in \Lambda^*} c_b(\xi\cdot\zeta)
	\pi_b f(\xi),
\end{align}
for a function $f$ on $\mathcal{X}_\La$. Let $\P_{\La,M,\zeta}$ be the uniform probability measure on 
$\mathcal{X}_{\La,M}$ given $\zeta$, and  we denote by $\lan\,\cdot\,\ran_{\La,M,\zeta}$
the expectation with respect to  $\P_{\La,M,\zeta}$; see Section~\ref{subsubsec.proba}.

Using the decomposition $\eta = \xi \cdot \zeta$ from \eqref{eq.defConfigDecom} on the domain $\Lambda_L = (-L/2, L/2)^d \cap \Zd$, we define the 
$\R^d$-valued functions $A_L, B_{L,\zeta}$ and $H_{L,\zeta,F}$ of $\xi$
(so those of $\eta$) as
\begin{equation}\label{eq.defALBLHL}
	\begin{split}
		A_L(\xi) &:=  \frac12 \sum_{x,y\in \Lambda_L:|x-y|=1} (\xi_y - \xi_x)(y-x)
		= - \sum_{b \in \Lambda^*} \pi_b \Ll(\sum_{x \in \Lambda_L} x\xi_x \Rr),  \\
		B_{L,\zeta}(\xi) &:= \frac12 \sum_{x,y\in \Lambda_L:|x-y|=1} W_{x,y}(\eta) (y-x)  
		= -\mathcal{L}_{\Lambda_L,\zeta} \Big( \sum_{x\in\Lambda_L} x\xi_x\Big),\\
		H_{L,\zeta,F}(\xi) &:= \sum_{x\in \Lambda_{L-r(F)-1}} \t_x (\mathcal{L} F)(\eta) 
		= \mathcal{L}_{\Lambda_L,\zeta}
		\bigg( \sum_{x\in \Lambda_{L-r(F)-1}} \t_x  F\bigg)(\xi).
	\end{split}
\end{equation}
Here $B_{L,\zeta}$
denotes the microscopic current with 
\begin{align}\label{eq.current}
	W_{x,y}(\eta) := c_{x,y}(\eta)(\eta_y - \eta_x),
\end{align}
and $F = (F_i)_{i=1}^d \in \mathcal{F}_0^d$ 
in $H_{L,\zeta,F}$ will be taken as a correction function. The correction function $F$ should be local compared to $\Lambda_L$, so we require $r(F) \leq L-1$, where $r(F)$ is the diameter of the support of $F$
\begin{align*}
	r(F) := \min \big\{r \in \N_+: F \in \mathcal{F}_0^d(\La_r)\big\}.
\end{align*}

Given $0\le M \le |\Lambda_L|$ and $\zeta\in \mathcal{X}_{\Lambda_L^c}
=\{0,1\}^{\Lambda_L^c}$, for functions $f,g:\mathcal{X}_{\La_L,M} \to \R$
such that $\bracket{f}_{\Lambda_L, M, \zeta} = \bracket{g}_{\Lambda_L, M, \zeta} = 0$, their CLT covariance and variance are respectively defined by
\begin{align}	\label{eq:CLT-v}
	\begin{aligned}
		\Delta_{L, M, \zeta}[f, g] &:= \bracket{f (- \mathcal{L}_{\Lambda_L, \zeta})^{-1}  g}_{\Lambda_L, M, \zeta},  \\
		\Delta_{L, M, \zeta}[f] &:= \Delta_{L, M, \zeta}[f,f].
	\end{aligned}
\end{align}

In the gradient replacement argument, the heuristic is that, when $F$ is well-chosen, we should have 
\begin{align*}
	D(M/\vert \Lambda_L\vert)A_L \simeq (B_{L,\zeta}-H_{L,\zeta,F}),
\end{align*}
where $D(\rho)= \c(\rho)/2\chi(\rho)$  is the diffusion matrix
defined in \eqref{eq.Einstein}. The error in this replacement is small in weak sense, so we aim to estimate the following  CLT variance 
\begin{multline}\label{eq.defQLF}
	Q_L(F;q,M,\zeta) \\
	:= 
	\frac{1}{|\Lambda_L|}\Delta_{L,M,\zeta} \Ll[q\cdot 
	\{ D(M/\vert \Lambda_L\vert)A_L-(B_{L,\zeta}-H_{L,\zeta,F})\}\Rr] 
	-  \frac12 q\cdot R(M/\vert \Lambda_L\vert;F)q,
\end{multline}
with $q \in B_1$  and $R(\rho;F) = \c(\rho;F) - \c(\rho)$ as the error defined in \eqref{eq.defR}.


Proposition~\ref{prop:7.1} below provides the convergence rate of $Q_L(F;q,M,\zeta) $, which is a quantitative refinement of the result shown in  \cite[Corollary~5.1]{fuy} and is a key for proving the hydrodynamic limit for non-gradient models. Actually, the quantity 
$Q_L(F;q,M,\zeta)$ can be divided into three pieces:
\begin{equation}\label{eq.Q6}
	\begin{split}
		Q^{(a)}_L(\tilde q, M, \zeta) &:= 	\frac{1}{|\Lambda_L|} \Delta_{L, M, \zeta}\Ll[\tilde q \cdot A_L\Rr] - 2 (\tilde q \cdot {\mathbf c}^{-1}(\hat{\rho}) \tilde q) \chi^2(\hat{\rho}),		
		\\		
		Q_L^{(b)}(F;q,M,\zeta) &: =   	\frac{1}{|\Lambda_L|}\Delta_{L,M,\zeta} \Ll[q\cdot(B_{L,\zeta}-H_{L,\zeta,F})\Rr]
		- \frac12 q\cdot {\mathbf c}(\hat{\rho};F)q,  \\  
		Q_L^{(c)}(F;q,\tilde q, M,\zeta) &: = 	\frac{1}{|\Lambda_L|} \Delta_{L,M,\zeta} \Ll[\tilde{q}\cdot A_L, 
		q\cdot(B_{L,\zeta}-H_{L,\zeta,F})\Rr]
		-  (q\cdot \tilde{q}) \chi(\hat{\rho}),			
	\end{split}
\end{equation}
with the shorthand notation $\hat{\rho} := M/\vert \Lambda_L\vert$,
for the empirical density.

\begin{proposition}\label{prop:7.1}
	By choosing $\tilde{q}=D(\hat{\rho})q$ in \eqref{eq.Q6}, 
	the following identity is valid for $Q_L(F;q,M,\zeta)$
	\begin{multline}\label{eq.QLIdentity}
		Q_L(F;q,M,\zeta) \\ = Q^{(a)}_L(D(\hat{\rho}) q, M, \zeta)
		+Q_L^{(b)}(F;q,M,\zeta) -2 Q_L^{(c)}(F;q, D(\hat{\rho})q, M,\zeta).
	\end{multline}
	
	Moreover, there exists a finite positive constant $C=C(d, \lambda, \r)$ 
	such that the quantity $Q_L(F;q,M,\zeta)$ defined in \eqref{eq.defQLF} satisfies the following estimate for all $L \in \N_+$ and $F \in \mcl F_0^d$
	\begin{multline}\label{eq.QLFBound}
		Q_L(F) :=\sup_{ \substack{q \in B_1,0 \leq M \leq \vert \Lambda_L\vert, \\ \zeta \in \mathcal{X}_{\La_L^c}}} \vert 	Q_L(F;q,M,\zeta) \vert \\
		\le C \Big(L^{-\gamma_2}+
		r(F)^d (1+r(F)^{2d} \|F\|_\infty^2) L^{-1} +
		r(F)^{2d} \| F\|_\infty L^{-d} +L^{-1}\Big).
	\end{multline}
	Here the positive exponent $\gamma_2$ is the same one as in Proposition~\ref{prop.CanonicalEnsemble} and Theorem~\ref{thm.main1}.
\end{proposition}

We study at first the error estimate for each term in \eqref{eq.QLIdentity}.

\subsubsection{CLT variance of $A_L$}
For the CLT variance of $A_L$, it was shown in Theorem 5.1 of \cite{fuy}
that
\begin{align} \label{4.1.1}
	\lim_{L, M \to \infty, M/|\La_L| \to \rho} 
	|\La_L|^{-1} \Delta_{L,M,\zeta} (\tilde q\cdot A_L)
	= 2 (\tilde q\cdot \c^{-1}(\rho)\tilde q) \chi^2(\rho),
\end{align}
uniformly in $\rho\in [0,1]$ and $\eta\in \mathcal{X}$;
recall $\zeta=\eta|_{\La_L^c}$. 
This is the key in the gradient replacement and shown based on a
characterization of closed forms given in Corollary
4.1 of \cite{fuy}.  

The computation of $Q_L^{(a)}$, as  the error in \eqref{4.1.1},
is indeed deeply related to the dual quantity studied in \eqref{eq.DualCanonical}, 
and this makes Varadhan's lemma avoidable.  
It is, in a sense, hidden in the variational formula for  $\Da_*(\Lambda_L, M)$.
In other words, our dual computation well fits to computing the CLT variance 
of  $A_L$.  This matches with the dual computation in (5.7) in the proof of 
Theorem 5.1 in \cite{fuy},
but our computation is presented at higher level on the configuration space.
We can write the error $Q_L^{(a)}$ of the CLT variance of $A_L$ defined by 
\eqref{eq.Q6} concretely in terms of the dual quantity as in \eqref{eq:Q6LMq} below.
It is important that we can give such an exact formula before taking the limit 
so that one can directly apply our estimate to obtain its convergence rate. 
Note that, to prove the hydrodynamic limit formulated in 
Theorem~\ref{thm.HLQuant},
we need the uniformity in the density $\rho \in [0,1]$ as in Corollary 5.1
in \cite{fuy}.

The decay estimate for $Q_L^{(a)}(q,M,\zeta)$ is stated as follows.
This is one of the main achievements of this paper.

\begin{proposition}\label{Theorem 5.1}
	There exists a finite positive constant $C=C(d, \lambda, \r)$ such that
	\begin{align}  \label{eq:Q(4)}
		\sup_{q \in B_1,0 \leq M \leq \vert \Lambda_L\vert, \zeta \in \mathcal{X}_{\La_L^c}} |Q_L^{(a)}(q,M,\zeta)|\le CL^{-\gamma_2}.
	\end{align}
	Here the positive exponent $\gamma_2$ is the same one as in Proposition~\ref{prop.CanonicalEnsemble} and Theorem~\ref{thm.main1}.
\end{proposition}

\begin{proof}	
	Let us implement the discussion in previous sections to $Q_L^{(a)}$. From \eqref{eq.defALBLHL} and \eqref{eq.defPiAffine}, the quantity $q \cdot A_L$ satisfies the identity
	\begin{align*}
		q \cdot A_L =  - \sum_{ b= \{x,y\} \in (\Lambda_L)^*} \pi_b \ell_q.
	\end{align*}
	Then by variational formula of the dual quantity $\widehat{D}_*(\Lambda_L, M)$ defined in \eqref{eq.DualCanonical}, the optimizer $u(\Lambda_L, q)$  satisfies that
	\begin{align*}
		\sum_{ b \in (\Lambda_L)^*} c_b \pi_b u(\Lambda_L, q) = 	\sum_{ b \in (\Lambda_L)^*} \pi_b \ell_q. 
	\end{align*}
	Recall the generator $\mathcal{L}_{\Lambda_L, \zeta}$ defined in \eqref{eq.defGeneratorExt}, thus the quantity $(- \mathcal{L}_{\Lambda_L, \zeta})^{-1} (q \cdot A_L)$ is nothing but $u(\Lambda_L, q)$ defined in (1) of Proposition~\ref{prop.Element} (see also Remark~\ref{rmk.DualCanonical}) because of the following identity 	\footnote{Here we abuse the notation because the generator defined in \eqref{eq.defGeneratorExt}, i.e. (5.1) in \cite{fuy}, is slightly different from \eqref{eq.GeneratorDomain}. Nevertheless, this tiny difference does no harm; see Remark~\ref{rmk.Domain}.}
	\begin{align}\label{eq.recover_u}
		(- \mathcal{L}_{\Lambda_L, \zeta})^{-1} (q \cdot A_L) = (\mathcal{L}_{\Lambda_L, \zeta})^{-1}\Ll(\sum_{ b \in (\Lambda_L)^*} \pi_b \ell_q\Rr) = u(\Lambda_L, q).
	\end{align}
	Therefore, we put this result back to $\Delta_{L, M, \zeta}[q \cdot A_L]$
	and obtain
	\begin{align*}
		\Delta_{L, M, \zeta}[q \cdot A_L] &=  \bracket{(q \cdot A_L) (- \mathcal{L}_{\Lambda_L, \zeta})^{-1}  (q \cdot A_L)}_{\Lambda_L, M, \zeta}\\
		&=\bracket{(q \cdot A_L)  u(\Lambda_L, q)}_{\Lambda_L, M, \zeta}\\
		&= - \sum_{ b \in (\Lambda_L)^*}  \bracket{(\pi_b \ell_q) u(\Lambda_L, q)}_{\Lambda_L, M, \zeta}\\
		&= \frac{1}{2} \sum_{ b \in (\Lambda_L)^*}  \bracket{(\pi_b \ell_q) (\pi_b u(\Lambda_L, q))}_{\Lambda_L, M, \zeta}\\
		&= \frac{1}{2} \sum_{ b \in (\Lambda_L)^*}  \bracket{c_b (\pi_b u(\Lambda_L, q))^2}_{\Lambda_L, M, \zeta}.
	\end{align*}
	Here from the third line to the fourth line, we use the identity $\pi_b \pi_b = -2 \pi_b$ and integration by part. 
	From the fourth line to the fifth line, we use the variational formula of $\widehat{D}_*(\Lambda_L, M)$ once again rewritten similar to \eqref{eq.bilinearNu}. This concludes that 
	\begin{align*}
		\vert \Lambda_L \vert^{-1} \Delta_{L, M, \zeta}[q \cdot A_L] = \Ll(2\chi(\hat{\rho})\Rr) \Ll( \frac{1}{2} q \cdot \widehat{D}^{-1}_*(\Lambda_L, M) q\Rr) = \chi(\hat{\rho}) q \cdot \widehat{D}^{-1}_*(\Lambda_L, M) q .
	\end{align*}
	We put this result back to \eqref{eq.Q6}, and obtain that 
	\begin{align} \label{eq:Q6LMq}
		Q^{(a)}_L(q, M, \zeta) & = \chi(\hat{\rho}) q \cdot \Ll(\widehat{D}^{-1}_*(\Lambda_L, M) - D^{-1}(\hat{\rho})\Rr) q  \\
		& = \chi(\hat{\rho}) q \cdot \widehat{D}^{-1}_*(\Lambda_L, M)
		\Ll(D(\hat{\rho}) - \widehat{D}_*(\Lambda_L, M) \Rr) D^{-1}(\hat{\rho}) q
		\notag\\
		&= \frac{1}{2} q \cdot \widehat{D}^{-1}_*(\Lambda_L, M)
		\Ll(\c(\hat{\rho}) - \ca_*(\Lambda_L, M) \Rr) D^{-1}(\hat{\rho}) q. \notag
	\end{align}
	Apply the quantitative homogenization of canonical ensemble in 
	\eqref{eq.main1B} (see also Proposition~\ref{prop.CanonicalEnsemble} and $\cc(\rho) = \c(\rho)$ proved in Proposition~\ref{prop.ProofMain}), 
	noting the uniform positivity of matrices $\widehat{D}_*(\Lambda_L, M)$
	and $D(\hat{\rho})$ in (2) of Proposition~\ref{prop.Element}, we obtain the desired result \eqref{eq:Q(4)}.
\end{proof}

\subsubsection{CLT variance of $B_{L,\zeta}-H_{L,\zeta,F}$ and covariance between
	$A_L$ and $B_{L,\zeta}-H_{L,\zeta,F}$}
\label{sec:5.3-D}	
The quantitative estimates of $Q_L^{(b)}$ and $Q_L^{(c)}$ are given in 
the following lemma, which is a refinement of Proposition 5.1 of \cite{fuy}.

\begin{lemma}\label{Proposition 5.1}
	There exists a finite positive constant $C=C(d, \lambda, \r)$ such that
	\begin{align}
		&  |Q_L^{(b)}(F;q,M,\zeta)| \le  C r(F)^d (1+r(F)^{2d}\|F\|_{\infty}^2) L^{-1},  \label{5.1} 
		\\  
		&  |Q_L^{(c)}(F;q,\tilde{q},M,\zeta)| \le C r(F)^{2d} \| F\|_\infty L^{-d} +CL^{-1},
		\label{5.2}
	\end{align}
	uniformly in  $q, \tilde{q} \in \R^d: |q| = |\tilde{q}|=1$, $0\le M\le |\La_L|$
	and  $\zeta\in\mathcal{X}_{\La_L^c}$.
\end{lemma}
\begin{proof}
	The proof of this lemma is exactly the same as
	that of Proposition 5.2 of \cite{Fu24}, which discusses
	the Kawasaki dynamics adding the effect of creation and annihilation of 
	particles called the Glauber effect, but the Glauber part 
	does not play any role in its proof. We give a sketch of it here. Actually, we can consider a function 
	\begin{align}\label{eq.vqF}
		v(\Lambda_L,q,F) :=  \sum_{x\in\Lambda_L} (q \cdot x) \xi_x + \sum_{x\in \Lambda_{L-r(F)-1}} q \cdot \t_x F,
	\end{align}
	then the term $B_{L,\zeta}-H_{L,\zeta,F}$ can be represented as  
	\begin{align*}
		q\cdot(B_{L,\zeta}-H_{L,\zeta,F}) = -\mathcal{L}_{\Lambda_L,\zeta} v(\Lambda_L,q,F).
	\end{align*}
	We can thus reformulate $Q_L^{(b)}(F;q,M,\zeta)$ in \eqref{eq.Q6} as 
	\begin{align*}
		Q_L^{(b)}(F;q,M,\zeta) = \frac{1}{|\Lambda_L|} \bracket{v(\Lambda_L,q,F)\Ll(-\mathcal{L}_{\Lambda_L,\zeta} v(\Lambda_L,q,F)\Rr)}_{\Lambda_L, M, \zeta} - \frac12 q\cdot {\mathbf c}(\hat{\rho};F)q.
	\end{align*}
	Then the calculation is quite similar as the subadditive quantity in \eqref{eq.defC2}, and the error comes from the local equivalence of ensembles and the boundary layer.
	
	Concerning the term $Q_L^{(c)}(F;q,\tilde{q},M,\zeta)$, we also have a representation using \eqref{eq.vqF} and \eqref{eq.recover_u}
	\begin{align*}
		Q_L^{(c)}(F;q,\tilde{q},M,\zeta) &= \frac{1}{|\Lambda_L|} \bracket{u(\Lambda_L,q)\Ll(-\mathcal{L}_{\Lambda_L,\zeta} v(\Lambda_L,\tilde{q},F)\Rr)}_{\Lambda_L, M, \zeta}
		-  (q\cdot \tilde{q}) \chi(\hat{\rho})\\
		&= \frac{1}{2 |\Lambda_L|} \sum_{ b \in (\Lambda_L)^*}  \bracket{(\pi_b \ell_q) (\pi_b v(\Lambda_L,\tilde{q},F))}_{\Lambda_L, M, \zeta} -  (q\cdot \tilde{q}) \chi(\hat{\rho}).
	\end{align*}
	The second line comes from the variational formula of $u(\Lambda_L,q)$; see \eqref{eq.defNu}. Since $v(\Lambda_L,\tilde{q},F) \in \ell_{q, \Lambda_L} + \F_0^{-}(\Lambda_L)$, the first term will yield $(q\cdot \tilde{q}) \chi(\hat{\rho})$ under $\P_{\hat{\rho}}$; see \eqref{eq.MeanZeroLocal}. The error just comes from the local equivalence of ensembles.

\end{proof}

\subsubsection{CLT variance of $D(\hat{\rho}) A_{L} -(B_{L,\zeta}-H_{L,\zeta,F})$}

Proposition~\ref{prop:7.1} is shown by combining Proposition \ref{Theorem 5.1} and
Lemma \ref{Proposition 5.1}, and gives a quantitative refinement of
Corollary 5.1 of \cite{fuy}.

\begin{proof}[Proof of Proposition~\ref{prop:7.1}]
	Omitting $\hat\rho$, we simply write $D, \c,
	\c(F)$, $R(F)$ and $\chi$ for $D(\hat\rho)$, $\c(\hat\rho)$,
	$\c(\hat\rho;F)$, $R(\hat\rho;F)$ and $\chi(\hat\rho)$, respectively.
	Then, noting that $D$ is symmetric, we have
	\begin{align*}
		|\La_L&|^{-1}  \Delta_{L,M,\zeta} \big[q\cdot 
		\{ DA_{L} -(B_{L,\zeta}-H_{L,\zeta,F})\}\big] \\
		& = |\La_L|^{-1} \Big\{ \Delta_{L,M,\zeta} \big[Dq\cdot A_L \big]
		-2 \Delta_{L,M,\zeta} \big[Dq\cdot A_L,
		q\cdot (B_{L,\zeta}-H_{L,\zeta,F})  \big]  \\
		& \hskip 40mm
		+ \Delta_{L,M,\zeta} \big[q\cdot (B_{L,\zeta}-H_{L,\zeta,F}) \big] \Big\}
		\\
		& = \big\{Q_L^{(a)}(Dq, M,\zeta) + 2 \big(Dq\cdot \c^{-1}Dq\big) 
		\, \chi^2\big\}  - 2 \big\{ Q_L^{(c)}(F;q,Dq, M,\zeta) 
		+ \big(q\cdot D q\big) \, \chi
		\big\} \\
		&  \quad + \big\{ Q_L^{(b)}(F;q, M,\zeta) + \frac12 \big(q\cdot \c(F)
		q\big)\big\}.
	\end{align*}
	However, by $D=  \c/2\chi$ and $\c(F)- \c
	=R(F)$, the term except for $Q^{(a)}, Q^{(b)}, Q^{(c)}$ on the right-hand side
	is rewritten as
	\begin{align*}
		&2 \big(Dq\cdot \c^{-1}Dq\big) 
		\, \chi^2 -2 \big(q\cdot Dq\big) \, \chi
		+ \frac12 \big(q\cdot \c(F)q\big)\\
		&= \frac{1}{2} q \cdot \c q - q \cdot \c q + \frac12 \big(q\cdot \c(F)q\big) \\
		&= \frac12 q \cdot R(F)q.
	\end{align*}
	This proves the identity \eqref{eq.QLIdentity}. Afterwards, the bound for $Q_L(F)$ follows from \eqref{eq:Q(4)} (combined
	with the boundedness of $D(\rho)$),	\eqref{5.1} and \eqref{5.2} by changing the constant $C>0$ if necessary.
\end{proof}

\section{Quantitative hydrodynamic limit}\label{sec.hy}

The main task of this section is to establish a quantitative hydrodynamic limit for non-gradient models. We introduce the settings in Section~\ref{subsec.hydro_setting}, then reduce Theorem~\ref{thm.HLQuant} to the relative entropy in Section~\ref{subsec.hydro_from_entropy}. The main part of the proof follows \cite{fuy}, and we reformulate a self-contained version in Section~\ref{subsec.relative_entropy}. The main improvement is a quantitative version of gradient replacement, which does not require the characterization of the closed forms (Varadhan's lemma), but brings a convergence rate. Some preparation about the CLT variance is done in Section~\ref{subsec.CLT}.

\subsection{Setting of hydrodynamic limit}\label{subsec.hydro_setting}

\subsubsection{Dynamics}
As briefly explained in Section 1.1,  the hydrodynamic limit was studied in 
\cite{fuy} for the non-gradient Kawasaki dynamics on the lattice torus 
$\T_N^d$.  It is described as a Markov process $\eta^N(t) = \{\eta_x^N(t), 
x \in \T_N^d\}$ on the configuration space $\mathcal{X}_N=\{0,1\}^{\T_N^d}$,
governed by the infinitesimal generator
\begin{align*}
	\mathcal{L}_N = N^2\mathcal{L},
\end{align*}
where $\mathcal{L}$ is the operator defined 
by \eqref{eq.Generator} replacing ${\mathbb Z}^d$ with $\T_N^d$, i.e., 
\begin{align}\label{eq.Generator-torus}
	\L := \sum_{b \in (\T_N^d)^*} c_b(\eta) \pi_b = \frac{1}{2}\sum_{x,y \in \T_N^d: \vert x - y\vert = 1} c_{x,y}(\eta) \pi_{x,y}.
\end{align}
The hydrodynamic limit aims to study
the asymptotic behavior as  $N \to \infty$ of
the macroscopic empirical mass distribution $\rho^N(t, \d v)$ of $\eta^N(t)$
defined by \eqref{eq.empricalMeasure}, i.e.,
\begin{align*}
	\rho^N(t, \d v) = N^{-d} \sum_{x \in \Td_N} \eta^N_x(t) \delta_{x/N}(\d v), 
\end{align*}
for $v \in \Td$,
and the nonlinear diffusion equation \eqref{eq.nonlinear}, i.e., 
\begin{align*}
	\partial_t \rho(t, v) = \nabla \cdot(D(\rho(t, v)) \nabla \rho(t, v)), \qquad (t, v) \in \R_+ \times \Td,
\end{align*}
was derived in the limit.  We always assume Hypothesis \ref{hyp} for the
jump rates $\{c_b=c_{x,y}\}$ and Hypothesis \ref{hyp:1.2} for the initial value
$\rho_0$ of  \eqref{eq.nonlinear}, given in Section \ref{sec:1}.

\subsubsection{Minimizing sequence $\{\Phi_n\}$ for $\c(\rho)$}
\label{sec:7.1.5}

The following proposition is a summary from the results in 
Lemma \ref{lem.supNorm} and Proposition~\ref{prop.ProofMain}
(see also Theorem~\ref{thm.mainUniform}). This proposition is a refinement of \eqref{eq.RQualitative}, originally shown in \cite{fuy}.
Recall  the variational formula \eqref{eq.defC}
for $\c(\rho)$ and \eqref{eq.defR}  for $R(\rho;F)$.
We denote by $r(F)$ the diameter of support of $F$ that,
\begin{align}\label{eq.defSupport}
	r(F) := \min \big\{L \in \N_+: F \in \mathcal{F}_0^d(\La_L)\big\}.
\end{align}

\begin{proposition} \label{prop:5.4}
	There exists an exponent $\gamma(d,\lambda, \r) > 0$ (same as in Theorem~\ref{thm.mainUniform}) and a positive constant $C(d, \lambda, \r) < \infty$, and a sequence of local functions $\Phi_n(\eta)\in 
	\mathcal{F}_0^d$ on $\mathcal{X}$, such that
	\begin{equation} \label{eq:Fn}
		r(\Phi_n)\le n, \quad \sup_{\rho\in [0,1]} | R(\rho; \Phi_n) | \le C n^{- \gamma},
		\quad \|\Phi_n\|_\infty \le C n^2 \log n.
	\end{equation}
\end{proposition}

\subsubsection{Relative entropy and local equilibrium state}
Let $h_N(f|\psi)$ be the relative entropy per volume for two probability
densities $f$ and $\psi$ with respect to $\nu^N\equiv \nu^N_{1/2}$ defined as 
\begin{align}\label{eq.defRelativeEntropy}
	h_N(f|\psi) := N^{-d}\int_{\mathcal{X}_N} f \log(f/\psi) \, \d\nu^N.
\end{align}
Here $\nu_{1/2}^N$ is the Bernoulli product measure of parameter $\frac{1}{2}$ on $\X_N$.

The local equilibrium state of second order approximation
$\psi_t^N$ is defined by
\begin{align}  \label{eq:2psit}
	\psi^N_t(\eta)  & \equiv \psi_{\la(t,\cdot),F_N}^N(\eta)  \\  & 
	:= \frac{1}{Z_{\la(t,\cdot),F_N}} \exp\bigg\{ \sum_{x\in\T_N^d} \la(t,x/N)
	\eta_x + \frac1{N} \sum_{x\in\T_N^d} \nabla \la(t,x/N) \cdot \tau_x F_N(\eta)
	\bigg\},   \notag
\end{align}
where $Z_{\la(t,\cdot),F_N}$ is the normalization constant with respect to
the Bernoulli product measure $\nu^N$, and $ 
\nabla\la=(\partial_{v_i}\la)_{i=1}^d$ is the spatial gradient, and the local function $F_N \in \mathcal{F}_0^d$ serves a correction term.  

Let $f_t^N$ stand for
the density of the distribution of $\eta^N(t)$ on $\mathcal{X}_N$ under $\nu^N$, and we consider its relative entropy with respect to the corrected local equilibrium state 
\begin{equation}  \label{eq:hNt}
	h_N(t) := h_N(f^N_t|\psi^N_t).
\end{equation}
Recall \eqref{eq.defRelativeEntropy} for the definition of $h_N(\cdot|\cdot)$.

The density $\psi_t^N$ can be seen as a good  local equilibrium approximation of $\rho(t,v)$ only when the functions $\lambda(t,v)$ and $F_N$ are well-chosen. Let $\bar\la: (0,1)\to\R$ be a function defined by 
\begin{equation}  \label{eq.bar_la}
	\bar{\la}(\rho) = \log \{\rho/(1-\rho)\}, \quad \rho \in (0,1).  
\end{equation}  
We also consider $\bar{\rho}: \R\to (0,1)$ as the inverse function of the mapping $ \rho \mapsto \bar{\la}(\rho)$, i.e.,
\begin{equation}\label{eq.bar_rho}
	\bar{\rho}(\la) = e^\la / (e^\la +1), \quad \la \in \R.  
\end{equation}

\begin{condition}\label{condition.good}
	Recall the solution $\rho(t,v)$ of the hydrodynamic equation \eqref{eq.nonlinear} and $\Phi_n$ defined in Proposition~\ref{prop:5.4}. We define the following choice of functions
	\begin{align}\label{eq.choice_la}
		\la(t,v) = \bar\la(\rho(t,v)), \qquad 
	\end{align}
	and
	\begin{align}\label{eq.choice_FN}
		F_N = \Phi_{n(N)},  \qquad n(N)
		:=\lfloor N^{s_1} \rfloor, \qquad s_1 \in \Ll(0, \frac{1}{8(d+2)^2}\Rr).
	\end{align}
\end{condition}
Recall Hypothesis \ref{hyp:1.2} for the initial value $\rho_0$ of the equation \eqref{eq.nonlinear} (i.e., smooth and $0<\rho_0(v)<1$) assumed for  
Theorem~\ref{thm.HLQuant}.  Then, by the maximum principle, we have 
$0<\rho(t,v)<1$; see below Hypothesis \ref{hyp:1.2}.  Therefore, $\la(t,v)$
is well-defined.

Roughly, $F_N$ aims to minimize $\sup_{\rho\in [0,1]} | R(\rho; \Phi_n) |$, and its support is mesoscopic compared to $N$. The choice of scale $s_1$ will be clear later (see \eqref{eq.s123} below).

In the following paragraphs, because several estimates are more general, we usually assume the approximation $\psi_t^N$ without specific choices of $\lambda(t, v), F_N$. The Condition~\ref{condition.good} enters the proof in the end.

\subsubsection{Probability measures}
Following the convention $\psi^N_t(\eta) \equiv \psi_{\la(t,\cdot),F_N}^N(\eta)$ in \eqref{eq:2psit}, we identify $\psi_{t,0}^N=\psi_{\la(t,\cdot),0}^N$ by taking $F_N=0$. Then we define the following probability measures on $\mathcal{X}_N$, 
\begin{align}\label{eq.measures_psi}
	\d P^{\psi_t^N} := \psi_t^N \, \d\nu^N, \qquad \d P^{\psi_{t,0}^N} := \psi_{t,0}^N \d\nu^N, \qquad P \equiv P^{f_t^N} := f_t^N \, \d \nu^N,
\end{align}
and the expectations under them by $E^{\psi_t^N}$, $E^{\psi_{t,0}^N}$ and $E \equiv E^{f_t^N}$, respectively. Notice that the measure $P$ and the expectation $E$ are canonical as they are associated to $(\eta^N_t)_{t \geq 0}$.

\subsubsection{Sobolev spaces on $\Td$}
Let us recall the Sobolev spaces $H^{\a} \equiv H^{\a}(\T^d)$ on $\T^d$
for $\a\in \R$.  Associated with the Laplacian $\De$ on $\T^d$,
we have eigenfunctions $e_{\bf n}$ of $-\De$ indexed by
${\bf n} = (n_1,\ldots,n_d) \in {\mathbb Z}^d$
\begin{align*}
	-\De e_{\bf n} = \la_{\bf n} e_{\bf n}, \qquad \la_{\bf n} := \pi^2|{\bf n}|^2 = \pi^2 \sum_{i=1}^d n_i^2.
\end{align*}
Let $\{e_{\bf n}\}_{{\bf n}\in {\mathbb Z}^d}$ form a complete orthonormal system
of $L^2(\T^d)$; see Section 2.2 of \cite{Fu24} for the concrete expression
of $e_{\bf n}$ using the trigonometric functions. Then, the norm $\|g\|_{H^\a}$ is defined first for $g\in C^\infty(\T^d)$ by
\begin{align}\label{eq:Ha}
	\|g\|_{H^{\a}}^2 := \lan (-\De+1)^{\a}g,g\ran
	= \sum_{{\bf n}\in \Zd} (\la_{\bf n}+1)^{\a} \lan g, e_{\bf n}\ran^2,
\end{align}
where $\lan\cdot,\cdot\ran$ denotes the inner product of $L^2(\T^d)$.
The Sobolev space $H^\a(\T^d)$ is the completion of $C^\infty(\T^d)$
under the norm $\|\cdot\|_{H^\a}$.

\subsection{Hydrodynamic limit via relative entropy}\label{subsec.hydro_from_entropy}

As shown in \cite{fuy}, the key to prove the hydrodynamic limit is the decay of $h_N(t)$ defined in \eqref{eq:hNt}.

\begin{proposition}  \label{Corollary 2.1}
	Under Condition~\ref{condition.good}, we assume that 
	$h_N(0) \le C_0N^{-\k_0}$ for some $C_0>0$ and $\k_0>0$,
	and Hypothesis \ref{hyp:1.2} for the initial value $\rho_0$.  Then, for every $T > 0$, there exist two finite positive constants
	$C_{\ref{Corollary 2.1}}(C_0, T, \rho_0, \k_0, d,\lambda, \r)$ and $\k_{\ref{Corollary 2.1}}(\k_0, d,\lambda, \r)$ such that
	\begin{align}\label{eq.entropy_decay}
		\forall t \in [0,T], \qquad h_N(t) \le C_{\ref{Corollary 2.1}} N^{-\k_{\ref{Corollary 2.1}}}.
	\end{align}
\end{proposition}

The proof of Proposition~\ref{Corollary 2.1} is postponed to Section~\ref{subsec.relative_entropy}. In this part, we admit it and deduce Theorem~\ref{thm.HLQuant} via the concentration inequality and the entropy inequality.

We prepare at first two lemmas. 

\begin{lemma} \label{lem:Add-2}
	Under the same setting as Theorem~\ref{thm.HLQuant}, for every $T>0$, there exist two finite positive constants
	$C_{\ref{lem:Add-2}}(C_0, \k_0, T)$ and $\k_{\ref{lem:Add-2}}(C_0, \k_0)$ such that
	$$
	\sup_{t\in [0,T]}
	E\Big[|\lan \rho^N(t),\phi\ran- \lan \rho(t),\phi\ran_N|^2\Big]
	\le C_{\ref{lem:Add-2}} \|\phi\|_\infty^2 N^{-\k_{\ref{lem:Add-2}}},
	$$
	where the two inner products are defined as
	\begin{equation}  \label{eq:2rhophiN}
		\begin{split}
			\lan \rho(t),\phi\ran_N &:= \frac1{N^d} \sum_{x\in \T_N^d}
			\rho(t,x/N) \phi(x/N), \\
			\lan \rho^N(t),\phi\ran &:= \int_{\T^d} \phi(v)\rho^N(t,\d v).
		\end{split}
	\end{equation}
\end{lemma}

\begin{proof}
	By the entropy inequality \cite[Appendix~1.8]{kipnis1998scaling}, we have
	\begin{equation}  \label{eq:3-ent-Q}
		E\Big[|\lan \rho^N(t),\phi\ran- \lan \rho(t),\phi\ran_N|^2\Big] 
		\le \frac{4}{N^d} \log E^{\psi_{t,0}^N}\Big[ e^{X_N^2/4}\Big] 
		+ 4 h_N(f_t^N|\psi_{t,0}^N),
	\end{equation}
	where $X_N$ is defined by
	\begin{align*}
		X_N :=  N^{d/2} \{ \lan \rho^N,\phi\ran- \lan \rho(t),\phi\ran_N\}
		=  N^{-d/2} \sum_{x\in \T_N^d} \zeta_x \phi(x/N),
	\end{align*}
	and $\zeta_x := \eta_x -\rho(t,x/N)$.
	
	\smallskip
	To estimate the entropy in \eqref{eq:3-ent-Q}, 
	we compare $\psi_t^N = \psi_{\la(t,\cdot),F_N}^N$ and $\psi_{t,0}^N=
	\psi_{\la(t,\cdot),0}^N$ recalling \eqref{eq:2psit}:
	$$
	\psi_{\la(t,\cdot),F_N}^N = \frac{Z_{\la(t,\cdot),0}}{Z_{\la(t,\cdot),F_N}}
	e^{R(\eta)} \psi_{\la(t,\cdot),0}^N, 
	$$
	where
	$$
	R(\eta) = \frac1N \sum_{x\in \T_N^d}  \nabla \la(t,x/N) \cdot \t_x F_N(\eta).
	$$
	We have a simple bound
	\begin{align}  \label{eq:2.6-a}
		|R(\eta)| \le N^{d-1} \|\nabla\la(t)||_\infty \|F_N\|_\infty
		\le C_2 N^{d-\k_1}, 
	\end{align}
	for some $0<\k_1<1$ (indeed, $\k_1=1-2s_1-\e, \e>0$), 
	by $\sup_{t\in [0,T]}\|\nabla\la(t)
	\|_\infty<\infty$ and Proposition \ref{prop:5.4}. Recall
	that $F_N = \Phi_{n(N)}$  with the choice of parameters in \eqref{eq.choice_FN}. This also implies that
	\begin{align*}
		0< \frac{Z_{\la(t,\cdot),0}}{Z_{\la(t,\cdot),F_N}} \le e^{\|R\|_\infty}.
	\end{align*}
	Thus, in \eqref{eq:3-ent-Q}, we can estimate it as
	\begin{equation}  \label{eq:1.3-Q}
		\begin{split}
			h_N(f_t^N|\psi_{t,0}^N) & = N^{-d}\int_{\mathcal{X}_N} \Big( \log \frac{f_t^N}{\psi_t^N} 
			+ \log \frac{\psi_t^N}{\psi_{t,0}^N} \Big) f_t^N d\nu^N  \\
			& \le h_N(f_t^N|\psi_t^N) + 2 N^{-d} \|R\|_\infty \\ 
			& \le  C_{\ref{Corollary 2.1}} N^{-\k_{\ref{Corollary 2.1}}} + 2C_2 N^{-\k_1}.
		\end{split}
	\end{equation}
	In the last line, we make use of the  the assumption  $h_N(0) \le C_0N^{-\k_0}$ in Theorem \ref{thm.HLQuant}, together with Proposition~\ref{Corollary 2.1}.
	
	\smallskip
	On the other hand, for the first term in \eqref{eq:3-ent-Q},
	since $\{\zeta_x\}$ are independent and mean $0$
	under $P^{\psi_{t,0}^N}$, by Hoeffding's inequality (see Theorem 2.8 in
	\cite{boucheron2013concentration}, take $N^{d/2}s$ for $t$ and note $|\zeta_x\phi(\tfrac{x}N)|
	\le \|\phi\|_\infty$), we obtain that
	\begin{align*}
		P^{\psi_{t,0}^N}\big(|X_N|\ge s\big) \le 2 e^{-s^2/(2\|\phi\|_\infty^2)},
	\end{align*}
	for all $s>0$ and $\phi\not\equiv 0$.  Thus, we have
	\begin{align}  \label{eq:1.5-Q}
		E^{\psi_{t,0}^N}\Big[e^{X_N^2/4}\Big]
		& = \sum_{i=0}^\infty E^{\psi_{t,0}^N} 
		\Big[e^{X_N^2/4}; i \le |X_N|< i+1\Big]  \\
		& \le \sum_{i=0}^\infty e^{(i+1)^2/4}  2 e^{-i^2/(2\|\phi\|_\infty^2)}
		=: C_3 <\infty,  \notag
	\end{align}
	uniformly for $\phi$ with $\|\phi\|_\infty=1$.
	
	Summarizing \eqref{eq:3-ent-Q}, \eqref{eq:1.3-Q}, \eqref{eq:1.5-Q}, we have
	\begin{equation*}
		E\Big[|\lan \rho^N(t),\phi\ran- \lan \rho(t),\phi\ran_N|^2\Big] 
		\le \frac{4\log C_3}{N^d} + 4 \big( C_{\ref{Corollary 2.1}} N^{-\k_{\ref{Corollary 2.1}}} 
		+ 2 C_2 N^{-\k_1} \big) 
		\le C_4 N^{-\k_{\ref{Corollary 2.1}} \wedge \k_1},
	\end{equation*}
	where the constant $C_4 = C_{4,T}>0$ is uniform in $\phi$ such that
	$\|\phi\|_\infty= 1$. 
	
	For general $\phi (\not\equiv 0)$, taking $\phi/\|\phi\|_\infty$ for
	$\phi$, we obtain
	\begin{equation*}
		E\Big[|\lan \rho^N(t),\phi\ran- \lan \rho(t),\phi\ran_N|^2\Big] 
		\le C_4 \|\phi\|_\infty^2 N^{-\k_{\ref{lem:Add-2}}},
	\end{equation*}
	by writing $\k_{\ref{lem:Add-2}} := \k_{\ref{Corollary 2.1}} \wedge \k_1$.
	This concludes the proof of the lemma.
\end{proof}

Next, for $\b\in (0,1]$, let us consider the H\"older norms 
$\|g\|_{C^\b} = \|g\|_{C^\b(\T^d)}$ of a function $g$ on $\T^d$:
\begin{equation*}  
	\|g\|_{C^\b} := \|g\|_\infty + \sup_{v_1\not=v_2} \frac{|g(v_1)-g(v_2)|}{|v_1-v_2|^\b},
\end{equation*}
where $|v|\in [0,\sqrt{d}/2]$ is  the Euclidean norm
defined for $v\in \T^d$ modulo $1$ in each component.
The next lemma is elementary and we refer to \cite[Lemma~2.3]{Fu24} for its proof.

\begin{lemma} \label{lem:Add-3}
	For every $\b\in (0,1]$ and functions $\rho, \phi \in C^\b(\T^d)$,
	we have
	$$
	|\lan \rho,\phi\ran_N -\lan \rho,\phi\ran|
	\le d^{\b/2} (2N)^{-\b} \|\rho\cdot\phi\|_{C^\b(\T^d)}.
	$$
\end{lemma}

We are at the position to give the proof of Theorem \ref{thm.HLQuant}.
\begin{proof}[Proof of Theorem \ref{thm.HLQuant} via Proposition~\ref{Corollary 2.1}]
	By \eqref{eq:Ha}, one can rewrite and estimate as
	\begin{align}  \label{eq:2.10-A}
		E\Big[\|\rho^N(t)-\rho(t)\|_{H^{-\a}}^2\Big]
		& = \sum_{{\bf n}\in \Zd} (\la_{\bf n}+1)^{-\a} 
		E\Big[\lan \rho^N(t)-\rho(t), e_{\bf n}\ran^2\Big] \\
		&  
		\le  2 \sum_{{\bf n}\in \Zd} (\la_{\bf n}+1)^{-\a} 
		E\Big[\big( \lan \rho^N(t), e_{\bf n}\ran-
		\lan \rho(t), e_{\bf n}\ran_N\big)^2\Big] \notag \\
		&\quad + 2 \sum_{{\bf n}\in \Zd} (\la_{\bf n}+1)^{-\a} \big( \lan \rho(t), e_{\bf n}\ran_N-
		\lan \rho(t), e_{\bf n}\ran\big)^2.  \notag
	\end{align}
	By Lemma \ref{lem:Add-2} and noting 
	$\| e_{\bf n}\|_\infty \le (\sqrt{2})^d$
	for all ${\bf n}$, the first term on the {\rhs} of \eqref{eq:2.10-A} is bounded by
	\begin{align}  \label{eq:thm1-1}
		C_{\ref{lem:Add-2}}  N^{-\k_{\ref{lem:Add-2}}} \sum_{{\bf n}\in \Zd} (\la_{\bf n}+1)^{-\a} 
		\le C_2 N^{-\k_{\ref{lem:Add-2}}},  
	\end{align}
	with $C_2=C_{2,T,\a}>0$ if $\a>d/2$.
	On the other hand, by Lemma \ref{lem:Add-3}, the second term  on the {\rhs} of \eqref{eq:2.10-A}  is bounded by
	\begin{align}  \label{eq:2-F}
		2 \sum_{{\bf n}\in \Zd} (\la_{\bf n}+1)^{-\a} d^\b (2N)^{-2\b }
		\|\rho(t)e_{\bf n}\|_{C^\b(\T^d)}^2,
	\end{align}
	for every $\b\in (0,1]$.
	However, since $\|\rho(t)e_{\bf n}\|_{C^\b(\T^d)} \le 
	\|\rho(t)\|_{C^\b(\T^d)} \|e_{\bf n}\|_{C^\b(\T^d)}$,
	\begin{align*}
		\|e_{\bf n}\|_{C^\b(\T^d)}\le \|e_{\bf n}\|_\infty
		+ \|\nabla e_{\bf n}\|_\infty^\b (2 \|e_{\bf n}\|_\infty)^{1-\b}
		\le C_5(|{\bf n}|^\b +1), 
	\end{align*}
	and $\sup_{t\in [0,T]} \|\rho(t)\|_{C^\b(\T^d)}<\infty$ due to the smoothness 
	of $\rho(t)$, the series \eqref{eq:2-F} is bounded by
	\begin{align}  \label{eq:second}
		C_6 N^{-2\b} \sum_{{\bf n}\in \Zd} (\la_{\bf n}+1)^{-\a} 
		(|{\bf n}|^\b +1)^2 \le C_7 N^{-2\b},
	\end{align}
	if $\a> \b+d/2$ holds. Notice the assumption $\a>d/2$, we can find $\b>0$ small enough satisfying this condition.
	
	Thus, from \eqref{eq:2.10-A}, \eqref{eq:thm1-1} and
	\eqref{eq:second}, we obtain the conclusion 
	of Theorem~\ref{thm.HLQuant}  by taking
	$\k = \k_{\ref{lem:Add-2}} \wedge 2\b$ with $\b \in (0, (\a-d/2)\wedge 1)$.
\end{proof}

Our method and estimates also apply to the non-gradient 
Glauber--Kawasaki dynamics.  
This is discussed in \cite{Fu24}.

\subsection{Decay of relative entropy}
\label{subsec.relative_entropy}

We prove Proposition~\ref{Corollary 2.1} in this subsection. The sketch follows \cite{fuy}, and we outline the procedure with the highlight of improvement. The main idea of proof is to calculate the time derivative $\partial_t h_N(t)$. It is done in Section~\ref{subsubsec.derivative_ht}, then the error bound is discussed in the the following paragraphs, which mainly consists of 3 parts.
\begin{itemize}
	\item Section~\ref{subsubsec.one_blcok} implements a refined one-block estimate, where the functions are evaluated using the local particle density.
	\item Section~\ref{subsubsec.grad_replacement} is devoted to the gradient replacement. This part is the main improvement, and makes use of the results from homogenization. Especially, the inputs are Theorem~\ref{thm.mainUniform} (Proposition~\ref{prop:5.4}) and the CLT variance in Proposition~\ref{prop:7.1}. 
	\item Section~\ref{subsubsec.LDP} studies the error of large deviation.
\end{itemize}
Finally, Gronwall's inequality is applied in Section~\ref{subsubsec.conclusion}, and the parameters of several mesoscopic scales are well chosen to conclude the proof. 

The proofs of several lemmas and proposition are postponed to
Appendix \ref{sec:4-C}. Note that ``the two-blocks estimate'' is unnecessary in the relative entropy method.

\subsubsection{Time derivative of the relative entropy $h_N(t)$}  
\label{subsubsec.derivative_ht}

We follow at first \cite[Lemma 3.1]{fuy} to develop the time derivative $\partial_t h_N(t)$. The error was of order $o(1)$, and we aim to refine it the present work. 

Recall $\nabla \la = (\partial_{v_i} \la)_{1\le i \le d}$.  Moreover,
we denote 
\begin{align}\label{eq.def_la}
	\nabla^2 \la = (\partial_{v_i}\partial_{v_j}\la)_{1 \le i,j \le d}, \qquad \nabla^3 \la = (\partial_{v_i}\partial_{v_j}\partial_{v_k} 
	\la)_{1 \le i,j,k \le d}, \qquad \dot{\la} = \partial \la/\partial t.
\end{align}
and set
\begin{align}\label{eq.def_la_norm}
	\| \nabla \la\|_{3,\infty} := \|\nabla\la\|_\infty + \|\nabla^2\la\|_\infty
	+ \|\nabla^3\la\|_\infty.
\end{align}

\begin{lemma} 
	\label{Lemma 3.1}
	Assume $\la \in C^{1,3}([0,T]\times\T^d)$, then we have
	\begin{align}\label{eq.derivative_h}
		\partial_t h_N(t) \le E^{f^N_t}[\Om_1 + \Om_2]
		+ N^{-d} \sum_{x\in\T_N^d} \dot{\la}(t,x/N)\rho(t,x/N) + Q_N^{En}(\la,F_N),
	\end{align}
	where  $\Om_1 = \Om_1(\eta)$  and  $\Om_2 = \Om_2(\eta)$  are
	defined respectively by
	\begin{align}  \label{eq:Om1}
		\Om_1(\eta) := & - \frac{N^{1-d}}2 \sum_{x,y \in \T_N^d: x\sim y} c_{x,y} \Om_{x,y}, 
		\\   \label{eq:Om2}
		\Om_2(\eta) := & - N^{-d} \sum_{x\in\T_N^d} \dot{\la}(t,x/N) \eta_x 
		+ \frac{N^{-d}}4 \sum_{x,y \in \T_N^d:x\sim y} c_{x,y} \Om_{x,y}^2   
		\\  \notag
		& -\frac{N^{-d}}4 \sum_{x,y \in \T_N^d:x\sim y} c_{x,y} \sum_{i,j=1}^d
		\partial_{v_i}\partial_{v_j} \la(t,x/N) (y_i -x_i)(y_j-x_j)(\eta_y-\eta_x) 
		\\  \notag
		&   + \frac{N^{-d}}2 \sum_{x,y \in \T_N^d:x\sim y} c_{x,y} \sum_{i,j=1}^d
		\partial_{v_i}\partial_{v_j}\la(t,x/N)  \pi_{x,y}\Ll(\sum_{z\in\T_N^d}(z_j-x_j)
		\tau_z F_{N,i}\Rr).
	\end{align}
	Here $F_{N,i}$ is the $i$-th component of $F_N$ and $\Om_{x,y}$ is defined as follows
	\begin{align}\label{eq:Omxy}
		\Om_{x,y}(\eta) &: =    \nabla\la(t,x/N) \cdot
		\bigg((y - x)(\eta_y-\eta_x)- \pi_{x,y}\Big(\sum_{z\in\T_N^d}
		\tau_z F_N\Big)\bigg).
	\end{align}
	There exists a finite positive constant $C(d,\lambda)$ depending on the dimension and the uniform ellipticity, such that the following estimate holds for the error term $Q_N^{En}(\la,F_N)$ 
	\begin{align}  \label{eq:R(la,F)}
		|Q_N^{En}(\la,F_N)| \le &
		CN^{-1} \Big(1+ \|\nabla\la\|_{3,\infty}\Big)^3
		\Big(1+r(F_N)^{d+2}\|F_N\|_\infty\Big)^3 \\
		& + CN^{-1} \|\nabla \dot{\la}\|_\infty \|F_N\|_\infty + 
		CN^{-1} \|\dot{\la}\|_\infty \|\nabla\la\|_\infty r(F_N)^{d}\|F_N\|_\infty,  \notag
	\end{align}
	as long as the functions $\la$ and $F_N$ satisfy the bound
	\begin{align}  \label{eq:2.F} 
		N^{-1} r(F_N)^d \|\nabla\la\|_\infty \| F_N\|_\infty \le 1.
	\end{align}
\end{lemma}
\begin{remark}
	The constant $1$ on the right-hand side of \eqref{eq:2.F} may be replaced
	by any other constant.
\end{remark}

The proof is postponed to Appendix~\ref{sec:4-C}. At least, we read the decay of $|Q_N^{En}(\la,F_N)|$ in \eqref{eq:R(la,F)} when $N \to \infty$. We then analyze $\Omega_1, \Omega_2$ in the following paragraphs.

\subsubsection{$\Omega_2$ by the refined one-block estimate}
\label{subsubsec.one_blcok}

We now formulate the ``refined'' one-block estimate, that is, a fine error 
estimate for the replacement
of the microscopic function $\Om_2$ of order $O(1)$ 
by its ensemble average.

We define the local sample average of $\eta$ over the box $x+\Lambda_{L}$
\begin{align}  \label{eq:3.saeta}
	\bar\eta_x^L 
	:= \frac{1}{|\La_L|} \sum_{y \in x +\Lambda_{L}} \eta_y.
\end{align}
Here the $x +\Lambda_{L}$ is regarded as its embedding in $\Td_N$ according to the context, and we take $L$ as an odd integer for convenience. Then, for every local function $\Om\in \mathcal{F}_0$, we define the error function
in the replacement of $\t_x\Om$ by its ensemble average at density
$\bar\eta_x^L$ as
\begin{align}  \label{eq:hatOm}
	\hat{\Om}_x^L(\eta) := \t_x \Om(\eta) - \lan \Om \ran(\bar\eta_x^L),
\end{align}
where we denote $\lan \,\cdot\, \ran (\rho)$ by $\lan \,\cdot\, \ran_\rho
= \Er[\,\cdot\,]$, $\rho\in [0,1]$. 


The error of the one-block estimate is mall thanks to the  local ergodicity with respect to the integral of $P^{f^N_t}, t\in [0,T]$
\begin{proposition}[Refined one-block estimate]
	\label{One-block}
	Let $a_{t,x}$ be deterministic coefficients which satisfy
	\begin{equation} \label{BG:assn:atx}  
		\forall t \in [0,T],  x \in \T_N^d, \qquad |a_{t,x}| \leq M.
	\end{equation}
	We assume
	\begin{equation} \label{3.26-A}
		L^{(d+2)/2} \le \de N
	\end{equation}
	for some $\de>0$ small enough (the choice of $\de$ is determined later in
	Lemma \ref{l:L10:6})  and for all $N$ large enough.  Then, for every exponent
	$\epsilon \in (0,1)$, there exists  $C=C(T,\epsilon) >0$ such that for 
	every $\Om\in \mathcal{F}_0$ satisfying (recall the definition \eqref{eq.defSupport})
	\begin{equation} \label{3.9-C}
		r(\Om) \le L^\epsilon,
	\end{equation}
	we have the following estimate for all $t\in [0,T]$ and $N$ large enough
	\begin{multline}\label{One-block:est}
		E \bigg[ \bigg | \int_0^t  N^{-d}
		\sum_{x \in \T_N^d} a_{s,x} \hat{\Om}^L_{x}(\eta^N(s)) \, \d s\bigg | \bigg] \\
		\leq C M (\|\Om\|_\infty +1)\big(N^{-1} L^{(d+2)/2} 
		+ L^{-d} r(\Om)^d \big).
	\end{multline}
\end{proposition}

The proof of Proposition \ref{One-block} is given in Appendix \ref{sec:4-C}
as a combination of a known estimate, which is formulated
in Lemma \ref{l:L10:6}, and the equivalence of ensembles with precise
convergence rate \eqref{eq:3EE}.

Proposition \ref{One-block} can be applied to the specific term $\Om_2$ 
obtained in Lemma~\ref{Lemma 3.1} to replace $\int_0^t E^{f^N_s}[\Om_2] \, \d s$.  The following corollary
summarizes the error for this replacement. Roughly, \eqref{3.Q-Om21} also exhibits a decay when $N \to \infty$.

\begin{corollary}
	\label{One-block-cor}
	For $t\in [0,T]$, the integral $\int_0^t E^{f^N_s}[\Om_2] \, \d s$ can be replaced by
	\begin{align}  \label{eq:hatc-D}
		\int_0^t E^{f^N_s} & \bigg[N^{-d} \sum_{x\in\T_N^d} \Big\{ -\dot{\la}(s,x/N)
		\bar{\eta}_{x}^L  \\
		& \phantom{\sum_{x\in\Ga_N}} 
		+ \frac12 \left(\nabla\la(s,x/N) \cdot
		\c(\bar{\eta}_{x}^L;F_N)\nabla\la(s,x/N)\right)
		\Big\} \bigg] \, \d s,  \notag
	\end{align}
	where $\c(\rho;F_N)$ is defined in \eqref{eq.defQuadra}. Moreover, given a constant $\epsilon\in (0,1)$, and assume the following condition for the function $F_N\in \mathcal{F}_0^d$ (with $\r>0$ as the range of the rate $c_{x,y}$ in Hypothesis~\ref{hyp}) 
	\begin{equation} \label{3.12-C}
		r(F_N) +  \r +1 \le L^\epsilon,
	\end{equation}
	then there exists a positive finite constant $C=C({T,\|\la\|_\infty}, \epsilon)$ such that the error for this replacement is dominated by
	\begin{align}  \label{3.Q-Om21}
		& Q_{N,L}^{\Om_2,1}(\la,F_N):= 
		C\big(N^{-1}L^{(d+2)/2} + L^{-d}(1+r(F_N)^d)\big)  \\
		& \qquad \times
		\Big( \|\dot{\la}\|_\infty+ \|\nabla\la\|_\infty^2 (1+r(F_N)^{d}\|F_N\|_\infty)^2
		+ \|\nabla^2\la\|_\infty (1+r(F_N)^{d+1}\|F_N\|_\infty)\Big).
		\notag
	\end{align}
	
\end{corollary}

\begin{proof}
	We apply Proposition \ref{One-block} to each term of $\Om_2$.  
	First, note that the ensemble averages of the third and fourth terms of 
	$\Om_2$ vanish:
	$$
	\lan c_{x,y}(\eta_x-\eta_y)\ran_\rho=0, \quad \lan c_{x,y}\pi_{x,y}\Phi\ran_\rho=0
	$$
	for all $\Phi=\Phi(\eta)\in \mathcal{F}_0$. 
	
	Then, \eqref{3.Q-Om21} is obtained by gathering all errors
	given by \eqref{One-block:est} for each term of $\Om_2$.
	Indeed, take $M=\|\dot{\la}\|_\infty$ for the first term of $\Om_2$, 
	$M(\| \Om\|_\infty +1) \le C \|\nabla\la\|_\infty^2 (1+r(F_N)^{d}\|F_N\|_\infty)^2$
	for the second term, $M(\| \Om\|_\infty +1) \le C \|\nabla^2\la\|_\infty$ 
	for the third term, and
	$M(\| \Om\|_\infty +1) \le C \|\nabla^2\la\|_\infty(1+r(F_N)^{d+1}\|F_N\|_\infty)$
	for the fourth term.
	
	The condition \eqref{3.12-C} ensures \eqref{3.9-C} for each choice of $\Om$.
	In \eqref{3.Q-Om21}, $r(F_N)^d$ needs to be $(r(F_N) + \r+1)^d$, but
	this difference is absorbed to the constant $C$ by changing it.
\end{proof}

\subsubsection{$\Omega_1$ by the gradient replacement}
\label{subsubsec.grad_replacement}

This part is a highlight of the non-gradient and quantitative method.
The term  $\Om_1$ in Lemma \ref{Lemma 3.1} looks like of the order $O(N)$.  
We need the gradient replacement
to eliminate the diverging factor in this term, with proper error estimates;
see Proposition \ref{Theorem 3.2} and Lemma \ref{Lemma 3.4}.
The main inputs comes from Section~\ref{subsec.CLT}, where we state the decay estimates
for the CLT variances.  In particular,  Propositions \ref{Theorem 5.1} and
\ref{prop:5.4} are new and essential to show the quantitative result.

The following proposition  is a refinement of Theorem 3.2 of \cite{fuy}. We recall that $A_L=A_L(\xi)$
is the $\R^d$-valued function defined by \eqref{eq.defALBLHL},
and  \eqref{eq.defR} for $R(\rho;F) = \c(\rho;F)  - \c(\rho)$,
and \eqref{eq.defC} and \eqref{eq.defQuadra}  for $\c(\rho)$ and
$\c(\rho;F)$, respectively.

\begin{proposition}[Gradient replacement]
	\label{Theorem 3.2}
	For every $T>0$, there exists a positive constant $C(d, \r, T) < \infty$,  such that the following estimates hold for all $t\in [0,T]$ and $\b > 0$:
	\begin{align*}
		\int_0^t &  E^{f^N_s} \!  \Bigg[\Om_1 +  N^{1-d}
		\sum_{x\in \T_N^d}  \nabla\la(s,x/N) \cdot  D(\bar{\eta}_x^L)
		L^{-d} \t_x A_L    \\ 
		&  \qquad\qquad  -\frac{\b d N^{-d}}{2}  
		\sum_{x\in\T_N^d} \nabla\la(s,x/N) \cdot
		R(\bar{\eta}_x^L;F_N) \nabla\la(s,x/N) \Bigg]  
		\, \d s \\
		&   \le 
		\frac{C}{\b} + \b^2 Q_{N,L}^{(1)}(\la,F_N)
		+  \b Q_L^{(2)}(\la,F_N),
	\end{align*}
	and the upper bound for $ Q_{N,L}^{(1)}(\la,F_N)$
	\begin{equation}  \label{3.Q1}
		| Q_{N,L}^{(1)}(\la,F_N)| \le C N^{-1} L^{(2d+4)} \|\nabla\la\|_\infty^3
		(1+ r(F_N)^{3d}\|F_N\|_\infty^3),
	\end{equation}
	and the domination for $Q_L^{(2)}(\la,F_N)$
	\begin{equation}  \label{eq:3.26}
		|Q_L^{(2)}(\la,F_N)| \le dT \|\nabla\la\|_\infty^2 Q_L(F_N).
	\end{equation}
	Here the upper bound $Q_L(F_N)$ appears in the computation 
	of the CLT variance in Proposition~\ref{prop:7.1}.
\end{proposition}

To prove Proposition~\ref{Theorem 3.2}, the first step is to reduce a non-equilibrium problem 
into a static problem under the canonical equilibrium measure, so we can make use of the results in Section~\ref{subsec.CLT}. We need the following key lemma, which is sometimes called Kipnis--Varadhan estimate or It\^o--Tanaka trick. It is a result similar to Lemmas 3.3 (the case ${\mathbf U}(\rho)=0$)
and 6.1 of \cite{fuy}, but providing a fine error estimate.

Recall the notation $\mathcal{X}_\La, \mathcal{X}_{\La,M}, \P_{\La,M,\zeta} $
and $\lan\,\cdot\,\ran_{\La,M,\zeta}$ introduced at the beginning of Section
\ref{subsec.CLT}, and $\L_{\La_L,\zeta}$ as the generator defined by \eqref{eq.defGeneratorExt}.  The following lemma will be used for the proof of Proposition \ref{Theorem 3.2} taking $n=d$ and for the proof of Lemma \ref{Lemma 3.4} taking $n=1$.

\begin{lemma}
	\label{Lemma 3.3}
	Let  $n\in \N$, $J(t,v) = \{J_i(t,v)\}_{i=1}^n \in C^\infty([0,T]\times\T^d, \R^n)$,  and  ${\mathbf U}(\rho) = 
	\{ {\mathbf U}_{ij}(\rho)\}_{1\le i,j \le n} \in C([0,1],\R^{n \times n})$, and	$G(\eta) = \{ G_i(\eta)\}_{i=1}^n\in \mathcal{F}_0^n$ be functions satisfying
	\begin{align*}
		\forall \zeta\in \mathcal{X}_{\La_L^c},  0 \le M \le |\La_L|, \qquad \lan G \ran_{\La_L,M,\zeta} = 0.
	\end{align*}  
	There exists a positive constant $C(d, \r, T) < \infty$
	such that for every $t\in [0,T]$ and $\b > 0$, we have an estimate
	\begin{align*}
		& \int_0^t  E^{f_s^N} \bigg[ N^{1-d}
		\sum_{x\in\T_N^d} J(s,x/N) \cdot \t_x G(\eta)
		- d\b N^{-d} \sum_{x\in\T_N^d} J(s,x/N) \cdot {\mathbf U}(\bar{\eta}_x^L)
		J(s,x/N)  \bigg] \, \d s \\
		&\le d \b\, t \sup_{|q| \le \|J\|_\infty } \sup_{0 \le M \le |\La_L|,\zeta\in \mathcal{X}_{\La_L^c}} 
		\Big( \, |\La_L| \big\lan (q\cdot G) \,
		(-\L_{\La_L,\zeta})^{-1} (q\cdot G), \big\ran_{\La_L,M,\zeta}  
		- q \cdot {\mathbf U}(M/ |\La_L|)q \Big) \\
		& \qquad + \frac{C}{\b} + \b^2 Q_{N,L}^{(1)}(J,G).
	\end{align*}
	The error term $Q_{N,L}^{(1)}(J,G)$ has a bound
	\begin{align}  \label{eq:3QNell}
		|Q_{N,L}^{(1)}(J,G)| 
		\le C N^{-1} L^{(2d+4)} \| G\|_\infty^3 \| J\|_\infty^3.
	\end{align}
\end{lemma}

Lemma \ref{Lemma 3.3} is a combination of essentially known results,
providing fine error estimates, so the proof is postponed to 
Appendix \ref{sec:4-C}. 

\begin{proof}[Proof of Proposition \ref{Theorem 3.2}]
	
	To rewrite the function in the expectation in Proposition \ref{Theorem 3.2},
	we recall three $\R^d$-valued functions  $A_{L}(\xi)$, 
	$B_{L,\zeta}(\xi)$, and $H_{L,\zeta,F}(\xi)$ of 
	$\eta=\xi\cdot\zeta$ introduced
	in \eqref{eq.defALBLHL} and  $\xi = \eta|_{\La_L}$ and $\zeta = \eta|_{\La_L^c}$.
	
	We now apply Lemma \ref{Lemma 3.3} taking
	$n = d$  and
	\begin{align}  \label{eq:3-G}
		\begin{aligned}
			J(t,v) & = \nabla \la(t,v), \\
			G(\eta) & = 
			L^{-d} \left\{D(\bar{\eta}_L) A_L(\xi) - B_{L,\zeta}(\xi) + H_{L,\zeta,F_N}(\xi) \right\},  \\
			{\mathbf U}(\rho) & = \frac12\left( \c(\rho;F_N) - \c(\rho)\right) = \frac12 R(\rho;F_N).
		\end{aligned}
	\end{align}
	Note that, under the above choice, the expressions under
	$\int_0^t E^{f^N_s}[\cdots]\d s$ in Proposition \ref{Theorem 3.2} and
	Lemma \ref{Lemma 3.3} coincide, by recalling that $\Om_1$ is defined 
	by \eqref{eq:Om1} and \eqref{eq:Omxy}.

	Then, we have $\| G\|_\infty \le C (1+r(F_N)^{d}\|F_N\|_\infty)$.
	Indeed, the function
	$D=D(\rho)$ is bounded (recall \eqref{eq:1.9}),
	$\| A_L\|_\infty \le C L^d$, $\| B_{L,\zeta} \|_\infty \le C L^d$
	since $W_{x,y}$ is bounded, and from
	\begin{align*}
		H_{L,\zeta,F_N} = \sum_{\{x,y\}\in \La_L^*} c_{xy}(\xi\cdot\zeta) \pi_{xy}
		\Big( \sum_{z\in \La_{L-r(F_N)-1}} \t_z F_N\Big),
	\end{align*}
	we have
	\begin{align*}
		\| H_{L,\zeta,F_N} \|_\infty \le C L^d r(F_N)^d \| F_N\|_\infty. 
	\end{align*}
	Therefore, by \eqref{eq:3QNell}, the error $Q_{N,L}^{(1)}(J,G)$ with $J$ and $G$ determined as above has the estimate 
	\eqref{3.Q1}.
	
	Thus, the proof of Proposition \ref{Theorem 3.2} is concluded 
	with the help of Proposition~\ref{prop:7.1}, by observing that
	\begin{multline}  \label{eq:4.11-P}
		Q_L^{(2)}(\|J\|_\infty,F_N) \\
		:=
		d T \sup_{|q| \le \|J\|_\infty } \sup_{0 \le M \le |\La_L|,\zeta\in \mathcal{X}_{\La_L^c}} 
		\Big\{ L^d \lan (q\cdot G), 
		(-\L_{\La_L,\zeta})^{-1} (q\cdot G) \ran_{\La_L,M,\zeta}
		- q \cdot {\mathbf U}(M/L^d)q \Big\}  
	\end{multline}
	appearing from Lemma  \ref{Lemma 3.3}
	is bounded by  $ d T \| J\|_\infty^2 Q_L(F_N)$ with $Q_L(F_N)$ given in 
	Proposition~\ref{prop:7.1}.  Taking $J=\nabla\la$ and
	denoting $Q_L^{(2)}(\la,F_N) := \| \nabla\la\|_\infty^2 Q_L(F_N)$, 
	we obtain \eqref{eq:3.26} and conclude the proof of Proposition \ref{Theorem 3.2}.
\end{proof}

In the first displayed formula in Proposition \ref{Theorem 3.2}, the term
next to $\Om_1$ still looks like diverging order $O(N)$
from the front factor $N^{1-d}$. In the following lemma, we evaluate it and reduce it into another 
term of order $O(1)$.
\begin{lemma}
	\label{Lemma 3.4}
	Set
	\begin{align*}
		Q_{N,L}^{(3)}(\la) :=
		\int_0^t E^{f^N_s} &  \bigg[
		N^{1-d} \sum_{x\in \T_N^d}  \nabla \la(s,x/N) \cdot  D(\bar{\eta}_x^L)
		L^{-d} \t_x A_L(\eta)     \\ 
		& \qquad  + N^{-d} \sum_{x\in\T_N^d} \sum_{i,j=1}^d \partial_{v_i}\partial_{v_j}
		\la(s,x/N)
		P_{ij}(\bar{\eta}_x^L) \bigg] \, \d s,
	\end{align*}
	where we have 
	\begin{align}\label{eq.defP}
		\rho \in [0,1], \quad P(\rho) \equiv \{P_{ij}(\rho)\}_{1\le i,j \le d} := \int_0^\rho D(\rho') \, \d\rho' .
	\end{align}
	There exists a positive constant $C(d, \r, T) < \infty$ such that for every  $t\in [0,T]$ and $\b>0$
	the following upper bound holds for $Q_{N,L}^{(3)}(\la)$:
	\begin{align*}
		Q_{N,L}^{(3)}(\la)\le C (L^{-1}\|\nabla^2\la\|_\infty +N^{-1}\|\nabla^3\la\|_\infty)
		+C\b L^{-1} \|\nabla\la\|_\infty^2   + 
		\frac{C}\b + C\b^2 N^{-1}L^{2d-2}  \|\nabla\la\|_\infty^3. 
	\end{align*}
\end{lemma}
It is a refined version of \cite[Lemma~3.4]{fuy}. 
The outline, following that of \cite[Lemma~4.3]{Fu24}, is given in Appendix \ref{sec:4-C} by applying 
Lemma \ref{Lemma 3.3}.

\subsubsection{Large deviation estimate}
\label{subsubsec.LDP}

Here, we have finished the step of the one-block estimate and the gradient replacement, so let us summarize the outcome of Lemma \ref{Lemma 3.1}, Corollary \ref{One-block-cor} and Proposition \ref{Theorem 3.2} together with Lemma~\ref{Lemma 3.4}  in the form of the following Lemma \ref{lem.before_LD}. We recall $h_N(t)$ defined in \eqref{eq:hNt}  and $R(\rho;F)$ defined in \eqref{eq.defR}, and the matrix $P(\rho)$ defined in \eqref{eq.defP}.

\begin{lemma}
	\label{lem.before_LD} 
	Assume $\la \in C^{1,3}([0,T]\times\T^d)$, $\rho(t,v) = \bar{\rho}(\la(t,v))$ as defined in \eqref{eq.bar_rho},  $F_N \in\mathcal{F}_0^d$ satisfying the conditions \eqref{eq:2.F}, \eqref{3.26-A} and \eqref{3.12-C}, 
	then there exists a finite constant $C(d, \r, \rho, \lambda, T) < \infty$, such that the following estimate holds for every $\b>0$
	\begin{multline}  \label{eq:6.1-h}
		h_N(t)   \le  h_N(0)+ \int_0^t  E^{f^N_s}[W_s]\, \d s  + C(\b+1) \,\|\nabla\la\|_\infty^2 \, \sup_{\rho \in [0,1]} |R(\rho; F_N)|   \\
		+ \int_0^t  \Ll(Q_N^{En}(\la,F_N)+ Q_{N,L,\b}^{\Om_1}(\la,F_N) + Q_{N,L}^{\Om_2}(\la,F_N)\Rr) \, \d s.
	\end{multline}
	On the {\rhs} of \eqref{eq:6.1-h}, the function $W_t$ is defined by
	\begin{align*}
		W_t(\eta) = & -  N^{-d}  \sum_{x\in\T_N^d} \dot{\la}(t,x/N)
		\{ \bar{\eta}_x^L - \rho(t,x/N)\}   
		\\  
		&  +  N^{-d} \sum_{x\in\T_N^d} \Tr \left(\nabla^2\la(t,x/N)
		\{ P(\bar{\eta}_x^L) - P(\rho(t,x/N))\}\right)  
		\\  
		&  +  \frac{N^{-d}}{2}  \sum_{x\in\T_N^d} \nabla\la(t,x/N) \cdot
		\{\c(\bar{\eta}_x^L) - \c(\rho(t,x/N))\}
		\nabla\la(t,x/N).
	\end{align*}
	The term $Q_N^{En}(\la,F_N)$ is as in Lemma \ref{Lemma 3.1}, and the term $Q_{N,L,\b}^{\Om_1}(\la,F_N)$ has a decomposition as
	\begin{align}  \label{3.QOM1}
		Q_{N,L,\b}^{\Om_1}(\la,F_N) := \frac{C}{\b}+\b^2 Q_{N,L}^{(1)}(\la,F_N)
		+\b Q_L^{(2)}(\la,F_N)+Q_{N,L}^{(3)} (\la).
	\end{align}
	See Proposition \ref{Theorem 3.2} for $Q_{N,L}^{(1)}$, $Q_L^{(2)}$ and Lemma \ref{Lemma 3.4} for $Q_{N,L}^{(3)}$.
	The error $Q_{N,L}^{\Om_2}(\la,F_N)$ is estimated as
	\begin{align}  \label{3.QOM2}
		&  |Q_{N,L}^{\Om_2} (\la,F_N)| \\
		& \le  C\big(N^{-1}L^{(d+2)/2} 
		+ L^{-d}(1+r(F_N)^d)\big) \notag   \\
		& \quad \times
		\Big( \|\dot{\la}\|_\infty+ \|\nabla\la\|_\infty^2 (1+r(F_N)^{d}\|F_N\|_\infty)^2
		+ \|\nabla^2\la\|_\infty (1+r(F_N)^{d+1}\|F_N\|_\infty)\Big)     \notag \\
		& \;\; + C N^{-1}(\|\nabla^3\la\|_\infty
		+ \|\nabla^2\la\|_\infty\|\nabla\la\|_\infty
		+ \|\nabla\la\|_\infty^2 \|\nabla \la \|_\infty).  \notag
	\end{align}
\end{lemma}

\begin{proof}
	The proof is similar to \cite[Lemma~3.2]{fuy}.
	
	First, we integrate the estimate for $\partial_t h_N(t)$ shown in Lemma
	\ref{Lemma 3.1} on the time integral $[0,t]$.  Then, for $\Om_1$, we apply 
	Proposition \ref{Theorem 3.2} (with an error $\frac{C}{\b}+\b^2 Q_{N,L}^{(1)}
	+\b Q_L^{(2)}$) and Lemma \ref{Lemma 3.4}
	(with an error $Q_{N,L}^{(3)}$), and obtain a 
	replacement of $\int_0^t E^{f^N_s}[ \Om_1] \, \d s$
	by the following integral:
	\begin{align*}
		\int_0^t &  \bigg\{  E^{f^N_s} \bigg[ N^{-d}
		\sum_{x\in\T_N^d} \Tr \left( \nabla^2\la(s,x/N) P(\bar{\eta}_{x}^L)\right)
		\bigg]   \\
		& \quad   +  E^{f^N_s} \bigg[  \frac{\b N^{-d}}{2}
		\sum_{x\in\T_N^d} \nabla\la(s,x/N) \cdot
		R(\bar{\eta}_{x}^L;F_N)\nabla\la(s,x/N) \bigg]  \bigg\} \, \d s,
	\end{align*}
	with an error $Q_{N,L,\b}^{\Om_1}(\la,F_N)$ defined in \eqref{3.QOM1}.
	The second term in the above formula is bounded by 
	$C \b\,\|\nabla\la\|_\infty^2 \, \sup_{\rho} |R(\rho; F_N)|$ and this is counted
	on the right-hand side of \eqref{eq:6.1-h} for $h_N(t)$.

	For $\Om_2$, by Corollary \ref{One-block-cor}, $\int_0^t E^{f^N_s}[\Om_2] \, \d s$ 
	can be replaced by the expression in \eqref{eq:hatc-D}:
	\begin{align*}  
		\int_0^t E^{f^N_s} & \bigg[N^{-d} \sum_{x\in\T_N^d} \Big\{ -\dot{\la}(s,x/N)
		\bar{\eta}_{x}^L  \\
		& \phantom{\sum_{x\in\Ga_N}} 
		+ \frac12 \nabla\la(s,x/N) \cdot
		\c(\bar{\eta}_{x}^L;F_N)\nabla\la(s,x/N)
		\Big\} \bigg] \, \d s ,  \notag
	\end{align*}
	with the error $Q_{N,L}^{\Om_2,1}(\la,F_N)$ defined in \eqref{3.Q-Om21}.
	This gives the first part
	in the error estimate of $|Q_{N,L}^{\Om_2}(\la,F_N)|$.
	Note that $\c(\bar{\eta}_{x}^L;F_N)$ in the above expression
	(i.e.\ \eqref{eq:hatc-D}) can be
	replaced by $\c(\bar{\eta}_{x}^L)$ with an error 
	$C \,\|\nabla\la\|_\infty^2 \, \sup_{\rho} |R(\rho; F_N)|$.
	
	The reason that we have the second part in the error estimate
	of $|Q_{N,L}^{\Om_2}(\la,F_N)|$ in \eqref{3.QOM2} is as follows.  
	Recalling that $\rho(t,v) = \bar{\rho}(\la(t,v))$, we have the identity
	\begin{align}  \label{3.IBP}
		-\int_{\T^d} \Tr \big(\nabla^2\la(t,v) P(\rho(t,v))\big) \d v  
		=  \frac1{2} \int_{\T^d} \nabla\la(t,v) \cdot \c(\rho(t,v))
		\nabla\la(t,v)\d v.
	\end{align}
	This is shown by integration by parts and noting 
	\begin{align*}
		\text{LHS of \eqref{3.IBP}}   
		&= \sum_{i,j=1}^d \int_{\T^d} - (\partial_{v_i} \partial_{v_j} \lambda)(t,v) P_{i,j}(\rho(t,v)) \, \d v \\
		&= \sum_{i,j=1}^d \int_{\T^d} \partial_{v_j}\lambda(t,v)  \partial_{v_i}  P_{i,j}(\rho(t,v)) \, \d v \\
		&= \sum_{i,j=1}^d \int_{\T^d} \partial_{v_j}\lambda(t,v)  \partial_{\rho}  P_{i,j}(\rho(t,v)) \partial_{\lambda} \rho(t,v) \partial_{v_i}\lambda(t,v) \, \d v \\
		&=  \sum_{i,j=1}^d \int_{\T^d} \partial_{v_j}\lambda(t,v)  D_{i,j}(\rho(t,v)) \chi(\rho(t,v)) \partial_{v_i}\lambda(t,v)\, \d v \\
		&= \text{RHS of \eqref{3.IBP}}   .
	\end{align*}
	The third line makes use of the chain rule. The fourth line makes use of $\frac{\partial \bar{\rho}}{\partial\la} = \chi(\rho)$ in \eqref{eq.bar_rho}, the definition of $P(\rho(t,v))$ in \eqref{eq.defP}. The last relies on the Einstein relation \eqref{eq.Einstein}.

	The terms with $\rho(t,x/N)$ in the second and third terms of $W$
	cancels by this identity with an error bounded by
	\begin{align*}
		C N^{-1}(\|\nabla^3\la\|_\infty+
		\|\nabla^2\la\|_\infty \| \nabla \rho\|_\infty 
		+ \|\nabla^2\la\|_\infty\|\nabla\la\|_\infty
		+ \|\nabla\la\|_\infty^2 \|\nabla \rho \|_\infty),
	\end{align*}
	which appears when we discretize the above two integrals in \eqref{3.IBP}; 
	note that $P, \c$ are bounded and $P, \c \in C^\infty((0,1))$ as proved in \cite{bernardin}.
\end{proof}

We aim to treat the term $E^{f^N_t}[W_t]$ in Lemma \ref{lem.before_LD} via a large deviation estimate. Using the entropy inequality, it can be reduced to the expectation under
${P^{\psi^N_t} = \psi^N_t\, \d \nu^N}$: 
\begin{equation}  \label{eq:3-ent}
	\forall \delta > 0, \qquad E^{f^N_t}[W_t] \le \frac{1}{\de N^d} \log E^{\psi^N_t}\Ll[ e^{\de N^d W_t}\Rr] 
	+ \frac{1}{\de}  h_N(t).
\end{equation}
We study the asymptotic behavior of
the first term on the right-hand side via a large deviation type upper 
bound with an appropriate error estimate under
$\psi^N_t\, \d \nu^N$; see Lemma~\ref{Theorem 3.3} below.

For  $\la(\cdot) \in C^1(\T^d)$  and  $F = F(\eta) \in \mathcal{F}_0^d$,
dropping the $t$-dependence in \eqref{eq:2psit}, the local equilibrium state  
$\psi_{\la(\cdot),F}^N(\eta)  \d \nu^N$ of second order approximation
is a probability measure on  $\mathcal{X}_N$ defined by
\begin{equation} \label{B.8}
	\psi_{\la(\cdot),F}^N (\eta) = Z^{-1}   
	\exp\bigg\{ \sum_{x\in \T_N^d} \la(x/N) \eta_x
	+ \frac1{N} \sum_{x\in\T_N^d} \nabla\la(x/N) \cdot \tau_x F(\eta) \bigg\}, 
\end{equation}
for  $\eta\in\mathcal{X}_N$,  where   $Z = Z_{\la(\cdot),F,N}$  is the 
normalization constant with respect to $\nu^N$.  Then we
have the following large deviation type upper bound for  
$\psi_{\la(\cdot),F}^N\, \d \nu^N$ with the error estimate.


Recall that $\nu^N_{\rho}$ is the the product Bernoulli measure of parameter $\rho$ on ${\X_N = \{0,1\}^{\T_N^d}}$. We define that
\begin{align*}
	\bar{\nu}^N_{\lambda} := \nu^N_{\rho(\lambda)},
\end{align*} 
with $\rho=\bar\rho(\la)$ defined as in \eqref{eq.bar_rho}. Then the rate  function for the large deviation principle for the Bernoulli measure $\bar{\nu}^N_{\lambda}$ is denoted by  $I(u;\la)$, namely, for $u\in [0,1]$,
\begin{align}\label{2.4} 
	\begin{aligned}
		& I(u;\la) := - \la  u - q(u) + p(\la),    \\
		& p(\la) := \log (e^\la + 1), \quad
		q(u)   := -\{u\log u +(1-u)\log(1-u)\}.
	\end{aligned}
\end{align}

\begin{lemma}[Appendix~B of \cite{Fu24}] 
	\label{Theorem 3.3}
	For every  $G(v,u) \in C^1(\T^d \times [0,1])$, we define its sum with respect to the local density 
	\begin{equation} \label{eq:3.tildeG}
		\widetilde{G}_{N,L}(\eta)
		:= \sum_{x\in \T_N^d}  G(x/N,\bar{\eta}_x^{L}).
	\end{equation}
	The following large deviation estimate then holds
	\begin{align*} 
		& N^{-d}  \log E^{\psi_{\la(\cdot),F}^N} \Ll[ \exp \widetilde{G}_{N,L}(\eta) \Rr]  \\
		\le &  \sup_{u(v) \in C(\T^d ; [0,1])}
		\int_{\T^d} \{ G(v,u(v)) 
		-I(u(v);\la(v))\} \, \d v + Q_{N,L}^{LD}(\la,F;G).
	\end{align*}
	The error term $Q_{N,L}^{LD}(\la,F;G)$ has an estimate:
	\begin{align*}
		|Q_{N,L}^{LD}(\la,F;G)| \le & C N^{-1} \|\nabla\la\|_\infty \| F\|_\infty
		+  C N^{-1}L  \Big( \|\nabla\la\|_\infty 
		+  \|\partial_v G\|_\infty \Big) \\
		& + C L^{-d} \Big(\log L +  \|\la\|_\infty
		+   \|\partial_u G\|_\infty \Big),
	\end{align*}
	where $\partial_v G$ and $\partial_u G$ are respectively the derivatives with respect to its first and second variable.
\end{lemma}
\begin{proof}
	It suffices to take $G_1=0, G_2=G$ and $K=1$ in Appendix~B of \cite{Fu24}.
\end{proof}

We then apply the large deviation estimate to $h_N(t)$. Recall Lemma \ref{Lemma 3.4} for $P(\rho)$, 
and \eqref{2.4} for $I(u;\la)$. We define $g_{\delta}(t)$ as follows:
\begin{align*}
	& g_{\de}(t) := \sup_{u(v) \in C(\T^d;[0,1])} 
	\int_{\T^d}
	\{\de \cdot \si(u(v);t,v) - I(u(v);\la(t,v)) \}\, \d v,  
	\\
	&  \si(u;t,v) :=  -\dot{\la}(t,v)\{ u-\rho(t,v)\} +
	\Tr \left(\nabla^2\la(t,v) \{ P(u) - P(\rho(t,v))\}\right)
	\phantom{\frac12}      \\
	&  \qquad\qquad\qquad \qquad\qquad 
	+ \frac12 \nabla\la(t,v) \cdot
	\{\c(u) - \c(\rho(t,v))\} \nabla\la(t,v).
\end{align*}

The following proposition gives the detail of the error terms  in Theorem~2.1 of \cite{fuy}.
\begin{proposition}\label{prop.after_LD}
	Under the same setting as Lemma~\ref{lem.before_LD}, there exists a constant ${C(d, \r, \rho, \lambda, T) < \infty}$  such that for every $\b > 0$ and $\delta > 0$, we have
	\begin{multline}\label{2.5} 
		h_N(t)   \le  h_N(0) + \frac{1}{\de} \int_0^t g_{\delta}(s)\,\d s
		+ \frac{1}{\de} \int_0^t h_N(s)\,\d s  \\  + C(\b+1) \,\|\nabla\la\|_\infty^2 \, \sup_{\rho \in [0,1]} |R(\rho; F_N)| +Q_{N,L,\de}^{LD}(\la,F_N)  \\
		+ \int_0^t  \Ll(Q_N^{En}(\la,F_N)+ Q_{N,L,\b}^{\Om_1}(\la,F_N) + Q_{N,L}^{\Om_2}(\la,F_N)\Rr) \, \d s.
	\end{multline}
	Here the term $Q_{N,L,\de}^{LD}(\la,F_N)$ is defined using $ Q_{N,L}^{LD}$ in Lemma~\ref{Theorem 3.3}
	\begin{align*}
		Q_{N,L,\de}^{LD}(\la,F_N) := \int_0^T Q_{N,L}^{LD}(\la(s),F_N; \delta \cdot \si(u;t,v)) \, \d s.
	\end{align*}
\end{proposition}
\begin{proof}
	The result is immediate by combing Lemma \ref{lem.before_LD}, 
	the entropy inequality \eqref{eq:3-ent} and Lemma \ref{Theorem 3.3}. In view of $W_t$ given in Lemma \ref{lem.before_LD}, we set $G$ in Lemma \ref{Theorem 3.3} as
	\begin{align*}
		G(v, u) = \delta \cdot \si(u;t,v).
	\end{align*}
\end{proof}
\begin{remark}
	Under the setting $\la(t,v) = \bar\la(\rho(t,v))$, there exists $\de_0 > 0$, such that for every $\de \in (0,\de_0)$, we have 
	\begin{align}\label{eq.g_negatve}
		\forall t \in [0,T], \qquad g_{\delta}(t) \leq 0.
	\end{align}
	This observation \eqref{eq.g_negatve} was proved in \cite[(2.6)]{fuy}.
\end{remark}

\subsubsection{Concluding the proof of Proposition \ref{Corollary 2.1}}
\label{subsubsec.conclusion}

\begin{proof}[Proof of Proposition \ref{Corollary 2.1}]
	Firstly, we choose the parameters $L, N$ as the setting of Lemma~\ref{lem.before_LD} and Proposition~\ref{prop.after_LD}, and assume \eqref{eq.choice_la} in Condition~\ref{condition.good}. The constant $C>0$ in this proof may change from line to line and may depend on $\rho_0$.

	\textit{Step~1: relative entropy estimate.} 
	In the bound $h_N(t)$ obtained in Proposition~\ref{lem.before_LD}, thanks to the observation \eqref{eq.g_negatve}, the term $g_{\delta}$ can be dropped. The Gronwall's inequality applies and we obtain the estimate
	\begin{multline}  \label{eq:3.hNt}
		0\le  h_N(t) \le  e^{t/\de} \Big( h_N(0)+ C(\b+1) \,\|\nabla\la\|_\infty^2 \, \sup_{\rho \in [0,1]} |R(\rho; F_N)| +Q_{N,L,\de}^{LD}(\la,F_N)  \\
		+ \int_0^t  \Ll(Q_N^{En}(\la,F_N)+ Q_{N,L,\b}^{\Om_1}(\la,F_N) + Q_{N,L}^{\Om_2}(\la,F_N)\Rr) \, \d s		\Big),
	\end{multline}
	for every $\b>0$, $t\in [0,T]$ and $0 < \de < \de_0$.  We then treat the bounds term by term.
	
	
	\smallskip
	
	\textit{Term~1: $Q_{N,L,\b}^{\Om_1}(\la,F_N)$.} In \eqref{eq:3.hNt}, we recall \eqref{3.QOM1} that the first error $Q_{N,L,\b}^{\Om_1}(\la,F_N)$ for the microscopic current of non-gradient type adjusting with the corrector $F_N$ consists of four terms as 
	\begin{align*}
		Q_{N,L,\b}^{\Om_1}(\la,F_N)=
		\frac{C}{\b}+\b^2 Q_{N,L}^{(1)}(\la,F_N)
		+\b Q_L^{(2)}(\la,F_N)+Q_{N,L}^{(3)}(\la).
	\end{align*}
	Here, the term $Q_{N,L}^{(1)}$ has a bound (see Proposition \ref{Theorem 3.2})
	\begin{align} \label{eq:Q.1}
		|Q_{N,L}^{(1)}(\la,F_N)| \le C N^{-1} L^{2d+4}(1+r(F_N)^{3d} \|F_N\|_\infty^3).
	\end{align}
	Note that $\la(t,v)$ is smooth, in particular,
	${\sup_{t\in [0,T]} \|\nabla\la(t)\|_\infty<\infty}$.  The term $Q_L^{(2)}$ is estimated by 
	\eqref{eq:3.26} as
	\begin{align}\label{eq:Q.2}
		|Q_L^{(2)}(\la,F_N)| \le 
		C (1+ r(F_N)^{2d}) (1+r(F_N)^{2d}\|F_N\|_\infty^2) L^{-1}+ C L^{-\gamma_2}.
	\end{align}
	The term $Q_{N,L}^{(3)}$ does not depend on $F_N$ and 	it is estimated in Lemma \ref{Lemma 3.4}
	\begin{align} \label{eq:Q.5}
		\vert Q_{N,L}^{(3)}(\la) \vert\le C(L^{-1}+N^{-1}) +C\b L^{-1} + 
		\frac{C}\b + C\b^2 N^{-1}L^{2d-2}.
	\end{align}

	\smallskip
	
	\textit{Term~2: $Q_{N,L}^{\Om_2}(\la,F_N)$.}
	The error $Q_{N,L}^{\Om_2}(\la,F_N)$ for the gradient term is bounded as
	\begin{align} \label{eq:Q.5-b}
		|Q_{N,L}^{\Om_2}(\la,F_N)|
		\le  & C \big(N^{-1} L^{(d+2)/2} + L^{-d} (1+r(F_N)^d)\big)\\
		& \times
		(1+r(F_N)^{2d} \|F_N\|_\infty^2 + r(F_N)^{d+1} \|F_N\|_\infty) + CN^{-1}. \notag
	\end{align}
	This estimate is provided in Lemma \ref{lem.before_LD}.
	
	\smallskip
	
	\textit{Term~3: $Q_{N,L,\de}^{LD}(\la,F_N)$.}
	The error $Q_{N,L,\de}^{LD}(\la,F_N)$ appearing in a large deviation bound 
	and is estimated as
	\begin{align} \label{eq:Q.6}
		|Q_{N,L,\de}^{LD}(\la,F_N)| \le C N^{-1} (L+ \| F_N\|_\infty)+ C L^{-d} \log L.
	\end{align}
	This estimate is shown in Lemma \ref{Theorem 3.3}.

	\smallskip
	
	\textit{Term~4: $Q_N^{En}(\la,F_N)$.}
	The error $Q_N^{En}(\la,F_N)$ in the entropy calculation, especially in the time derivative of $h_N(t)$, has a bound
	\begin{align} \label{eq:R.1}
		|Q_N^{En}(\la,F_N)| \le C N^{-1} (1+r(F_N)^{d+2} \|F_N\|_\infty)^3,
	\end{align}
	if the condition \eqref{eq:2.F} is satisfied;
	see 	Lemma \ref{Lemma 3.1}.

	\medskip
	\textit{Step~2: choice of parameters.} Now,  according to Condition~\ref{condition.good}, we set
	\begin{align*}
		1\ll n=n(N) \ll L=L(N) \ll N,  \qquad F_N =\Phi_{n(N)},
	\end{align*}
	based on Proposition \ref{prop:5.4}.	
	More precisely, we set $n, L, \beta$ as mesoscopic scales
	\begin{equation}\label{eq.s123}
		n:=\lfloor N^{s_1} \rfloor < L := \lfloor N^{s_2} \rfloor< N, \quad 0<s_1<s_2<1, \qquad \b:=n^{s_3}, \quad s_3 > 0,
	\end{equation}
	with $s_1, s_2, s_3$ to be determined. Then, from \eqref{eq:3.hNt} and the estimates \eqref{eq:Q.1}--\eqref{eq:R.1} stated above,
	we obtain that
	\begin{align*}
		h_N(t) \le  C \Big( h_N(0) & + n^{-s_3} + n^{4d+6} N^{-1}L^{2d+4} 
		+ n^{s_3} L^{-\gamma}   \\
		& + n^{4s+6} L^{-1} + L^{-d}\log L + n^{s_3} n^{-\gamma_2}  \Big),
	\end{align*}
	where $\gamma, \gamma_2$ are in Theorem~\ref{thm.mainUniform} and Proposition~\ref{prop:5.4}.
	Since $h_N(0)\le C_1 N^{-\kappa_1}$, choosing 
	$s_3 \in (0,\gamma)$, $s_2= \frac{1}{2d+5}$ and then
	$s_1\in (0, \frac{s_2}{4d+6})$ small enough, one can derive
	\begin{align}  \label{eq:7.24}
		h_N(t) \le C N^{-\kappa},
	\end{align}
	for some $C, \kappa>0$.  Note that the conditions \eqref{eq:2.F}, 
	\eqref{3.26-A} and \eqref{3.12-C} in  Proposition \ref{lem.before_LD}
	are all satisfied under the above choice of $s_1, s_2$ for $n, L$ in \eqref{eq.s123}.
\end{proof}

\section{Extension to disordered lattice gas}\label{sec.disorder}

We have worked with the non-gradient model where the jump rate is non-constant but a deterministic function on the configuration space. One may wonder whether the method is effective when the external randomness comes into the jump rate function. These models are known as \emph{the disordered lattice gas} or \emph{the exclusion process in random/inhomogeneous environment}. In the literature, \cite{quastel2006disorder, faggionato2003disorder} considered the lattice gas with disorder on site; \cite{gonccalves2008scaling, jaranonhomogeneous, jara2006} studied the cases where the jump rate depends on the random environment and combines the homogenization theory to obtain the diffusion matrix; \cite{faggionato2008,faggionato2022} further relaxed the underlying graph to the supercritical percolation and other stationary random graphs. It is not immediate to cover the quantitative homogenization results in all these models, because there are various ways to pose the disorder and sometimes the jump rate also degenerates. Here we just give one typical example to illustrate that our proof still works in the presence of external disorder. This model can be seen as a lattice gas with disorder on bonds, where randomness is introduced without breaking the spatial homogeneous property, the uniform ellipticity and the product Bernoulli measure is still an invariant measure. The argument will give a quantitative convergence rate (with a random fluctuation) of finite-volume conductivity.
\subsection{Model and hypothesis}
The notations in this new model is almost the same of the original one. However, instead of the generator \eqref{eq.Generator} defined by a collection of functions $\{c_b(\eta)\}_{b\in(\Zd)^*}$, we now consider a collection of random functions:
\begin{align*}
	c:\ \Omega&\rightarrow \F_0^{(\Zd)^*},
	\\ \omega&\mapsto \{c_b^\omega(\cdot)\}_{b\in(\Zd)^*},
\end{align*}
on some probability space $(\Omega, \gil,\mathbf P)$ satisfying the following hypothesis.
\begin{hypothesis}\label{hyp2} The following conditions are supposed for  $c:\Omega\rightarrow \F_0^{(\Zd)^*}$.
	\begin{enumerate}
		\item Non-degenerate and local: there exists a positive integer $\r$ and a positive number $\lambda>1$ such that for every $\omega$, the function $c^\omega_{x,y}(\eta)$ depends only on $\{\eta_z: \vert z - x\vert \leq \r\}$, and is bounded on two sides $1 \leq c^\omega_{x,y}(\eta) \leq \lambda$.
		\item Detailed balance under Bernoulli measures: for every $\omega$, the function $c^\omega_{x,y}(\eta)$ does not depend on $\{\eta_x, \eta_y\}$.
		\item Spatially homogeneous: 
		the joint distribution of $(c_{x,x+e}^\omega)_{x\in\Zd, e \in U}$ is the same as that of $(\tau_x c_{0,e}^\omega)_{x\in\Zd, e \in U}$, where both of them are viewed as a family of random functions taking value in $\F_0$.
		\item Unit range dependence: for any two edge sets $E,F \subset (\Zd)^*$ such that if there is no adjacent pair $b\in E,\ b'\in F$ sharing a  common vertex, then the functions $\{c_b^\cdot\}_{b\in E}$ and $\{c_b^\cdot\}_{b\in F}$ are independent.
	\end{enumerate}
\end{hypothesis}
We specially mention that given each sample $\omega\in\Omega$, the functions $\{c_b^\omega\}_{b\in(\Zd)^*}$ satisfy the same properties as that in Hypothesis~\ref{hyp} except the spatial homogeneous property, which is replaced by the equality in distribution. Meanwhile, the detailed balance condition ensures that the product Bernoulli measure $\operatorname{Ber}(\rho)^{\otimes \Zd}$ is still an invariant measure for the Kawasaki dynamics of jump rate $\{c_b^\omega\}_{b\in(\Zd)^*}$ for every $\rho \in [0,1]$. 

\subsection{Quantitative stochastic homogenization}
We use the renormalization approach to establish the quantitative homogenization result. Given $\omega\in\Omega$, we define the \emph{quenched} subadditive quantities $\nub(\omega,\rho,\Lambda,p)$ and $\nub_*(\omega,\rho,\Lambda,q)$  as in \eqref{eq.defNu}
\begin{equation}\label{eq.defNuOmega}
	\begin{split}
		\nub(\omega, \rho,\Lambda,p) &:= \inf_{v\in\ell_{p,\Lambda^+} +\F_0(\Lambda^-)} \Ll\{ \frac{1}{2 \chi(\rho)\vert\Lambda\vert} \sum_{b\in\ov{\Lambda^*}} \bracket{ \frac{1}{2}c^\omega_b(\pi_b v)^2}_\rho \Rr\},\\
		\nub_*(\omega, \rho,\Lambda,q) &:=\sup_{v \in  \F_0} \Ll\{ \frac{1}{2 \chi(\rho)\vert\Lambda\vert}\sum_{b\in\ov{\Lambda^*}}  \bracket{ (\pi_b \ell_{q})(\pi_b v) - \frac{1}{2} c^\omega_b(\pi_b v)^2}_{\rho}\Rr\}.
	\end{split}
\end{equation}
Then for each $\omega$, (1)-(4) in Proposition~\ref{prop.Element} still hold except \eqref{eq.subadditivenu} and \eqref{eq.subadditivenu*}. This gives the definition of the quenched diffusion matrix and conductivity
\begin{align*}
	\nub(\omega, \rho,\Lambda,p) &=\frac{1}{2}p\cdot \D(\omega, \rho,\Lambda) p, \qquad \cc(\omega, \rho,\Lambda) = 2\chi(\rho) \D(\omega, \rho,\Lambda),\\
	\nub_*(\omega, \rho,\Lambda,q) &=\frac{1}{2}q\cdot \D_*^{-1}(\omega, \rho,\Lambda) q, \qquad \cc_*(\omega, \rho,\Lambda) = 2\chi(\rho) \D_*(\omega, \rho,\Lambda).   
\end{align*}
The results \eqref{eq.subadditivenu} and \eqref{eq.subadditivenu*} are missing, because now for every $\Lambda \subset \Zd$ and $z \in \Zd$ we only have $\nub(\omega, \rho, z+\Lambda,p) \stackrel{(d)}{=} \nub(\omega, \rho, \Lambda,p)$  (from (3) of Hypothesis~\ref{hyp2}) instead of $\nub(\rho, z+\Lambda,p) = \nub(\rho, \Lambda,p)$ in Proposition~\ref{prop.Element}. Nevertheless, denoting by $\mathbf E$ the expectation associated to $(\Omega, \gil,\mathbf P)$, we recover
\begin{equation}\label{eq.subadditiveEmu}
	\begin{split}
		\mathbf E[\nub(\cdot,\rho,\cu_{m+1},p)]&\leq\mathbf E[\nub(\cdot,\rho,\cu_{m},p)],\\
		\mathbf E[\nub_*(\cdot,\rho,\cu_{m+1},q)]&\leq\mathbf E[\nub_*(\cdot,\rho,\cu_{m},q)].
	\end{split}
\end{equation}
The monotone property allows us to define the limit
\begin{equation*}
	\D(\rho) :=\lim_{m\rightarrow\infty}\mathbf E[\D(\cdot,\rho,\cu_m)], \qquad \cc(\rho) :=2\chi(\rho)\D(\rho).
\end{equation*}

Our goal is to prove the quantitative convergence rate of the finite-volume quenched conductivity matrix to this limit. Because $ \cc(\omega, \rho,\Lambda)$ depends on the random environment $\omega$, we need to measure the random fluctuation and we use the following notation introduced in \cite[Appendix A]{AKMbook}: for a random variable $X$ in $(\Omega, \gil,\mathbf P)$ and exponents $s,\theta\in(0,\infty)$, we define
\begin{equation}\label{eq.defOs}
	X\leq\mathcal O_s(\theta) \Longleftrightarrow 	\mathbf E[\exp((\theta^{-1}X_+)^s)]\leq 2,
\end{equation}
where $X_+:=\max\{X,0\}$. Roughly, \eqref{eq.defOs} tells that $X$ is concentrated with a stretched exponential tail. More properties of $\mathcal O_s$ can be found in \cite[Appendix A]{AKMbook}. Using this notation, we state our result as follows.
\begin{theorem}\label{thm.disorder}
	Fix $t\in(0,d)$, there exists $\beta(d,\lambda)>0$, $C(t,d,\lambda)<\infty$ such that for any $\rho\in(0,1)$ and any $m\in\N_+$:
	\begin{equation}\label{eq.disorder}
		|\cc(\rho)-\cc(\omega, \rho,\cu_m)|+|\cc(\rho)-\cc_*(\omega, \rho,\cu_m)|\leq\left(C3^{-m\beta (d-t)}+\mathcal O_1(C3^{-mt})\right).
	\end{equation}
\end{theorem}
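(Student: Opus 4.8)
\textbf{Proof strategy for Theorem~\ref{thm.disorder}.} The plan is to run the renormalization scheme of Section~\ref{sec.Rate} quenched, keeping track of both a deterministic error coming from the bias $\mathbf{E}[\nub]-\D(\rho)$ and a stochastic error coming from the fluctuation of the quenched subadditive quantities around their means. All the \emph{analytic} inputs transfer verbatim for each fixed $\omega$: the modified Caccioppoli inequality (Proposition~\ref{prop.Caccioppoli}) only uses $1\le c_b^\omega\le\lambda$ and the martingale structure of $(\A_n)$; the coarse-grained lifting and the weighted multiscale Poincar\'e inequality (Proposition~\ref{prop.WMPoincare}) are statements about the Bernoulli measure $\Pr$, which is still invariant by (2) of Hypothesis~\ref{hyp2}; and the elementary properties of $\nub(\omega,\cdot),\nub_*(\omega,\cdot),J(\omega,\cdot)$ from Propositions~\ref{prop.Element}, \ref{prop.ElementJ} hold pathwise, save for the exact subadditivity \eqref{eq.subadditivenu}. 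What replaces \eqref{eq.subadditivenu} is the combination of (i) the expectation-level monotonicity \eqref{eq.subadditiveEmu}, which follows from subadditivity along the partition \eqref{eq.CubeRenormalization2} together with stationarity (3) of Hypothesis~\ref{hyp2}, and (ii) the unit-range dependence (4), which makes the $3^d$ rescaled sub-cube quantities $\{J(\omega,z+\cu_m,p,q)\}_{z\in\Z_{m+1,m}}$ asymptotically i.i.d.; this is exactly the structure exploited in \cite[Chapter~2]{AKMbook}.

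First I would redo Lemma~\ref{lem.Jestimate}, Proposition~\ref{prop.L2Flat}, Lemma~\ref{lem.mainVariance} and Lemma~\ref{lem.Juniform} with $\omega$ carried along, defining the quenched master quantity $J(\omega,\cu_n,p,q)$ and its gap $\tau_n(\omega)=\sup_{p,q\in B_1}(J(\omega,\cu_n,p,q)-J(\omega,\cu_{n+1},p,q))$. Two points need care. In the variance-decay step (Lemma~\ref{lem.mainVariance}) the cancellation $\bracket{X_z\cdot X_w}_\rho=\bracket{X_z}_\rho\cdot\bracket{X_w}_\rho=0$ used spatial independence of the \emph{Bernoulli field}; now $X_z$ also depends on $\omega$ through $c^\omega$ restricted to a neighbourhood of $z+\cu_n$, so for $\dist(z,w)>2\r$ the pair is still independent jointly in $(\eta,\omega)$ under $\Pr\otimes\mathbf{P}$, but one must work under the product measure and use (4) of Hypothesis~\ref{hyp2}; the slope property \eqref{eq.SlopeJ} still gives $\mathbf{E}\bracket{X_z}_\rho$-type terms, and one controls the residual by $\mathbf{E}[\tau_{n-1}]$. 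Second, the $L^\infty$ bound Lemma~\ref{lem.supNorm} must hold uniformly in $\omega$: the mixing-time argument in Appendix~\ref{appendix.B} only uses ellipticity $1\le c_b^\omega\le\lambda$, so $\norm{v(\omega,\cdot)}_\infty,\norm{u(\omega,\cdot)}_\infty\le CL^{d+2}\log L$ with $C=C(\lambda,d)$, which is what the rare-event estimate in Case~1.2 of Proposition~\ref{prop.L2Flat} needs. The upshot is a pathwise inequality of the shape $J(\omega,\cu_m,p,D_m(\omega)p)\le C\tau_m(\omega)+C\sum_{n\le m}3^{-\kappa(m-n)}\tau_n(\omega)$ plus the negligible concentration term, hence, after interpolating with $\chi^2$ exactly as in Lemma~\ref{lem.Juniform}, a uniform-in-$\rho$ bound $\chi^2(\rho)J(\omega,\cu_m,p,D_m(\omega)p)\le C(3^{-\kappa m}+\sum_{n\le m}3^{-\kappa(m-n)}\tau_n(\omega))$.

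The heart of the argument is then the stochastic iteration for $F_m(\omega):=\sum_{i=1}^d\chi^2(\rho)J(\omega,\cu_m,e_i,D_m(\omega)e_i)$. One shows $F_{m+1}(\omega)\le C(\tilde F_m(\omega)-\tilde F_{m+1}(\omega))+C3^{-\kappa m/2}$ with $\tilde F_m=\sum_{n\le m}3^{-\kappa(m-n)/2}F_n$ as in \cite[Proposition~2.11]{AKMbook}, except that $\mathbf{E}[F_{m+1}]-\mathbf{E}[F_m]\le0$ is replaced by the monotonicity \eqref{eq.subadditiveEmu} and the difference $F_m-F_{m+1}$ is estimated using \eqref{eq.quadraresponseJ} applied to the $3^d$ sub-cubes of scale $m$. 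The crucial probabilistic step is that $\mathbf{E}[J(\omega,\cu_m,p,q)]-J_{\mathrm{lim}}(p,q)$ decays algebraically (from \eqref{eq.subadditiveEmu} and a standard "approach of the mean" lemma, \cite[Lemma~2.7]{AKMbook}), while $J(\omega,\cu_m,p,q)-\mathbf{E}[J(\omega,\cu_m,p,q)]$ concentrates: because $J(\omega,\cu_m,\cdot)$ is (up to the sub-additivity defect, controlled by a boundary-layer term of relative size $3^{-(m-n)}$) an average of $3^{d(m-n)}$ weakly dependent bounded contributions of scale $n$, a bounded-difference/$\mathcal{O}_s$ concentration inequality \cite[Appendix~A]{AKMbook} gives $J(\omega,\cu_m,p,q)-\mathbf{E}[\cdot]\le\mathcal{O}_1(C3^{-\gamma(d-t)m})$ — more precisely one interpolates: summing $3^{d(m-n)}$ independent blocks of size $3^{dn}$ gives fluctuations of relative order $3^{-d(m-n)/2}$, optimising the split $n=\theta m$ against the deterministic rate $3^{-\kappa(m-n)}$ produces the two terms $C3^{-m\kappa(d-t)}$ and $\mathcal{O}_1(C3^{-mt})$ for the chosen $t\in(0,d)$. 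The $\mathcal{O}_1$ tail (rather than $\mathcal{O}_2$) reflects that only unit-range, not finite-range-with-moments, dependence is assumed. Converting back from $J$ to $\cc,\cc_*$ via $\eqref{eq.JResume}$ (i.e. \eqref{eq.ratecontrolJ} with $\tilde D=D_m(\omega)$) and summing the geometric series in $\tau_n$, which is itself $\le\mathcal{O}_1(\cdots)$, yields \eqref{eq.disorder}; the final application of Lemma~\ref{lem.WhitneySub} to pass from triadic cubes $\cu_m$ to general cubes is unaffected since that lemma is deterministic.

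\textbf{Main obstacle.} The delicate point is not the analysis — which is robust to disorder — but packaging the \emph{joint} fluctuation of the quenched quantity and of the gap $\tau_n(\omega)$ into a single $\mathcal{O}_1$ bound while simultaneously keeping the $\rho$-uniformity. Concretely: the bounded-difference estimate producing the $\mathcal{O}_1$ tail must be applied to $J(\omega,\cu_m,p,q)$ for every $p\in B_1$ and then made uniform in $\rho$ by the same $\chi^2$-interpolation used in Lemma~\ref{lem.Juniform}; one has to check that the concentration constants do not blow up as $\rho\to0,1$, which works precisely because $0\le\chi^2(\rho)J\le 3\lambda\chi^2(\rho)$ is bounded and the interpolation exponent $s=16d+26$ is $\rho$-independent. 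I expect the bookkeeping of how the boundary-layer sub-additivity defect interacts with the block-independence used for concentration — i.e. proving that $J(\omega,\cu_m,p,q)$ really is, up to an error of relative size $3^{-(m-n)}$, an average of i.i.d. scale-$n$ quantities in the sense required by \cite[Lemma~A.7--A.12]{AKMbook} — to be the longest and least automatic part of the write-up.
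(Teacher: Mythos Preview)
Your diagnosis of which analytic tools transfer pathwise (Caccioppoli, weighted multiscale Poincar\'e, the $L^\infty$ bound via log-Sobolev) is correct, but the organization has a real gap. You try to run Section~\ref{sec.Rate} quenched, with $\tau_n(\omega):=\sup_{p,q}(J(\omega,\cu_n,p,q)-J(\omega,\cu_{n+1},p,q))$ and $D_m(\omega)$, and then claim a pathwise inequality $J(\omega,\cu_m,p,D_m(\omega)p)\le C\sum_n 3^{-\kappa(m-n)}\tau_n(\omega)$. This fails at several points: (i) pathwise subadditivity only gives $J(\omega,\cu_{n+1})\le 3^{-d}\sum_z J(\omega,z+\cu_n)$ with the translates \emph{not} equal to $J(\omega,\cu_n)$, so your $\tau_n(\omega)$ need not be nonnegative and does not arise naturally from the argument---already the last step in the proof of \eqref{eq.JestimateDecom1} uses $J(z+\cu_m)=J(\cu_m)$, which is false for fixed $\omega$; (ii) as you yourself concede, the variance decay (Lemma~\ref{lem.mainVariance}) must be done under $\Pr\otimes\mathbf P$, so its output feeds into a bound on $\mathbf{E}\bracket{v_{m+1}^2}_\rho$, not on $\bracket{v_{m+1}^2}_\rho$ pathwise. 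Hence no inequality of the claimed shape is available, and the subsequent ``stochastic iteration for $F_m(\omega)$'' rests on nothing.

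The paper avoids all of this by inverting the order of operations. It applies \cite[Lemma~A.7]{AKMbook} as the \emph{first} step: for any $0<n<m$,
\[
J(\omega,\rho,\cu_m,p,\D(\rho)p)\ \le\ 2\,\mathbf{E}\bigl[J(\cdot,\rho,\cu_n,p,\D(\rho)p)\bigr]+\mathcal{O}_1\bigl(C\lambda\,3^{-(m-n)d}\bigr),
\]
and the choice $n=(1-t/d)m$ already produces both error terms in \eqref{eq.disorder}. What remains is the decay of the \emph{deterministic} mean $\mathbf{E}[J(\cdot,\rho,\cu_n,p,\D(\rho)p)]$, for which Section~\ref{sec.Rate} is rerun with every quantity averaged: one sets $D_n:=\mathbf{E}[\D_*(\cdot,\rho,\cu_n)^{-1}]^{-1}$, $\tau_n:=\sup_{p,q}\mathbf{E}[J(\cdot,\cu_n,p,q)-J(\cdot,\cu_{n+1},p,q)]$, and works under $\mathbf{E}\bracket{\cdot}_\rho$ throughout. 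Stationarity in law restores $\mathbf{E}[J(\cdot,z+\cu_n,\cdot)]=\mathbf{E}[J(\cdot,\cu_n,\cdot)]$, so the exact analogues of Lemmas~\ref{lem.Jestimate}--\ref{lem.Juniform} hold; the $\chi^2$-interpolation for $\rho$-uniformity goes through verbatim. Your ``main obstacle'' dissolves once the annealed framework is adopted, and the concentration bookkeeping you feared would be long is entirely contained in the single inequality above.
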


\begin{proof}
	We follow \cite[Theorem~2.4]{AKMbook} to handle the fluctuation part. We combine the quenched subadditive quantity defined in \eqref{eq.defNuOmega} to obtain the quenched master quantity 
	\begin{align}\label{eq.defJOmega}
		J(\omega, \rho,\Lambda,p,q):=\nub(\omega, \rho,\Lambda,p)+\nub_*(\omega, \rho,\Lambda,q)- p\cdot q.
	\end{align}
	Then for each $\omega$ fixed, Lemmas~\ref{lem.repJ} and \ref{lem.ratecontrolJ} remain valid and especially we have  
	\begin{align*}
		\Ll\vert \cc(\omega, \rho,  \cu_m) - \cc_*(\omega, \rho,  \cu_m) \Rr\vert \leq C(d,\lambda)\chi(\rho)\Ll(\sup_{|p|=1}J(\omega,\rho,\cu_m,p, \D(\rho) p)^{\frac{1}{2}}\Rr).
	\end{align*}
	
	One can verify that $J(\omega, \rho,\Lambda,p,q)$ is a subadditive quantity. The key step of the proof relies on the observation in \cite[Lemma~A.7]{AKMbook}, which breaks the control of subadditive quantities with mixing condition into its mean and the random fluctuation. It also applies to the mapping $\Lambda \mapsto J(\omega, \rho,\Lambda,p,q)$ in our setting: for every $p \in B_1$, there exists a constant $C$ independent of $\rho$ such that 
	\begin{align}\label{eq.KeyDisorder}
		J(\omega,\rho,\cu_m,p, \D(\rho) p) \leq 2\mathbf{E}[J(\cdot,\rho,\cu_n,p, \D (\rho) p)] + \mathcal O_1(C \lambda 3^{-(m-n)d}).
	\end{align}
	Here we have the freedom to choose the parameter $n \in (0,m) \cap \N_+$, which will determine $t$ in \eqref{eq.disorder} by setting $t := d\Ll(1 - \frac{n}{m}\Rr) \in (0,d)$.

	Therefore, it suffices to study of the decay of the mean part $\mathbf{E}[J(\cdot,\rho,\cu_n,p, \D (\rho) p)]$ and the proof is quite similar to what we have done in Section~\ref{sec.Rate}, where the main steps are Lemmas~\ref{lem.Jestimate} and ~\ref{lem.mainVariance}, and Proposition~\ref{prop.L2Flat}. They can be carried to the disordered setting for the following reasons.
	\begin{enumerate}
		\item Lemma~\ref{lem.Jestimate} depends on the modified Caccioppoli inequality \eqref{eq.Caccioppoli}, which is valid for each $\omega \in \Omega$ because  $\{c_b^\omega\}_{b\in(\Zd)^*}$  satisfies the uniform ellipticity by (1) of Hypothesis~\ref{hyp2}, and the underlying invariant measure is still Bernoulli measure by (2) of Hypothesis~\ref{hyp2}.
		\item The variance decay in Lemma~\ref{lem.mainVariance} uses the spatial independence, which is ensured by (3) and (4) of  Hypothesis~\ref{hyp2}. More precisely, let $v(\omega, \rho, \cu_n,  p, q)$ be the optimizer of $J(\omega,\rho,\cu_n,p, q)$ like \eqref{eq.varJ}, we aim to estimate 
		\begin{align*}
			&\frac{1}{2 \chi(\rho)}\mathbf E\left[\bracket{\Ll\vert\frac{1}{\vert \cu_n \vert} \sum_{x \in \cu_n} \nabla_x \Ll(v(\omega, \rho, \cu_n,  p, q) - \ell_{D_n^{-1}q - p}\Rr)\Rr\vert^2}_{\rho}\right] \\
			&\leq C 3^{-\beta n} + C\sum_{k=0}^{n-1}3^{-\beta(n-k)}\tau_k,
		\end{align*}
		where the quantity $D_n(\rho)$ is defined as
		\begin{align*}
			D_n(\rho) :=\mathbf E[\D_*(\cdot,\rho,\cu_n)^{-1}]^{-1},
		\end{align*}
		and $\tau_n$ is defined as
		\begin{equation*}
			\tau_n=\sup_{p,q\in B_1} \mathbf E[J(\cdot,\rho,\cu_n,p,q)- J(\cdot,\rho,\cu_{n+1},p,q)].
		\end{equation*}
		We write $v_n$ as a shorthand for $v(\omega, \rho, \cu_n,  p, q)$ and $v_{n-1, z}$ as a shorthand of $v(\omega, \rho, z+\cu_{n-1},  p, q)$, then we have the decomposition
		\begin{align*}
			&\mathbf E\left[\bracket{\frac{1}{2 \chi(\rho)}\Ll\vert\frac{1}{\vert \cu_n \vert} \sum_{x \in \cu_n} \nabla_x \Ll(v_n - \ell_{D_n^{-1}q - p}\Rr)\Rr\vert^2}_{\rho}\right]^\frac{1}{2}\\
			& \leq \mathbf E\left[\bracket{\frac{1}{2 \chi(\rho)} \Ll\vert\frac{1}{\vert \cu_n \vert}\sum_{z\in\Z_{n,n-1}} \sum_{x \in z+\cu_{n-1}} \nabla_x \Ll(v_{n-1, z} - \ell_{D_{n-1}^{-1}q - p}\Rr)\Rr\vert^2}_{\rho}\right]^\frac{1}{2}\\
			& \quad + \mathbf E \left[\bracket{\frac{1}{2 \chi(\rho)}\Ll\vert\frac{1}{\vert \cu_n \vert}\sum_{z\in\Z_{n,n-1}} \sum_{x \in z+\cu_{n-1}} \nabla_x \left(  v_{n-1,z} -  v_n\right)\Rr\vert^2}_{\rho}\right]^\frac{1}{2}\\
			& \quad +\mathbf \lambda|D_n^{-1}q-D_{n-1}^{-1}q|.
		\end{align*}
		The key is the cancellation in the first term under $\mathbf E[\bracket{\cdots}_\rho]$. After expanding the sum, we not only use the finite-range dependence of  $\eta \mapsto c^{\omega}_b(\eta)$ to obtain the independence over $\mathbb P_\rho$, but we also use the unit-range dependence of $\mathbf P$. 
		
		\item The $L^2$-flatness estimate in Proposition~\ref{prop.L2Flat} should also be carried under the expectation $\mathbf E$. It relies on the weighted multiscale Poincar\'e inequality \eqref{eq.WMPoincare}, which does not involve the jump rate. Then we further develop it, and the variance decay in Lemma~\ref{lem.mainVariance} applies to the typical case \eqref{eq.L2Case11Weight}, which has been discussed as above. For the atypical case \eqref{eq.L2Case12}, the $L^\infty$ estimate also applies since it only requires the log-Sobolev inequality (see the proof in Appendix~\ref{appendix.B}), which remains valid thanks to the uniform ellipticity of the jump rate by (1) of Hypothesis~\ref{hyp2} .
	\end{enumerate}
\end{proof}
One can deduce further results built on Theorem~\ref{thm.disorder}.

\appendix
\section{Sub-additivity and Whitney inequality}
A lot of results on quantitative homogenization are stated for the triadic cubes, but they actually hold for the general domain with reasonable boundary regularity, and such generalization only relies on the sub-additivity and a nice decomposition. We state this observation under $\Rd$ setting, and one can adapt it easily in lattice models. In the following statement, a triadic cube $Q$ in $\Rd$ is an open set of type $z + (-\frac{3^m}{2}, \frac{3^m}{2})^d$, where $z \in \Rd$ is called its \emph{center} and $3^m$ with $m \in \mathbb{Z}$ is called its \emph{size} and is denoted by  $\size(Q)$. Especially, we denote by $\cu_m = (-\frac{3^m}{2}, \frac{3^m}{2})^d$ as the cube centered at $0$ and of size $3^m$ in this section. We also denote by $\vert U \vert$ the $\Rd$-Lebesgue measure for Borel set $U$, and by $\sigma(\mcl C)$ the area of the $(d-1)$-dimensional surface $\mcl C$.

\begin{lemma}\label{lem.WhitneySub} Let the quantity $\nu$ be defined on the bounded open sets of $\Rd$ with Lipschitz boundary and satisfy the following properties.
	\begin{enumerate}
		\item (Spatial homogeneous) For any $z \in \Rd$ and open set $U \subset \Rd$, $\nu(z+U) = \nu(U)$.
		\item (Sub-additivity) Given disjoint open sets $\{U_i\}_{1 \leq i \leq n}$ such that they partition $U$ in the sense  that $U_1, \cdots, U_n \subset U$ and  $\vert U \setminus (\bigcup_{i=1}^n U_i)\vert = 0$, then we have
		\begin{align}\label{eq.WhitneySubadditive}
			\nu(U) \leq \sum_{i=1}^n \frac{\vert U_i \vert}{\vert U \vert}\nu(U_i).
		\end{align}
		\item (Decay for triadic cubes) There exist two finite positive constants $C, \alpha$, such that for any triadic cube $\cu_m$, we have 
		\begin{align}\label{eq.WhitneyDecayCube}
			\nu(\cu_m) \leq C(3^{- \alpha m} \wedge 1).
		\end{align}
	\end{enumerate}
	Then there exists a positive constant $C'$, such that for any open set $U$ circumscribed by piecewise $C^1$ surfaces, we have
	\begin{align}
		\nu(U)  \leq C'  \Ll(\frac{\sigma( \partial U ) \diam(U)^{(1-\alpha) \vee 0}}{\vert U \vert} \wedge 1\Rr).
	\end{align}
\end{lemma}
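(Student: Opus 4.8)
The plan is to build a Whitney-type decomposition of the open set $U$ into dyadic (or rather triadic) cubes and apply the subadditivity hypothesis \eqref{eq.WhitneySubadditive} to reduce the estimate on $\nu(U)$ to the sum of the known decay bounds \eqref{eq.WhitneyDecayCube} on these cubes. First I would invoke the classical Whitney covering lemma: any open set $U \subsetneq \Rd$ with Lipschitz (indeed here piecewise $C^1$) boundary admits a partition into triadic cubes $\{Q_j\}_{j \in \N}$ such that $U = \bigcup_j Q_j$ up to a null set, the $Q_j$ have disjoint interiors, and each $Q_j$ satisfies $\size(Q_j) \le \dist(Q_j, \partial U) \le C \size(Q_j)$ for a dimensional constant $C$. (One must take the standard dyadic Whitney cubes and pass to an equivalent triadic version; this is a routine technical adjustment, since triadic and dyadic decompositions are mutually comparable.) Since all $Q_j \subset U$, applying \eqref{eq.WhitneySubadditive} with this partition gives
\begin{align}\label{eq.WhitneyProof1}
	\nu(U) \le \sum_{j} \frac{\vert Q_j \vert}{\vert U \vert} \nu(Q_j) \le \frac{C}{\vert U \vert} \sum_{j} \vert Q_j \vert \Ll(\size(Q_j)^{-\alpha} \wedge 1\Rr),
\end{align}
using \eqref{eq.WhitneyDecayCube} and spatial homogeneity (1) in the last step.

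The next step is to control the sum $\sum_j \vert Q_j \vert \size(Q_j)^{-\alpha}$. Grouping the Whitney cubes by their size, let $\mathcal{N}_k$ be the number of cubes $Q_j$ with $\size(Q_j) = 3^{-k} \diam(U)$ (after rescaling so that $\diam(U)$ is the reference length). A standard geometric fact about Whitney decompositions of sets with piecewise $C^1$ boundary is that $\mathcal{N}_k \le C \, \sigma(\partial U) \, (3^{-k} \diam(U))^{-(d-1)}$, i.e.\ the number of cubes of a given small scale near the boundary is controlled by the surface area divided by the $(d-1)$-volume of a face. Hence
\begin{align}\label{eq.WhitneyProof2}
	\sum_j \vert Q_j \vert \size(Q_j)^{-\alpha} = \sum_{k \ge 0} \mathcal{N}_k \, (3^{-k}\diam(U))^{d - \alpha} \le C\, \sigma(\partial U) \sum_{k \ge 0} (3^{-k} \diam(U))^{1 - \alpha}.
\end{align}
When $\alpha \ge 1$ the geometric series in $k$ converges and is dominated by its $k=0$ term, giving a bound $C\,\sigma(\partial U)\,\diam(U)^{1-\alpha}$ if $\alpha = 1$ and $C\,\sigma(\partial U)$ if $\alpha > 1$; more uniformly the bound is $C\,\sigma(\partial U)\,\diam(U)^{(1-\alpha)\vee 0}$ in all cases, since for $\alpha < 1$ the sum telescopes up to the largest relevant scale $\sim \diam(U)$ and produces $\diam(U)^{1-\alpha}$ as claimed. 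Dividing by $\vert U \vert$ as in \eqref{eq.WhitneyProof1} yields $\nu(U) \le C' \sigma(\partial U)\diam(U)^{(1-\alpha)\vee 0} / \vert U \vert$. Finally, to obtain the minimum with $1$, I would separately note that \eqref{eq.WhitneyDecayCube} with the trivial bound $\nu(Q_j) \le C$ and subadditivity give $\nu(U) \le C$ directly (e.g.\ enclose $U$ in one triadic cube or just sum the trivial bounds), so $\nu(U) \le C'(\,\cdots \wedge 1)$.

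The main obstacle I anticipate is the geometric counting estimate $\mathcal{N}_k \le C\,\sigma(\partial U)(3^{-k}\diam(U))^{-(d-1)}$ and, relatedly, the passage from the standard dyadic Whitney decomposition to a triadic one that still enjoys this counting bound with constants depending only on $d$ and the $C^1$ character of $\partial U$. For a merely Lipschitz boundary the surface-area scaling can be delicate, but the hypothesis of piecewise $C^1$ surfaces makes the boundary locally a graph with bounded gradient, so the number of Whitney cubes of scale $3^{-k}\diam(U)$ that can touch a single $C^1$ piece is indeed comparable to the area of that piece over $(3^{-k}\diam(U))^{d-1}$, uniformly in $k$. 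I would carry out this local-graph argument on each $C^1$ piece and sum. Everything else — the application of subadditivity, the geometric series in \eqref{eq.WhitneyProof2}, and the reduction to the case-by-case value of $(1-\alpha)\vee 0$ — is routine. Since the excerpt only needs this lemma as a soft device to promote cube estimates to general-domain estimates, and the lattice version used in the body (Remark~\ref{rmk.Domain}, and the applications in Proposition~\ref{prop.GrandCanonicalEnsemble} and Proposition~\ref{prop.StrongCanonicalEnsemble}) follows by the identical Whitney-partition argument on $\Zd$ with $\Lambda_L$ in place of $U$, I would present the continuum proof and remark that the discrete adaptation is verbatim.
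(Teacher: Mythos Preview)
Your approach is the same as the paper's: Whitney decomposition, subadditivity, group cubes by scale, bound the contribution at each scale via the boundary area, sum the geometric series. Two points where the paper differs and where your write-up slips:

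\textbf{The series for $\alpha \ge 1$ is handled incorrectly.} After dropping the $\wedge 1$ you sum $\sum_{k\ge 0}(3^{-k}\diam(U))^{1-\alpha}$ and claim it converges for $\alpha\ge 1$. It does not: for $\alpha>1$ the general term is $3^{k(\alpha-1)}\diam(U)^{1-\alpha}\to\infty$, and for $\alpha=1$ every term equals $1$. The fix, which the paper carries out, is to keep the $\wedge 1$ and split the sum at absolute scale $1$: index cubes by their true size $3^k$ (not relative to $\diam(U)$), use $\nu(\cu_k)\le C$ for $k\le 0$ and $\nu(\cu_k)\le C3^{-\alpha k}$ for $1\le k\le m$ where $3^{m-1}\le\diam(U)\le 3^m$. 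The small-cube part gives $\sum_{k\le 0}3^k \le C$ and the large-cube part gives $\sum_{k=1}^m 3^{(1-\alpha)k}\le C\,3^{m((1-\alpha)\vee 0)}$. Your argument is correct only in the regime $\alpha<1$, which happens to be the one used in the applications, but the lemma is stated for all $\alpha>0$.

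\textbf{The scale-$k$ estimate is obtained more directly.} You plan to count cubes via a local-graph argument on each $C^1$ piece, which you flag as the main obstacle. The paper bypasses counting entirely: since every Whitney cube of size $3^k$ lies within distance $4\sqrt{d}\,3^k$ of $\partial U$, the union $\bigcup_{i\in I_k}Q_i$ is contained in the collar $\{x\in U:\dist(x,\partial U)\le 5\sqrt{d}\,3^k\}$, and for a piecewise $C^1$ boundary this collar has volume at most $C\,3^k\sigma(\partial U)$. This gives $\sum_{i\in I_k}|Q_i|\le C\,3^k\sigma(\partial U)$ in one line, with no need to count cubes or analyze graphs.
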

\begin{remark}
	As an application, for the cube $\Lambda_L = (-\frac{L}{2}, \frac{L}{2})^d$ with $L \geq 1$, we have $\nu(\Lambda_L)  \leq C' L^{-(\alpha \wedge 1)}$ and it gives a polynomial decay in function of the diameter. 
\end{remark}

\begin{proof}
	The case for $\diam(U) \leq 1$ is trivial and we focus on the case that $U$ of large diameter. The proof relies on the standard Whitney decomposition, which will give a family of open triadic cubes $\{Q_j\}_{j \geq 0}$ of nice properties; see \cite[Appendix J]{Grafakos} for its construction and proof. In our proof, the useful properties from such decomposition  are
	\begin{itemize}
		\item $\{Q_j\}_{j \geq 0}$ are disjoint and they partition $U$ in the sense of (2) in Lemma~\ref{lem.WhitneySub};
		\item $\sqrt{d}\size(Q_j) \leq \dist(Q_j, \partial U) \leq 4\sqrt{d}\size(Q_j)$.
	\end{itemize}
	Applying the sub-additivity to $\{Q_j\}_{j \geq 0}$ and by passing to the limit, we have 
	\begin{align}\label{eq.SubWhitney}
		\nu(U) \leq \sum_{i=1}^\infty \frac{\vert Q_i \vert}{\vert U \vert}\nu(Q_i).
	\end{align}
	Let $m$ be the positive integer such that 
	\begin{align}\label{eq.WhitneyDiameter}
		3^{m-1} \leq \diam(U) \leq 3^m,
	\end{align}
	then we classify the cubes by their sizes 
	\begin{align*}
		I_k := \{i \in \N_+: \size(Q_i) = 3^k\}.
	\end{align*}
	Using the second properties listed above about the decomposition, all the cubes in $I_k$ stay in distance $4\sqrt{d} 3^k$ from the boundary, then we have 
	\begin{align}\label{eq.QIkInclusion}
		\bigcup_{i \in I_k} Q_i \subset \{x \in U: \dist(x, \partial U) \leq 5\sqrt{d} 3^k\}.
	\end{align}
	Since $U$ admits piece-wise $C^1$ boundary, we denote by $\partial U = \bigcup_{j=1}^m \mcl C_j$,
	where different pieces $\mcl C_j$ only have intersection of null set. Then we have the following volume estimate 
	\begin{multline}\label{eq.WhitneyVolume}
		\sum_{i\in I_k} \vert Q_i\vert \leq \left\vert  \{x \in U: \dist(x, \partial U) \leq 5\sqrt{d} 3^k\} \right\vert  \leq \sum_{j=1}^m \left\vert  \{x \in U: \dist(x, \partial \mcl C_j) \leq 5\sqrt{d} 3^k\} \right\vert\\
		\leq 10 \sqrt{d} 3^k  \sum_{j=1}^m \sigma(\mcl C_j) = 10 \sqrt{d} 3^k \sigma(\partial U).
	\end{multline}
	Then we put these estimates back to the cubes from Whitney decomposition \eqref{eq.SubWhitney} to obtain
	\begin{align*}
		\nu(U) &\leq \sum_{k=-\infty}^0 \frac{1}{\vert U \vert} \Ll(\sum_{i\in I_k} \vert Q_i \vert\Rr) \nu(\cu_k)  +  \sum_{k=1}^m \frac{1}{\vert U \vert} \Ll(\sum_{i\in I_k} \vert Q_i \vert\Rr) \nu(\cu_k)\\
		&\leq \frac{C \sigma(\partial U)}{\vert U \vert} \sum_{k=-\infty}^0 3^{k} + \frac{C \sigma(\partial U)}{\vert U \vert} \sum_{k=1}^m 3^{(1-\alpha) k}\\
		&\leq \frac{C \sigma(\partial U)}{\vert U \vert} 3^{m(1-\alpha) \vee 0}.
	\end{align*}
	Here apply \eqref{eq.WhitneyDecayCube} and \eqref{eq.WhitneyVolume} from the first line to the second line, then we use \eqref{eq.WhitneyDiameter} to conclude the proof.
\end{proof}

\section{Proof of $L^\infty$ norm using mixing time}\label{appendix.B}
In this part, we prove Lemma~\ref{eq.supNorm}. Such estimate should be generally valid for Markov chain, and we state the case of Kawasaki dynamics.

\begin{lemma}
	There exists a constant $C(\lambda,d) < \infty$, such that for any connected domain $\Lambda \subset \Zd$ of diameter $L$ and any two functions  $u,f: \X \to \R$ satisfying
	\begin{align*}
		\L_\Lambda u = f,
	\end{align*}
	then for any $N\in \N_+$ we have 
	\begin{align}\label{eq.supNorm2}
		\norm{u - \bracket{u}_{\Lambda, N}}_{\infty} \leq C L^{2} \log L \norm{f}_{\infty}.
	\end{align}
\end{lemma}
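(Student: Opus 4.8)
The plan is to prove the $L^\infty$ bound \eqref{eq.supNorm2} via the probabilistic representation of the solution $u$ in terms of the Markov jump process generated by $\L_\Lambda$ restricted to the canonical ensemble with $N$ particles, and then to control the relevant time integral by the mixing time of this dynamics. Concretely, fix the number of particles $N$ and consider the continuous-time Markov chain $(\eta(t))_{t \geq 0}$ on the finite state space $\{\zeta \in \X_\Lambda : \sum_{x \in \Lambda} \zeta_x = N\}$ (with the exterior configuration frozen) generated by $\L_\Lambda$. This chain is irreducible on each such hyperplane, reversible with respect to the uniform measure $\bracket{\cdot}_{\Lambda, N}$, and since $u - \bracket{u}_{\Lambda, N}$ and $f$ both have mean zero under $\bracket{\cdot}_{\Lambda, N}$ on each sector, the Poisson equation $\L_\Lambda u = f$ has the representation
\begin{align*}
	u(\eta) - \bracket{u}_{\Lambda, N} = - \int_0^\infty \E_\eta\Ll[ (f - \bracket{f}_{\Lambda, N})(\eta(t)) \Rr] \, \d t.
\end{align*}
Here $\E_\eta$ is the expectation for the chain started at $\eta$. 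The convergence of the integral is exactly the point where mixing enters.

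First I would split the integral at a time $t_0$ to be chosen of order $\tx$, the mixing time of the chain. For $t \leq t_0$, the integrand is bounded crudely by $2\norm{f}_\infty$, giving a contribution $\leq 2 t_0 \norm{f}_\infty$. For $t \geq t_0$, I would use the standard fact that $\Ll| \E_\eta[(f - \bracket{f}_{\Lambda,N})(\eta(t))] \Rr| \leq 2\norm{f}_\infty \, \norm{\P_\eta(\eta(t) \in \cdot) - \bracket{\cdot}_{\Lambda,N}}_{\mathrm{TV}}$, and that total-variation distance decays exponentially past the mixing time: $\norm{\P_\eta(\eta(t) \in \cdot) - \bracket{\cdot}_{\Lambda,N}}_{\mathrm{TV}} \leq 2^{-\lfloor t/\tx \rfloor}$. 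Integrating this geometric tail yields a contribution $\leq C \tx \norm{f}_\infty$. Altogether $\norm{u - \bracket{u}_{\Lambda,N}}_\infty \leq C \tx \norm{f}_\infty$, so the whole problem reduces to the bound $\tx \leq C L^2 \log L$.

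The mixing-time estimate is the technical heart and I would obtain it from the spectral gap together with a log-Sobolev-type or simply an $L^2 \to L^\infty$ comparison. The cleanest route: by reversibility, $\norm{\P_\eta(\eta(t)\in\cdot) - \bracket{\cdot}_{\Lambda,N}}_{\mathrm{TV}}^2 \leq \frac14 \Ll( \frac{1}{\bracket{\{\eta\}}_{\Lambda,N}} - 1\Rr) e^{-2 \mathrm{gap} \cdot t}$, where $\mathrm{gap}$ is the spectral gap of $-\L_\Lambda$ on the sector of $N$ particles. The Lu--Yau estimate quoted in \eqref{eq.Poingrand} (together with its extension to general connected domains via Remark~\ref{rmk.Domain} and Lemma~\ref{lem.WhitneySub}, or by the standard comparison of a connected domain of diameter $L$ with a cube of side $\simeq L$) gives $\mathrm{gap} \geq c L^{-2}$, uniformly in $N$ and in the exterior configuration, using the uniform ellipticity $1 \leq c_b \leq \lambda$. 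The prefactor $\frac{1}{\bracket{\{\eta\}}_{\Lambda,N}}$ is at most the total number of configurations, which is bounded by $2^{|\Lambda|} \leq 2^{C L^d}$, so $\log$ of it is $O(L^d)$. Hence choosing $t = C L^2 (L^d + \log(1/\varepsilon))$ makes the total-variation distance less than $\varepsilon$; this already suffices to make the above integral converge and gives $\tx \leq C L^{d+2}$, hence $\norm{u - \bracket{u}_{\Lambda,N}}_\infty \leq C L^{d+2}\norm{f}_\infty$, which is in fact stronger in the $L$-power than \eqref{eq.supNorm2} as stated — so I would double-check which normalization is intended; if the claimed $L^2 \log L$ is desired, one must instead use the log-Sobolev inequality (again available for Kawasaki dynamics under uniform ellipticity, by comparison with the constant-rate case) to replace the factor $\log(1/\bracket{\{\eta\}}_{\Lambda,N}) \simeq L^d$ by $\log \log(1/\bracket{\{\eta\}}_{\Lambda,N}) \simeq \log L$ in the mixing-time bound, which is the classical improvement of $t_{\mathrm{mix}}$ from spectral gap alone to log-Sobolev.

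The main obstacle I anticipate is precisely this last point: extracting the sharp power of $L$. The naive spectral-gap argument only gives $L^{d+2}$ (because of the entropy of the initial Dirac mass), and upgrading to $L^2 \log L$ requires the modified log-Sobolev (or full log-Sobolev) inequality for the restricted Kawasaki dynamics with uniformly elliptic rates, which one gets by comparing Dirichlet forms with the $c_b \equiv 1$ case and invoking the known log-Sobolev constant $\simeq L^{-2}$ for the simple exclusion process on a box of side $L$ (Yau; Lee--Yau). I would also need to make sure all constants are uniform in $N$ and in the frozen exterior configuration $\zeta$, which is automatic from the uniform ellipticity since every comparison is bond-by-bond. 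A minor point to handle carefully is the reduction of a general connected domain of diameter $L$ to a cube: one embeds $\Lambda$ in a cube $\Lambda_{CL}$ and uses the canonical path / Dirichlet-form comparison to transfer the functional inequalities, at the cost of a constant depending only on $d$.
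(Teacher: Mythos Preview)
Your proposal is correct and follows essentially the same route as the paper: write $u-\bracket{u}_{\Lambda,N}$ via the Duhamel/semigroup integral, control the integrand by total variation distance, sum the geometric tail $d(n\tx)\le 2^{-n}$ to get $\norm{u-\bracket{u}_{\Lambda,N}}_\infty \le C\tx\norm{f}_\infty$, and then invoke the Lu--Yau log-Sobolev inequality to obtain $\tx \le C L^2 \log L$. One small slip: the spectral-gap route gives the \emph{weaker} bound $L^{d+2}$, not a stronger one; but you correctly identify that the log-Sobolev improvement is what upgrades this to $L^2\log L$, exactly as the paper does.
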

\begin{proof}
	Let $P_t(\cdot, \cdot):\X \times \X \to \R_+$ be the transition probability of the continuous-time Kawasaki dynamics generated by $\L_\Lambda$ with reflect boundary condition, which has $\mu_{\Lambda, N} = \P_{\Lambda, N}$ as its stationary measure for any $N \in \N_+$. Then we use the Duhamel's formula 
	\begin{align}\label{eq.Duhamel}
		u(\eta) - \bracket{u}_{\Lambda, N} = \int_{0}^\infty (P_t f)(\eta) - \bracket{f}_{\Lambda, N} \, \d t,
	\end{align}
	which is well-defined thanks to the spectral gap and exponential decay of the mapping $t \mapsto \var_{\Lambda, N}[P_t f]$.
	
	Then we estimate the $L^\infty$ norm. We notice that 
	\begin{align}\label{eq.TV}
		\norm{(P_t f)(\eta) - \bracket{f}_{\Lambda, N}}_{\infty} &= \sup_{\eta \in \X} \Ll(\sum_{\eta' \in \X} P_t(\eta, \eta') f(\eta')- \sum_{\eta' \in \X} \mu_{\Lambda, N} (\eta')f(\eta') \Rr)  \nonumber \\
		&\leq 2 \sup_{\eta \in \X}\norm{P_t(\eta, \cdot) - \mu_{\Lambda, N} }_{TV} \norm{f}_{\infty}.
	\end{align}
	Because we assume reflect boundary condition for the Kawasaki dynamics associated to $\L_\Lambda$, the term $P_t(\eta, \eta')$ is non-zero only when 
	\begin{align*}
		\forall x \in \Lambda^c, \qquad \eta_x  =  \eta'_x. 
	\end{align*}
	Thus, the sum $\sum_{\eta' \in \X}$ is actually over a finite set and well-defined. Then we denote by $\norm{\cdot}_{TV}$ the total variation distance and make use of its definition \cite[(4.7)]{LP17}. By convention in \cite[(4.22),(4.30),(4.31)]{LP17}, we also define 
	\begin{align*}
		d(t) &:= \sup_{\eta \in \X}\norm{P_t(\eta, \cdot) - \mu_{\Lambda, N} }_{TV}, \\
		\tx &:= \inf\Ll\{t\in \R_+: d(t) < \frac{1}{4}\Rr\}.
	\end{align*} 
	Then $t \mapsto d(t)$ is decreasing and satisfies (see \cite[(4.33)]{LP17})
	\begin{align*}
		\forall n \in \N, \qquad d(n \tx) \leq 2^{-n}.
	\end{align*}
	We put this observation and \eqref{eq.TV} back to \eqref{eq.Duhamel}
	\begin{align*}
		\norm{(P_t f)(\eta) - \bracket{f}_{\Lambda, N}}_{\infty} &\leq \sum_{n=0}^\infty \int_{n \tx}^{(n+1) \tx} \norm{(P_t f)(\eta) - \bracket{f}_{\Lambda, N}}_{\infty} \, \d t\\
		&\leq \sum_{n=0}^\infty 2 d(n \tx)  \norm{f}_{\infty} \tx\\
		&\leq 2\sum_{n=0}^\infty 2^{-n} \norm{f}_{\infty} \tx\\
		&= 2 \norm{f}_{\infty}\tx.
	\end{align*}
	One classical method to obtain the mixing time of Markov process is the log-Sobolev inequality; see \cite[(1.8)]{DS96}. In our case, we use the log-Sobolev inequality on general Kawasaki dynamics developed in \cite[Theorem~3]{LuYau}; see also \cite[(2)]{morris16} and discussion there. This gives us 
	\begin{align}\label{eq.Tmix}
		\tx \leq C(d,\lambda) L^2 \log \log {L^d \choose N} \leq \tilde C(d,\lambda) L^2 \log L,
	\end{align}
	which yields \eqref{eq.supNorm2}. 
\end{proof}
\begin{remark}
	The steps before \eqref{eq.Tmix} is standard for all the reversible Markov processes. We apply the log-Sobolev inequality for the mixing time, because the jump rate in this paper depends on the local configuration. The spectral gap inequality \cite[Theorem~2]{LuYau} is also enough to conclude \eqref{eq.Tmix} with a slightly larger bound $L^{d+2} \log L$. We remark some  other recent progresses \cite{oliveira13, lacoin16, morris16} on the mixing time of the exclusion processes (i.e. the constant-speed Kawasaki dynamics). Their generalization on the non-gradient models is an interesting but challenging question. 
\end{remark}

\begin{proof}[Proof of Lemma~\ref{lem.supNorm}]
	We take $u(\Lambda, q)$ for example, which is the solution of 
	\begin{align*}
		\L_{\Lambda} u(\Lambda, q) = \sum_{b \in \overline{\Lambda_L^*}} \pi_b \ell_q.
	\end{align*}
	Therefore, we apply \eqref{eq.supNorm2} with $\norm{\sum_{b \in \overline{\Lambda_L^*}} \pi_b \ell_q}_{\infty} \leq C(\lambda, d)L^d$. The case for $v(\rho, \Lambda, \xi)$ can be done similarly.
\end{proof}
\begin{remark}\label{rmk.Lp}
	For $2\le p < \infty$, we have $\| u- \langle u \rangle_{\Lambda,N}\|_p \le C L^2 \|f\|_p$ without $\log L$.  Indeed, we may apply Riesz--Thorin interpolation theorem for two inequalities $\| P_t f- \langle f \rangle_{\Lambda,N}\|_\infty \le 2 \|f\|_\infty$ and $\| P_t f- \langle f \rangle_{\Lambda,N}\|_2
	\le e^{-c L^{-2}t} \|f\|_2$ for some $c>0$, which follows from the spectral gap estimate for $\mathcal{L}_\Lambda$.
\end{remark}

\section{Proofs of lemmas and proposition in Section \ref{subsec.relative_entropy}}
\label{sec:4-C}

Here we complete the proofs of 
Lemma \ref{Lemma 3.1} (calculation of $\partial_t h_N(t)$), 
Proposition \ref{One-block} (refined one-block estimate),
Lemma \ref{Lemma 3.3} (a key lemma) and 
Lemma \ref{Lemma 3.4} (rewriting $O(N)$-looking term to $O(1)$)
postponed from Sections \ref{subsubsec.derivative_ht}, \ref{subsubsec.one_blcok} and 
\ref{subsubsec.grad_replacement}.  These are gathered here, since 
they are shown based on known results with some modification
providing fine error estimates.

\subsection{Proof of Lemma \ref{Lemma 3.1} (calculation of $\partial_t h_N(t)$)}
\label{sec:4.1-C}
Throughout this proof, we use the following shorthand notations for simplicity 
\begin{align*}
	F \equiv F_N, \qquad \psi_t \equiv \psi_{t}^N, \qquad f_t \equiv f_t^N, \qquad Z_t \equiv Z_{\la(t,\cdot),F_N}.
\end{align*}
Recall that $h_N(t)$ is the relative entropy per volume defined by \eqref{eq:hNt} 
with respect to the local equilibrium $\psi_t = \psi_{\la(t,\cdot),F}^N$
in \eqref{eq:2psit}.    Then, we apply the estimate
\begin{equation}
	\partial_t h_N(t) \le N^{-d}
	\int_{\X_N} \psi_t^{-1} \left(\mathcal{L}_N^* \psi_t - \partial_t \psi_t
	\right) \cdot f_t\, \d \nu^N ,     \label{3.1}
\end{equation}
which holds for a large class of Markovian models, where $\mathcal{L}_N^*$ 
denotes the dual operator of the generator  $ \mathcal{L}_N (=N^2\mathcal{L})$  
with respect to 
the measure $ \nu^N$; see \cite{Yau} Lemma 1.
The proof is divided into four steps.

{\it Step 1.} (First term in \eqref{3.1}) 
The integrand on the right-hand side of  (\ref{3.1}) was already computed 
in the proof of Lemma 3.1 of \cite{fuy},
but only with the error estimate $o(1)$.  Let us record the computation (3.2) in
\cite{fuy} here to make
the error term clear.  Noting the symmetry of the Kawasaki generator $\L$:
$\L^*=\L$,
\begin{align}  \label{3.2}
	& \qquad  N^{-d} \psi_t^{-1} N^2 \L \psi_t    \\
	= &  \frac{N^{2-d}}2  \sum_{x\sim y} c_{x,y} \bigg[ \exp 
	\Big\{(\la(t,x/N) - \la(t,y/N)) (\eta_y-\eta_x)
	\phantom{\sum_{z\in\T_N^d}}   \notag  \\
	&\qquad\qquad \qquad\qquad \qquad      
	+ \frac1N \pi_{x,y} \sum_{z\in\T_N^d} \left(\nabla\la(t,z/N)\cdot\tau_z F
	\right) \Big\} - 1 \bigg]    
	\notag \\  
	= & - \frac{N^{1-d}}2 \sum_{x\sim y} c_{x,y} \Om_{x,y}
	+ \frac{N^{-d}}{4} \sum_{x\sim y} c_{x,y} \Om_{x,y}^2  \notag \\
	& - \frac{N^{-d}}{4} \sum_{x\sim y}  c_{x,y} \sum_{i,j=1}^d
	\partial_{v_i}\partial_{v_j}\la(t,x/N) (y_i -x_i)(y_j-x_j)(\eta_y-\eta_x) \notag \\
	& + \frac{N^{-d}}2 \sum_{x\sim y} c_{x,y} \sum_{i,j=1}^d
	\partial_{v_i}\partial_{v_j}\la(t,x/N)  \pi_{x,y}\left(\sum_{z\in\T_N^d}(z_j-x_j)
	\tau_z F_i\right)          + Q_{1,N},   \notag
\end{align}
where the error term $Q_{1,N}\equiv Q_{1,N}(\la,F)$ is defined as the difference
between the expressions in the middle two lines and the last three lines
in \eqref{3.2}.  Recall \eqref{eq:Omxy} for $\Om_{x,y}$.

This error is estimated as 
\begin{align}  \label{eq:R(la,F)-1}
	|Q_{1,N}| \le &
	CN^{-1} \Big(1+ \|\nabla\la\|_{3,\infty}\Big)^3
	\Big(1+r(F)^{d+2}\|F\|_\infty\Big)^3,
\end{align}
if the condition \eqref{eq:2.F} 
and accordingly $|\e_N| \le C$ below are satisfied for some $C>0$. Here, we use $\e_N\equiv \e_{N,x,y}$ to denote the term given in the braces in \eqref{3.2}. Indeed, to show \eqref{eq:R(la,F)-1}, we expand the second to the third lines
of \eqref{3.2} as
$$
\exp\{\e_N\}-1 = \e_N+ \tfrac12\e_N^2 + O(\e_N^3), \quad |\e_N| \le C.
$$
Then, by Taylor's formula, we have
\begin{align*}
	\la(t,x/N) - \la(t,y/N)= -&\tfrac1N  \nabla\la(t,x/N)\cdot (y-x)\\
	& -\tfrac1{2N^2}(y-x)\cdot \nabla^2\la(t,x/N) (y-x)+ 
	O\Big( N^{-3} {\|\nabla^3\la\|_\infty} \Big),
\end{align*}
and therefore
$$
\e_N= -\tfrac1N \Om_{x,y} + \tfrac1N \bar\Om_{x,y}
- \tfrac1{2N^2}(y-x)\cdot \nabla^2\la(t,x/N)(y-x)
(\eta_y-\eta_x) + O\Big( N^{-3} {\|\nabla^3\la\|_\infty}\Big),
$$
where
\begin{align*}
	& \bar \Om_{x,y}  \equiv \bar\Om_{x,y}(\eta) 
	=  \pi_{x,y}\sum_{z\in\T_N^d}\left(\nabla\la(t,z/N) 
	- \nabla\la(t,x/N)\right)\cdot\tau_z F) \\
	& = N^{-1} \left( \nabla^2\la(t,x/N)\pi_{x,y}\sum_{z\in\T_N^d}(z-x)\right)\cdot\tau_z F 
	+ O\left(N^{-2} \|\nabla^3\la\|_\infty \left|\pi_{x,y}\sum_{z\in\T_N^d}
	(z-x)^{\otimes 2}
	\t_z F\right|\right),
\end{align*}
where $x^{\otimes 2}=(x_{i}x_j)_{1\leq i,j\leq d}$ is a $d\times d$ matrix. 
Thus, $Q_{1,N}\equiv Q_{1,N}(\la,F)$ is given by
\begin{align*}
	Q_{1,N} = & \frac{N^{2-d}}2 \sum_{x\sim y} c_{x,y} 
	O\Big( N^{-3} \|\nabla^3\la\|_\infty  (1+r(F)^{d+2}\|F\|_\infty)\Big)  \\
	& + \frac{N^{2-d}}4 \sum_{x\sim y} c_{x,y}  
	\big(\e_N^2- N^{-2}\Om_{x,y}^2 \big)
	+ O\left(N^{2-d} \sum_{x\sim y} c_{x,y} \e_N^3 \right).
\end{align*}
The first term (its absolute value) is bounded by
$$
CN^{-1} \|\nabla^3\la\|_\infty(1+r(F)^{d+2}\|F\|_\infty).
$$
By noting that $|\Om_{x,y}|\le \|\nabla\la\|_\infty 
(1+r(F)^{d}\|F\|_\infty)$,
the second is bounded by
\begin{align*}
	&\left|  \frac{N^{2-d}}4 \sum_{x\sim y} c_{x,y}  
	\big(\e_N+ \tfrac1{N}\Om_{x,y} \big) \big(\e_N- \tfrac1{N}\Om_{x,y} \big) \right| \\
	& \le CN^{-1} \Big( 1+ \|\nabla\la\|_{3,\infty}\Big)^2
	\Big(1+r(F)^{d+2}\|F\|_\infty\Big)^2,
\end{align*}
see the same estimate given in Section 9.1 of \cite{Fu24}, if necessary.
Similarly, the third is bounded by
\begin{align*}
	CN^2  \sup_{x,y,t} |\e_N^3|
	\le CN^{-1} \Big( 1+ \|\nabla\la\|_{3,\infty}\Big)^3
	\Big(1+r(F)^{d+2}\|F\|_\infty\Big)^3,
\end{align*}
also see Section 9.1 of \cite{Fu24}, if necessary.
Summarizing these estimates for three terms of $Q_{1,N}(\la,F)$, we obtain
\eqref{eq:R(la,F)-1} for $Q_{1,N}(\la,F)$.

{\it Step 2.} (Second term in  \eqref{3.1})
Recalling \eqref{eq:2psit} for $\psi_t\, (=\psi_t^N)$, the second term in the integrand
on the right-hand side of \eqref{3.1} is rewritten as
\begin{align}  \label{3.3}
	N^{-d} \psi_t^{-1}  \partial_t \psi_t
	=  &- N^{-d} Z_t^{-1} \partial_t Z_t  \\
	&  + N^{-d} \partial_t
	\left\{ \sum_{x\in\T_N^d} \la(t,x/N) \eta_x 
	+ \frac1{N} \sum_{x\in\T_N^d} \nabla\la(t,x/N)\cdot
	\tau_x F  \right\}.    \notag
\end{align}
From \eqref{3.2} and \eqref{3.3}, it follows that
\begin{equation}
	N^{-d} \psi_t^{-1} \left(\mathcal{L}_N^* \psi_t - \partial_t \psi_t \right)
	= \Om_1(\eta) + \Om_2(\eta) + a(t) +\sum_{i=1}^2 Q_{i,N},  \label{3.4}
\end{equation}
where $Q_{1,N}$ is given in Step 1,
\begin{align*}
	Q_{2,N} \equiv Q_{2,N}(\la,F) = -N^{-d-1}\sum_{x\in \T_N^d} \nabla\dot{\la}(t,x/N)\cdot\t_x F,
\end{align*}
and
\begin{equation}
	a(t) := N^{-d} Z_t^{-1} \partial_t Z_t
	= E^{\psi_t}\left[ N^{-d} \sum_{x\in\T_N^d} \dot{\la}(t,x/N)\eta_x \right]
	+ O\Big(N^{-1} \|\nabla\dot{\la}\|_\infty \|F\|_\infty\Big).     \label{3.5}
\end{equation}
Recall \eqref{eq:Om1} for $\Om_1$.
The last equality for $a(t)$  is seen from \eqref{3.3} by noting that
$$
E^{\psi_t}\left[ N^{-d} \psi_t^{-1} \partial_t\psi_t \right] = 0
$$
and
\begin{equation}  \label{eq:R3}
	| Q_{2,N}| \le N^{-1} \|\nabla\dot{\la}\|_\infty \|F\|_\infty.
\end{equation}
Thus,
$$
a(t) = N^{-d} \sum_{x\in\T_N^d} \dot{\la}(t,x/N)\rho(t,x/N)+Q_{3,N} +
O\Big(N^{-1} \|\nabla\dot{\la}\|_\infty \|F\|_\infty\Big),
$$
where the error $Q_{3,N} \equiv Q_{3,N}(\la,F)$ is given by
$$
Q_{3,N} = N^{-d} \sum_{x\in\T_N^d} \dot{\la}(t,x/N)
\Big( E^{\psi_t}\left[ \eta_x \right]-\rho(t,x/N)\Big).
$$
We only need to estimate $|E^{\psi_t}\left[ \eta_x \right] 
-\rho(t,x/N)|$.

{\it Step 3.} (Proof of the error estimate  \eqref{eq:R(la,F)})
To give an estimate for $Q_{3,N} \equiv Q_{3,N}(\la,F)$, we note the 
$r$-Markov property of $\psi_t$ (or $P^{\psi_t}$) with $r=r(F)$
(cf.\  Lemma A.1 of \cite{Fu24}) to see
\begin{equation}  \label{m-etax}
	E^{\psi_t}[\eta_x] = E^{\psi_t}\left[E^{P_{\{ x\}}^{\om,F}} [\eta_x] \right],
\end{equation}
where $P_{\{ x\}}^{\om,F}(\eta_x) = P^{\psi_t}(\{\zeta\in \mathcal{X}_N;
\zeta_x=\eta_x\}|\mathcal{F}_{\{x\}^c})(\om)$ for $\eta_x\in \{0,1\}$
and $\om \in \{0,1\}^{\bar\La \setminus \{ x\}}$, $\bar\La = \La_{r(F), x}$.
Then,
\begin{align}\label{mm-etax}
	E^{P_{\{ x\}}^{\om,F}} [\eta_x] 
	& = \widetilde Z_{\om,F}^{-1} A_{\om,F},
\end{align}
where
\begin{align*}
	A_{\om,F} & := \sum_{\eta_x \in \{0,1\}} \eta_x
	e^{\la(t,x/N) \eta_x + \frac1N \sum_{y \in \T_N^d:\, \text{supp}\, \t_y F\subset 
			\bar\La} (\nabla\la(t,y/N)\cdot\t_y F (\eta_x\cdot\om))}, \\
	\widetilde Z_{\om,F} & := \sum_{\eta_x\in \{0,1\}} 
	e^{\la(t,x/N) \eta_x + \frac1N \sum_{y \in \T_N^d:\, \text{supp}\, \t_y F\subset 
			\bar\La} (\nabla\la(t,y/N)\cdot\t_y F (\eta_x\cdot\om))}.
\end{align*}

Noting that $E^{P_{\{ x\}}^{\om,0}} [\eta_x] = \rho(t,x/N)$ taking $F\equiv 0$,
we have
$$
E^{P_{\{ x\}}^{\om,F}} [\eta_x] - \rho(t,x/N)
=  \widetilde Z_{\om,F}^{-1} A_{\om,F}  -  \widetilde Z_{\om,0}^{-1} A_{\om,0}.
$$
A simple estimate shows
\begin{align*}
	& \widetilde Z_{0,\omega} e^{-\frac{c}N r(F)^d \|\nabla\la(t,\cdot)\|_\infty \| F\|_\infty}
	\le \widetilde Z_{\om,F} \le 
	\widetilde Z_{0,\omega} e^{\frac{c}N r(F)^d \|\nabla\la(t,\cdot)\|_\infty \| F\|_\infty}, \\
	& A_{0,\omega} e^{-\frac{c}N r(F)^d \|\nabla\la(t,\cdot)\|_\infty \| F\|_\infty}
	\le A_{\om,F} \le 
	A_{0,\omega} e^{\frac{c}N r(F)^d \|\nabla\la(t,\cdot)\|_\infty \| F\|_\infty},
\end{align*}
for some $c>0$.
Therefore, taking the expectation in $\om$ under $P^{\psi_t}$, we obtain
by \eqref{m-etax}
\begin{align*}
	|E^{\psi_t}\left[ \eta_x \right] -\rho(t,x/N)|
	& = \left| E^{\psi_t}\left[ \frac{A_{0,\omega}}{\widetilde Z_{\om,F}}
	\Big( \frac{A_{\om,F}}{A_{0,\omega}} - \frac{\widetilde Z_{\om,F}}{\widetilde Z_{0,\omega}}\Big) \right] \right| \\
	& \le C_1 \Big| e^{\frac{c}N r(F)^d \|\nabla\la(t,\cdot)\|_\infty \| F\|_\infty}
	-1 \bigg| \\
	& \le C_2 N^{-1} r(F)^d \|\nabla\la\|_\infty \| F\|_\infty,
\end{align*}
if $F$ satisfies the condition \eqref{eq:2.F}, i.e., $N^{-1} r(F)^d 
\|\nabla\la\|_\infty \| F\|_\infty \le 1$; note that $0\le A_{0,\omega}/\widetilde Z_{\om,F}
= A_{0,\omega}/\widetilde Z_{0,\omega} \cdot \widetilde Z_{0,\omega}/\widetilde Z_{\om,F}\le e^c$
under this condition.  This leads to the bound on 
$Q_{3,N} \equiv Q_{3,N}(\la,F)$:
\begin{align}  \label{eq:3.R4}
	|Q_{3,N}| \le C N^{-1}  \|\dot{\la}\|_\infty \|\nabla\la\|_\infty 
	r(F)^{d}\|F\|_\infty.
\end{align}
By \eqref{eq:R(la,F)-1},  \eqref{eq:R3} and \eqref{eq:3.R4},
we obtain \eqref{eq:R(la,F)}, and this concludes the proof of Lemma \ref{Lemma 3.1}.

\subsection{Proof of Proposition \ref{One-block} (refined one-block estimate)}
\label{sec:4.2-C}

We follow the argument in Section 3 of \cite{FvMST}, where the Glauber-Kawasaki dynamics $\eta^N(t)$ was considered.  Note that although the gradient condition was assumed in \cite{FvMST}, it was not used in Section 3.  

Recalling $\hat\Om_x$ in \eqref{eq:hatOm}, we decompose as 
\begin{align} \label{BG:pf:0}
	&E \bigg[ \bigg |\int_0^t N^{-d} \sum_{x \in \T_N^d} a_{s,x} \hat{\Om}_x 
	\d s \bigg |\bigg]
	\\\leq & E\bigg[ \bigg |\int_0^t N^{-d} 
	\sum_{x \in \T_N^d} a_{s,x} m_x \d s \bigg |\bigg]  
	+ E\bigg[ \bigg |\int_0^t N^{-d} \sum_{x \in \T_N^d} a_{s,x} E^{\nu_{1/2}}
	[\hat{\Om}_x \mid \bar\eta_x^L] \d s \bigg |\bigg],  \notag
\end{align}
where $\nu_{1/2}$ is the Bernoulli measure with mean
$\rho=1/2$ on $\mathcal{X}=\{0,1\}^{{\mathbb Z}^d}$ or on 
$\mathcal{X}_N=\{0,1\}^{\T_N^d}$ and
\begin{align*}
	m_x \equiv m_x(\eta)
	:= \hat{\Om}_x - E^{\nu_{1/2}}[\hat{\Om}_x \mid \bar\eta_x^L].
\end{align*}
Precisely saying, the three expectations in \eqref{BG:pf:0} are taken for the
process $\eta^N(\cdot)$, so that $\hat{\Om}_x$ in the left-hand side
and $m_x$ in the right-hand side are understood as $\hat{\Om}_x(\eta^N(s))$ and
$m_x(\eta^N(s))$, respectively, and also we replace $\eta$ to
$\eta^N(s)$ in the conditional expectation, which is a function of
$\eta$, in the last expectation.

Lemma 3.1 of \cite{FvMST} shown for the Glauber-Kawasaki dynamics
applies for the first term in the right-hand side of \eqref{BG:pf:0}.
Recall we assume \eqref{BG:assn:atx}  for $a_{t,x}$: $|a_{t,x}|\le M$,
also \eqref{3.26-A} for $L$.

\begin{lemma} {\rm (cf.\ Lemma 3.1 of \cite{FvMST}, Lemma 9.1 of \cite{Fu24})} 
	\label{l:L10:6}
	Let $\gamma = \gamma(N) > 0$ be given and assume 
	$\gamma M L^{d+2} \le \de N^2$ for some $\de>0$ small enough.
	Then, for every $t\in [0,T]$ and $N$ large enough
	\begin{equation}  \label{eq:8.15-C}
		E\left[ \left |\int_0^t N^{-d} \sum_{x \in \T_N^d} a_{s,x} m_x \d s \right |\right]
		\leq C \left( \frac 1\gamma 
		+ \frac{ \gamma  L^{d+2} }{ N^2 } M^{2} \| \Om\|_\infty^2\right),
	\end{equation}
	for some $C=C_T>0$.  
	In particular, choosing 
	$\ga= N L^{-(d+2)/2}/ M(\| \Om\|_\infty+1)$, the right-hand side is
	bounded by
	$$
	2CN^{-1} L^{(d+2)/2} M(\| \Om\|_\infty+1).
	$$
	For this choice of $\ga$, the above condition for $\ga$ is satisfied if 
	the condition \eqref{3.26-A} stated in Proposition \ref{One-block} holds.
\end{lemma}

\begin{proof}
	We may adjust the proof of Lemma 3.1 of \cite{FvMST} in our setting.
	The differences are two-fold.  The generator in \cite{FvMST} was
	$N^2 \L + KL_G$ with Glauber generator $L_G$ with time change factor $K\ge 1$.
	In our case, $K=0$.  The second difference is the assumption on $a_{t,x}$.
	We assume \eqref{BG:assn:atx}: $|a_{t,x}| \le M$, while it was
	$|a_{t,x}| \le CK^\th/[t(T-t)]^\Theta$ for some $\th\ge 0, \Theta\in [0,1)$
	depending on the Glauber speed $K$ and also $t\in [0,T]$.  Our assumption
	\eqref{BG:assn:atx} is simpler. We only point out the necessary changes in the proof.
	
	First by applying the entropy inequality with respect to the Bernoulli
	measure $\nu^N=\nu_{1/2}^N$ on $\mathcal{X}_N$, 
	as in (3.3) of \cite{FvMST}, we obtain
	\begin{align} \label{BG:pf:5}
		& E\left[ \left |\int_0^t N^{-d} \sum_{x \in \T_N^d} a_{s,x} m_x \d s \right |\right]\\
		&\qquad  \leq \frac{\log 2}\gamma + \frac{N^{-d}}\gamma 
		\log E^{\nu^N} \left[ \exp \left| \gamma \int_0^t \sum_{x \in \T_N^d} 
		a_{s,x} m_x  \d s \right| \right].   \notag
	\end{align}
	
	To estimate the second term on the right-hand side of \eqref{BG:pf:5},
	instead of the assumption (1.19) in \cite{FvMST}
	for $a_{t,x}$ (i.e.\ $|a_{t,x}| \le CK^\th/[t(T-t)]^\Theta$, $\th\ge 0,
	\Theta\in [0,1)$), we have \eqref{BG:assn:atx}:
	$|a_{t,x}| \le M$ for all $t\in [0,T], \, x \in \T_N^d$.
	In other words, we may take $\Theta=0$ and $M$ for $CK^\th$ in
	(1.19) of \cite{FvMST}.  Furthermore, letting $\e= \e(N)\downarrow 0$,
	the first term in the concluding estimate in  Lemma 3.1 of \cite{FvMST}
	becomes negligible, since $\e^{1-\Theta}K^\th=\e M/C\to 0$.  
	
	Then, since there is no Glauber part (i.e.\ $K=0$), we do not have the term 
	$K\lan -L_G\psi,\psi\ran_{\nu_\b}$ in (3.7) (recall $\nu_\b=\nu^N$)
	so that no $C'KN^d$ in (3.8) in
	the proof of Lemma 3.1  of \cite{FvMST}.  This $K$ gave the second term
	$\frac{K}\gamma$ in the concluding estimate in Lemma 3.1.
	Instead, we only have $\frac{\log 2}\gamma$ from \eqref{BG:pf:5}
	and this gives the first term $\frac{C}\gamma$ in our estimate \eqref{eq:8.15-C}.
	
	For the third term in the concluding estimate in  Lemma 3.1
	($\e^{2\Theta}=\e^0=1$), at the last part of its proof, we had
	$$
	\langle (-\L_{L, x})^{-1} (a_{t,x} m_x)a_{t,x} m_x \rangle_{x+\Lambda_L,j}
	\leq \frac1{\operatorname{gap}(j,L)} \sup_{t,x,\eta} | a_{t,x} m_x |^2, 
	$$
	where $\operatorname{gap}(j,L) \geq C / L^2$ as indicated above this
	estimate in \cite{FvMST}.  Thus, under our assumption
	\eqref{BG:assn:atx} on $a_{t,x}$, estimating as
	$$
	\sup_{t,x,\eta} |a_{t,x}m_x|^2 \le (2M)^2 \|\Om\|_\infty^2,
	$$
	we obtain the estimate \eqref{eq:8.15-C}.
	
	Note that the condition `$\gamma M L^{d+2} \le \de N^2$
	for some $\de>0$ small enough' was used for the
	denominator in (3.10) of \cite{FvMST} (with $M$ instead of $CK^\th$ and 
	$\e^{-\Theta}=1$), derived by Rayleigh estimate,
	to stay uniformly positive, say $\ge 1/2$.  This determines how small
	$\de>0$ needs to be.  This is the condition (3.2) in \cite{FvMST} with
	$\e^{-\Theta}= \e^0=1$ and $M/C$ instead of $K^\th$.
	
	The second assertion with the special choice of $\ga$ is straightforward.
\end{proof}

Let us continue the proof of Proposition \ref{One-block}.
For the second term in \eqref{BG:pf:0}, we apply Theorem 4.1 of \cite{CM} 
(taking $\La=\La_L$) which provides the equivalence of ensembles
for general Gibbs measures with precise convergence rate:
For any $\e\in (0,1/4)$, there exists $C=C_\e > 0$ such that for all $N$ 
sufficiently large
\begin{equation} \label{eq:3EE}
	\sup_{\rho\in [0,1]} \max_{x \in \T_N^d} \sup_{\eta \in \mathcal X_N} \left| 
	E^{\nu_{\rho}}[\t_x \Om \mid \bar\eta_x^L]
	- \lan \Om \ran (\bar\eta_x^L)  \right| 
	\leq \frac C{L^{d}} |\supp\Omega| \ \| \Om\|_\infty.
\end{equation}

Combining the two estimates given in Lemma \ref{l:L10:6} and 
\eqref{eq:3EE} with $\rho=1/2$ and $|
\supp\Omega| \le r(\Om)^d \le L^{d(1-4\e)}$
and writing the exponent as  $a= 1-4\e\in (0,1)$, 
the proof of Proposition \ref{One-block} is concluded.

\subsection{Proof of Lemma \ref{Lemma 3.3} (a key lemma)}
\label{sec:4.3-C}

Recall that $J(t,v) \in C^\infty([0,T]\times\T^d, \R^n)$, 
$G(\eta) \in \mathcal{F}_0^n$ and  ${\mathbf U}(\rho) \in C([0,1],\R^n \otimes \R^n)$
are given.  For a box $\La_L$, we decompose $\eta$ as 
$\xi = \eta|_{\La_L}$ and $\zeta = \eta|_{\La_L^c}$.
We write $G$ as $G(\eta)= G_{\zeta}(\xi)$ regarding it as a function of
$\xi$.  We assume $\lan G_\zeta \ran_{\La_L,M} = 0$ for every  
$\zeta\in \mathcal{X}_{\La_L^c}$ and $M: 0 \le M \le |\La_L|$.  
The proof is divided into
three steps.  It is given by combining that of Lemma 6.1 of \cite{fuy}
and the calculation developed in that of Lemma 3.1
of \cite{FvMST} for Glauber-Kawasaki dynamics.

{\it Step 1.} (Application of entropy inequality and Feynman-Kac formula) 
Setting 
$$ 
W_{N,t}(\eta) = N \sum_{x \in \T_N^d} J(t,x/N) \cdot \tau_xG - 
\b \sum_{x \in \T_N^d} J(t,x/N) \cdot {\mathbf U}(\bar{\eta}_x^L)J(t,x/N),
$$ 
by the entropy inequality 
noting that $h_N(P^{f_0}|P^{\nu^N}) \le C$ (recall the normalized entropy defined by \eqref{eq.defRelativeEntropy}) ($P^{f_0}$ denotes the distribution
on the path space $D([0,T],\mathcal{X}_N)$ of $\eta^N(\cdot)$ with the
initial distribution $f_0^N\, \d \nu^N$) at the process level, we have 
\begin{align*}
	\int_0^t E^{f_s^N} [ N^{-d}  W_{N,s}(\eta) ]\d s 
	& \le \frac1{\b} N^{-d} \log {E^{\nu^N} \left[e^{\int_0^t \b W_{N,s}(\eta^N(s))\d s} 
		\right]}  + \frac{C}{\b},
\end{align*}
for every $\b>0$; recall $\nu^N=\nu_{1/2}^N$ and $E^{\nu^N}[\,\cdot\,]$
means the expectation under the process $\eta^N(\cdot)$ with the initial
distribution $\nu^N$.
Then, by Feynman--Kac formula, similar to (3.6) of \cite{FvMST}
(with $\e=0, \e^{1-\Theta}=0$ in the sense that we explained in the proof of 
Lemma \ref{l:L10:6} in Section \ref{sec:4.2-C})  shown for the
Glauber--Kawasaki dynamics, the right-hand side is bounded by
\begin{align}  \label{eq:9.18-E}
	\frac1{\b} N^{-d}\int_0^t  \Om_{N,\b}(s)\d s + \frac{C}{\b},
\end{align}
where 
\begin{equation}  \label{eq:9.19-Om}
	\Om_{N,\b}(t) 
	= \sup_{\psi: \int\psi^2d\nu^N=1}
	\Big\{E^{\nu^N} [\b W_{N,t} \psi^2] - N^2E^{\nu^N} [\psi (-\mathcal{L} \psi)]
	\Big\}.
\end{equation}
Recall that the generator of the process $\eta^N(\cdot)$ is $\L_N=N^2\L$.

{\it Step 2.} (Error estimate for $\Om_{N,\b}(t)$)
Since  $\Om_{N,\b}(t)$ is the same quantity as (6.3) of \cite{fuy},
the same bound as \cite{fuy} holds 
for $\Om_{N,\b}(t)$ until the very end of the proof of Lemma 6.1 of \cite{fuy}.
The only difference is that,
in (6.7) of \cite{fuy}, the limit was taken as $N\to\infty$, but here we have to
derive an error estimate. 

Indeed, the first term in \eqref{eq:9.18-E} is bounded by $t$ times 
the expression on line 4, p.28 of
\cite{fuy}, i.e.\ by adjusting the notation, it is bounded by
\begin{equation}  \label{eq:9.19-Q}
	t \sup_{|q| \le \|J\|_\infty } \sup_{\zeta\in \mathcal{X}_{\La_L^c}}
	\sup_{M: 0\le M \le |\La_L|}
	\left[ \frac{N^2}{\b |(\La_L)^*|} \Om_{N,\b,q,G}^{M,L,\zeta}- \b  
	\, q \cdot {\mathbf U}(M/|\La_L|)q \right], 
\end{equation}
where $\Om_{N,\b,q,G}^{M,L,\zeta}$ is the largest eigenvalue of the symmetric
operator $\L_{\La_L,\zeta}+V$ with $V=N^{-1}\b |(\La_L)^*|  \, 
q\cdot G_\zeta$ in $L^2(\mathcal{X}_{\La_L,M}, \nu_{\La_L,M})$;
recall \eqref{eq.defGeneratorExt} and its above for $\mathcal{L}_{\Lambda_L, \zeta}$
and $\nu_{\La_L,M}$.

By Theorem 6.1 of \cite{fuy} (Rayleigh-Schr\"odinger bound for 
general Markov generator), $\Om_{N,\b,q,G}^{M,L,\zeta}$ is bounded as
\begin{equation}  \label{eq:9.21-P}
	\Om_{N,\b,q,G}^{M,L,\zeta} \le
	\lan V  (-\L_{\La_L,\zeta})^{-1} V\ran_{\La_L,M}
	+ Q_{N,L,\b}(V),
\end{equation}
where the upper bound of the error is given by 
$$
Q_{N,L,\b}(V) := 
4 \Vert V \Vert_{\infty}^3
\lbr \sup_{f:\lan f\ran_{\La_L,M} =0}
\frac {\lan f^2\ran_{\La_L,M}}{\lan -f \L_{\La_L,\zeta} f\ran_{\La_L,M}}\rbr^2.
$$
Since we have a spectral gap estimate for the Kawasaki generator:
\begin{align}  \label{eq:3-SG}
	\lan -f \L_{\La_L,\zeta} f\ran_{\La_L,M} \ge C_0 L^{-2}
	\lan f^2\ran_{\La_L,M},
\end{align}
for $f:\lan f\ran_{\La_L,M} =0$, 
with a constant $C_0>0$ independent of $\zeta, M, L$ (see \cite{LuYau} or
Appendix A of \cite{fuy}), recalling the definition of $V$,
we have a bound for the error term:
\begin{align*}
	Q_{N,L,\b}(V)
	\le 4N^{-3}\b^3 |(\La_L)^*|^3 \|q\cdot G\|_\infty^3 
	\, \big(C_0^{-1} L^2\big)^2.
\end{align*}
Therefore, in \eqref{eq:9.19-Q},     
\begin{align*}
	t \frac{N^2}{\b |(\La_L)^*|} Q_{N,L,\b}(V)    
	\le C_T N^{-1}\b^2 L^{(2d+4)} \| G\|_\infty^3 \| J\|_\infty^3,
\end{align*}
for $q: |q| \le \|J\|_\infty$.  This is the estimate \eqref{eq:3QNell}
for $Q_{N,L}^{(1)}(J,G)$ in Lemma \ref{Lemma 3.3} multiplied by $\b^2$.  
The rest in \eqref{eq:9.19-Q} is, by \eqref{eq:9.21-P},
\begin{align*}
	&t  \sup_{|q| \le \|J\|_\infty } \sup_{\zeta\in \mathcal{X}_{\La_L^c}}
	\sup_{M: 0\le M \le |\La_L|}
	\left[ \frac{N^2}{\b |(\La_L)^*|} \lan V (-\L_{\La_L,\zeta})^{-1} V\ran_{\La_L,M}
	- \b  \, q \cdot {\mathbf U}(M/|\La_L|)q \right].
\end{align*}
This coincides with the variational term in Lemma \ref{Lemma 3.3} 
recalling the definition of $V$ and noting that 
$|(\La_L)^*| \leq  d |\La_L|$.  Thus, we complete the proof 
of Lemma \ref{Lemma 3.3}.

\subsection{Proof of Lemma \ref{Lemma 3.4} (rewriting $O(N)$-looking term 
	to $O(1)$)}
\label{sec:4.4-C}

We essentially follow the proof of Lemma 3.4 of \cite{fuy}, 
but make it finer.  Indeed, it is very similar to the proof of Lemma 4.3 of \cite{Fu24}
(Section 9.4) for the Glauber-Kawasaki dynamics.  The difference is only that 
the term $C/\b$ in the concluding estimate for $Q_{N,L}^{(3)}$ was $CK/\b$
for the Glauber-Kawasaki case.  

This difference is caused when we apply the key lemma, i.e., Lemma 4.2 in
\cite{Fu24} and Lemma \ref{Lemma 3.3} in our case.  Comparing these two
lemmas, the difference is only that the error was $CK/\b$ in Lemma 4.2 in
\cite{Fu24}, while it is $C/\b$ in our Lemma \ref{Lemma 3.3}.
Therefore, the only difference in the proof is that
the estimate (9.25) in \cite{Fu24} should be replaced by
\begin{align}  \label{eq:3.7-0}
	& \b T \|\nabla \la\|_\infty^2 d \, |\La_{L+1}|  
	\sup_{0\leq M\leq |\Lambda_{L+1}|,\zeta\in \mathcal{X}_{\La_{L+1}^c} }
	\lan G (-\L_{\La_{L+1},\zeta})^{-1} G 
	\ran_{\La_{L+1}, M}  \\
	& \qquad \qquad
	+ \frac{C}{\b} + \b^2 Q_{N,L+1}^{(1)}(\nabla\la,G)
	\notag
\end{align}
in our setting.  Otherwise,  everything is the same and we may simply
copy all the proof of Lemma 4.3 of \cite{Fu24} as it is.  In this way, 
we can complete the proof of Lemma~\ref{Lemma 3.4}.

\section*{Acknowledgements}
The research of CG is supported by the National Key R\&D Program of China (No. 2023YFA1010400) and the National Natural Science Foundation of China (Nos. 12301166, 12595284, and 12595280). The research of TF is supported by International Scientists Project of BJNSF (No. IS23007). Part of this project was developed when CG and HW attended the workshop ``Probability and Statistical Physics'' at TSIMF. CG thanks Jean-Christophe Mourrat to mention this problem during his thesis. We would like to thank Yuval Peres and Shangjie Yang for helpful discussions, and thank Scott Armstrong to share the recent progress in high-contrast homogenization. We also thank the referees for helpful comments and suggestions.

\bibliographystyle{abbrv}
\bibliography{KawasakiRef}
\newpage

\end{document}